\documentclass[11pt]{amsart}
\usepackage{amsfonts, pdfsync}
\usepackage{graphicx, comment}
\usepackage{times}
\usepackage{amsmath}
\usepackage{amssymb}
\usepackage{latexsym}
\usepackage{amsthm}
\usepackage{enumitem}
\usepackage[usenames,dvipsnames]{xcolor}
 \definecolor{blue(ryb)}{rgb}{0.01, 0.28, 1.0}
 \definecolor{bleudefrance}{rgb}{0.19, 0.55, 0.91}
  \definecolor{azure(colorwheel)}{rgb}{0.0, 0.5, 1.0}

\usepackage[bookmarks, breaklinks, colorlinks,  linktocpage=true, linkcolor=red,anchorcolor=Periwinkle,citecolor=bleudefrance,urlcolor=Emerald, pdftitle={Strongly Compact Strong Trajectory attractors for Evolutionary systems and Their Applications},
  pdfsubject={Mathematics},
  pdfauthor={Songsong Lu},
  pdfkeywords={trajectory attractor, global attractor, evolutionary system, Navier-Stokes equations, reaction-diffusion system},
 ]{hyperref}

\allowdisplaybreaks[4]
\newtheorem{theorem}{Theorem}[section]
\newtheorem{lemma}[theorem]{Lemma}

\newtheorem{definition}[theorem]{Definition}
\newtheorem{corollary}[theorem]{Corollary}
\newtheorem{remark}[theorem]{Remark}
\newtheorem{open}[theorem]{Open Problem}
\newtheorem{mtheorem}{Main Theorem}
\newcommand{\As}{\mathcal{A}_{\mathrm{s}}}
\newcommand{\Aw}{\mathcal{A}_{\mathrm{w}}}

\newcommand{\ds}{\mathrm{d}_{\mathrm{s}}}
\newcommand{\dw}{\mathrm{d}_{\mathrm{w}}}

\newcommand{\Dc}{\mathcal{E}}
\newcommand{\s}{\mathrm{s}}
\newcommand{\w}{\mathrm{w}}
\newcommand{\Xw}{X_{\mathrm{w}}}
\newcommand{\Xs}{X_{\mathrm{s}}}
\newcommand{\Hw}{H_{\mathrm{w}}}
\newcommand{\Ab}{\mathcal{A}_{\bullet}}
\newcommand{\db}{\mathrm{d}_{\bullet}}
\newcommand{\dd}{\mathrm{d}}

\newcommand{\wb}{{\omega}_{\bullet}}
\newcommand{\ww}{{\omega}_{\mathrm{w}}}
\newcommand{\ws}{{\omega}_{\mathrm{s}}}

\newcommand{\ddt}{\frac{d}{dt}}

\newcommand{\Rc}{\widetilde{R}}

\def\setmarsing{
\oddsidemargin-0in
\evensidemargin-0in
\textwidth5.5in
\textheight8in
}
\setmarsing
\setcounter{tocdepth}{4}
\setcounter{secnumdepth}{3}
\begin{document}

\title
[Strongly Compact Strong Trajectory Attractors]
{Strongly Compact Strong Trajectory Attractors for Evolutionary Systems and Their Applications}
\author{Songsong Lu}
\thanks{S.L. was partially supported
by Specialized Research Fund for the Doctoral Program of Higher Education (200805581025), NSFC 11001279, 11571383, 11971497, the Fundamental Research Funds for the Central Universities (SYSU 11lgpy27), NSF of Guangdong Province (2015A030313097), the Science and Technology Program of Guangzhou (201607010144).}
\address[]
{Department of Mathematics, Sun Yat-sen University
\\Guangzhou, 510275, P.R. China}
\email{luss@mail.sysu.edu.cn}

\begin{abstract} We show that for any fixed accuracy and time length $T$, a {\it finite} number of $T$-time length pieces of the complete trajectories on the global attractor are capable of uniformly approximating all trajectories within the accuracy in the natural strong metric after sufficiently large time when the observed dissipative system is asymptotically compact. Moreover, we obtain the strong equicontinuity of all the complete trajectories on the global attractor.  These results follow by proving the existence of a strongly compact strong trajectory attractor. The notion of a trajectory attractor was previously constructed for a family of auxiliary systems including the originally considered one without uniqueness.  Recently, Cheskidov and the author developed a new framework called evolutionary system, with which a (weak) trajectory attractor can be actually defined for the original system. In this paper, the theory of trajectory attractors is further developed in the natural strong metric for our purpose. We then apply it to both the 2D and the 3D Navier-Stokes equations and a general nonautonomous reaction-diffusion system.

\noindent {\bf Keywords: }trajectory attractor, global
attractor, evolutionary system, Navier-Stokes equations, reaction-diffusion system

\noindent{\bf Mathematics Subject Classification (2010)} Primary: 37L05, 37N10; Secondary: 35B40, 35B41, 35Q30, 35K57
\end{abstract}

\maketitle
\tableofcontents

\section{Introduction}
The global attractor is a natural mathematical object describing the long-time behavior of solutions of many dissipative partial differential equations (PDEs). All the solutions  converge to the attractor as time goes to infinity. Its studies goes back to the seminal work of Foias and Prodi \cite{FP67}, who proved that the long-time behavior of certain weak solutions of the 2D Navier-Stokes equations  (NSE) is determined by the long-time behavior of a finite number of numerical parameters. For further development, see e.g. \cite{La72, FT77, CFT85, Hal88, T88, Ha91, La91, BV92, Ro01, CV02, SY02, CLR13}.
\subsection{Main results and preliminary comments}
In this paper we will show that the global attractor possesses a {\it finite} strong uniform tracking property.   More precisely, we prove that,

\begin{mtheorem}\label{t:MTIntrod}(Conclusion 3 of Theorem \ref{t:sA}) Let $\Dc$ be an asymptotically compact evolutionary system\footnote{See Definitions \ref{Dc} and \ref{d:AC} in  preliminary
 Section 2 below, where we will briefly recall the basic definitions of the theory of evolutionary systems developed in \cite{CF06, C09, CL09, CL14}.}
satisfying the fundamental assumption\footnote{We will discuss the assumption a little later in this subsection.} A1 and  let $\bar{\Dc}$  be the closure of $\Dc$. Then the global attractor
$\As$ for $\Dc$ possesses the finite strong uniform tracking property, i.e., for any fixed accuracy $\epsilon>0$ and time length $T>0$, there exist $t_0$ and a {\it finite} set $P_T^f$ consisting of $T$-time length pieces  on $[0,T]$ of the complete trajectories of $\bar{\Dc}$ on  $\As$, such that for any $t^*>t_0$, every trajectory $u(t)$ of $\Dc$ satisfies
$$\ds(u(t), v(t-t^*)) < \epsilon,\quad\forall t\in [t^*,t^*+T],$$
for some $T$-time length piece  $v\in P^f_T$. Here $\ds$ is the natural strong metric.
\end{mtheorem}

\begin{remark}\label{r:remarkof MT1}We first give some explanation on terminologies in the theorem.
\begin{itemize}
\item[1.]One feature of our framework called evolutionary system is that the  phase space $X$ (typically being a bounded absorbing set of the dissipative system under consideration) is endowed with both a weak metric and a strong metric. In applications, the strong metric induces the natural strong topology we are concerned about. Notations with subscript $_\mathrm s$ or $_\mathrm w$ are related to the strong metric $\ds$ or the weak metric $\dw$, respectively.
\item[2.]All Leray-Hopf weak solutions of the 3D Navier-Stokes equations with a fixed time-dependent force staying in $X$, for instance, form an evolutionary system  satisfying the fundamental assumption A1. In general, evolutionary systems defined by PDEs of mathematical physics satisfy A1 (cf. e.g. [T88, CV02]).
\item[3.]Recently, Cheskidov and the author developed the framework of an evolutionary system  in \cite{CL14} by introducing a ``closure of the evolutionary system $\Dc$''. The structure of the  global
attractor $\As$ for $\Dc$ is obtained via  all the complete trajectories $\bar{\Dc}((-\infty, \infty))$ of its closure $\bar{\Dc}$. In applications, for an autonomous system, the closure $\bar{\Dc}$ of the associated evolutionary system $\Dc$ is identical to $\Dc$ itself.

\end{itemize}

\end{remark}

Intuitively, the {\it finite} strong uniform tracking property means that for any fixed accuracy $\epsilon$ and time length $T$, a
{\it finite} number of $T$-time length pieces of the complete trajectories on the global attractor $\As$ are capable of uniformly approximating all trajectories within the accuracy $\epsilon$ in the strong
metric after sufficiently large time.
The uniform tracking property indicates how the dynamics on  $\As$ describe the asymptotic behavior of all solutions of PDEs by picking up approximating pieces on  $\As$ one by one with smaller and smaller accuracies and longer and longer time lengths (see Corollary \ref{t:fsutpdetermine2}). It was studied in \cite{V92, LR99} for some special cases and in \cite{C09,CL14} for general cases.  The main novelty of this theorem is the {\it finiteness} of the number of candidate approximating pieces for every fixed accuracy and time length. It follows by proving the existence of a strongly compact strong trajectory attractor, which we will discuss in  Main Theorem \ref{t:sAIntrod} below.

The notions of a weak global attractor and a (weak) trajectory attractor were introduced for the autonomous 3D NSE by Foias and Temam \cite{FT85} and by Sell \cite{Se96}, respectively. As the fundamental model for the flow of fluid, the NSE are of great physical importance. However, the problem of uniqueness is still a highlighted difficulty in the theory of PDEs. Their methods attempt to bypass this obstacle\footnote{\label{fn:nonuniq}Very recently, the nonuniqueness of weak solutions to the 3D NSE has greatly progressed. See \cite{BuVi18, ABC22, CLuo22} and the references therein. Especially, the nonuniqueness of Leray-Hopf weak solutions have been proved \cite{ABC22} for the 3D NSE in the whole space with a non-zero force.}. The weak global attractor captures the long-time behavior of all Leray-Hopf weak solutions with respect to (w.r.t.) the weak topology of the natural phase space. The trajectory attractor is a global attractor in the space of trajectories, which is endowed with a suitable topology usually related to the weak topology. The set of points on all the trajectories in the trajectory attractor coincides with the weak global attractor \cite{FT85, Se96, C09, CL14}.  Only few papers are concerned with the strong topology (see \cite{VZ96, C09, CL09, VZC10, CVZ11, CL14}).

The trajectory attractor was further studied in \cite{Se96, CV97,CV02, SY02} for the nonautonomous system by investigating  a family of auxiliary systems containing the originally considered one, but not just the original one. Its trajectory attractor does not always have to be the one for the original system as  suggested by open problems in \cite{CL09, CL14}. Precisely, it might not satisfy the minimality property\footnote{See Definition \ref{d:wta}.} for the original system. Recently,  in our paper \cite{CL14},  Cheskidov and the author developed  the framework called evolutionary system and made it natural to construct a (weak) trajectory attractor for the {\it original} system rather than  for a family of systems. Indeed, the aforementioned minimality property does hold\footnote{We will have more detailed comments on our framework  and the previous ones in next subsetion.}. The notion of an evolutionary system $\mathcal E$ was initiated in \cite{CF06} to study a weak global attractor and a trajectory attractor for the autonomous 3D NSE, and continued in \cite{C09} where the strong convergence of trajectories to the trajectory attractor was studied. Our new results in the current paper are systematic investigations on trajectory attractors involved in the natural strong topology.

In fact, the previous paper \cite{CL14} presented primarily an approach that deals directly with the notion of a uniform global attractor for {\it original} nonautonomous systems. This notion, introduced by Haraux \cite{Ha91}, naturally generalizes that of a global attractor to nonautonomous ones. It was proved \cite{CL14} that the uniform global attractor possesses the uniform tracking property under the assumption A1.

In this paper, we further develop the theory of trajectory attractors in the natural strong metric for our purpose. We will show the existence of a strongly compact strong trajectory attractor when an evolutionary system is asymptotically compact.  As a consequence, we obtain that a {\it finite} number of pieces of  the complete trajectories on the global attractor are enough to ensure  the uniform tracking property in the strong metric, which is exactly Main Theorem \ref{t:MTIntrod}. The proof relies on two main ingredients. One is a new point of view that identifies a weak global attractor possessing the weak uniform tracking property with  the existence of a (weak) trajectory attractor.  The other one is a simultaneous use of the weak and strong metrics.

With the new perspective, part of results in \cite{CL14} can be reformulated into an inspirational form Theorem \ref{t:wA}, then we conveniently generalize the related notions to strong metric versions and  prove the following utilizing the weak and strong metrics at the same time.
\begin{mtheorem} \label{t:sAIntrod}(Conclusions 4-5 of Theorem \ref{t:wA} and 1-2 of Theorem \ref{t:sA}) Let $\Dc$ be an asymptotically compact evolutionary system
satisfying A1 and  let $\bar{\Dc}$  be the closure of $\Dc$. Then
\begin{itemize}\item[1.]The strongly compact strong trajectory attractor\footnote{See Definition \ref{d:sta}.} $\mathfrak A_{\s}$ exists, and it is  the restriction of all the complete trajectories of $\bar{\Dc}$ on $[0,\infty)$:
\[\mathfrak A_{\s}=\Pi_+\bar{\Dc}((-\infty, \infty)):=\left\{u(\cdot)|_{[0,\infty)}:u\in \bar{\Dc}((-\infty, \infty))\right\}. \]
\item[2.] $\As$ is a section of $\mathfrak A_{\s}$:
\[\As=\mathfrak A_{\s}(t):=\left\{u(t): u\in \mathfrak A_{\s}\right\},\quad \forall\, t\geq 0.\]

\end{itemize}
\end{mtheorem}
\begin{remark}\label{r:remarkof MT2}We supplement some notes.
\begin{itemize}
\item[1.]We can see that  all the complete trajectories $\bar{\Dc}((-\infty, \infty))$ of $\bar{\Dc}$ lie in $\As$ and their restriction on $[0,\infty)$ is $\mathfrak A_{\s}$. Then, the approximating pieces in Main Theorem \ref{t:MTIntrod} lie in  $\mathfrak A_{\s}$. Hence, it is also convenient to call that $\mathfrak A_{\s}$ or $\Dc$ possesses a finite strong uniform tracking property.
\item[2.]In applications, the evolutionary system $\Dc$ is only constructed from the solutions of the originally considered PDEs rather than those from a family of auxiliary PDEs, and thereby the attractors $\As$ and $\mathfrak A_{\s}$ are indeed the ones for the original system. They faithfully satisfy the minimality property for the original system. See next subsection for more detailed comments.
    \end{itemize}

\end{remark}
By the Arzal\`{a}-Ascoli Theorem, Main Theorem \ref{t:sAIntrod} has Main Theorem \ref{t:MTIntrod} and the following as corollaries.

\begin{mtheorem}\label{t:equiconIntrod}(Conclusion 4 of Theorem \ref{t:sA}) Let $\Dc$ be an asymptotically compact evolutionary system
satisfying A1 and  let $\bar{\Dc}$  be the closure of $\Dc$. Then, the strongly compact strong trajectory attractor $\mathfrak A_{\s}=\Pi_+\bar{\Dc}((-\infty, \infty))$ is strongly equicontinuous on $[0,\infty)$, i.e.,
\[
\ds\left(v(t_1),v(t_2)\right)\leq \theta\left(|t_1-t_2|\right), \quad\forall\, t_1,t_2\ge 0, \  \forall v\in \mathfrak A_{\s},
\]
where $\theta(l) $ is a positive function tending
to $0$ as $l\rightarrow 0^+$.
\end{mtheorem}
It is easy to see that, equivalently, all the complete trajectories $\bar{\Dc}((-\infty, \infty))$ of $\bar{\Dc}$ is strongly equicontinuous on $(-\infty,\infty)$.   Roughly speaking, Main Theorem \ref{t:equiconIntrod} excludes the existence of a complete trajectory on $\As$ that oscillates  more and more drastically.

Global attractors  are ever anticipated to be very complicated objects (fractals), which obstruct their applications. We expect that our finite strong uniform tracking property and strong equicontinuity, which are now described by the existence of a strongly compact strong trajectory attractor,  will do some good for their practical utilization, for instance for numerical simulations.

Now we make some comments on the assumption A1 that appears in our above results. It is the following:
 \begin{itemize}
\item[A1] The set consisting of all trajectories of $\Dc$ on $[0,\infty)$ is a precompact set in the space\footnote{It is the space of all weakly continuous $X$-valued  functions on $[0,\infty)$. See Remark \ref{r:remarkof MT1} or more details in Subsection \ref{ss:WTAS}.}
$ C([0,\infty); \Xw)$.
\end{itemize}
It provides the existence of a (weak) trajectory attractor for an evolutionary system. It also implies that the weak global attractor consists of points on the trajectories in the trajectory attractor (see Theorem \ref{t:wA}). A1 is satisfied by the associated evolutionary systems of the 2D and the 3D NSE and a general dissipative reaction-diffusion system (RDS) that we will investigate in this paper. In general, the associated evolutionary systems of PDEs that arise in mathematical physics satisfy A1 (cf. e.g. \cite{T88, CV02}). We will study more details on its relationships with the canonical closedness condition and continuity condition for evolutionary systems with uniqueness, which contain the classical  frameworks of a semigroup for autonomous systems and a family of processes for nonautonomous systems (see Lemmas \ref{t:closedtocontinuous}-\ref{t:continuoustoclosed}).

Our abstract theory  can be directly applied to both the 2D and the 3D nonautonomous NSE on a bounded domain $\Omega$ with space periodic  or with non-slip boundary conditions. We have the following for the 3D NSE.

 \begin{mtheorem}\label{t:As03DNSEIntrod}(Theorem \ref{t:As03DNSE}) Assume that the external force $g_0(t)$ is normal\footnote{See Definition \ref{d:normal}.} in $L^2_{\mathrm{loc}}(\mathbb{R}; V')$ and
every complete trajectory of the closure $\bar{\Dc}$ of the corresponding evolutionary system $\Dc$ of the 3D NSE with the fixed force $g_0(t)$ is strongly
continuous. Then
\begin{itemize}
\item[1.] The global attractor $\As$ satisfies the finite strong
uniform tracking property, i.e., for any fixed accuracy $\epsilon>0$ and time length $T>0$, there exist $t_0$ and a {\it finite} set $P_T^f$ consisting of $T$-time length pieces  on $[0,T]$ of the complete trajectories of $\bar{\Dc}$ on  $\As$, such that for any $t^*>t_0$, every Leray-Hopf weak solution $u(t)$  satisfies
$$|u(t)-v(t-t^*)| < \epsilon, \quad \forall t\in [t^*,t^*+T],$$
for some  $T$-time length piece  $v\in P_T^f$. Here $|\cdot|$ is the $\left(L^2(\Omega)\right)^3$-norm.
\item[2.]The strongly compact strong trajectory attractor $\mathfrak A_{\s}=\Pi_+\bar{\Dc}((-\infty, \infty))$, which is  the restriction of all the complete trajectories of $\bar{\Dc}$ on $[0,\infty)$, is strongly equicontinuous on $[0,\infty)$, i.e.,
\[
\left|v(t_1)-v(t_2)\right|\leq \theta\left(|t_1-t_2|\right), \quad\forall\, t_1,t_2\ge 0, \  \forall v\in \mathfrak A_{\s},
\]
where $\theta(l) $ is a positive function tending
to $0$ as $l\rightarrow 0^+$.
\end{itemize}
\end{mtheorem}

\begin{remark}We have the following remarks on the theorem.
\begin{itemize}
\item[1.]In our application, the phase space $X$ is a bounded absorbing subset in  the space $H$, the whole of which the previous frameworks are  used to taking as the natural phase space.  The strong metric $\ds$ of $X$ is defined by the $\left(L^2(\Omega)\right)^3$-norm $|\cdot|$  and  induces the natural strong topology the same as before. The weak metric $\dw$ of $X$ induces the weak topology that is the restriction on $X$ of  the weak topology of $\left(L^2(\Omega)\right)^3$ as mentioned earlier.
\item[2.]The normality condition on the  force was introduced in \cite{LWZ05} and the author \cite{Lu06}. Note that  the class of normal functions  in $L^2_{\mathrm{loc}}(\mathbb{R}; V')$ is bigger than the class of so-called translation compact functions in $L^2_{\mathrm{loc}}(\mathbb{R}; V')$ (see Definition \ref{d:trc}), which is necessary by applying previous frameworks \cite{Se96, CV94, CV97, CV02, SY02}.
\item[3.] Every complete trajectory in $\bar{\Dc}((-\infty, \infty))$ is a weak solution of the 3D NSE with the force $g_0(t)$ or  a force related to $g_0(t)$. See the proof of Lemma 5.4 in \cite{CL14}. For the autonomous 3D NSE, that is, the force is  time-independent,  $\bar{\Dc}((-\infty, \infty))=\Dc((-\infty, \infty))$.
\item[4.]The attractors $\As$ and $\mathfrak A_{\s}$  are exactly for the original 3D NSE with a fixed force $g_0(t)$. With a more restricted force $g_0(t)$ that is a translation compact function in $L^2_{\mathrm{loc}}(\mathbb{R}; V')$, the previous frameworks (see e.g. \cite{CV02}) are able to construct attractors for a family of auxiliary 3D NSE with forces related to $g_0(t)$. However, it is not know whether they are actual attractors for the 3D NSE  with the fixed force $g_0(t)$ we originally consider.  See Open Problem \ref{op:attractor}, or  next subsection for more details.
\item[5.]For the 2D NSE, concerned on both weak and strong solutions,  and a general RDS \eqref{RDSIntrod} below, we obtain similar results (see Theorems \ref{t:As02DNSEw}, \ref{t:As02DNSEs} and \ref{t:sturctureofattractorsRDS}, respectively). Note that, in these cases, the solutions have  already been proved to be strongly continuous (see Theorems \ref{t:Leray2Dws}, \ref{t:Leray2Dss} and \ref{t:existenceRDS}, respectively).
\item[6.]For the 2D cases, and for the RDS with more regular interaction terms, $\bar{\Dc}((-\infty, \infty))$ can be known in more details due to the uniqueness and better regularity of their solutions (see Theorems \ref{t:Aw02DNSEw} and \ref{t:As02DNSEw}, \ref{t:Aw02DNSEs} and \ref{t:As02DNSEs},  \ref{t:AwRDSStrcuniq} and \ref{t:AsRDSStrcuniq}, respectively).
\end{itemize}
\end{remark}

 It is worth to mention that we obtain the finite strong uniform attracting property and the strong equicontinuity of all the complete trajectories for these systems without additional conditions (cf. \cite{ CL14, LWZ05, Lu06, Lu07, CL09}).

The dissipative RDS:
\begin{equation}\label{RDSIntrod}
\partial_t u- a\Delta u+f(u,t)=g(x,t),
\end{equation}
 on a bounded domain $\Omega$ with Dirichlet  or with Neumann or with periodic boundary conditions
is another fundamental  model in the theory of  infinite dimensional dynamical systems.  It is quite general that covers many examples arising  in physics, chemistry and biology etc. We just list a few: the RDS with polynomial nonlinearity, Ginzburg-Landau equation, Chafee-Infante equation, FitzHugh-Nagumo equations and Lotka-Volterra competition system. See e.g. \cite{M87, T88, Ro01, CV02, SY02} for more. The main difficulty of RDS different from NSE in previous literature is the time-dependence of the reaction terms  $f(\cdot,t)$. Nevertheless, we obtain the much deeper results that are similar to Main Theorem \ref{t:As03DNSEIntrod} under even less restrictions on  $f(\cdot,t)$: We make no additional assumption other than the necessary conditions of continuity, dissipativeness  and growth (see \eqref{c:continuityoffRDS}-\eqref{c:boundednessoffRDS} in Section \ref{s:RDS}).  Contrastively, some conditions are imposed to construct a so-called symbol space, which is necessary for previous works \cite{CV94, CV97, CV02, Lu07, CL09}. Then, as a by product, we  give an  answer to an open problem in \cite{Lu07, CL09}, which concerns how to describe the structure of global attractors for the RDS with general interaction terms $f(\cdot,t)$.

It is not yet known  whether previous frameworks can also deal with such above cases as we indicate in Open Problems \ref{op:attractor} and \ref{op:attractorRDS} below (see also \cite{CL09, CL14}).

\subsection{More detailed comments on our new results}In this subsection, we discuss in more detail on the novelty of the paper, from both the theoretical and the applied points of view.

\begin{itemize}
[labelsep=0.5em, itemindent=0em, leftmargin=1.0em, rightmargin=1.5em,topsep=10pt, partopsep=0pt]
\item {\bf Theoretical point of view.}
{\it The main object of the paper is a notion of strongly compact strong trajectory attractor. In our previous work \cite{CL14},  a weak trajectory attractor is indeed defined for the original nonautonomous system under consideration rather than for a family of  auxiliary systems as done with previous frameworks. Now, it is possible to establish one in strong metric also for the original system. The proofs of the main theoretical results of Main Theorems \ref{t:MTIntrod}-\ref{t:equiconIntrod} rely on a new point of view and a simultaneous use of the weak and strong metrics. We also discuss the special evolutionary systems that include the classical frameworks of a semigroup and a family of processes.}
\end{itemize}

Our theory is based on the previous works of the  framework of an evolutionary system. It is originally designed in \cite{CF06, C09} for autonomous systems and developed in \cite{CL09, CL14} especially for nonautonomous systems.  Note that the   phase space $X$ (typically taking a bounded absorbing set of the observed dissipative system) of an evolutionary system is endowed with both a weak metric and a strong metric. The work \cite{CL14} mainly focused on the notion of a uniform global attractor for the {\it originally} considered nonautonomous system rather than for a family of auxiliary systems. The open problems in \cite{CL09, CL14} (see Open Problem \ref{op:attractor}) indicate that the uniform global and (weak) trajectory attractors constructed by the previous frameworks (see  \cite{Se96, CV97, CV02, SY02}) might not satisfy the minimality property with respect to uniformly attracting for the {\it original} nonautonomous system. Cheskidov and the author \cite{CL14} overcame the difficulty by taking a closure of the associated evolutionary system $\Dc$. The properties of the uniform global attractor for $\Dc$  are able to be investigated via that for its closure $\bar{\Dc}$.

Note that in applications the evolutionary system $\Dc$ only consists of the solutions of the original nonautonomous PDEs, thereby its attractors do faithfully satisfy above mentioned minimality property. In contrast, the previous frameworks investigate all the solutions of, for instance, a family of 3D NSE with forces in a suitable closure of the translation family of the original force. It results in that a strong translation compactness condition on the force  is necessary and such constructed attractors are only for the family of 3D NSE, which do not always have to coincide with those obtained in \cite{CL14} (see Open Problem \ref{op:attractor}). In theoretical settings,  the previous mostly used framework developed by Chepyzhov and Vishik \cite{CV97, CV02} was based on the use of the so-called time symbol (e.g. the external force in 3D NSE) and the related symbol  space, which on one hand require from the very beginning some necessary restrictions on the time symbols, on the  other hand, only provide auxiliary attractors. Moreover, as we will study in Section \ref{s:RDS},
in some interesting cases it is not even clear how to choose a symbol space. Our method \cite{CL14} is able to deal directly with the actual attractors for the original nonautonomous systems with less restrictions on the time symbols, since we avoid constructing such a symbol space. See \cite{CL14} as well as Subsection \ref{sss:ESuni} below for more discussions on our framework and the previous ones.

In the previous paper \cite{CL14}, it is also possible and natural to define a trajectory attractor for the {\it original} nonautonomous system. The notion of a trajectory attractor was originally introduced in \cite{Se96}  to bypass the difficulty of uniqueness issue of the 3D NSE (cf. footnote \ref{fn:nonuniq}). Its theory (see \cite{Se96, CV97, CV02, SY02}) is usually related to the weak topologies of the functional settings of the PDEs. In this paper, we further develop the theory of strongly compact strong trajectory attractors, which is   principally Main Theorem \ref{t:sAIntrod}. As a consequence, we derive Main Theorem \ref{t:MTIntrod} as well as Main Theorem \ref{t:equiconIntrod}. Our theory and its applications are new for both autonomous and nonautonomous dissipative systems of or lack of uniqueness.

It is worthwhile to remind again that, such a strongly compact strong trajectory attractor is certainly for the original system under consideration. Especially,  the minimality property with respect to uniformly strongly attracting (cf. Definitions \ref{d:stas} and \ref{d:sta}) the  original nonautonomous system is valid. This is possible on account of the theory established in our previous paper \cite{CL14} and the reformulation with a new perspective in the ongoing  paper. We will realize it as Main Theorem \ref{t:sAIntrod}. Moreover, we will  see that the existence a trajectory attractor describes the global attractor possessing the uniform tracking property (see Lemma \ref{l:wgawutp=wta}) and its strong compactness provides the {\it finiteness} of the number of candidate strongly approximating pieces (see Main Theorem \ref{t:MTIntrod}) and the strong equicontinuity of all the complete trajectories on the global attractor  (see Main Theorem \ref{t:equiconIntrod}).
 It is also worth to mention that we obtain simultaneously the strongly compact global attractor and the strongly compact strong trajectory attractor.

The proofs of Main Theorems \ref{t:MTIntrod}-\ref{t:equiconIntrod} are proceed as follows. First, we give a convenient definition of the weak uniform tracking property slightly different from that in \cite{C09, CL09, CL14}. Second, we show that, a weak global attractor for an evolutionary system $\Dc$  satisfying the weak uniform tracking property is equivalent to the existence of a (weak) trajectory attractor for $\Dc$  under the assumption A1. This equivalence makes it possible to reformulate the related results in the previous work \cite{CL14} in a more concise and inspirational form. Third, following this new form, we set out to prove a version in the strong metric. A strong trajectory attractor is naturally defined as a (weak) trajectory attractor which is also a strong trajectory attracting set (see Definition \ref{d:stas}).  The strong trajectory attracting property was investigated  in \cite{VZ96, C09, CL09, VZC10, CVZ11, CL14}, which are few papers involving the natural strong topology we are really interested in. Fourth, by taking advantage of a simultaneous use of the weak and strong metrics, we derive the strong compactness of the strong trajectory attractor, namely Main Theorem \ref{t:sAIntrod}, when the evolutionary system is asymptotically compact. Finally, thanks to the Arzal\`{a}-Ascoli Theorem, we  obtain the {\it finite} strong uniform tracking property and the strong equicontinuity of all the complete trajectories on the global attractor, that is, Main Theorems \ref{t:MTIntrod} and  \ref{t:equiconIntrod}, respectively. In particular, the systems that possess strongly compact global attractors are asymptotically compact. Hence, in these cases, the strongly compact strong trajectory attractors follow immediately once property A1 holds (see Corollary \ref{t:SAtoSTA} and Remark \ref{r:SAtoSTA}).

We investigate more details on the structure of the attractors, especially for closed evolutionary systems and evolutionary systems with uniqueness (see Definitions \ref{d:ESclosed} and \ref{d:ESuni}). These evolutionary systems include the classical frameworks of a semigroup and a family of processes as shown in \cite{CL14}. Thereout, we generalize the  results in \cite{CV02, LWZ05, Lu06, Lu07, CL14} in that, without additional assumptions, we obtain the existence of strongly compact strong trajectory attractors, and the sequent consequences of the {\it finite} strong uniform tracking property, the  strong equicontinuity of all the complete trajectories on global attractors and other properties following from Corollaries \ref{t:fsutpdetermine1}, \ref{t:fsutpdetermine2} and \ref{t:fsutpdetermine3}. In the case of  these evolutionary systems,  we also compare A1 with the familiar related conditions of closedness  and continuity imposed on the previous framework of a family of processes (see Lemmas \ref{t:closedtocontinuous}-\ref{t:continuoustoclosed}).

\begin{itemize}
[labelsep=0.5em, itemindent=0em, leftmargin=1.0em, rightmargin=1.5em,topsep=12pt, partopsep=0pt]
\item {\bf Applied point of view.} {\it The applications of our new theory are first focused on both the 3D and the 2D Navier-Stokes equations (NSE). The NSE are probably the most fundamental example in the theory of infinite dimensional dynamics systems, most part of which has been established by investigating this example. We also study a general reaction-diffusion system (RDS), which is another fundamental model.}
\end{itemize}

We apply our abstract theory to the 3D NSE, concerned on Leray-Hopf weak solutions, and the 2D NSE, concerned on both weak and strong solutions, with a fixed force as continuations of \cite{CL14}, \cite{Lu06} and \cite{LWZ05}, respectively. No extra condition is assumed. The property A1 is verified by the well-known compactness lemmas (see e.g. Lemma \ref{l:precompactofLH23}) on the solutions. The new results are Main Theorem \ref{t:As03DNSEIntrod} for the 3D case and analogues for the 2D cases. Besides the {\it finite} strong uniform tracking property, one of the new observation is that,  for instance,  for the 2D NSE with a fixed normal force in $L^2_{\mathrm{loc}}(\mathbb{R}; V')$,  it denies the existence of the more and more wildly oscillating complete weak  solutions. Note that, actually, for the autonomous 2D NSE, the strong compactness of the strong trajectory attractors could have been deduced by the classical estimates (see \eqref{i:uniq2DNSEautot}) that imply the uniqueness of the solutions and the continuity of the associated semigroups. Nevertheless, this is not valid for the nonautonomous case (see Remark \ref{r:uniq2DNSE}). Now we are able to cope with this case.

Another fundamental model we study  is a RDS (see \eqref{RDSIntrod}) with a fixed pair of a time-dependent nonlinearity and a driving force. It is treated  along the same line as the NSE. Although the nonlinearity depending on time is a main difficulty in previous studies (see e.g. \cite{CV97, CV02, Lu07, CL09}), we assume less that the nonlinearity only satisfies the basic conditions of continuity, dissipativeness  and growth (see \eqref{c:continuityoffRDS}-\eqref{c:boundednessoffRDS}). This is owing to no need to construct a so-called symbol space with our method. The three conditions neither  guarantee the unique solvability of \eqref{RDSIntrod} nor  provide a suitable  symbol space. The assumption on the force is a translation boundedness condition, which is the weakest condition that ensures the existence of a bounded  uniformly w.r.t. the initial time absorbing ball. We take this ball as a phase space $X$. The  weak and strong metrics are metrics that induce the usual weak and strong topologies of the space\footnote{$N$ is the number of  components of the unknown vector function $u$ of \eqref{RDSIntrod}.} $\left(L^2(\Omega)\right)^N$ restricted on $X$, respectively.   We verify that all the weak solutions of the RDS \eqref{RDSIntrod} form an evolutionary system satisfying A1. Therefore, we obtain the existence and structure of the weak attractors $\Aw$ and $\mathfrak A_{\w}$. In addition, if the force is normal in $L_{\mathrm{loc}}^2\left(
\Bbb{R};V'\right)$, where $V'$ is the dual of $\left(H_0^1(\Omega)\right)^N$, the evolutionary system is asymptotically compact.  Then, the two weak attractors  $\Aw$ and $\mathfrak A_{\w}$ are strongly compact strong attractors $\As$ and $\mathfrak A_{\s}$. Consequently, the {\it finite} strong uniform tracking property for the RDS \eqref{RDSIntrod} holds, i.e., for  any fixed accuracy $\epsilon$ and time length $T$, a
{\it finite} number of $T$-time length pieces of the complete trajectories on $\As$ are enough to uniformly approximate all weak solutions within the accuracy $\epsilon$ in the $\left(L^2(\Omega)\right)^N$-norm
after sufficiently large time. Moreover, all the complete trajectories on $\As$ is equicontinuous on $(-\infty, \infty)$ in the $\left(L^2(\Omega)\right)^N$-norm. Note again that there is no additional assumption on the time-dependent nonlinearity. Hence, part of these results answer an open problem in \cite{Lu07, CL09}, where an appropriate symbol space is absent (see Remark \ref{r:ansOP} for more details).

We also consider the RDS \eqref{RDSIntrod} with more regular nonlinearities $f(\cdot,t)$.  One more assumption imposed on $f(\cdot,t)$  is a translation compact condition (see Section \ref{ss:MoreRegN}), which is necessary for the previous works to obtain the structure of the global attractors (see \cite{CV94, CV95, CV97, CV02, Lu07, CL09}). Analogously for the 3D NSE, we boost the results in \cite{CL09}. Another more assumption (see \eqref{c:uniquenessoffRDS}) on $f(\cdot,t)$ guarantees the uniqueness of the weak solutions of \eqref{RDSIntrod}. Hence, we can know more properties about the structure of the attractors and  obtain similar results to those of 2D NSE, which generalize the results in \cite{CV02, Lu07}.

 At the end of the paper, we construct several  interesting examples of nonlinearities $f(\cdot,t)$  that do not satisfy the two assumptions just mentioned any more, but our theory is still applicable for the RDS \eqref{RDSIntrod} with these nonlinearities. The  pointwise limit function $f_\infty(\cdot)$  of one example as $t\rightarrow +\infty$ is discontinuous and the others even have no pointwise limit functions.  These facts hint that there might not exist a suitable symbol space which is very required in previous studies \cite{CV97, CV02, Lu07, CL09}. It is not clear whether  the results of the RDS \eqref{RDSIntrod} with such nonlinearities can also be obtained with previous frameworks (see Open Problem \ref{op:attractorRDS}).

\subsection{Paper outline}
The rest of this paper is organized as follows.
\begin{itemize}
\item In Sect. \ref{s:ES}, we briefly recall the basic definitions of  the theory of evolutionary systems and a criterion of their asymptotical compactness, developed in \cite{CF06, C09, CL09, CL14}.  For our convenience, we formulate as a definition the strong trajectory attracting property obtained in \cite{C09, CL09, CL14}.
\item In Sect.  \ref{s:AttractES}, we devote to prove main theoretical theorems of the paper.
In Subsect. \ref{ss:WAttractES}, we concern about the weak trajectory attractors and in Subsect.  \ref{ss:SAttractES}, the strongly compact strong trajectory attractors and the sequent corollaries, such as the finite strong uniform tracking property. Comparisons with  known results are also mentioned.
\item In Sect.  \ref{s:NSE}, we apply our new theory to  both the 3D and  the 2D NSE with a fixed force. Similarly, we also present comparisons with existing literature.
\item In Sect. \ref{s:RDS}, we apply the theory to  the RDS with a fixed pair of a time-dependent nonlinearity and a driving force along the same line as the NSE. In Subsect. \ref{ss:MoreRegN}, the  RDS with more regular nonlinearities are considered.
In Subsect.  \ref{ss:nonln}, we collect and study some properties on the nonlinearities and give several examples with which our theory is applicable for the RDS while previous frameworks do not work.
\end{itemize}

\section{Evolutionary System}\label{s:ES}
Now we  briefly recall the  basic definitions on  evolutionary
systems. See \cite{C09, CL09, CL14} for details. An important property obtained in these references is formulated as a definition (see Definition \ref{d:stas}) for the purpose of this paper. We also  recall a criterion of the asymptotical compactness for evolutionary systems developed in \cite{CF06, C09, CL09, CL14}.

\subsection{Phase space endowed with two metrics}\label{ss:PhaseS}
Assume that a set $X$ is  endowed with two metrics $\ds(\cdot,\cdot)$ and $\dw(\cdot,\cdot)$ respectively, satisfying the following conditions:
\begin{enumerate}
\item $X$ is $\dw$-compact.
\item If $\ds(u_n, v_n) \to 0$ as $n \to \infty$ for some
$u_n, v_n \in X$, then $\dw(u_n, v_n) \to 0$ as $n \to \infty$.
\end{enumerate}
Hence, we will  refer
to $\ds$ as a strong metric and $\dw$ as a weak metric. Let $\overline{A}^{\bullet}$ be the closure of a set $A\subset X$ in the topology generated by $\db$. Here (the same below) $\bullet=\s$ or $\w$.
Note that any strongly compact ($\ds$-compact) set is weakly compact
($\dw$-compact), and any weakly closed set is strongly closed.

\subsection{(Autonomous) evolutionary system}\label{ss:AutoES}
Let
\[
\mathcal{T} := \left\{ I: \ I=[\tau,\infty) \subset \mathbb{R}, \mbox{ or }
I=(-\infty, \infty) \right\},
\]
and for each $s\in \mathbb{R}$, let $I+s$ be $[\tau,\infty)$ if $I=[\tau+s,\infty)$ or $(-\infty, \infty) $ if
$I=(-\infty, \infty) $. For any$I \in\mathcal{T}$, denote by $\mathcal{F}(I)$
the set of all $X$-valued functions on $I$.
Now we define an evolutionary system $\Dc$ as follows.
\begin{definition} \label{Dc}
A map $\Dc$ that associates to each $I\in \mathcal{T}$ a subset
$\Dc(I) \subset \mathcal{F}(I)$ will be called an evolutionary system if
the following conditions are satisfied:
\begin{enumerate}
\item $\Dc([0,\infty)) \ne \emptyset$.
\item
$\Dc(I+s)=\{u(\cdot): \ u(\cdot +s) \in \Dc(I) \}$ for
all $s \in \mathbb{R}$.
\item $\{u(\cdot)|_{I_2} : u(\cdot) \in \Dc(I_1)\}
\subset \Dc(I_2)$ for all
pairs $I_1,I_2 \in \mathcal{T}$, such that $I_2 \subset I_1$.
\item
$\Dc((-\infty , \infty)) = \{u(\cdot) : \ u(\cdot)|_{[\tau,\infty)}
\in \Dc([\tau, \infty)), \forall \tau \in \mathbb{R} \}.$
\end{enumerate}
\end{definition}

We will refer to $\Dc(I)$ as the set of all trajectories
on the time interval $I$. The set $\Dc((-\infty,\infty))$ is called the kernel of $\Dc$ and the trajectories in it are called complete.
Let $P(X)$ be the set of all subsets of $X$.
For every $t \geq 0$, define a set-valued map
\begin{eqnarray*}
&R(t):P(X) \to P(X),&\\
&R(t)A := \{u(t): u(0)\in A, u \in \Dc([0,\infty))\}, \quad
A \subset X.&
\end{eqnarray*}
Note that the assumptions on $\Dc$ imply that $R(t)$ enjoys
the following property:
\begin{equation*} \label{eq:propR(T)}
R(t+s)A \subset R(t)R(s)A, \quad \, \forall A \subset X,\quad t,s \geq 0.
\end{equation*}
\begin{definition} \label{d:attractor}
A set $\Ab\subset X$ is a $\db$-global attractor if $\Ab$ is a minimal set
that is
\begin{enumerate} \item $\db$-closed. \item $\db$-attracting: for
any $B\subset X$ and $\epsilon>0$, there exists $t_0$, such that
\[R(t)B\subset B_\bullet(\Ab,\epsilon):=\{y\in X:
\inf_{x\in \Ab}\db(x,y)<\epsilon\},\quad \forall\, t\geq t_0.
\]
\end{enumerate}
\end{definition}
\begin{definition}The $\wb$-limit of a set $A\subset X$ is
\[
\wb(A):=\bigcap_{\tau\ge0}\overline{\bigcup_{t\ge \tau}R(t)A}^{\bullet}.
\]
\end{definition}

\begin{definition}\label{d:AC}An evolutionary system $\Dc$ is asymptotically
compact if for any $t_n\rightarrow+\infty $ and any $x_n\in R(t_n)X$,
the sequence $\{x_n\}$ is relatively strongly compact.
\end{definition}
\begin{definition}
Let $\Dc$ be an evolutionary system. If a map $\Dc^1$ that associates to each
$I\in \mathcal T$ a subset $\mathcal  E^1(I)\subset \mathcal E(I)$ is also an evolutionary system, we will call it an evolutionary subsystem of $\Dc$, and denote by $\mathcal  E^1\subset \mathcal E$.
\end{definition}
Note that it is in fact sufficient that for each $I\in \mathcal T\backslash\{(-\infty,\infty)\}$, $\mathcal  E^1(I)\subset \mathcal E(I)$, since $\mathcal  E^1((-\infty,\infty))\subset \mathcal E((-\infty,\infty))$ follows immediately from the definition of an evolutionary system.

In order to extend the notion of invariance from a semiflow to an
evolutionary system, we will need the following mapping:
\[
\Rc(t) A := \{u(t): u(0) \in A, u \in \Dc((-\infty,\infty))\}, \quad A \subset X,
\ t \in \mathbb{R}.
\]
\begin{definition} A set $A\subset X$ is positively invariant if
\[
\Rc(t) A\subset A, \quad \forall t\geq 0.
\]
$A$ is invariant if
\[
\Rc(t) A= A, \quad \forall t\geq 0.
\]
$A$ is quasi-invariant if for every $a\in A$ there exists a complete trajectory
$u \in \Dc((-\infty,\infty))$ with $u(0)=a$ and $u(t) \in A$ for all $t\in \mathbb{R}$.
\end{definition}

\subsection{Nonautonomous evolutionary system and reducing to autonomous system}\label{ss:NonautoES}
Let $\Sigma $ be a parameter set and $\left\{T(s): s\ge
0\right\}$ be a family of operators acting on $\Sigma$ satisfying
 $T(s)\Sigma=\Sigma$, $\forall s\ge0$.  Any element $\sigma\in \Sigma $ will be  called (time) symbol and $ \Sigma $ will be called (time) symbol
 space. For instance, in many applications $\{T(s)\}$ is the translation
 semigroup and $ \Sigma$ is  the translation family
 of the time-dependent items of the considered system or its closure in some appropriate topological space (for more examples see \cite{CV94, CV02, CL14}, the appendix in \cite{CLR13}).
\begin{definition} \label{d:NDc}A family of maps $\mathcal E_\sigma $, $\sigma \in \Sigma $ that for every $\sigma\in \Sigma$
associates to each $I\in \mathcal T$ a subset $\mathcal
E_\sigma(I)\subset \mathcal F(I)$ will be called a nonautonomous
evolutionary system  if the following conditions are satisfied:
\begin{enumerate}
\item $\mathcal E_\sigma ([\tau, \infty ))\neq \emptyset, \forall\, \tau \in \mathbb R$.
\item $\mathcal E_ \sigma (I+s)=\{u(\cdot): u(\cdot+s)\in  \mathcal E_{T(s)\sigma}(I)\},
\forall s\ge 0$. \item $\{u(\cdot)|_{I_2}:u(\cdot)\in \mathcal
E_\sigma(I_1)\}\subset \mathcal E_\sigma(I_2)$, $\forall\, I_1$,
$I_2\in \mathcal T$, $I_2\subset I_1$.
\item $\mathcal E_\sigma((-\infty,\infty))=\{u(\cdot):u(\cdot)|_{[\tau,\infty)}\in
\mathcal E_\sigma([\tau,\infty)),\forall\,\tau \in \mathbb R\}$.
\end{enumerate}
\end{definition}

It is shown in \cite{CL09,CL14} that any nonautonomous evolutionary system can be viewed as an (autonomous)  evolutionary system.  Let\footnote{Here we fix a minor flaw in Section 2.3 in \cite{CL14}. See Lemma \ref{t:kerofESSig} in  Section \ref{ss:Ker} below.}
\begin{align*}\mathcal E_\Sigma (I):=\bigcup_{\sigma\in \Sigma}\mathcal E_\sigma (I),\quad \forall\, I\in \mathcal
T\backslash\{(-\infty,\infty)\},\end{align*}
and
\begin{align*}\mathcal E_\Sigma ((-\infty,\infty)):=\{u(\cdot):u(\cdot)|_{[\tau,\infty)}\in
\mathcal E_\Sigma([\tau,\infty)),\forall\,\tau \in \mathbb R\}.\end{align*}
Now we define an (autonomous) evolutionary system $\mathcal E$ in the following way:
$$\mathcal E (I):=\mathcal E_\Sigma (I),\quad
\forall\, I\in \mathcal T.$$
It can be checked that  all the conditions in Definition~\ref{Dc} are satisfied. Consequently, the above notions of invariance, quasi-invariance, and a global attractor for $\Dc$  can be extended to  the nonautonomous evolutionary system $\{\Dc_\sigma\}_{\sigma\in \Sigma}$.  The global attractor in the nonautonomous case will be conventionally called a uniform global attractor (or simply a global attractor). However,  for some evolutionary systems constructed from nonautonomous dynamical systems the associated symbol spaces are not known. See \cite{CL14} and the following sections for more details. Thus, we will not distinguish between autonomous and nonautonomous evolutionary systems. If it is necessary,  we denote an evolutionary system with a symbol space $\Sigma$ by $\Dc_{\Sigma}$ and its global attractor by $\mathcal A^{\Sigma}$, and so on.

\begin{definition}\label{d:ESuni}An evolutionary system $\Dc_\Sigma$ is a system with uniqueness if for every $u_0\in X$
and $\sigma \in \Sigma$, there is a unique trajectory $u \in \Dc_\sigma([0, \infty)) $ such that $u(0) =u_0$.
\end{definition}

\subsection{Weak trajectory attracting set and weak trajectory attractor}\label{ss:WTAS}
Denote by  $C([a, b];X_\bullet)$ the space of $\db$-continuous $X$-valued
functions on $[a, b]$  endowed with the metric
\[
\dd_{C([a, b];X_\bullet)}(u,v) := \sup_{t\in[a,b]}\db(u(t),v(t)).
\]
Let also $C([a, \infty);X_\bullet)$ be the space of $\db$-continuous
$X$-valued functions on $[a, \infty)$
endowed with the metric
\[
\dd_{C([a, \infty);X_\bullet)}(u,v) := \sum_{l\in \mathbb{N}} \frac{1}{2^l} \frac{\dd_{C([a, a+l];X_\bullet)}(u,v)}
{1+\dd_{C([a, a+l];X_\bullet)}(u,v)}.
\]
Note that the convergence in $C([a, \infty);X_\bullet)$ is equivalent to uniform convergence on
compact sets.

Now we suppose that evolutionary systems $\Dc$ satisfy the following assumption:
\[
\Dc([0,\infty))\subset C([0,\infty); \Xw).
\]
 Define the family of translation operators $\{T(s)\}_{s\geq0}$,
\begin{equation}\label{e:dTO}
(T(s)u)(\cdot):=u(\cdot+s)|_{[0,\infty)},\quad u\in C([0,\infty);\Xw).
\end{equation}
Due to the property (3) of the evolutionary system (see Definitions
\ref{Dc} and \ref{d:NDc}), we have that,
$$T(s)\Dc([0,\infty))\subset \Dc([0,\infty)),\quad \forall\, s\geq 0.$$
Note that $\Dc([0,\infty))$ may not be closed in $ C([0,\infty);\Xw)$. We consider the dynamics of the translation semigroup $\{T(s)\}_{s\geq 0} $ acting on the phase space  $C([0,\infty);\Xw)$.
A set $P\subset C([0,\infty);\Xw)$ weakly uniformly attracts a set
$Q\subset \Dc([0,\infty))$ if for any $\epsilon>0$, there exists $t_0$,
such that
\begin{equation*}\label{e:wtaing}
T(t)Q\subset \left\{v\in C([0,\infty);\Xw):\inf_{u \in P}\dd_{C([0,\infty);\Xw)}(u,v)<\epsilon\right\}, \quad\forall\, t\geq t_0.
\end{equation*}
\begin{definition}\label{d:wtas}
A set $P\subset C([0,\infty);\Xw)$ is a weak trajectory attracting set for an evolutionary system $\Dc$ if it
weakly uniformly attracts $\Dc([0,\infty))$.
\end{definition}

\begin{definition}\label{d:wta}
A set $\mathfrak A_{\w}\subset  C([0,\infty);\Xw)$ is a weak trajectory attractor for an evolutionary system $\Dc$ if $\mathfrak A_{\w}$ is a minimal  weak trajectory attracting set that is
\begin{enumerate}
\item Closed in  $ C([0,\infty);\Xw)$.
\item Invariant: $T(t)\mathfrak A_{\w}=\mathfrak A_{\w}$,  $\forall\, t\geq 0$.
\end{enumerate}
\end{definition}
It is easy to see that if a weak trajectory attractor exists, it is unique. In previous literature (see e.g. \cite{Se96, CV97, CV02,  SY02, C09, CL14}),  this kind of attractor is just called a trajectory attractor. We now use the current name for a distinction, since in the paper we develops the theory of trajectory attractors related to the strong topology.

\subsection{Strong trajectory attracting set}\label{ss:STAS}
In  \cite{C09, CL14}, a property of uniformly attracting $\Dc([0,\infty))$ in $L_{\mathrm{loc}}^{\infty}((0,\infty);\Xs)$ was obtained. Here, analogy to Definition \ref{d:wtas}, we retell it as following Definition \ref{d:stas}.
A set $P\subset C([0,\infty);\Xw)$  strongly uniformly attracts a set
$Q\subset \Dc([0,\infty))$ if for any $\epsilon>0$  and $T>0$, there exists $t_0$,
such that
\begin{equation*}\label{e:wtaing}
T(t)Q\subset \left\{v\in C([0,\infty);\Xw):\inf_{u\in P}\sup_{s\in [0,T]}\ds(u(s),v(s))<\epsilon\right\}, \quad\forall\, t\geq t_0.
\end{equation*}
\begin{definition}\label{d:stas}
A set $P\subset C([0,\infty);\Xw)$ is a strong trajectory attracting set for an evolutionary system $\Dc$ if it
 strongly uniformly attracts $\Dc([0,\infty))$.
\end{definition}
The advantage of such a definition will be seen in the next section, where we will naturally define a strong trajectory attractor analogy to  Definition \ref{d:wta}, and obtain its strong compactness.

\subsection{Fundamental assumption A1}\label{ss:FAA1}
We will investigate evolutionary systems $
\Dc$ satisfying the following property:
\begin{itemize}
\item[A1] $\Dc([0,\infty))$ is a precompact set in
$ C([0,\infty); \Xw)$.
\end{itemize}
In general, the evolutionary systems defined by PDEs of mathematical physics satisfy A1 (cf. e.g. \cite{T88, CV02}).

  The evolutionary systems satisfying A1 are closely related to the concept of the uniform w.r.t. the initial time global attractor for a nonautonomous dynamical system, initiated by Haraux \cite{Ha91}. For instance, as shown in \cite{CL14}, the uniform global attractor for an evolutionary system $\Dc_{\Sigma}$ defined by a process $\{U_{\sigma_0}(t,\tau)\}$  is the uniform w.r.t. the initial time global attractor for $\{U_{\sigma_0}(t,\tau)\}$ due to Haraux. A stronger version of A1 used in \cite{CF06, C09, CL09} is the following:
  \begin{itemize}
\item[\={A1}] $\Dc([0,\infty))$ is a compact set in
$ C([0,\infty); \Xw)$.
\end{itemize}
However, instead of the property \={A1}, the  evolutionary system $\Dc_{\Sigma}$ usually satisfies only A1. For more details see \cite{CL09, CL14}.

\subsection{Closure of an evolutionary system}\label{ss:ClosureES}
Let
\[
\bar{\Dc}([\tau,\infty)):=\overline{\Dc([\tau,\infty))}^{ C([\tau,
\infty);\Xw)},\quad\forall\,\tau\in\mathbb R,
\]
and
\begin{align*}\bar{\Dc} ((-\infty,\infty)):=\{u(\cdot):u(\cdot)|_{[\tau,\infty)}\in
\bar{\Dc}([\tau,\infty)),\forall\,\tau \in \mathbb R\}.\end{align*}
It can be checked that $\bar{\Dc}$ is also an evolutionary system. We call
$\bar{\Dc}$ the closure of the evolutionary system $\Dc$, and add for  $\bar{\Dc}$ the top-script $\bar{\ }$ to the corresponding notations for $\Dc$. For example, we denote by $\bar{\mathcal
A}_{\bullet}$ the uniform $\db$-global attractor for $\bar{\Dc}$.

Obviously,  if $\Dc$ satisfies A1, then $\bar{\Dc}$ satisfies \={A1}.
Note that, a nonautonomous evolutionary system $\Dc$ usually satisfies only A1 rather than \={A1}. Moreover, for some nonautonomous  evolutionary systems, as we will see in Sections \ref{s:NSE} and \ref{s:RDS},  there may not exist suitable symbol spaces associated to their closures. However, Cheskidov and the author showed in \cite{CL14} that it is possible to investigate the global attractor  as well as the weak trajectory attractor for any  $\Dc$ via those for its closure $\bar{\Dc}$, no matter whether it is lack of uniqueness or  lack of a symbol space, since our approach avoids the necessity  of constructing a symbol space.

\subsection{A criterion of asymptotical compactness}\label{ss:ACriterion}

We recall a method  to verify the asymptotical compactness of  evolutionary systems satisfying these
additional properties (see \cite{CF06, C09, CL09, CL14}):
\begin{itemize}
\item[A2] (Energy inequality)
Assume that $X$ is a set in some
Banach space $H$ satisfying the Radon-Riesz property (see below)
with the norm denoted by $|\cdot|$, such
that $\ds(x,y)=|x-y|$ for $x,y \in X$ and $\dw$ induces the weak
topology on $X$.
Assume also that for any $\epsilon
>0$, there exists $\delta>0$, such that for every $u \in
\Dc([0,\infty))$ and $t>0$,
\[
|u(t)| \leq |u(t_0)| + \epsilon,
\]
for $t_0$ a.e. in $ (t-\delta, t)$.
\item[A3] (Strong convergence
a.e.) Let $u_n \in \Dc([0,\infty))$ be such that,
$u_n$ is $\mathrm d_{ C([0, T];\Xw)}$-Cauchy sequence in
$ C([0, T];\Xw)$ for some $T>0$. Then $u_n(t)$ is
$\ds$-Cauchy sequence a.e. in $[0,T]$.
\end{itemize}

A Banach space $\mathcal B$ is said to satisfy the Radon-Riesz property if for any sequence $\{x_n\}\subset \mathcal B$,
\[
x_n\rightarrow x \  \mbox{ strongly in  } \mathcal B\Leftrightarrow
\left\{
\begin{split}
& x_n\rightarrow x \  \mbox{ weakly in  } \mathcal B\\
\|&x_n\|_{\mathcal B}\rightarrow \|x\|_{\mathcal B}
\end{split}
\right., \quad \mbox{as } n\rightarrow \infty.
\]
In many applications, $H$ in A2 is a uniformly convex separable
Banach space and  $X$ is a bounded closed set in $H$. Then the weak topology of $H$ is metrizable on $X$, and $X$ is
compact w.r.t. such a metric $\dw$. Moreover, the Radon-Riesz property is automatically satisfied in this case.

We have the following criterion of asymptotical compactness that is sufficient for the applications in this paper. For more, see Remark \ref{r:SAtoSTA}.

\begin{theorem} \cite{CL14}\label{t:A123Comp}
Let $\Dc$ be an evolutionary system satisfying A1, A2, and
A3, and assume that its closure $\bar{\Dc}$ satisfies
$\bar{\Dc}((-\infty,\infty))\subset C((-\infty, \infty);\Xs)$. Then
$\Dc$ is asymptotically compact.
\end{theorem}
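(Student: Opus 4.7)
The plan is to take an arbitrary sequence $x_n \in R(t_n)X$ with $t_n \to \infty$, build a complete trajectory $w \in \bar{\Dc}((-\infty,\infty))$ to which a subsequence of $\{x_n\}$ converges weakly at time $0$, and then upgrade weak convergence to strong convergence using A2, A3, and the assumed strong continuity of $w$.

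First I fix $u_n \in \Dc([0,\infty))$ with $u_n(t_n) = x_n$, and for each integer $\tau \ge 1$ set $w_n^\tau(s) := u_n(s + t_n - \tau)$, which lies in $\Dc([0,\infty))$ once $t_n \ge \tau$ (by properties (2)--(3) of Definition \ref{Dc}) and satisfies $w_n^\tau(\tau) = x_n$. Applying A1 for each $\tau$ and extracting a diagonal subsequence $n_k$, I may assume $w_{n_k}^\tau \to w^\tau$ in $C([0,\infty);\Xw)$ for every $\tau$, with $w^\tau \in \bar{\Dc}([0,\infty))$. The identity $w_n^{\tau+1}(s+1) = w_n^\tau(s)$ passes to the limit, so the $w^\tau$ glue to a single function $w$ defined on $(-\infty,\infty)$ by $w(s) := w^\tau(s+\tau)$ for any $\tau \ge -s$; the translation property of $\bar{\Dc}$ then places $w$ in $\bar{\Dc}((-\infty,\infty))$. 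In particular $x_{n_k} = w_{n_k}^\tau(\tau) \rightharpoonup w^\tau(\tau) = w(0)$ weakly in $X$, and by hypothesis $w \in C((-\infty,\infty);\Xs)$.

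It remains to show that $x_{n_k} \to w(0)$ strongly; by the Radon--Riesz property built into A2, this reduces to proving $|x_{n_k}| \to |w(0)|$. Weak lower semicontinuity of the norm gives $|w(0)| \le \liminf_k |x_{n_k}|$. For the reverse, fix $\epsilon > 0$ and $\tau \ge 1$, pick $\delta > 0$ from A2 applied at time $\tau$ (valid for all trajectories in $\Dc([0,\infty))$), and shrink $\delta$ further, using strong continuity of $w$ at $0$, so that $|w(r)| \le |w(0)| + \epsilon$ for all $r \in (-\delta, 0)$. By A3 applied to the weakly Cauchy sequence $w_{n_k}^\tau$ in $C([0,\tau];\Xw)$, one obtains strong convergence $w_{n_k}^\tau(t_0) \to w^\tau(t_0) = w(t_0 - \tau)$ for a.e. $t_0 \in [0,\tau]$; I then select $t_0 \in (\tau - \delta, \tau)$ belonging both to this a.e.\ set and to the a.e.\ set on which the A2 inequality holds. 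Then
\[
\limsup_k |x_{n_k}| = \limsup_k |w_{n_k}^\tau(\tau)| \le |w(t_0 - \tau)| + \epsilon \le |w(0)| + 2\epsilon,
\]
and letting $\epsilon \to 0$ yields $\limsup_k |x_{n_k}| \le |w(0)|$, as required.

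The main technical obstacle is coordinating the two almost-everywhere conditions at a single $t_0$: the A2 bound and the A3 strong convergence. Both hold on full-measure subsets of $(\tau - \delta, \tau)$, so their intersection remains of full measure and such a $t_0$ can always be chosen; the diagonal extraction, the gluing of the $w^\tau$, and the identification of $w$ as an element of $\bar{\Dc}((-\infty,\infty))$ are routine consequences of Definition \ref{Dc} and the closure construction in Subsection \ref{ss:ClosureES}.
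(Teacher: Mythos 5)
The paper does not reprove Theorem \ref{t:A123Comp} --- it simply cites \cite{CL14} --- and your argument is correct and is essentially the proof from that reference: pull each trajectory back by $t_n-\tau$, use A1 with a diagonal extraction to produce a complete trajectory $w\in\bar{\Dc}((-\infty,\infty))$ with $x_{n_k}\rightharpoonup w(0)$, then combine A2, A3, the assumed strong continuity of $w$, and the Radon--Riesz property to upgrade to strong convergence. Two small points worth tightening: the exceptional set in A2 depends on the trajectory, so you must intersect countably many full-measure subsets of $(\tau-\delta,\tau)$ (one for each $k$) together with the A3 set, which is still of full measure; and A3 only yields that $w_{n_k}^{\tau}(t_0)$ is $\ds$-Cauchy, so you should add that a norm-Cauchy sequence in the Banach space $H$ of A2 that converges weakly to $w^{\tau}(t_0)$ must converge strongly to that same limit.
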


\section{Attractors for Evolutionary System}\label{s:AttractES}

An important property of a global attractor called uniform tracking property has been studied in \cite{C09, CL09, CL14}. This property indicates how the dynamics on the global attractor describes a long-time behavior of every trajectory of an evolutionary system (see e.g. \cite{Ro01}). We now show that a  weak global attractor possessing the weak uniform tracking property is equivalent to the existence of a weak trajectory attractor.
Inspired by this new point of view, we further develop in this section a notion of a strongly compact strong trajectory attractor for an evolutionary system $\Dc$, which connotes deep results on the strong uniform tracking property. It is remarkable that we obtain its existence at the same time we get the strongly compact strong global attractor.

\subsection{Weak trajectory attractor:  Revisit with a new point of view}\label{ss:WAttractES}
In this subsection, we first investigate some properties of a weak trajectory attractor.  Then, we reformulate related results obtained in \cite{CL14}. We start with introducing the following definition.

\begin{definition}\label{d:wutp}
A set $P\subset C([0,\infty);\Xw)$  satisfies the weak uniform tracking property for an evolutionary system $\Dc$ if for any $\epsilon >0$, there exists $t_0$, such that for any $t^*>t_0$, every trajectory $u \in
\Dc([0,\infty))$ satisfies
\begin{equation*} \label{e:wutp}
\dd_{C([t^*,\infty);\Xw)} (u(\cdot), v(\cdot-t^*)) < \epsilon,
\end{equation*} for some trajectory $v \in P$.
\end{definition}
\begin{lemma}\label{l:=wutp}
Let  $P\subset C([0,\infty);\Xw)$ satisfy $T(s)P=P$,  $\forall\, s\geq 0$.  Then $P$ satisfies the weak uniform tracking property for an evolutionary system $\Dc$ if and only if for any $\epsilon >0$, there exists $t_0$, such that for any $t^*>t_0$, every trajectory $u \in
\Dc([0,\infty))$ satisfies
\begin{equation*} \label{e:=wutp}
\dd_{C([t^*,\infty);\Xw)} (u, v) < \epsilon,
\end{equation*} for some trajectory $v \in P$.
\end{lemma}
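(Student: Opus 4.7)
The plan is to show both implications by exploiting only the translation-invariance hypothesis $T(s)P=P$ for all $s\ge 0$, which lets us shift any element of $P$ forward or backward in time while staying inside $P$. No approximation is actually needed: the two formulations of the tracking inequality describe the same pair of functions on $[t^*,\infty)$, just reindexed, and the translation-invariance of $P$ bridges the reindexing.

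First I would unpack what the equality $T(s)P=P$ gives. The inclusion $T(s)P\subset P$ says that for every $v\in P$ the shifted function $v(\cdot+s)|_{[0,\infty)}$ lies in $P$. The reverse inclusion $P\subset T(s)P$ says that for every $v\in P$ there exists $\tilde v\in P$ with $T(s)\tilde v=v$, i.e.\ $\tilde v(\tau)=v(\tau-s)$ for $\tau\ge s$. In particular, the map $T(s)\colon P\to P$ is surjective, so each element of $P$ admits a ``backward extension'' inside $P$.

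For the forward direction, suppose $P$ satisfies the weak uniform tracking property in the sense of Definition \ref{d:wutp}. Given $\epsilon>0$, pick the corresponding $t_0$, let $t^*>t_0$ and $u\in \Dc([0,\infty))$, and choose $v\in P$ with $\dd_{C([t^*,\infty);\Xw)}(u(\cdot),v(\cdot-t^*))<\epsilon$. Using $P\subset T(t^*)P$, select $\tilde v\in P$ with $T(t^*)\tilde v=v$; then $\tilde v(s)=v(s-t^*)$ for every $s\ge t^*$, so $\tilde v$ and $v(\cdot-t^*)$ coincide on $[t^*,\infty)$ and the metric on $C([t^*,\infty);\Xw)$, which depends only on the values of its arguments on $[t^*,\infty)$, gives $\dd_{C([t^*,\infty);\Xw)}(u,\tilde v)=\dd_{C([t^*,\infty);\Xw)}(u(\cdot),v(\cdot-t^*))<\epsilon$, as desired.

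For the reverse direction, suppose the alternative form holds and choose $\tilde v\in P$ with $\dd_{C([t^*,\infty);\Xw)}(u,\tilde v)<\epsilon$. Set $v:=T(t^*)\tilde v$, so that $v(\tau)=\tilde v(\tau+t^*)$ for all $\tau\ge 0$; by $T(t^*)P\subset P$ we have $v\in P$. Then $v(\tau-t^*)=\tilde v(\tau)$ for every $\tau\ge t^*$, so $v(\cdot-t^*)$ and $\tilde v$ agree on $[t^*,\infty)$, whence $\dd_{C([t^*,\infty);\Xw)}(u(\cdot),v(\cdot-t^*))=\dd_{C([t^*,\infty);\Xw)}(u,\tilde v)<\epsilon$, which is Definition \ref{d:wutp}. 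There is no serious obstacle: the only thing worth checking carefully is that $\dd_{C([t^*,\infty);\Xw)}$ indeed depends only on the restrictions to $[t^*,\infty)$, which is immediate from its definition as a weighted sum of supremum distances on the intervals $[t^*,t^*+l]$.
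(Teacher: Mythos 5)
Your proposal is correct and follows essentially the same route as the paper: the paper's proof likewise observes that $T(t^*)P=P$ gives a correspondence $v\leftrightarrow v^*$ with $T(t^*)v^*=v$, under which $v(\cdot-t^*)$ and $v^*$ coincide on $[t^*,\infty)$, so the two tracking inequalities are literally the same condition. Your write-up merely makes the two directions (using $P\subset T(t^*)P$ one way and $T(t^*)P\subset P$ the other) explicit.
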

\begin{proof}
By the assumption, for any $t^*\geq 0$, $v\in P$ if and only if there exists $v^*\in P$ that $T(t^*)v^*=v$. Hence, for any $\epsilon >0$  and $u \in
\Dc([0,\infty))$,
\begin{equation*}
\dd_{C([t^*,\infty);\Xw)} (u(\cdot), v(\cdot-t^*)) < \epsilon,
\end{equation*}for some trajectory $v \in P$ is equivalent to
\begin{equation*}
\dd_{C([t^*,\infty);\Xw)} (u(\cdot), v^*(\cdot)) < \epsilon,
\end{equation*}for some trajectory $v^* \in P$.
\end{proof}
The latter property in  Lemma \ref{l:=wutp} is in fact called weak uniform tracking property in \cite{C09, CL09, CL14}. Instead, with this lemma in hand, we now use Definition \ref{d:wutp} for our later convenience.

Now, we have the following relationship between the weak uniform tracking property and the weak trajectory attracting property.
\begin{lemma}\label{l:wta=wutp}
A set $P\subset C([0,\infty);\Xw)$ satisfies  the weak uniform tracking property for an evolutionary system $\Dc$ if and only if it is  a weak trajectory attracting set for $\Dc$.
\end{lemma}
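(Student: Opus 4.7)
The plan is to reduce both sides of the equivalence to a single statement by recognizing that the two metrics $\dd_{C([t^*,\infty);\Xw)}(u(\cdot),v(\cdot-t^*))$ (appearing in Definition \ref{d:wutp}) and $\dd_{C([0,\infty);\Xw)}(T(t^*)u,v)$ (appearing implicitly in Definition \ref{d:wtas}) are literally the same number. Once this is in hand, the two definitions become word-for-word identical after renaming $t^*$ to $t$ and interchanging the quantifier ``for any trajectory $u$'' with the set-theoretic inclusion $T(t^*)\Dc([0,\infty))\subset\cdots$, both of which are routine.

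The first step I would carry out is the computation of the metric identity. For $u\in\Dc([0,\infty))$, $v\in C([0,\infty);\Xw)$, and $t^*\geq 0$, a direct change of variables $s=t-t^*$ gives
\[
\sup_{t\in[t^*,t^*+l]}\dw\bigl(u(t),v(t-t^*)\bigr)=\sup_{s\in[0,l]}\dw\bigl((T(t^*)u)(s),v(s)\bigr),
\]
for every $l\in\mathbb N$; summing the corresponding geometric series yields
\[
\dd_{C([t^*,\infty);\Xw)}\bigl(u(\cdot),v(\cdot-t^*)\bigr)=\dd_{C([0,\infty);\Xw)}\bigl(T(t^*)u,v\bigr).
\]
This turns the weak uniform tracking inequality in Definition \ref{d:wutp} into the pointwise $\epsilon$-approximation of $T(t^*)u$ by some $v\in P$ in the metric of $C([0,\infty);\Xw)$.

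The second step is to package this as set inclusions. For the forward direction, given $\epsilon>0$ choose the $t_0$ from the weak uniform tracking property; then for every $t\geq t_0$ and every $w=T(t)u\in T(t)\Dc([0,\infty))$ the above identity produces $v\in P$ with $\dd_{C([0,\infty);\Xw)}(v,w)<\epsilon$, which is precisely the weak trajectory attracting inclusion of Definition \ref{d:wtas}. For the converse, starting from the attracting inclusion one reads the same $v$ out of the infimum condition and reverses the metric identity to recover the weak uniform tracking inequality. Since $\epsilon$ is arbitrary, both implications follow.

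I do not expect any genuine obstacle here: the only thing one has to notice is that the shift by $t^*$ inside $v(\cdot-t^*)$ on the left-hand side of the tracking inequality exactly compensates the translation $T(t^*)$ applied to $u$ on the right-hand side, so that the two formulations are cosmetic rewordings of one another. No use of A1, asymptotic compactness, or the invariance $T(s)P=P$ (which was needed for Lemma \ref{l:=wutp}) is required; the proof is purely a matter of unwinding the definitions, which is why stating the equivalence as a separate lemma is worthwhile mainly for the conceptual shift it enables in the later arguments.
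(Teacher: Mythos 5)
Your proof is correct and follows essentially the same route as the paper's: both arguments rest on the observation that the change of variables $s=t-t^*$ identifies $\dd_{C([t^*,\infty);\Xw)}(u(\cdot),v(\cdot-t^*))$ with $\dd_{C([0,\infty);\Xw)}(T(t^*)u,v)$, after which the two definitions are seen to be rewordings of each other. The only cosmetic difference is that the paper works with $\epsilon/2$ to extract an element from the infimum and adjusts $t_0$ to $t_0+1$ in the converse, while you read the element directly from the strict inequality; both are fine.
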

\begin{proof}Suppose that $P$ is  a weak trajectory attracting set for an evolutionary system $\Dc$. For any $\epsilon>0$, there exists $t_0$,
such that
\begin{equation*}\label{e:wtaing1}
T(t)\Dc([0,\infty))\subset \left\{v\in C([0,\infty);\Xw):\dd_{C([0,\infty);\Xw)}(P,v)<\epsilon/2\right\}, \ \ \forall\, t\geq t_0.
\end{equation*}
Hence, for any $t^*\geq t_0$ and every trajectory $u\in \Dc([0,\infty))$, we know that
\begin{equation}\label{e:wtaing2}
\dd_{C([0,\infty);\Xw)}(v (\cdot), (T(t^*)u)(\cdot))<\epsilon,
\end{equation}
for some $v\in P$. By (\ref{e:dTO}) of the definition of the family of translation operators $\{T(s)\}_{s\geq 0}$, (\ref{e:wtaing2}) is
\begin{equation}\label{e:wtaing3}
\dd_{C([0,\infty);\Xw)}(v (\cdot), u(\cdot+t^*))<\epsilon.
\end{equation}
By a change of the variable,  (\ref{e:wtaing3}) is equivalent to
\begin{equation}\label{e:wtaing4}
\dd_{C([t^*,\infty);\Xw)}(v (\cdot-t^*), u(\cdot))<\epsilon.
\end{equation}
Therefore, $P$ satisfies the weak uniform tracking property.

Conversely, assume that for any $\epsilon >0$, there exists $t_0$, such that for any $t^*>t_0$, every trajectory $u \in
\Dc([0,\infty))$ satisfies the inequality \eqref{e:wtaing4} for some $v\in P$. Equivalently, \eqref{e:wtaing2} is valid, which implies that
\begin{equation*}
T(t)\Dc([0,\infty))\subset \left\{v\in C([0,\infty);\Xw):\dd_{C([0,\infty);\Xw)}(P,v)<\epsilon\right\}, \ \ \forall\, t\geq t_0+1.
\end{equation*}
Thus,  $P$ is  a weak trajectory attracting set.
\end{proof}

Let $\Dc$ be an evolutionary system satisfying A1. Due to Theorems 3.5 and 4.3 in \cite{CL14}, the weak global attractor $\Aw$ and the  weak trajectory attractor $\mathfrak A_{\w}$ for $\Dc$ exist, and satisfy
\begin{equation}\label{e:strucofwta}\mathfrak A_{\w}=\Pi_+\bar{\Dc}((-\infty, \infty)):=\{u(\cdot)|_{[0,\infty)}:u\in \bar{\Dc}((-\infty, \infty))\},
\end{equation}
 and
\begin{equation}\label{e:strucofwga}\Aw=\mathfrak A_{\w}(t):=\{u(t): u\in \mathfrak A_{\w}\},\quad \forall\, t\geq 0.
\end{equation}
Here, $\bar{\Dc}$ is the closure of $\Dc$. It follows from Lemma \ref{l:wta=wutp} that $\mathfrak A_{\w}$ satisfies the weak uniform tracking property for $\Dc$, that is, for any $\epsilon >0$, there exists $t_0$, such that for any $t^*>t_0$, every trajectory $u \in
\Dc([0,\infty))$ satisfies
\begin{equation*}
\dd_{C([t^*,\infty);\Xw)} (u(\cdot), v(\cdot-t^*)) < \epsilon,
\end{equation*} for some trajectory $v \in \mathfrak A_{\w}$. In view of \eqref{e:strucofwta}, there is $\tilde{v}(\cdot)\in \bar{\Dc}((-\infty, \infty))$ such that,
\[
\tilde{v}(\cdot)|_{[0,\infty)}=v.
\]
Especially, we have
\begin{equation*}
\dd_{C([t^*,\infty);\Xw)} (u(\cdot), \tilde{v}(\cdot-t^*)) < \epsilon.
\end{equation*}
Hence, we may conveniently call that $ \bar{\Dc}((-\infty, \infty))$ or $\Dc$ itself satisfies the weak uniform tracking property. By \eqref{e:strucofwta} and \eqref{e:strucofwga}, we know that, for any $\tilde{v}(\cdot)\in \bar{\Dc}((-\infty, \infty))$,
$$\{\tilde{v}(t):t\in \mathbb R\}\subset \Aw.$$
Then, we may also call that the weak global attractor $\Aw$ satisfies the weak uniform tracking property.

Contrarily, suppose that, for any $\epsilon >0$, there exists $t_0$, such that for any $t^*>t_0$, every trajectory $u \in
\Dc([0,\infty))$ satisfies
\begin{equation*}
\dd_{C([t^*,\infty);\Xw)} (u(\cdot), \hat{v}(\cdot-t^*)) < \epsilon,
\end{equation*} for some complete trajectory $\hat{v}$ on $\Aw$. Then, by Definition \ref{d:wutp}, $\Pi_+\bar{\Dc}((-\infty, \infty))$ satisfies the weak uniform tracking property. It is deduced from Lemma \ref{l:wta=wutp} that it is a weak trajectory attracting set for $\Dc$. Obviously, $\Pi_+\bar{\Dc}((-\infty, \infty))$ is invariant and closed in  $C([0,\infty);\Xw)$. As shown in the second part of the proof of Theorem 4.3 in \cite{CL14}, it is also  a minimal weak trajectory attracting set for $\Dc$. That is,  $\Pi_+\bar{\Dc}((-\infty, \infty))$ is a weak trajectory attractor for $\Dc$. Therefore, we have the following equivalence.

\begin{lemma}\label{l:wgawutp=wta}Let $\Dc$ be an evolutionary system satisfying A1. The weak global attractor $\Aw$ for $\Dc$ satisfies  the weak uniform tracking property if and only if  $\Dc$ possesses a weak trajectory attractor $\mathfrak A_{\w}$.
\end{lemma}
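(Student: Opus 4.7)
The plan is to pivot both implications through Lemma \ref{l:wta=wutp}, which already identifies the weak uniform tracking property with the weak trajectory attracting property, and then to translate between forward trajectories and complete trajectories via the structural identities \eqref{e:strucofwta}--\eqref{e:strucofwga} supplied by Theorems~3.5 and 4.3 in \cite{CL14} under assumption A1.

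For the forward direction, I would assume that the weak trajectory attractor $\mathfrak A_{\w}$ exists. Since by Definition \ref{d:wta} it is in particular a weak trajectory attracting set, Lemma \ref{l:wta=wutp} provides that it satisfies the weak uniform tracking property for $\Dc$. To transfer this property from $\mathfrak A_{\w}$ to $\Aw$, I would use \eqref{e:strucofwta} to lift each shadowing element $v\in \mathfrak A_{\w}$ to a complete trajectory $\tilde v\in\bar{\Dc}((-\infty,\infty))$ with $\tilde v|_{[0,\infty)}=v$; the tracking inequality then reads
\[
\dd_{C([t^*,\infty);\Xw)}\bigl(u(\cdot),\tilde v(\cdot-t^*)\bigr) < \epsilon .
\]
By \eqref{e:strucofwga}, the image of $\tilde v$ is contained in $\Aw$, so each $u\in \Dc([0,\infty))$ is shadowed after a uniform time by a complete trajectory on $\Aw$, which is exactly the tracking property of $\Aw$ being claimed.

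For the converse, I would start from the assumption that $\Aw$ satisfies the weak uniform tracking property, i.e. every forward trajectory of $\Dc$ is eventually shadowed by a complete trajectory $\hat v$ on $\Aw$. Every such $\hat v$ lies in $\bar{\Dc}((-\infty,\infty))$ again by \eqref{e:strucofwga}, and hence $\hat v|_{[0,\infty)}\in \Pi_+\bar{\Dc}((-\infty,\infty))$. By Definition \ref{d:wutp} this shows that $\Pi_+\bar{\Dc}((-\infty,\infty))$ satisfies the weak uniform tracking property, and Lemma \ref{l:wta=wutp} upgrades it to a weak trajectory attracting set. Invariance under $\{T(s)\}_{s\geq 0}$ and closedness in $C([0,\infty);\Xw)$ follow directly from how $\bar{\Dc}$ is built, while minimality among weak trajectory attracting sets is exactly the content of the second part of the proof of Theorem~4.3 in \cite{CL14}, which I would cite rather than reprove. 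Together these verify Definition \ref{d:wta} and exhibit $\mathfrak A_{\w}=\Pi_+\bar{\Dc}((-\infty,\infty))$.

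The only delicate bookkeeping will be the change of variable between ``shadowing on $[t^*,\infty)$ by $v(\cdot-t^*)$'' and ``shadowing on $[0,\infty)$ by $T(t^*)u$'', but this is precisely the computation already carried out in the proof of Lemma \ref{l:wta=wutp}, so it can simply be invoked. No genuine obstacle remains: the lemma is an organizational statement that realigns the existence result of \cite{CL14} with the tracking viewpoint that drives the rest of the paper.
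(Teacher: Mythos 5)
Your proposal is correct and follows essentially the same route as the paper: both directions pivot through Lemma \ref{l:wta=wutp}, use the structural identities from Theorems 3.5 and 4.3 of \cite{CL14} to pass between elements of $\mathfrak A_{\w}$ and complete trajectories on $\Aw$, and cite the second part of the proof of Theorem 4.3 in \cite{CL14} for minimality. The only quibble is that the identification of complete trajectories on $\Aw$ with $\bar{\Dc}((-\infty,\infty))$ rests on the quasi-invariance statement (conclusion 3 of Theorem \ref{t:wA}) together with \eqref{e:strucofwta}, rather than on \eqref{e:strucofwga} alone, but this does not affect the argument.
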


Since the evolutionary system $\Dc$ satisfies A1,  its weak trajectory attractor is always weakly compact, that is, it is compact in $C([0,\infty);\Xw) $. Then, we introduce the following definition.
\begin{definition}\label{d:fwutp}
A set $P\subset C([0,\infty);\Xw)$  satisfies the finite weak uniform tracking property for an evolutionary system $\Dc$ if for any $\epsilon >0$, there exist $t_0$ and a finite subset $P^f\subset P$, such that for any $t^*>t_0$, every trajectory $u \in
\Dc([0,\infty))$ satisfies
\begin{equation*} \label{e:fwutp}
\dd_{C([t^*,\infty);\Xw)} (u(\cdot), v(\cdot-t^*)) < \epsilon,
\end{equation*} for some trajectory $v \in P^f$.
\end{definition}
With the new perspective of above Lemma \ref{l:wgawutp=wta},  Theorems 3.5 and 4.3 in \cite{CL14} can be restated in the following  more concise form.
\begin{theorem} \label{t:wA}
Let $\Dc$ be an evolutionary system. Then \begin{itemize}\item
[1.] The weak global attractor $\Aw$ exists, and $ \Aw =\ww(X)$.
\end{itemize}
Furthermore, assume that  $\Dc$ satisfies A1. Let $\bar{\Dc}$ be
the closure of $\Dc$. Then
\begin{itemize}
\item [2.]$ \Aw =\ww(X)=\bar{\omega}_{\mathrm w}(X)=\bar{\omega}_{\mathrm s}(X)=\bar{\mathcal A}_{\mathrm w}$.
\item [3.]$\Aw$ is the maximal
invariant and maximal quasi-invariant set w.r.t. $\bar{\Dc}$:
\[\Aw=\left\{ u_0 \in X:  u_0=u(0) \mbox{ for some } u \in
\bar{\Dc}((-\infty, \infty))\right\}.\]
\item [4.] The weak trajectory attractor $\mathfrak A_{\w}$ exists, it is weakly compact, and
\[\mathfrak A_{\w}=\Pi_+\bar{\Dc}((-\infty, \infty)):=\left\{u(\cdot)|_{[0,\infty)}:u\in \bar{\Dc}((-\infty, \infty))\right\}. \]
Hence,  $\mathfrak A_{\w}$ satisfies the finite weak uniform tracking property for $\Dc$ and is weakly equicontinuous on $[0,\infty)$.
\item[5.] $\Aw$ is a section of $\mathfrak A_{\w}$:
\[\Aw=\mathfrak A_{\w}(t):=\left\{u(t): u\in \mathfrak A_{\w}\right\},\quad \forall\, t\geq 0.\]
\end{itemize}
\end{theorem}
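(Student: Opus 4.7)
The plan is to follow the structure of Theorems 3.5 and 4.3 in \cite{CL14}, reorganized around the new viewpoint of Lemma \ref{l:wgawutp=wta}. For conclusion 1, I would construct $\ww(X)$ directly: the $\dw$-compactness of $X$ guarantees $\ww(X)$ is nonempty and $\dw$-closed, and the standard $\omega$-limit argument applied to the set-valued map $R(t)$ shows it $\dw$-attracts every subset of $X$. Minimality follows because any $\dw$-closed, $\dw$-attracting set must contain every subsequential $\dw$-limit of points on forward orbits, hence must contain $\ww(X)$.

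For conclusions 2 and 3, I would invoke A1 through a diagonal extraction. Given a sequence $x_n \in R(t_n)X$ with $t_n \to \infty$, choose $u_n \in \Dc([0,\infty))$ with $u_n(t_n)=x_n$ and, for each integer $N \ge 1$, form the time shift $w_n^N := u_n(\cdot + t_n - N)|_{[0,\infty)}$. The precompactness in A1 yields, by a diagonal procedure, a subsequence converging in $C([0,\infty); \Xw)$ for every $N$, and these limits paste together to produce a complete trajectory $v \in \bar{\Dc}((-\infty,\infty))$ whose value $v(N)$ is a $\dw$-subsequential limit of $\{x_n\}$. This simultaneously shows $\ww(X) \subset \bar{\omega}_{\mathrm s}(X)$ and realizes every $\dw$-cluster point of forward orbits as $v(t)$ for some $v \in \bar{\Dc}((-\infty,\infty))$ and some $t$. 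Combined with the obvious inclusions $\bar{\omega}_{\mathrm s}(X) \subset \bar{\omega}_{\mathrm w}(X) \subset \bar{\A}_{\w}$ and $\ww(X) \subset \Aw$, the chain of equalities in conclusion 2 follows, and conclusion 3 falls out because the set $\{v(0) : v \in \bar{\Dc}((-\infty,\infty))\}$ is contained in $\Aw$, is both invariant and quasi-invariant for $\bar{\Dc}$, and is maximal with these properties by the trajectory construction.

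For conclusion 4, I would verify that $\mathfrak A_{\w} := \Pi_+ \bar{\Dc}((-\infty,\infty))$ is the weak trajectory attractor. Closedness in $C([0,\infty); \Xw)$ and $T(s)$-invariance follow because $\bar{\Dc}$ is itself an evolutionary system and its complete trajectories are preserved by positive time shifts. To show $\mathfrak A_{\w}$ weakly uniformly attracts $\Dc([0,\infty))$, I would argue by contradiction: if some $\epsilon>0$ and $u_n \in \Dc([0,\infty))$ with $t_n \to \infty$ had $T(t_n)u_n$ remaining $\epsilon$-far from $\mathfrak A_{\w}$ in $C([0,\infty); \Xw)$, the same diagonal extraction produces a subsequential limit that belongs to $\mathfrak A_{\w}$, a contradiction. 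Minimality then follows since any closed $T(s)$-invariant weak trajectory attracting set must contain every such limit. Weak compactness of $\mathfrak A_{\w}$ is immediate from A1 (the closure $\bar{\Dc}([0,\infty))$ is compact in $C([0,\infty); \Xw)$ and $\mathfrak A_{\w}$ is a closed subset). A finite $\epsilon$-covering of this weakly compact set, combined with Lemma \ref{l:wta=wutp}, yields the finite weak uniform tracking property, and weak equicontinuity follows from the Arzel\`a-Ascoli characterization of compact subsets of $C([0,\infty); \Xw)$.

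For conclusion 5, at any $t \ge 0$ the section $\mathfrak A_{\w}(t) = \{u(t) : u \in \Pi_+\bar{\Dc}((-\infty,\infty))\}$ equals $\{v(0) : v \in \bar{\Dc}((-\infty,\infty))\}$ by the time-translation property in Definition \ref{Dc}(2), and this latter set is exactly $\Aw$ by conclusion 3. I expect the main obstacle to be the diagonal extraction that produces complete two-sided trajectories in $\bar{\Dc}((-\infty,\infty))$ from sequences of one-sided trajectories in $\Dc([0,\infty))$: this is precisely where A1 is essential, and it is the mechanism that converts one-sided forward attraction into a description via complete trajectories, underlying the identifications in every other conclusion.
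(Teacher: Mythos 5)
Your proposal is correct and takes essentially the same route as the paper: the paper delegates conclusions 1--3, the existence/structure part of 4, and conclusion 5 to Theorems 3.5 and 4.3 of \cite{CL14} (whose proofs rest on exactly the diagonal extraction you describe) and then proves only the two genuinely new claims --- finite weak uniform tracking via a finite $\epsilon$-net of the weakly compact $\mathfrak A_{\w}$ combined with the tracking property, and weak equicontinuity via Arzel\`{a}--Ascoli --- just as you do. The one detail worth adding is that Arzel\`{a}--Ascoli in $C([0,\infty);\Xw)$ yields equicontinuity only on compact subintervals, so to obtain a single modulus valid on all of $[0,\infty)$ you should, as the paper does, invoke the invariance $T(\alpha)\mathfrak A_{\w}=\mathfrak A_{\w}$ to reduce everything to the fixed interval $[0,1]$.
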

\begin{proof}
The conclusions 1-3 are the corresponding results of  Theorem 3.5 in \cite{CL14}. The first part of the conclusion 4 and the conclusion 5 are just Theorem 4.3 in  \cite{CL14}.  The conclusion 4 in Theorem 3.5 in \cite{CL14} is incorporated in the existence of $\mathfrak A_{\w}$ due to Lemmas \ref{l:=wutp} and \ref{l:wgawutp=wta}. In other words, $\mathfrak A_{\w}$ satisfies the weak uniform tracking property for $\Dc$. Thus, for any $\epsilon >0$, there exists $t_0$, such that for any $t^*>t_0$, every trajectory $u \in
\Dc([0,\infty))$ satisfies
\begin{equation} \label{ie:wutpwta}
\dd_{C([t^*,\infty);\Xw)} (u(\cdot), v^*(\cdot-t^*)) < \epsilon/2,
\end{equation} for some trajectory $v ^*\in \mathfrak A_{\w}$. Thanks  to the assumption A1,  $\mathfrak A_{\w}$ is weakly compact.
Hence, we can take a set $P^f$  consisting of a finite number of trajectories, $$P^f:=\{u_1,u_2,\cdots,u_K\}\subset \mathfrak A_{\mathrm{w}},$$
such  that, for any $v \in \mathfrak A_{\mathrm{w}}$,
$$\dd_{C([0,\infty);\Xw)} (v,u_i) < \epsilon/2,$$
for some $u_i\in P^f$.
Then,
 there exists some $u_j(\cdot)\in P^f$ satisfying
$$\dd_{C([0,\infty);\Xw)} (v^*,u_j) < \epsilon/2,$$
which deduces that
$$\dd_{C([t^*,\infty);\Xw)} (v^*(\cdot-t^*),u_j(\cdot-t^*)) < \epsilon/2.$$
Combining with (\ref{ie:wutpwta}), it implies that
$$\dd_{C([t^*,\infty);\Xw)} (u(\cdot),u_j(\cdot-t^*)) < \epsilon,$$
for some  trajectory $u_j \in P^f$. This means that $\mathfrak A_{\mathrm{s}}$ satisfies the finite weak  uniform tracking property for $\Dc$.

Denote by
$$\mathfrak A_{\w}|_{[\alpha,\beta]}:=\left\{u(\cdot)|_{[\alpha,\beta]}:u\in \mathfrak A_{\w}\right\},\quad\forall \beta> \alpha\ge 0.$$
 Note that $\mathfrak A_{\w}|_{[0,1]}$
 is compact in  $C ([0,1];\Xw)$. It is deduced from the Arzel\`{a}-Ascoli compactness criterion that $\mathfrak A_{\w}|_{[0,1]}$ is weakly equicontinuous on $[0,1]$. By the invariance of $\mathfrak A_{\w}$, we have
  $$\left\{v(\cdot+\alpha):v(\cdot)\in \mathfrak A_{\w}|_{[\alpha,\alpha+1]}\right\}=\mathfrak A_{\w}|_{[0,1]}, \  \forall \alpha\ge 0.$$
  Thus, $\mathfrak A_{\w}$ is weakly equicontinuous on $[0,\infty)$.
\end{proof}
Accordingly, we can call that $\Aw$, or $\bar{\Dc}((-\infty, \infty))$, or $\Dc$ satisfies the finite weak uniform tracking property.

\subsection{Strongly compact strong trajectory attractor} \label{ss:SAttractES}Such a form of Theorem \ref{t:wA} is indeed inspirational. We will establish a version of it in the strong metric in this subsection. We begin with the following.

\begin{lemma} \label{l:stas-is-wtas}
A strong trajectory attracting set for an evolutionary system $\Dc$ is a weak trajectory attracting set for $\Dc$.
\end{lemma}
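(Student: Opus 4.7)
The plan is to unpack the two metrics involved and show that strong uniform closeness on a sufficiently long finite interval $[0,L]$ forces weak uniform closeness on $[0,\infty)$ in the metric $\dd_{C([0,\infty);\Xw)}$. The key tools are (i) the series definition of $\dd_{C([0,\infty);\Xw)}$, whose tail is summable and thus controllable by choosing $L$ large, and (ii) a uniform version of condition (2) on the phase space $X$ saying that closeness in $\ds$ forces closeness in $\dw$ \emph{uniformly} over pairs in $X$.

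First, I would promote condition (2) to a uniform modulus statement: for every $\eta>0$ there exists $\delta>0$ such that $\ds(x,y)<\delta$ implies $\dw(x,y)<\eta$ for all $x,y\in X$. This is a routine contradiction argument: if it failed for some $\eta>0$, one would produce sequences $u_n,v_n\in X$ with $\ds(u_n,v_n)<1/n$ but $\dw(u_n,v_n)\ge \eta$, directly contradicting the sequential statement (2). No compactness is needed here; only (2) itself.

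Next, given $\epsilon>0$, I would fix $L\in\mathbb{N}$ large enough that $\sum_{l>L}2^{-l}<\epsilon/2$, and then use the modulus above to pick $\eta>0$ small enough that whenever $\sup_{s\in[0,L]}\ds(u(s),v(s))<\delta$ for the corresponding $\delta$, one has $\sup_{s\in[0,l]}\dw(u(s),v(s))\le \eta$ for each $l\le L$, and therefore the first $L$ terms of the defining series of $\dd_{C([0,\infty);\Xw)}(u,v)$ contribute less than $\epsilon/2$. Combined with the tail estimate, this yields $\dd_{C([0,\infty);\Xw)}(u,v)<\epsilon$. Now I would invoke the hypothesis that $P$ is a strong trajectory attracting set (Definition \ref{d:stas}) with accuracy $\delta$ and time length $T:=L$: this produces $t_0$ such that for every $t\ge t_0$ and every $v\in T(t)\Dc([0,\infty))$ there is $u\in P$ with $\sup_{s\in[0,L]}\ds(u(s),v(s))<\delta$. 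By the previous step, the same $u$ witnesses $\dd_{C([0,\infty);\Xw)}(u,v)<\epsilon$, so $P$ weakly uniformly attracts $\Dc([0,\infty))$, i.e., $P$ is a weak trajectory attracting set.

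The only genuine subtlety is the passage from the sequential condition (2), which is naturally a pointwise statement on $X\times X$, to the uniform modulus estimate that is needed to transfer supremum-level control from $\ds$ to $\dw$ on the interval $[0,L]$. Everything else is bookkeeping with the geometric series defining $\dd_{C([0,\infty);\Xw)}$ and a single application of the strong trajectory attracting hypothesis with a suitably chosen accuracy and time length.
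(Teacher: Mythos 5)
Your proof is correct. It reaches the same conclusion as the paper but by a direct argument rather than by contradiction: the paper assumes $P$ fails to be a weak trajectory attracting set, extracts sequences $t_n\to\infty$, $u_n$, truncates the series metric at a level $l_0$ to get $\dd_{C([0,l_0];\Xw)}(P,T(t_n)u_n)>2\epsilon_0$, and then produces times $s_n\in[0,l_0]$ at which $\ds(v_n(s_n),(T(t_n)u_n)(s_n))\to 0$ while $\dw(v_n(s_n),(T(t_n)u_n)(s_n))>\epsilon_0$, contradicting condition (2) in exactly its sequential form. You instead first upgrade condition (2) to a uniform modulus ($\forall\eta\,\exists\delta$: $\ds<\delta\Rightarrow\dw<\eta$ on all of $X\times X$) — itself a one-line contradiction argument, and a correct one, since the negation of the uniform statement produces precisely the sequences that (2) forbids — and then run a clean $\epsilon$--$\delta$ computation: choose $L$ to kill the tail of the series, choose $\delta$ from the modulus to make the first $L$ terms small, and instantiate the strong attracting hypothesis with accuracy $\delta$ and time length $L$. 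The core mechanism (truncate the geometric series, transfer $\ds$-control to $\dw$-control on the finite window) is identical in both proofs; what your version buys is a quantitative, constructive statement (explicit $t_0$ as a function of $\epsilon$) and the avoidance of the slightly awkward subsequence extraction the paper needs to turn a vanishing infimum into actual witnesses $v_n\in P$. Your final paragraph correctly identifies the only nontrivial point, namely that the pointwise sequential condition (2) is in fact equivalent to the uniform modulus on $X\times X$ without any appeal to compactness.
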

\begin{proof}
Let $P$  is a strong trajectory attracting set for an evolutionary system $\Dc$. Suppose that it is not a weak trajectory attracting set for $\Dc$. Then, there exist $\epsilon_0>0$, and  sequences $t_n\rightarrow \infty$ as $n\rightarrow \infty$, $u_n\in \Dc([0,\infty))$, such that
\begin{equation}\label{i:staiswta1}
\dd_{ C([0,\infty);\Xw)}(P,T(t_n)u_n)>3\epsilon_0.
\end{equation}
Let $l_0\in \mathbb N$ satisfy
\[ \sum_{l>l_0} \frac{1}{2^l}\leq \epsilon_0. \]
By the definition of the metric $\dd_{ C([0,\infty);\Xw)}$, we obtain from (\ref{i:staiswta1}) that
 \begin{equation*}
  \dd_{C([0, l_0];X_\w)}(P,T(t_n)u_n)\sum_{l\leq l_0} \frac{1}{2^l} +\sum_{l>l_0} \frac{1}{2^l}>3\epsilon_0, \ \ \forall\, n\in \mathbb N,
\end{equation*}
which yields that
 \begin{equation}\label{i:staiswta2}
  \dd_{C([0, l_0];X_\w)}(P,T(t_n)u_n)>2\epsilon_0, \ \ \forall\, n\in \mathbb N.
 \end{equation}

 On the other hand,  since $P$  is a strong trajectory attracting set, we have that
 \begin{equation*}
 \lim_{n\rightarrow \infty}   \inf_{v\in P}\sup_{s\in [0,l_0]}\ds(v(s),(T(t_n)u_n)(s))=0.
 \end{equation*}
Passing to  a subsequence and dropping a subindex, we can assume that there exists a sequence
$\{v_n\} \subset P$, such that
\begin{equation}\label{e:staiswta3}
 \lim_{n\rightarrow \infty}  \sup_{s\in [0,l_0]}\ds(v_n(s),(T(t_n)u_n)(s))=0.
 \end{equation}
 Thanks to (\ref{i:staiswta2}), there exists  a sequence  $\{s_n\}\subset [0,l_0]$, such that
 \begin{equation}\label{i:staiswta3}
\dd_\w(v_n(s_n),(T(t_n)u_n)(s_n))>\epsilon_0, \ \ \forall\, n\in \mathbb N.
  \end{equation}
  However, it follows from (\ref{e:staiswta3}) that
 \begin{equation*}
 \lim_{n\rightarrow \infty} \dd_\s(v_n(s_n),(T(t_n)u_n)(s_n))=0,
 \end{equation*}
 which implies that
 \begin{equation*}
 \lim_{n\rightarrow \infty} \dd_\w(v_n(s_n),(T(t_n)u_n)(s_n))=0.
 \end{equation*}
 This contradicts to (\ref{i:staiswta3}). We complete the proof.
\end{proof}

According to Definitions \ref{d:wta}, \ref{d:stas} and Lemma \ref{l:stas-is-wtas}, we naturally define a strong trajectory attractor as well as a strongly compact one.
\begin{definition}\label{d:sta}
A set $\mathfrak A_{\s}\subset C([0,\infty);\Xw) $ is a strong trajectory attractor for an evolutionary system $\Dc$ if $\mathfrak A_{\s}$ is a minimal  strong trajectory attracting set that is
\begin{enumerate}
\item Closed in  $ C([0,\infty);\Xw)$.
\item Invariant: $T(t)\mathfrak A_{\s}=\mathfrak A_{\s}$,  $\forall\, t\geq 0$.
\end{enumerate}
It is said that $\mathfrak A_{\s}$ is strongly compact if it is compact in $C([0,\infty);\Xs) $.
\end{definition}

 Hence, if a strong trajectory attractor exists,  it is unique. Moreover, such a definition means that, a strong trajectory attractor is a weak trajectory attractor whenever it is  also a strong trajectory attracting set.

We establish the following definition and lemmas that can be viewed as versions of Definition \ref{d:wutp} and Lemmas \ref{l:=wutp} and \ref{l:wta=wutp} in the strong metric, respectively.
\begin{definition}\label{d:sutp}
A set $P\subset C([0,\infty);\Xw)$  satisfies the strong uniform tracking property  for an evolutionary system $\Dc$  if for any $\epsilon >0$ and $T>0$, there exists $t_0$, such
that for any $t^*>t_0$, every trajectory $u \in \Dc([0,\infty))$
satisfies
\begin{equation*}\label{e:sutp}
\ds(u(t), v(t-t^*)) < \epsilon, \quad\forall t\in [t^*,t^*+T],
\end{equation*}
for some  $T$-time length piece $v \in P_T$. Here $P_T:=\{v(\cdot)|_{[0,T]}: v\in P\}$.
\end{definition}
\begin{lemma}\label{l:=sutp}
Let  $P\subset C([0,\infty);\Xw)$ satisfy $T(s)P=P$,  $\forall\, s\geq 0$.  Then $P$ satisfies the strong uniform tracking property for an evolutionary system $\Dc$ if and only if for any $\epsilon >0$ and $T>0$, there exists $t_0$, such
that for any $t^*>t_0$, every trajectory $u \in \Dc([0,\infty))$
satisfies
\begin{equation*}\label{e:sutp}
\ds(u(t), v(t)) < \epsilon, \quad\forall t\in [t^*,t^*+T],
\end{equation*}
for some  trajectory $v \in P$.
\end{lemma}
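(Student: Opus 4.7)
The plan is to mirror the short argument used for Lemma \ref{l:=wutp}, observing that the equivalence there rests solely on the translation invariance $T(s)P=P$ and the definition \eqref{e:dTO} of the translation operator, and not on which metric ($\dw$ or $\ds$) is used to compare $u$ with the tracking trajectory. So nothing genuinely new is needed; one simply replays the weak argument at the pointwise level, which is compatible with the strong supremum formulation in Definition \ref{d:sutp}.

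First, I would unwind the hypothesis $T(s)P=P$ for all $s\ge 0$. For any fixed $t^*\ge 0$, surjectivity of $T(t^*)$ on $P$ gives: for each $v\in P$ there exists $v^*\in P$ with $T(t^*)v^*=v$, and conversely $T(t^*)v^*\in P$ whenever $v^*\in P$. By \eqref{e:dTO}, the identity $T(t^*)v^*=v$ is exactly the pointwise equality $v(s)=v^*(s+t^*)$ for all $s\ge 0$. Substituting $s=t-t^*$ for $t\in[t^*,t^*+T]$, so that $s\in[0,T]\subset[0,\infty)$, this becomes $v(t-t^*)=v^*(t)$ on the interval of interest.

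Second, I would plug this identification directly into Definition \ref{d:sutp}. The condition ``$\ds(u(t),v(t-t^*))<\epsilon$ for all $t\in[t^*,t^*+T]$ and some $v\in P$'' thereby becomes ``$\ds(u(t),v^*(t))<\epsilon$ for all $t\in[t^*,t^*+T]$ and some $v^*\in P$''. Since the remaining quantifiers on $\epsilon$, $T$, $t_0$, $t^*>t_0$, and $u\in\Dc([0,\infty))$ are identical in both formulations, the two properties are logically equivalent.

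I expect no substantive obstacle. The only subtlety is handling the restriction in \eqref{e:dTO} correctly, but because the tracking inequality is required only on the forward interval $[t^*,t^*+T]$, where $t-t^*\ge 0$, the shift identity $v(t-t^*)=v^*(t)$ applies without issue. In short, the proof is a verbatim strong-metric analogue of that of Lemma \ref{l:=wutp}.
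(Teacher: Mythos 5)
Your argument is correct and is essentially the paper's own proof: both unwind the hypothesis $T(t^*)P=P$ via $T(t^*)v^*=v$, i.e.\ $v(t-t^*)=v^*(t)$ for $t\in[t^*,t^*+T]$, and substitute this into Definition \ref{d:sutp}, exactly mirroring Lemma \ref{l:=wutp}. The only cosmetic difference is that the paper first passes from the $T$-time length piece $v\in P_T$ to a full trajectory $\tilde{v}\in P$ with $\tilde{v}|_{[0,T]}=v$ before applying the translation identity, a step you implicitly absorb; since the tracking inequality only tests $v$ on $[0,T]$, this changes nothing.
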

\begin{proof}The proof is analogous to that of Lemma \ref{l:=wutp}. Note that, by definition, $v\in P_T$ if and only if there is $\tilde{v}\in P$ such that $\tilde{v}|_{[0,T]}=v$.
Hence, due to the assumption, for any $t^*\geq 0$, $v\in P_T$ if and only if there exists $v^*\in P$ that $T(t^*)v^*=\tilde{v}$. Then, for any $\epsilon >0$, $T>0$  and $u \in
\Dc([0,\infty))$,
\begin{equation*}
\ds(u(t), v(t-t^*)) < \epsilon, \quad\forall t\in [t^*,t^*+T],
\end{equation*}for some $T$-time length piece $v \in P_T$ is equivalent to
\begin{equation*}
\ds(u(t), v^*(t)) < \epsilon, \quad\forall t\in [t^*,t^*+T],
\end{equation*}for some trajectory $v^* \in P$.
\end{proof}
Similarly, the latter property in  Lemma \ref{l:=sutp} is called strong uniform tracking property in \cite{C09, CL09, CL14}. However, we will soon see  that, it is convenient to  substitute Definition \ref{d:sutp}.

\begin{lemma}\label{l:sta=sutp}
A set $P\subset C([0,\infty);\Xw)$ satisfies  the strong uniform tracking property for an evolutionary system $\Dc$ if and only if it is  a strong trajectory attracting set for $\Dc$.
\end{lemma}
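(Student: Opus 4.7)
The plan is to observe that the two properties are logically equivalent via the elementary change of variables $t = s + t^*$, with the only bookkeeping being the difference in how the approximating object is packaged: as an element of $P$ in Definition \ref{d:stas}, versus as a $T$-time length piece in $P_T$ in Definition \ref{d:sutp}. No dynamical input beyond the definitions is needed; in particular, neither the evolutionary system axioms nor A1 are required here.

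For the forward implication, I would assume $P$ is a strong trajectory attracting set. Fixing arbitrary $\epsilon > 0$ and $T > 0$, Definition \ref{d:stas} furnishes some $t_0$ such that for every $t^* \geq t_0$ and every $u \in \Dc([0,\infty))$ one can choose $v \in P$ with $\sup_{s \in [0,T]} \ds(v(s),(T(t^*)u)(s)) < \epsilon$. Unpacking $(T(t^*)u)(s) = u(s+t^*)$ via \eqref{e:dTO} and substituting $r = s + t^*$ yields $\ds(u(r), v(r - t^*)) < \epsilon$ for every $r \in [t^*, t^*+T]$, which is exactly the strong uniform tracking property with the $T$-time length piece $v|_{[0,T]} \in P_T$.

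For the converse, I would assume the strong uniform tracking property. Given $\epsilon > 0$ and $T > 0$, the definition produces $t_0$ such that for each $t^* > t_0$ and each $u \in \Dc([0,\infty))$ there exists $\hat v \in P_T$ with $\ds(u(t), \hat v(t - t^*)) < \epsilon$ on $[t^*, t^*+T]$. Writing $\hat v = v|_{[0,T]}$ for some $v \in P$ (by the very definition of $P_T$) and substituting $s = t - t^*$, one obtains $\sup_{s \in [0,T]} \ds((T(t^*)u)(s), v(s)) < \epsilon$, which places $T(t^*)u$ into the $\epsilon$-neighborhood required by Definition \ref{d:stas} for all $t^* > t_0$. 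Taking the infimum over $v \in P$ preserves the strict inequality after a harmless enlargement (e.g.\ replace $\epsilon$ by $\epsilon/2$ at the outset if one prefers a safety margin), giving the strong trajectory attracting property.

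I do not expect any substantial obstacle; this lemma is essentially a tautology unfolding the two definitions. The only minor point of care is the restriction/extension dictionary between $P$ and $P_T$, but both directions of that correspondence are immediate from $P_T = \{v|_{[0,T]} : v \in P\}$, so the whole argument reduces to a change of variable.
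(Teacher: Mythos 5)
Your proposal is correct and follows essentially the same route as the paper's own proof: both directions reduce to unpacking $(T(t^*)u)(s)=u(s+t^*)$, performing the change of variable $t=s+t^*$, and translating between $v\in P$ and its restriction $v|_{[0,T]}\in P_T$. The only cosmetic difference is that the paper starts the forward direction with $\epsilon/2$ in the attracting-set hypothesis, while you (correctly) note that no such safety margin is actually needed since the infimum being below $\epsilon$ already yields a witness $v\in P$ within $\epsilon$.
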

\begin{proof}Assume that $P$ is  a strong trajectory attracting set for an evolutionary system $\Dc$. Then, for any $\epsilon>0$  and $T>0$, there exists $t_0$,
such that
\begin{equation*}\label{e:staing}
T(t)\Dc([0,\infty))\subset \left\{v\in C([0,\infty);\Xw):\inf_{u\in P}\sup_{s\in [0,T]}\ds(u(s),v(s))<\frac{\epsilon}{2}\right\}, \  \forall\, t\geq t_0.
\end{equation*}
Hence, for any $t^*\geq t_0$ and every trajectory $u\in \Dc([0,\infty))$, we have
\begin{equation}\label{e:staing2}
\sup_{s\in [0,T]}\ds(\tilde{v}(s) ,(T(t^*)u)(s) )<\epsilon,
\end{equation}
for some $\tilde{v}\in P$. Thanks to (\ref{e:dTO}) of the definition of the family of translation operators $\{T(s)\}_{s\geq 0}$,  (\ref{e:staing2}) is equivalent to
\begin{equation}\label{e:staing3}
\sup_{s\in [0,T]}\ds(v(s) ,u(s+t^*) )<\epsilon,
\end{equation}
with $v=\tilde{v}|_{[0,T]}$.
 By a change of the variable, (\ref{e:staing3}) is
\begin{equation}\label{e:staing4}
\sup_{s\in [t^*,t^*+T]}\ds(v(s-t^*) ,u(s) )<\epsilon.
\end{equation}
Therefore, $P$ satisfies the strong uniform tracking property.

On the contrary, suppose that for any $\epsilon >0$ and $T>0$, there exists $t_0$, such
that for any $t^*>t_0$, every trajectory $u \in \Dc([0,\infty))$ satisfies the inequality
 \eqref{e:staing4} for some $v\in P_T$. Equivalently, \eqref{e:staing2} holds for some $\tilde{v}\in P$ that $\tilde{v}|_{[0,T]}=v$. Hence, we have,
 \begin{equation*}\label{e:staing}
T(t)\Dc([0,\infty))\subset \left\{v\in C([0,\infty);\Xw):\inf_{u\in P}\sup_{s\in [0,T]}\ds(u(s),v(s))<\epsilon \right\}, \forall\, t> t_0,
\end{equation*}
which means that  $P$ is  a strong trajectory attracting set.
\end{proof}

Due to Lemma \ref{l:sta=sutp}, a strong trajectory attractor is the minimal set that is  invariant, closed in $C([0,\infty);\Xw) $  and satisfying the strong uniform tracking property.

Now we arrive at one of the main theoretical results of the paper, which improves  Theorems 3.6 and 4.4 in \cite{CL14} by obtaining the strong compactness of strong trajectory attractors and its corollaries without additional condition.
\begin{theorem} \label{t:sA}
Let $\Dc$ be an asymptotically compact evolutionary system.
Then
\begin{enumerate}
\item[1.]
The strong global attractor $\As$ exists, it is strongly compact, and
$\As=\Aw$.
\end{enumerate}
Furthermore, assume that $\Dc$ satisfies A1. Let $\bar{\Dc}$ be
the closure of $\Dc$. Then
\begin{enumerate}
\item[2.]
The strong trajectory attractor $\mathfrak A_{\mathrm{s}}$ exists and $\mathfrak A_{\mathrm{s}}=\mathfrak A_{\mathrm{w}}=\Pi_+\bar{\Dc}((-\infty, \infty))$, it is  strongly compact.
\item[3.]$\mathfrak A_{\mathrm{s}}$ satisfies the finite strong uniform tracking property for $\Dc$, i.e., for any $\epsilon >0$ and $T>0$, there exist $t_0$ and a finite subset $P^f_T\subset \mathfrak A_{\s}|_{[0,T]}$, such
that for any $t^*>t_0$, every trajectory $u \in \Dc([0,\infty))$
satisfies
$$\ds(u(t), v(t-t^*)) < \epsilon,\quad\forall t\in [t^*,t^*+T],$$
for some $T$-time length piece  $v\in P^f_T$.
\item[4.]$\mathfrak A_{\s}=\Pi_+\bar{\Dc}((-\infty, \infty))$  is strongly equicontinuous on $[0,\infty)$, i.e.,
\[
\ds\left(v(t_1),v(t_2)\right)\leq \theta\left(|t_1-t_2|\right), \quad\forall\, t_1,t_2\ge 0, \  \forall v\in \mathfrak A_{\s},
\]
where $\theta(l) $ is a positive function tending
to $0$ as $l\rightarrow 0^+$.
\end{enumerate}
\end{theorem}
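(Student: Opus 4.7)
The plan is to prove the four conclusions in order, with the core task being to upgrade the weak trajectory attractor $\mathfrak A_{\w}$ from Theorem \ref{t:wA} to a strongly compact strong trajectory attractor.

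For conclusion 1, asymptotic compactness makes $\ws(X)$ nonempty, strongly compact, and strongly attracting. Since strong attraction implies weak attraction and strong compactness implies weak closedness, the minimality of $\Aw=\ww(X)$ among weakly closed weakly attracting sets (Theorem \ref{t:wA}) gives $\Aw\subset\ws(X)$; the reverse inclusion $\ws(X)\subset\ww(X)=\Aw$ is immediate since the strong topology is finer than the weak topology. Hence $\As=\ws(X)=\Aw$ is strongly compact.

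For conclusion 2, my strategy is to show that $\mathfrak A_{\w}=\Pi_+\bar{\Dc}((-\infty,\infty))$ is itself a strong trajectory attracting set and is strongly compact; by Lemma \ref{l:stas-is-wtas}, the minimality clause of Definition \ref{d:sta}, and uniqueness of such an attractor, this will force $\mathfrak A_{\s}=\mathfrak A_{\w}$. First I observe that every $v\in\mathfrak A_{\w}$ is strongly continuous: for $t_k\to t$ the sequence $v(t_k)$ lies in the strongly compact $\As$, so any subsequence has a strongly convergent sub-subsequence whose limit must coincide with the weak limit $v(t)$, forcing strong convergence of the full sequence. To prove the strong attracting property I argue by contradiction: if it failed there would exist $\epsilon_0,T_0>0$, $t_n\to\infty$, and $u_n\in\Dc([0,\infty))$ with
\[
\sup_{s\in[0,T_0]}\ds(u_n(s+t_n),v(s))\geq \epsilon_0\quad\text{for every } v\in\mathfrak A_{\w}.
\]
Setting $w_n(s):=u_n(s+t_n)$, A1 combined with the weak attracting property of $\mathfrak A_{\w}$ yields a subsequence $w_n\to w^*\in\mathfrak A_{\w}$ in $C([0,\infty);\Xw)$. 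Picking $s_n\in[0,T_0]$ with $\ds(w_n(s_n),w^*(s_n))\geq\epsilon_0$ and passing to $s_n\to s^*$, asymptotic compactness applied to $w_n(s_n)\in R(s_n+t_n)X$ produces strong precompactness; the only possible strong limit is the weak limit $w^*(s^*)$, while the strong continuity of $w^*$ gives $w^*(s_n)\to w^*(s^*)$ strongly, yielding the desired contradiction. Strong compactness of $\mathfrak A_{\s}$ in $C([0,\infty);\Xs)$ is then obtained by a parallel contradiction applied to any sequence $\{v_n\}\subset\mathfrak A_{\s}$, whose weak $C([0,\infty);\Xw)$-limit $v$ exists by A1 and lies in $\mathfrak A_{\s}$.

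Conclusion 3 is a finite-net argument: strong compactness of $\mathfrak A_{\s}|_{[0,T]}$ in $C([0,T];\Xs)$ provides a finite $\epsilon/2$-net $P^f_T$, and combining this with the strong uniform tracking property of $\mathfrak A_{\s}$ via the triangle inequality yields the finite version. Conclusion 4 follows from the Arzel\`a--Ascoli theorem: strong compactness of $\mathfrak A_{\s}|_{[0,1]}$ in $C([0,1];\Xs)$ furnishes a common strong modulus of continuity on $[0,1]$, which propagates to a uniform modulus on all of $[0,\infty)$ via the invariance $T(s)\mathfrak A_{\s}=\mathfrak A_{\s}$. The main obstacle is the contradiction argument for the strong attracting property; it essentially requires asymptotic compactness (to upgrade pointwise weak convergence to strong convergence along $w_n(s_n)$) together with strong continuity of the limit $w^*$ (to bridge $s_n\to s^*$), neither of which was available in the weak version in Theorem \ref{t:wA}.
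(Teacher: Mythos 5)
Your proposal is correct, and its overall skeleton matches the paper's proof: conclusion~1 from asymptotic compactness, the minimality argument via Lemma \ref{l:stas-is-wtas} to identify $\mathfrak A_{\s}$ with $\mathfrak A_{\w}$, the contradiction argument (using strong compactness of $\As$ to upgrade uniform weak convergence to uniform strong convergence) for strong compactness of $\mathfrak A_{\s}$, the finite $\epsilon/2$-net for conclusion~3, and Arzel\`a--Ascoli plus invariance for conclusion~4. The one genuine divergence is that the paper does not prove the strong trajectory attracting property of $\mathfrak A_{\w}$ at all --- it simply cites Theorem~4.4 of \cite{CL14} --- whereas you give a self-contained proof by contradiction: extracting $w_n=T(t_n)u_n\to w^*\in\mathfrak A_{\w}$ in $C([0,\infty);\Xw)$ via A1 and closedness of $\mathfrak A_{\w}$, using asymptotic compactness on $w_n(s_n)\in R(s_n+t_n)X$ to force strong convergence to the weak limit $w^*(s^*)$, and bridging $s_n\to s^*$ with the strong continuity of $w^*$ (which you correctly derive first from strong compactness of $\As$, exactly as the paper does when showing $\mathfrak A_{\s}\subset C([0,\infty);\Xs)$). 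This extra argument is sound and makes the theorem independent of the cited reference, at the cost of some length; the paper's version is shorter but less self-contained. Likewise, your sketch of conclusion~1 supplies an argument where the paper only cites Theorem~3.6 of \cite{CL14}. No gaps.
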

\begin{proof}The conclusion 1 is that of Theorem 3.6 in \cite{CL14}.

Due to Theorem 4.4 in \cite{CL14} and Definition \ref{d:stas}, the weak trajectory attractor  $\mathfrak A_{\mathrm{w}}$ is a  strong trajectory attracting set that is  invariant and compact in $C([0,\infty);\Xw) $.  Suppose that $P\subset C([0,\infty);\Xw) $ is any other strong trajectory attracting set being invariant and closed in $C([0,\infty);\Xw) $. We know from  Lemma \ref{l:stas-is-wtas} that $P$ is also a weak trajectory attracting set. Hence, $\mathfrak A_{\mathrm{w}}\subset P $. This concludes that $\mathfrak A_{\mathrm{w}}$ is indeed a strong trajectory attractor $\mathfrak A_{\mathrm{s}}$ according to Definition \ref{d:sta}.

Now we demonstrate  the  compactness of $\mathfrak A_{\mathrm{s}}$ in $C([0,\infty);\Xs) $.

First, we have  $\mathfrak A_{\mathrm{s}}\subset C([0,\infty);\Xs)$. In fact, thanks to Theorem \ref{t:wA}, for every $u\in  \mathfrak A_{\mathrm{s}}$, the set
$$\{u(t): t\in [0,\infty )\}\subset \As,$$
is precompact in $\Xs$. Hence, for every $\tilde{t}\in [0 , \infty)$, any weakly convergent sequence $\{u(t_n):t_n\geq 0\}$  with the limit $u(\tilde{t})$ as $t_n\rightarrow \tilde{t}$ does  strongly converge to $u(\tilde{t}) $, which means   $u\in C([0,\infty);\Xs)$.

Note that  $\mathfrak A_{\mathrm{s}}$ is compact in  $C([0,\infty);\Xw) $.  Now take a sequence $\{u_n(t)\}\subset \mathfrak A_{\mathrm{s}}$ and  $u(t)\in \mathfrak A_{\mathrm{s}}$ that  $u_n(t)\rightarrow u(t)$ in $C([0,\infty);\Xw) $ as $n\rightarrow \infty$. We claim that the convergence is indeed in $C([0,\infty);\Xs) $. Otherwise, there exist $\epsilon_0>0$, $T_0>0$, and  sequences $\{n_j\}$, $n_j\rightarrow \infty$ as $j \rightarrow \infty$ and $\{t_{n_j}\}\subset [0,T_0]$, such that
\begin{equation}\label{e:dsbigdist}
\ds\left(u_{n_j}(t_{n_j}), u(t_{n_j})\right)>\epsilon_0, \quad\forall n_j.
\end{equation}
The sequences
$$\left\{u_{n_j}(t_{n_j})\right\}, \left\{u(t_{n_j})\right\}\subset \As,$$
 are relatively strongly compact  due to the strong compactness of $\As$. Passing to a subsequence and dropping a subindex,  we may assume  that  $\left\{u_{n_j}(t_{n_j})\right\}$ and $\left\{u(t_{n_j})\right\}$ are strongly convergent with limits $x$ and $y$, respectively. We have that
\begin{equation}\label{i:sA1}
\dw(x,y)\leq \dw\left(u_{n_j}(t_{n_j}), x\right)+\dw\left(u_{n_j}(t_{n_j}),u(t_{n_j})\right)+\dw\left(u(t_{n_j}),y\right),\  \forall n_j.
\end{equation}
By the assumption,
\begin{equation*}\label{lim:wdist}
\lim_{j\rightarrow \infty} \sup_{t\in [0,T_0]} \dw \left(u_{n_j}(t), u(t)\right)=0.
\end{equation*}
Together with
\begin{equation*}
\lim_{j\rightarrow \infty} \dw\left(u_{n_j}(t_{n_j}), x\right)=0,
\end{equation*}
and
\begin{equation*}
\lim_{j\rightarrow \infty} \dw\left(u(t_{n_j}),y\right)=0,
\end{equation*}
it follows from (\ref{i:sA1}) that $x=y$, which is a contradiction to (\ref{e:dsbigdist}).

 By Lemma \ref{l:sta=sutp}, $\mathfrak A_{\mathrm{s}}$  possesses the strong  uniform tracking property for $\Dc$.
Therefore, for any $\epsilon >0$ and $T>0$, there exists $t_0$, such
that for any $t^*>t_0$, every trajectory $u \in \Dc([0,\infty))$
satisfies
\begin{equation}\label{ie:stupSTA}
\ds\left(u(t), v^*(t-t^*)\right) < \epsilon/2, \quad\forall t\in [t^*,t^*+T],
\end{equation}
for some  $T$-time length piece $v^* \in \mathfrak A_{\mathrm{s}}|_{[0,T]}$. Now since $\mathfrak A_{\mathrm{s}}|_{[0,T]}$ is compact in  $C([0,T];\Xs) $, we can take a finite number set, $$P^f_T:=\left\{u_1(t),u_2(t),\cdots,u_K(t)\right\}\subset \mathfrak A_{\mathrm{s}}|_{[0,T]},$$ such  that, for any $v \in \mathfrak A_{\mathrm{s}}|_{[0,T]}$,
\[
\ds\left(u_i(t), v(t)\right) < \epsilon/2, \quad\forall t\in [0,T],
\]
for some $u_i(t) \in P^f_T$.
Hence, there exists some $u_j(t)\in P_T^f$ satisfying
\[
\ds\left(u_j(t), v^*(t)\right) < \epsilon/2, \quad\forall t\in [0,T],
\]
which is equivalent to
\[
\ds\left(u_j(t-t^*), v^*(t-t^*)\right) < \epsilon/2, \quad\forall t\in [t^*,t^*+T].
\]
Together with (\ref{ie:stupSTA}), it implies that
\[
\ds\left(u(t), u_j(t-t^*)\right) < \epsilon, \quad\forall t\in [t^*,t^*+T],
\]
for some $T$-time length piece $u_j(t) \in P^f_T$. That is, $\mathfrak A_{\mathrm{s}}$ satisfies the finite strong  uniform tracking property for $\Dc$.

Finally, we show that $\mathfrak A_{\s}=\Pi_+\bar{\Dc}((-\infty, \infty))$  is strongly equicontinuous on $[0,\infty)$.
Without loss of generality, we assume that $|t_1-t_2|\leq 1$. Hence, $t_1$ and $t_2$ belong to some interval $[\gamma, \gamma+2]$, $\gamma\ge 0$. Denote by
\[
\Pi_{[\alpha,\beta]}\bar{\Dc}((-\infty, \infty)):=\left\{u(\cdot)|_{[\alpha,\beta]}:u\in \bar{\Dc}((-\infty, \infty))\right\}.
\]
Notice that
\[
\left\{v(\cdot+\gamma):v(\cdot)\in \Pi_{[\gamma,\gamma+2]}\bar{\Dc}((-\infty, \infty))\right\}=\Pi_{[0,2]}\bar{\Dc}((-\infty, \infty)).
\]Thus, we need only to verify that $\Pi_{[0,2]}\bar{\Dc}((-\infty, \infty))$
is strongly equicontinuous on $[0,2]$. Thanks to the Arzel\`{a}-Ascoli compactness criterion, this is a consequence of  the fact that $\Pi_{[0,2]}\bar{\Dc}((-\infty, \infty))$ is compact in $C ([0,2];\Xs)$.

The proof is complete.
\end{proof}

Similarly, according to the above theorem and Theorem \ref{t:wA}, it is also convenient to call that the global attractor $\As$, or $\bar{\Dc}((-\infty, \infty))$, or $\Dc$ possesses a finite strong uniform tracking
property. That is, for any fixed accuracy $\epsilon$ and time length $T$, a finite number of $T$-time length pieces on $[0,T]$ of the complete trajectories on $\As$ are capable of uniformly approximating all trajectories within the accuracy $\epsilon$ in the strong metric after sufficiently large time. By applying Theorem \ref{t:sA} repeatedly, we have the following two corollaries that indicate how the dynamics on the global attractor determine  the long-time dynamics of all trajectories of an evolutionary system.
\begin{corollary}\label{t:fsutpdetermine1} Let $\Dc$ be an asymptotically compact evolutionary system satisfying A1 and let $\bar{\Dc}$ be the closure of $\Dc$. Then, for any $\epsilon >0$ and $T>0$, there exist $j_0\in \mathbb N$ and a finite number of  $T$-time length pieces of the complete trajectories $\bar{\Dc}((-\infty, \infty))$ on the global attractor for  $\Dc$:
$$P_T^f:=\left\{u_1(t),u_2(t),\cdots,u_K(t)\right\}\subset \mathfrak A_{\mathrm{s}}|_{[0,T]},$$
such
that, for every trajectory $u \in \Dc([0,\infty))$, there is a sequence $\{i_j\}_{j\geq j_0}$ with $i_j\in  \{1, 2, \cdots, K\}$ that satisfies
$$\ds\left(u(t), u_{i_j}(t-jT)\right) < \epsilon,\quad\forall t\in [jT,(j+1)T],$$
for $j\geq j_0$ and $u_{i_j}\in P_T^f$.
\end{corollary}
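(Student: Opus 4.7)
The plan is to apply Conclusion 3 of Theorem~\ref{t:sA} (the finite strong uniform tracking property of $\mathfrak A_{\s}$) repeatedly on consecutive time windows of length $T$. The statement of that conclusion is already window-independent: it produces a single finite set $P_T^f\subset \mathfrak A_{\s}|_{[0,T]}$ and a threshold $t_0$ such that for \emph{every} $t^*>t_0$ and \emph{every} trajectory $u\in\Dc([0,\infty))$, one can find a piece $v\in P_T^f$ with $\ds(u(t),v(t-t^*))<\epsilon$ on $[t^*,t^*+T]$. So all that remains is to sample $t^*$ along an arithmetic progression of step $T$.

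Concretely, I would proceed as follows. Fix $\epsilon>0$ and $T>0$. Apply Conclusion~3 of Theorem~\ref{t:sA} to obtain $t_0>0$ and a finite collection
\[
P_T^f:=\{u_1(t),u_2(t),\dots,u_K(t)\}\subset\mathfrak A_{\s}|_{[0,T]}.
\]
Choose any integer $j_0\in\mathbb N$ with $j_0T>t_0$. For each $j\ge j_0$, set $t^*=jT$; since $t^*>t_0$, Conclusion~3 yields an index $i_j\in\{1,\dots,K\}$ (depending on $u$ and on $j$) such that
\[
\ds\bigl(u(t),u_{i_j}(t-jT)\bigr)<\epsilon,\qquad\forall\,t\in[jT,(j+1)T].
\]
Collecting these indices gives the required sequence $\{i_j\}_{j\ge j_0}$, and we are done.

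There is essentially no obstacle: the corollary is a direct reformulation of Conclusion~3 of Theorem~\ref{t:sA}, with the crucial point being that the \emph{same} finite set $P_T^f$ and the \emph{same} threshold $t_0$ serve for all starting times $t^*>t_0$. The non-trivial content—the finiteness of $P_T^f$—has already been proved via the strong compactness of $\mathfrak A_{\s}$ in $C([0,T];\Xs)$, so here we merely tile the half-line $[j_0 T,\infty)$ by the intervals $[jT,(j+1)T]$ and read off one index per tile.
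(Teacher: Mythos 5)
Your proposal is correct and matches the paper's own proof: both invoke Conclusion~3 of Theorem~\ref{t:sA} once to get a single $t_0$ and finite set $P_T^f$, then apply it successively at $t^*=jT$ for $j\ge j_0$ with $j_0T>t_0$. Nothing further is needed.
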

\begin{proof}By Theorem \ref{t:sA},
for any $\epsilon>0$ and $T>0$,  there exist $t_0$ and a finite subset $P^f_T\subset \mathfrak A_{\s}|_{[0,T]}$ consisting of $K$ elements,
$$P_T^f:=\left\{u_1(t),u_2(t),\cdots,u_K(t)\right\},$$
such that for any $t^*>t_0$, every trajectory $u \in \Dc([0,\infty))$
satisfies
$$\ds\left(u(t), u_i(t-t^*)\right) < \epsilon,\quad\forall t\in [t^*,t^*+T],$$
for some  $T$-time length piece  $u_i\in P^f_T$. Then, we obtain the conclusion if we apply this result successively by taking $t^*=j_0T>t_0$ and $t^*=jT$, $j>j_0$.
\end{proof}
It is interesting to note that the number $K$ of the candidate approximating pieces $P^f_T$ depends only on the accuracy $\epsilon$ and the time length $T$, and then,  for any fixed accuracy and time length, every trajectory is assigned a sequence of elements in the finite set $\{1, 2, \cdots, K\}$.
\begin{corollary}\label{t:fsutpdetermine2}Let $\Dc$ be an asymptotically compact evolutionary system satisfying A1 and let $\bar{\Dc}$ be the closure of $\Dc$. Give two sequences $\{\epsilon_n\}$ and $\{T_n\}$ satisfying
$$\epsilon_1>\epsilon_2>\cdots>\epsilon_n>\cdots \rightarrow 0,\quad \mbox{ as } n\rightarrow \infty,$$
and
$$0<T_1<T_2<\cdots<T_n<\cdots \rightarrow \infty, \quad \mbox{ as } n\rightarrow \infty.$$
Then, there exist a time $t_0$, a sequence $\{J_n\}$ and a series of finite sets $P^f_{T_n}$ consisting of $T_n$-time length pieces of the complete trajectories $\bar{\Dc}((-\infty, \infty))$ on the global attractor for $\Dc$:
$$P^f_{T_n}:=\left\{u^{n}_1(t),u^{n}_2(t),\cdots,u^{n}_{K_n}(t)\right\}\subset \mathfrak A_{\mathrm{s}}|_{[0,T_n]},$$
such that, for every trajectory $u \in \Dc([0,\infty))$, there is a sequence
$$i^1_1,i^1_2,\cdots, i^1_{J_1}, i^2_1,i^2_2,\cdots, i^2_{J_2}, \cdots,i^n_1,i^n_2,\cdots, i^n_{J_n}, \cdots,$$
with $i^n_j\in  \{1, 2, \cdots, K_n\}$, $1\leq j\leq J_n$ that satisfies $u^n_{i^n_j}\in P_{T_n}^f$ and
\begin{align*}
&\ds\left(u(t), u^n_{i^n_j}\left(t-\left(t_0+\sum\nolimits_{l=1}^{n-1}J_lT_l+(j-1)T_n\right)\right)\right) < \epsilon_n,\\
&\qquad\qquad\forall t\in \left[t_0+\sum\nolimits_{l=1}^{n-1}J_lT_l+(j-1)T_n,t_0+\sum\nolimits_{l=1}^{n-1}J_lT_l+jT_n\right].
\end{align*}
\end{corollary}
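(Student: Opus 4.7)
The plan is to prove this by iteratively invoking Conclusion 3 of Theorem~\ref{t:sA} (equivalently, Corollary~\ref{t:fsutpdetermine1}) once for each accuracy--time-length pair $(\epsilon_n, T_n)$, and then stitching together the resulting finite approximations into a single, consecutive schedule. The whole construction reduces to a careful indexing bookkeeping; no new analytic ingredient is needed beyond Theorem~\ref{t:sA}.

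First I would apply Theorem~\ref{t:sA} (Conclusion 3) for each $n\in \mathbb N$ with the pair $(\epsilon_n, T_n)$ to extract a threshold time $t_0^{(n)}>0$ and a finite subset
\[
P^f_{T_n}:=\left\{u_1^n,u_2^n,\ldots,u_{K_n}^n\right\}\subset \mathfrak A_{\s}|_{[0,T_n]},
\]
such that for every $t^*>t_0^{(n)}$ and every $u\in \Dc([0,\infty))$ there exists $i\in\{1,\ldots,K_n\}$ with
$\ds(u(t),u_i^n(t-t^*))<\epsilon_n$ for all $t\in[t^*,t^*+T_n]$. Without loss of generality the sequence $\{t_0^{(n)}\}$ can be taken nondecreasing. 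I then set $t_0:=t_0^{(1)}$ and define $J_n\in\mathbb N$ inductively as the smallest positive integer satisfying
\[
t_0+\sum_{l=1}^{n-1}J_l T_l + J_n T_n \;\ge\; t_0^{(n+1)}.
\]
This choice guarantees that the starting time of block $n+1$, namely $s_{n+1}:=t_0+\sum_{l=1}^{n}J_l T_l$, already exceeds $t_0^{(n+1)}$, so the $n$-th level threshold is respected as soon as block $n$ begins (note $s_1=t_0\ge t_0^{(1)}$ trivially).

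Next, for each level $n$ and each $j\in\{1,\ldots,J_n\}$, let $t^*_{n,j}:=t_0+\sum_{l=1}^{n-1}J_l T_l+(j-1)T_n\ge s_n\ge t_0^{(n)}$, and apply the property from Theorem~\ref{t:sA} (Conclusion~3) to $u$ with the starting time $t^*=t^*_{n,j}$: this yields some $i^n_j\in\{1,\ldots,K_n\}$ with
\[
\ds\!\left(u(t),\, u^n_{i^n_j}(t-t^*_{n,j})\right)<\epsilon_n,\qquad \forall\,t\in[t^*_{n,j},\,t^*_{n,j}+T_n].
\]
Listing the indices in the order $i^1_1,\ldots,i^1_{J_1},i^2_1,\ldots,i^2_{J_2},\ldots$ gives exactly the sequence prescribed in the statement, and the displayed inequality is the one required by the corollary (after rewriting $t^*_{n,j}$ as $t_0+\sum_{l=1}^{n-1}J_lT_l+(j-1)T_n$).

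The only nontrivial point, which is hardly an obstacle, is to verify that $J_n$ is well-defined (i.e., finite) and that the construction produces a genuine concatenation without gaps: finiteness of $J_n$ follows because $T_n>0$ and $t_0^{(n+1)}<\infty$; the absence of gaps follows because $s_{n+1}-s_n=J_nT_n$, so consecutive blocks at the same level abut and the end of block $n$ coincides with the start of block $n+1$. Consequently every $t\ge t_0$ is covered by exactly one sub-interval of the schedule and the pointwise estimate holds on all of $[t_0,\infty)$, completing the proof.
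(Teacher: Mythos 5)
Your argument is correct and follows essentially the same route as the paper: apply the finite strong uniform tracking property (Conclusion 3 of Theorem \ref{t:sA}) once per pair $(\epsilon_n,T_n)$ and choose the block counts $J_n$ so that each level's blocks begin only after that level's threshold, the only difference being your cleaner choice of $J_n$ as a minimal integer versus the paper's greatest-integer-function bookkeeping. The single cosmetic slip is that the theorem requires $t^*>t_0^{(n)}$ strictly, so you should take $t_0:=t_0^{(1)}+1$ (as the paper does) and demand strict inequality in the defining condition for $J_n$; this changes nothing of substance.
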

\begin{proof}Due to Theorem \ref{t:sA}, for any $\epsilon_n>0$, and $T_n>0$, there exist $t_n$ and
a finite subset $P^f_{T_n}\subset \mathfrak A_{\s}|_{[0,T_n]}$ consisting of $K_n$ elements,  $$P^f_{T_n}:=\left\{u^{n}_1(t),u^{n}_2(t),\cdots,u^{n}_{K_n}(t)\right\},$$
such that, for any $t^*_n>t_n$,  every trajectory $u \in \Dc([0,\infty))$,  satisfies
$$\ds\left(u(t), v(t-t^*_n)\right) < \epsilon,\quad\forall t\in \left[t^*_n,t^*_n+T\right],$$
for some  $T_n$-time length piece $v\in P_{T_n}^f$.  Without loss of generality, we take $\{t_n\}$ satisfying
$t_2-t_1>1$ and $t_{n+1}-t_n> T_{n-1}$, $n\ge 2$.

Now we are going to determine   $t_0$ and the sequences $\{J_n\}$ and $ \{i^n_j\}$.  We process inductively.

Let $$t_0:=t_1+1,$$ and let
$$ J_1:=\left[\frac{t_2-t_0}{T_1}\right]+1.$$
Here $[\cdot]$ is the greatest integer function. Then, $t_0+J_1T_1\in (t_2,t_3)$. We apply the previous result in the first paragraph $J_1$ times with $\epsilon_1$ and $T_1$, and gain that, for every trajectory $u \in \Dc([0,\infty))$,
\begin{align*}
&\ds\left(u(t), u^1_{i^1_j}\left(t-\left(t_0+(j-1)T_1\right)\right)\right) < \epsilon_1,\\&\qquad\qquad\qquad\qquad\forall t\in \left[t_0+(j-1)T_1,t_0+jT_1\right],
\end{align*}for $i^1_j\in  \{1, 2, \cdots, K_1\}$, $1\leq j\leq J_1$ and $u^1_{i^1_j}\in P_{T_1}^f$. Next, let
 $$ J_2:=\left[\frac{t_3-\left(t_0+J_1T_1\right)}{T_2}\right]+1.$$
 We apply the previous result in the first paragraph $J_2$ times with $\epsilon_2$ and $T_2$, and have that,
\begin{align*}
&\ds\left(u(t), u^2_{i^2_j}\left(t-\left(t_0+J_1T_1+(j-1)T_2\right)\right)\right) < \epsilon_2,\\
&\qquad\qquad\qquad\forall t\in \left[t_0+J_1T_1+(j-1)T_2,t_0+J_1T_1+jT_2\right],
\end{align*}for $i^2_j\in  \{1, 2, \cdots, K_2\}$, $1\leq j\leq J_2$ and $u^2_{i^2_j}\in P_{T_2}^f$. Note that, $t_0+\sum_{l=1}^{2}J_lT_l\in (t_3,t_4)$.

Suppose we have obtained $\{J_l\}$ and $\{i^l_j\} $, $1\leq l\leq n$, $1\leq j\leq J_l$, that
$$J_l:=\left[\frac{t_{l+1}-\left(t_0+\sum_{m=1}^{l-1}J_mT_m\right)}{T_l}\right]+1,$$
and
$$i^l_j\in  \left\{1, 2, \cdots, K_l\right\}, \quad u^l_{i^l_j}\in P^f_{T_l},$$
 satisfying
 \begin{align*}
&\ds\left(u(t), u^l_{i^l_j}\left(t-\left(t_0+\sum\nolimits_{m=1}^{l-1}J_mT_m+(j-1)T_l\right)\right)\right) < \epsilon_l,\\
&\qquad\quad\forall t\in \left[t_0+\sum\nolimits_{m=1}^{l-1}J_mT_m+(j-1)T_l,t_0+\sum\nolimits_{m=1}^{l-1}J_mT_m+jT_l\right].
\end{align*}
We know that $t_0+\sum_{l=1}^{n}J_lT_l \in (t_{n+1}, t_{n+2})$. Now take
$$J_{n+1}:=\left[\frac{t_{n+2}-\left(t_0+\sum_{l=1}^{n}J_lT_l\right)}{T_{n+1}}\right]+1.$$
 We can consecutively apply the previous conclusion in the first paragraph  $J_{n+1}$ times with $\epsilon_{n+1}$ and $T_{n+1}$, and  obtain that,
\begin{align*}
&\ds\left(u(t), u^{n+1}_{i^{n+1}_j}\left(t-\left(t_0+\sum\nolimits_{l=1}^{n}J_lT_l+(j-1)T_{n+1}\right)\right)\right) < \epsilon_{n+1},\\
&\qquad\quad\forall t\in \left[t_0+\sum\nolimits_{l=1}^{n}J_lT_l+(j-1)T_{n+1},t_0+\sum\nolimits_{l=1}^{n}J_lT_l+jT_{n+1}\right],
\end{align*}
for $i^{n+1}_j\in  \{1, 2, \cdots, K_{n+1}\}$, $1\leq j\leq J_{n+1}$  and $u^{n+1}_{i^{n+1}_j}\in P_{T_{n+1}}^f$.

 The proof is completed.
\end{proof}
Now we give a property of the complete trajectories $\bar{\Dc}((-\infty, \infty))$ on $\As$.
\begin{corollary}\label{t:fsutpdetermine3} Let $\Dc$ be an asymptotically compact evolutionary system satisfying A1 and let $\bar{\Dc}$ be
the closure of $\Dc$. Then,  every complete trajectory $u(t)\in \bar{\Dc}((-\infty, \infty))$ is translation compact in $C((-\infty, \infty);\Xs)$, i.e., the set $$\overline{\{u(\cdot+h):h\in \mathbb R\}}^{C((-\infty, \infty);\Xs)}$$ is compact  in $C((-\infty, \infty);\Xs)$.
\end{corollary}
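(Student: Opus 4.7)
The plan is to reduce the claim to strong compactness on compact time intervals, which is delivered by Theorem \ref{t:sA}, and then to conclude via a Cantor diagonal argument. Convergence in $C((-\infty,\infty);\Xs)$ is, by the standard construction analogous to that recalled in Subsection \ref{ss:WTAS}, equivalent to uniform $\ds$-convergence on every compact subinterval. Hence it suffices to show that for every integer $N \geq 1$, the family $\{u(\cdot + h)|_{[-N, N]} : h \in \mathbb{R}\}$ is precompact in $C([-N,N];\Xs)$.

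The key step is to use translation invariance of $\bar{\Dc}((-\infty,\infty))$. By property (2) of the definition of an evolutionary system applied to $\bar{\Dc}$, for every $s \in \mathbb{R}$ the translate $u(\cdot + s)$ again belongs to $\bar{\Dc}((-\infty,\infty))$, so its restriction to $[0,\infty)$ lies in $\mathfrak A_{\s} = \Pi_+ \bar{\Dc}((-\infty,\infty))$. Applying this with $s = h - N$, the function $t \mapsto u(t + h - N)$ for $t \in [0, 2N]$ is an element of $\mathfrak A_{\s}|_{[0,2N]}$. By Theorem \ref{t:sA}, $\mathfrak A_{\s}$ is compact in $C([0,\infty);\Xs)$, and since the restriction map to $C([0,2N];\Xs)$ is continuous, $\mathfrak A_{\s}|_{[0,2N]}$ is compact in $C([0,2N];\Xs)$. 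In particular $\{u(\cdot + h - N)|_{[0,2N]} : h \in \mathbb{R}\}$ is precompact there, and the change of variable $t \mapsto t + N$ transports this to precompactness of $\{u(\cdot + h)|_{[-N,N]} : h \in \mathbb{R}\}$ in $C([-N,N];\Xs)$.

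Finally, given any sequence $\{h_n\} \subset \mathbb{R}$, I would extract via a Cantor diagonal argument a single subsequence along which $u(\cdot + h_{n_k})|_{[-N,N]}$ converges in $C([-N,N];\Xs)$ for every $N \in \mathbb{N}$, yielding convergence in $C((-\infty,\infty);\Xs)$. I do not foresee any serious obstacle: the heavy lifting has been absorbed into the strong compactness of $\mathfrak A_{\s}$ from Theorem \ref{t:sA}, and the only mildly subtle point is the routine verification that the natural metric on $C((-\infty,\infty);\Xs)$ encodes uniform convergence on compact sets, which is standard.
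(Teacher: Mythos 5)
Your proof is correct, but it takes a different route from the paper. The paper's proof is two lines: it reads off from Theorem \ref{t:sA} that the range $\{u(t):t\in(-\infty,\infty)\}\subset\As$ is precompact in $\Xs$ and that $u$ is uniformly strongly continuous on $(-\infty,\infty)$ (conclusion 4), and then invokes Proposition V.2.2 of Chepyzhov--Vishik, which characterizes translation compactness in $C((-\infty,\infty);\Xs)$ precisely by these two properties (an Arzel\`a--Ascoli-type criterion). You instead use conclusion 2 of Theorem \ref{t:sA} --- the strong compactness of $\mathfrak A_{\s}$ in $C([0,\infty);\Xs)$ --- together with the translation invariance of the kernel (property (2) of Definition \ref{Dc} applied to $\bar{\Dc}$ with $I=(-\infty,\infty)$, which indeed gives $u(\cdot+s)\in\bar{\Dc}((-\infty,\infty))$ for all $s$), so that every shifted window $u(\cdot+h-N)|_{[0,2N]}$ lies in the compact set $\mathfrak A_{\s}|_{[0,2N]}$, and you finish with a Cantor diagonal extraction. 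Your argument is self-contained where the paper's defers to an external reference, at the cost of essentially re-deriving the content of that cited proposition; conversely, the paper's route makes visible that the two ingredients really needed are precompact range and uniform equicontinuity. Both arguments rest on Theorem \ref{t:sA}, just on different conclusions of it, and both are sound; the only points you wave at (that the metric on $C((-\infty,\infty);\Xs)$ encodes uniform convergence on compacta, that the restriction map is continuous so $\mathfrak A_{\s}|_{[0,2N]}$ is compact, and that the diagonal limit is an $X$-valued strongly continuous function because $X$ is weakly closed) are indeed routine.
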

\begin{proof}
It follows from Theorem \ref{t:sA} that,  the set
$\{u(t): t\in (-\infty,\infty )\}\subset \As$
is precompact in $\Xs$, and $u(t)$ is uniformly strongly continuous on $(-\infty,\infty )$. Then the conclusion follows from Proposition V.2.2 in \cite{CV02}.
\end{proof}
The complete trajectories being periodic, quasi-periodic, almost periodic, homoclinic and heteroclinic on $\As$ are translation compact in $C((-\infty, \infty);\Xs)$. See \cite{CV02} for more details.
\begin{remark}\label{r:newpov}
Such forms of Theorems \ref{t:wA} and \ref{t:sA} suggest the following comments.
\begin{enumerate}
\item[1.] Theorem \ref{t:sA} indicates that the notion of a strongly compact strong trajectory attractor is an apt  description of the strongly compact strong global attractor possessing the  finite strong uniform tracking property and the  strong equicontinuity of all the complete trajectories on it.
\item[2.] Comparing with Theorem \ref{t:wA}, Theorem \ref{t:sA} implies that both the strong compactness of  the strong global attractor and the strong trajectory attractor follow simultaneously once we obtain the asymptotical compactness of an evolutionary system.
\item[3.] Theorems \ref{t:wA} and \ref{t:sA} show that the global attractor is a section of  the trajectory attractor and the trajectory attractor consists of the restriction of all the complete trajectories on the global attractor on time semiaxis $[0,\infty)$; the notion of a global attractor stresses the property of attracting trajectories staring from sets in phase space $X$ while the notion of a trajectory attractor emphasizes the uniform tracking  property.
\end{enumerate}
\end{remark}

In fact, the asymptotical compactness of $\Dc$ is also a necessary condition.
\begin{theorem} \label{t:suffience and necessity}
An evolutionary system $\Dc$ is asymptotically compact if and only if its strongly compact strong global attractor $\As$ exists.
\end{theorem}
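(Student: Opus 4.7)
The plan is to prove the two implications separately. The forward direction, that asymptotic compactness of $\Dc$ yields a strongly compact strong global attractor $\As$, is nothing but Conclusion 1 of Theorem \ref{t:sA} and requires no new argument.

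For the converse I would proceed as follows. Suppose $\As$ exists and is strongly compact, and let $t_n \to \infty$ with $x_n \in R(t_n)X$. I need to verify Definition \ref{d:AC}, namely that $\{x_n\}$ has a strongly convergent subsequence. Applying the strong attracting property of $\As$ to the set $B = X$, for every $\epsilon > 0$ there is a $t_0$ with $R(t)X \subset B_\s(\As, \epsilon)$ for all $t \geq t_0$, so $\inf_{a\in\As}\ds(x_n,a) \to 0$ as $n \to \infty$. Strong compactness of $\As$ ensures that this infimum is attained, so I may pick $a_n \in \As$ with $\ds(x_n, a_n) \to 0$. Applying strong compactness of $\As$ a second time, extract a strongly convergent subsequence $a_{n_k} \to a \in \As$; the triangle inequality
\[
\ds(x_{n_k}, a) \leq \ds(x_{n_k}, a_{n_k}) + \ds(a_{n_k}, a) \longrightarrow 0
\]
then delivers the desired strongly convergent subsequence of $\{x_n\}$.

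I do not anticipate any real obstacle. The argument uses only the definitions of a strong global attractor (Definition \ref{d:attractor}) and asymptotic compactness (Definition \ref{d:AC}), together with a twofold use of strong compactness of $\As$: first to realize the distance $\inf_{a \in \As}\ds(x_n, a)$ by some $a_n \in \As$, and then to extract a convergent subsequence within $\As$ itself. Because the candidate limit $a$ is produced inside the compact set $\As$, no completeness hypothesis on the strong metric $\ds$ on $X$ is required. Neither assumption A1 nor the closure $\bar{\Dc}$ of the evolutionary system is invoked for this direction, which makes the equivalence a clean characterization of asymptotic compactness purely in terms of the existence of a strongly compact strong global attractor.
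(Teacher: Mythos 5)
Your proposal is correct and follows essentially the same route as the paper: the forward direction is delegated to Conclusion 1 of Theorem \ref{t:sA}, and the converse picks near-minimizers $a_n\in\As$ for $x_n$ using the strong attracting property, extracts a strongly convergent subsequence of the $a_n$ by compactness of $\As$, and concludes with the triangle inequality. The only cosmetic difference is that you invoke compactness to attain the infimum, which is unnecessary (choosing $y_k\in\As$ with $\ds(x_{n_k},y_k)<1/k$ suffices, as the paper does), but this changes nothing.
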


\begin{proof}The sufficiency is just the conclusion 1 of Theorem \ref{t:sA}. We now prove the necessity. Take any sequences $t_n\rightarrow+\infty $ as $n\rightarrow \infty$ and $x_n\in R(t_n)X$. Then, for any positive integer $k$, by Definition \ref{d:attractor}, there exist $x_{n_k} $ and $y_k$ such that, $y_k\in \As $ and $$\ds(x_{n_k},y_k)<\frac{1}{k}.$$  Since $\As$ is strongly compact, passing to a subsequence and dropping a subindex, we may assume that the sequence $\{y_k\}$ strongly converges with a limit $y$. It follows that
$$\ds(x_{n_k},y)\le\ds(x_{n_k},y_k)+\ds(y_k,y), \quad\forall k,$$which means that  the subsequence $\{x_{n_k}\}$ also strongly converges to $y$. Hence, $\{x_{n}\} $ is relatively compact, that is,  $\Dc$ is asymptotically compact.
\end{proof}

\begin{corollary}\label{t:SAtoSTA}
Let $\Dc$ be an evolutionary system satisfying A1 and let $\bar{\Dc}$ be the closure of $\Dc$. If the strongly compact strong global attractor $\As$ for $\Dc$ exists,  then the  strongly compact strong trajectory attractor $\mathfrak A_{\mathrm{s}}$ for $\Dc$ exists. Hence,
\begin{itemize}
\item[1.] $\mathfrak A_{\mathrm{s}}=\Pi_+\bar{\Dc}((-\infty, \infty))$ satisfies the finite strong uniform tracking property for $\Dc$, i.e., for any $\epsilon >0$ and $T>0$, there exist $t_0$ and a finite subset $P^f_T\subset \mathfrak A_{\s}|_{[0,T]}$, such
that for any $t^*>t_0$, every trajectory $u \in \Dc([0,\infty))$
satisfies
$$\ds(u(t), v(t-t^*)) < \epsilon, \quad\forall t\in [t^*,t^*+T],$$
for some $T$-time length piece  $v\in P^f_T$.
\item[2.]$\mathfrak A_{\s}=\Pi_+\bar{\Dc}((-\infty, \infty))$ is strongly equicontinuous on $[0,\infty)$, i.e.,
$$
\ds\left(v(t_1),v(t_2)\right)\leq \theta\left(|t_1-t_2|\right), \quad\forall\, t_1,t_2\ge 0, \  \forall v\in \mathfrak A_{\s},
$$
where $\theta(l) $ is a positive function tending
to $0$ as $l\rightarrow 0^+$.

\end{itemize}
\end{corollary}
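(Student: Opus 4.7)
The plan is essentially to chain together the two results that immediately precede the corollary, so the proof should be very short. The key observation is that Theorem \ref{t:suffience and necessity} provides an equivalence between asymptotic compactness of $\Dc$ and the existence of a strongly compact strong global attractor $\As$. Once asymptotic compactness is in hand, Theorem \ref{t:sA} (conclusions 2--4) delivers exactly the statements we want about $\mathfrak A_{\s}$.

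First I would invoke Theorem \ref{t:suffience and necessity} in the ``necessary'' direction: given that $\As$ exists and is strongly compact, $\Dc$ is asymptotically compact. Then, since we are additionally assuming that $\Dc$ satisfies A1, the hypotheses of Theorem \ref{t:sA} are met. Applying conclusion 2 of that theorem, the strong trajectory attractor $\mathfrak A_{\s}$ exists, is strongly compact, and coincides with $\Pi_+\bar{\Dc}((-\infty,\infty))$. This immediately yields the identification $\mathfrak A_{\s}=\Pi_+\bar{\Dc}((-\infty,\infty))$ asserted in items 1 and 2 of the corollary.

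Next, item 1 (the finite strong uniform tracking property for $\Dc$) is precisely conclusion 3 of Theorem \ref{t:sA}, so it is obtained at no extra cost. Likewise, item 2 (strong equicontinuity of $\mathfrak A_{\s}$ on $[0,\infty)$) is precisely conclusion 4 of Theorem \ref{t:sA}. Hence both bulleted conclusions follow by direct quotation.

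There is no genuine obstacle here: the real work has already been done in Theorems \ref{t:sA} and \ref{t:suffience and necessity}. The only thing to make explicit, for the reader's convenience, is that assuming the existence of a strongly compact $\As$ is a hypothesis at least as strong as asymptotic compactness, so that Theorem \ref{t:sA} applies verbatim; after that, items 1 and 2 are simply restatements of its conclusions 3 and 4 under the new label.
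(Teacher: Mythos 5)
Your proposal is correct and is exactly the argument the paper intends: the corollary is stated immediately after Theorem \ref{t:suffience and necessity} precisely so that its necessity direction upgrades the hypothesis to asymptotic compactness, after which Theorem \ref{t:sA} (conclusions 2--4) gives everything verbatim. This is also the same chain the paper itself uses in case iii) of the proof of Theorem \ref{t:saofESac}.
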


\begin{remark}\label{r:SAtoSTA}
With this corollary in hand, we are able to apply our theory to the systems for which the existence of strongly compact strong global attractors have been proved. After reformulate them by our framework of evolutionary systems,  the existence  of strongly compact strong trajectory attractors follow immediately, once A1 is verified. We will see this idea in the next sections where we apply our theory to the 2D Navier-Stokes equations and a general dissipative reaction-diffusion system.
\end{remark}

\subsection{Kernel of evolutionary system}\label{ss:Ker}In this subsection, we investigate further the kernels of evolutionary systems, especially of closed evolutionary systems and evolutionary systems with uniqueness.

\subsubsection{Closed evolutionary system} We introduce a closed evolutionary system $\Dc_\Sigma$ with symbol space $\Sigma$ and study its properties.
\begin{definition}\label{d:ESclosed}An evolutionary system $\Dc_{\Sigma}$ is (weakly) closed if for any $\tau \in \mathbb R$,   $u_n\in \Dc_{\sigma_n}([\tau,\infty))$,
the convergences $u_n\rightarrow u$ in $C([\tau, \infty);\Xw)$ and $\sigma_n\rightarrow
\sigma$ in some topological space $\Im$ as $n\rightarrow \infty$ imply $u\in \Dc_{\sigma}([\tau,\infty))$.
\end{definition}

\begin{lemma}\label{t:NonempfESsig}
Let $\Dc_{\Sigma}$ be a closed evolutionary system satisfying A1. Then, $\mathcal E_{\sigma} ((-\infty,\infty))$ is nonempty for any $\sigma\in \Sigma$.
\end{lemma}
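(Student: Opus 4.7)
The plan is to build an element of $\Dc_\sigma((-\infty, \infty))$ as the limit of increasingly long trajectories in $\Dc_\sigma([-N, \infty))$, relying on $T(N)\Sigma=\Sigma$ to turn one-sided trajectories into past-directed ones, on A1 for compactness, and on closedness to keep the limit in the system.

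First, for every $N \in \mathbb{N}$, the surjectivity $T(N)\Sigma = \Sigma$ yields a symbol $\sigma_N \in \Sigma$ with $T(N)\sigma_N = \sigma$, and property (1) of Definition \ref{d:NDc} supplies some $v_N \in \Dc_{\sigma_N}([0, \infty))$. Property (2) of Definition \ref{d:NDc}, applied with $I = [-N, \infty)$ and $s = N$ to the symbol $\sigma_N$, reads
\[
\Dc_{\sigma_N}([0, \infty)) = \{u: u(\cdot + N) \in \Dc_\sigma([-N, \infty))\},
\]
so that $u_N(\cdot) := v_N(\cdot + N) \in \Dc_\sigma([-N, \infty))$.

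Next, I would extract, for each fixed $k \in \mathbb{N}$ and $N \geq k$, a subsequence of $\{u_N|_{[-k, \infty)}\}$ convergent in $C([-k, \infty); \Xw)$. By property (3) of Definition \ref{d:NDc}, $u_N|_{[-k, \infty)} \in \Dc_\sigma([-k, \infty))$. Pick $\sigma^{(k)} \in \Sigma$ with $T(k)\sigma^{(k)} = \sigma$ and set $\tilde w_N^{(k)}(t) := u_N(t-k)$ on $[0, \infty)$. A second use of property (2), now with $\sigma^{(k)}$, $I = [-k, \infty)$ and $s = k$, gives $\tilde w_N^{(k)} \in \Dc_{\sigma^{(k)}}([0, \infty)) \subset \Dc_\Sigma([0, \infty)) = \Dc([0, \infty))$. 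Assumption A1 makes this last set precompact in $C([0, \infty); \Xw)$, and the translation $t \mapsto t-k$ is a homeomorphism from $C([-k, \infty); \Xw)$ onto $C([0, \infty); \Xw)$, so $\{u_N|_{[-k, \infty)}\}_{N \geq k}$ is precompact in $C([-k, \infty); \Xw)$. A Cantor diagonal argument then produces a subsequence $\{u_{N_j}\}$ and a function $u$ on $(-\infty, \infty)$ with $u_{N_j}|_{[-k, \infty)} \to u|_{[-k, \infty)}$ in $C([-k, \infty); \Xw)$ for every $k \in \mathbb{N}$.

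Finally, since each $u_{N_j}|_{[-k, \infty)}$ lies in $\Dc_\sigma([-k, \infty))$ with the constant, hence trivially convergent, symbol sequence $\sigma$, the closedness of $\Dc_\Sigma$ yields $u|_{[-k, \infty)} \in \Dc_\sigma([-k, \infty))$ for every $k$, and property (3) of Definition \ref{d:NDc} extends this to arbitrary $\tau \in \mathbb{R}$ by choosing $k \geq -\tau$. Property (4) of Definition \ref{d:NDc} then gives $u \in \Dc_\sigma((-\infty, \infty))$, which is therefore nonempty. The main bookkeeping obstacle is keeping the one-sided shift of property (2) consistent: the surjectivity $T(N)\Sigma = \Sigma$ is precisely what realizes every needed ``backward'' shift as a forward shift applied to a preimage symbol, both when constructing $u_N$ from $v_N$ and when transporting $u_N|_{[-k, \infty)}$ into $C([0, \infty); \Xw)$ so that A1 can be invoked.
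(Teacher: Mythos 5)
Your proof is correct and follows essentially the same route as the paper's: produce trajectories in $\Dc_\sigma([-N,\infty))$ for every $N$, use A1 (transported by translation) to get precompactness of the restrictions on each $[-k,\infty)$, diagonalize, invoke closedness with the constant symbol sequence $\sigma$, and glue via conditions (3)--(4) of Definition \ref{d:NDc}. The only difference is cosmetic: the paper gets the trajectories on $[-N,\infty)$ directly from condition (1) of Definition \ref{d:NDc}, which already asserts $\mathcal E_{\sigma}([\tau,\infty))\neq\emptyset$ for every $\tau$, so your detour through preimage symbols $\sigma_N$ with $T(N)\sigma_N=\sigma$ is valid but unnecessary.
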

\begin{proof}
Fix $\sigma\in \Sigma$. Since for any  $\tau \in \mathbb R$,   $\mathcal E_{\sigma} ([\tau,\infty))\neq \emptyset$, we are able to take a sequence $u_{n}(t)\in \mathcal E_{\sigma} ([t_n,\infty))$, where $t_n$ is decreasing and $t_n\rightarrow -\infty$ as $n\rightarrow \infty$. Thanks to A1 and the definition of $\Dc_{\Sigma}$, we have that, for every fixed $t_m$, $\Dc_{\Sigma}([t_m,\infty)) $ is precompact in $C([t_m, \infty);\Xw)$. Thus, together with the condition that $\Dc_{\Sigma}$ is closed, the sequence $$\left\{u_n(t)|_{[t_1,\infty)}\right\}_{n\ge 1}\subset \Dc_{\sigma}([t_1,\infty)) ,$$ passing to a subsequence and dropping a subindex, converges  in $C([t_1, \infty);\Xw)$  with a limit $u^1(t)\in \mathcal E_{\sigma} ([t_1,\infty))$.
Again, passing to a subsequence and dropping a subindex, the sequence $$\left\{u_n(t)|_{[t_2,\infty)}\right\}_{n\ge 2}\subset \Dc_{\sigma}([t_2,\infty)) ,$$converges in $C([t_2, \infty);\Xw)$ with a limit $u^2(t)\in \mathcal E_{\sigma} ([t_2,\infty))$.
It is easy to see that, $u^1(t)=u^{2}(t)$ on $[t_1,\infty)$.  By a standard diagonalization process, we obtain a subsequence of $u_n(t)$, still denoted by $u_n(t)$,  such that, for every $m\in \mathbb N$,
$$u_n(t)|_{[t_m,\infty)}\rightarrow u^m(t)\ \mbox{ in } C([t_m, \infty);\Xw), \mbox{ as } n\rightarrow \infty,$$
with $u^m(t)\in \mathcal E_{\sigma} ([t_m,\infty))$. Note that, for any $m$, $u^m(t)=u^{m+1}(t)$ on $[t_m,\infty)$. Hence, we can define $u(t)$ on $(-\infty, \infty)$, such that,
$$ u(t):=u^m(t), \quad \forall t\in [t_m,\infty),\  \forall m\in \mathbb N.$$
Obviously, it yields
$$u(t)|_{[\tau,\infty)}\in \mathcal E_{\sigma} ([\tau,\infty)), \quad\forall \tau\in \mathbb R,$$
which means that $u(t)\in \mathcal E_{\sigma} ((-\infty,\infty))$ by Definition \ref{d:NDc}.
\end{proof}
\begin{lemma}\label{t:kerofESSig}
Let $\Im$ be some topological space and
$\Sigma\subset \Im$ be sequentially compact in itself. Let $\Dc_{\Sigma}$ be a closed evolutionary system. Then,
\begin{align*}\mathcal E_{\Sigma} ((-\infty,\infty))=\bigcup_{\sigma\in {\Sigma}}\mathcal E_\sigma ((-\infty,\infty)).\end{align*}
\end{lemma}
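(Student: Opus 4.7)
The plan is to prove two inclusions. The $\supseteq$ direction is essentially by definition: if $u \in \mathcal E_\sigma((-\infty,\infty))$ for some $\sigma\in\Sigma$, then for every $\tau\in\mathbb R$ we have $u|_{[\tau,\infty)} \in \mathcal E_\sigma([\tau,\infty)) \subset \mathcal E_\Sigma([\tau,\infty))$, hence $u \in \mathcal E_\Sigma((-\infty,\infty))$. So the content is in $\subseteq$, and that is what I will focus on.

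For the nontrivial direction, fix $u \in \mathcal E_\Sigma((-\infty,\infty))$. By definition, for each $n\in\mathbb N$ the restriction $u|_{[-n,\infty)}$ lies in $\mathcal E_\Sigma([-n,\infty))$, so there exists $\sigma_n \in \Sigma$ with $u|_{[-n,\infty)} \in \mathcal E_{\sigma_n}([-n,\infty))$. Using property (3) of an evolutionary system (restriction stability), for every $\tau > -n$ we also have $u|_{[\tau,\infty)} \in \mathcal E_{\sigma_n}([\tau,\infty))$. By sequential compactness of $\Sigma$ in $\Im$, pass to a subsequence (not relabeled) so that $\sigma_n \to \sigma$ in $\Im$ for some $\sigma \in \Sigma$.

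I claim that $u \in \mathcal E_\sigma((-\infty,\infty))$. Fix any $\tau \in \mathbb R$ and choose $N$ so large that $-n < \tau$ for all $n \ge N$. Consider the constant sequence $u_n := u|_{[\tau,\infty)} \in \mathcal E_{\sigma_n}([\tau,\infty))$ for $n \ge N$. Trivially $u_n \to u|_{[\tau,\infty)}$ in $C([\tau,\infty); X_{\mathrm w})$, while $\sigma_n \to \sigma$ in $\Im$. The closedness hypothesis on $\Dc_\Sigma$ then yields $u|_{[\tau,\infty)} \in \mathcal E_\sigma([\tau,\infty))$. Since $\tau$ was arbitrary, the defining condition of $\mathcal E_\sigma((-\infty,\infty))$ in Definition~\ref{d:NDc}(4) gives $u \in \mathcal E_\sigma((-\infty,\infty))$, as desired.

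The only delicate point is that a priori the symbol $\sigma_n$ chosen at scale $-n$ may differ for different $n$; the role of sequential compactness of $\Sigma$ together with closedness of $\Dc_\Sigma$ is precisely to distill these symbols into one fixed $\sigma$ that simultaneously witnesses membership for every $\tau$. This step — extracting a single symbol valid for all time intervals — is the main (and only) obstacle, and it is exactly what the two hypotheses are tailored to overcome. No asymptotic compactness or property A1 is needed for the proof, only the structural conditions already imposed.
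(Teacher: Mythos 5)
Your proof is correct and follows essentially the same route as the paper's: pick symbols $\sigma_n$ witnessing membership on intervals tending to $-\infty$, extract a convergent subsequence $\sigma_n\to\sigma$ by sequential compactness of $\Sigma$, and apply closedness to the (constant) sequence of restrictions to conclude $u|_{[\tau,\infty)}\in\mathcal E_\sigma([\tau,\infty))$ for every $\tau$. No gaps.
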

\begin{proof}
Thanks to Definition \ref{d:NDc} and the definition of an evolutionary system $\Dc_{\Sigma}$, we have
$$\mathcal E_{\Sigma} ((-\infty,\infty))\supset \bigcup_{\sigma\in {\Sigma}}\mathcal E_\sigma ((-\infty,\infty)).$$
Take
$u(t)\in \mathcal E_{\Sigma} ((-\infty,\infty))$. Denote by
$$u_{n}(t):=u(t)|_{[t_n,\infty)}\in
\mathcal E_{\sigma_n}([t_n,\infty)),$$
where $\sigma_n\in \Sigma$, $t_n$ is decreasing and $t_n\rightarrow -\infty$ as $n\rightarrow \infty$. Since $\Sigma$ is sequentially compact in itself, passing to a subsequence and dropping a subindex, there exists $\sigma\in \Sigma$, such that $\sigma_n\rightarrow \sigma$ in $\Im$  as $n\rightarrow \infty$. Note that,  for every fixed $m$, $$u_{n}(t)|_{[t_m,\infty)}=u(t),\quad \forall n\ge m, t\in [t_m, \infty).$$Hence, by the closedness of $\Dc_\Sigma$,
$$u(t)|_{[t_m,\infty)}\in
\mathcal E_{\sigma}([t_m,\infty)), \quad \forall m\in \mathbb N.$$
This implies that $u(t)\in \mathcal E_{\sigma} ((-\infty,\infty))$ due to Definition \ref{d:NDc}. Therefore,
we have
$$\mathcal E_{\Sigma} ((-\infty,\infty))\subset \bigcup_{\sigma\in {\Sigma}}\mathcal E_\sigma ((-\infty,\infty)).$$
The proof is completed.
\end{proof}

\begin{lemma}\label{t:ClosureofESSig}
Let $\Im$ be some topological space and
$\Sigma\subset \Im$ be sequentially compact in itself. Let $\Dc_{\Sigma}$ be a closed evolutionary system.  Then, for any $\tau\in \mathbb R$, the set
\begin{align*}\mathcal E_{\Sigma} ([\tau,\infty))=\bigcup_{\sigma\in {\Sigma}}\mathcal E_\sigma ([\tau,\infty))\end{align*}
is closed in $C([\tau, \infty);\Xw)$.
\end{lemma}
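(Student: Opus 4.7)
The plan is a direct closedness argument using sequential compactness of $\Sigma$ together with the hypothesis that $\Dc_\Sigma$ is closed. First I would take an arbitrary sequence $\{u_n\} \subset \mathcal E_{\Sigma}([\tau,\infty))$ converging to some limit $u$ in $C([\tau,\infty); \Xw)$, and I would need to show that $u \in \mathcal E_{\Sigma}([\tau,\infty))$. By the definition of $\mathcal E_\Sigma([\tau,\infty))$ as the union over $\sigma \in \Sigma$ of $\mathcal E_\sigma([\tau,\infty))$, for each $n$ there exists $\sigma_n \in \Sigma$ with $u_n \in \mathcal E_{\sigma_n}([\tau,\infty))$.

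Next I would invoke sequential compactness of $\Sigma$ in $\Im$ to extract a subsequence $\sigma_{n_k} \to \sigma$ in $\Im$ with $\sigma \in \Sigma$. Passing to the corresponding subsequence $u_{n_k}$, convergence in $C([\tau,\infty);\Xw)$ is preserved, so $u_{n_k} \to u$ in $C([\tau,\infty);\Xw)$ and $\sigma_{n_k} \to \sigma$ in $\Im$. The closedness of $\Dc_\Sigma$ then yields directly that $u \in \mathcal E_\sigma([\tau,\infty)) \subset \mathcal E_\Sigma([\tau,\infty))$, completing the proof.

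The equality $\mathcal E_{\Sigma}([\tau,\infty)) = \bigcup_{\sigma \in \Sigma}\mathcal E_\sigma([\tau,\infty))$ in the statement is just the definition of $\mathcal E_\Sigma$ for $I \in \mathcal{T}\setminus\{(-\infty,\infty)\}$ recalled in Subsect.~\ref{ss:NonautoES}, so no extra argument is needed there (unlike the kernel case treated in Lemma~\ref{t:kerofESSig}). I do not anticipate any genuine obstacle: the only subtlety is verifying that the subsequence extraction respects both the convergence in function space and the convergence of symbols simultaneously, which is automatic since passing to a subsequence only strengthens the former. The proof is essentially a three-line invocation of the two hypotheses.
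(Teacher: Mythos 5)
Your proposal is correct and follows exactly the same route as the paper's proof: extract a convergent subsequence of symbols via the sequential compactness of $\Sigma$, then apply the closedness of $\Dc_\Sigma$ to the pair of convergent (sub)sequences. The remark that the displayed equality is just the definition of $\mathcal E_\Sigma$ on $[\tau,\infty)$ is also accurate.
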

\begin{proof}
Take a sequence $\{u_{n}(t)\}$ that
$u_{n}(t)\in \mathcal E_{\sigma_n} ([\tau,\infty))$, $\sigma_n\in  \Sigma$ and $u_{n}(t)\rightarrow u(t)$ in $C([\tau, \infty);\Xw)$ as $n\rightarrow \infty$. Due to the sequential compactness of  $\Sigma$, passing to a subsequence and dropping a subindex, there exists $\sigma\in \Sigma$ that is the limit of $\{\sigma_n\}$ in $\Im$.
Since $\Dc_\Sigma$ is closed,  we have $u(t)\in \mathcal E_{\sigma} ([\tau,\infty))$. Hence, $\mathcal E_{\Sigma} ([\tau,\infty))$ is a closed set in $C([\tau, \infty);\Xw)$.
\end{proof}

\subsubsection{Evolutionary system with uniqueness}\label{sss:ESuni}
Let $\Dc_{\bar{\Sigma}}$ be an evolutionary system with symbol space $\bar{\Sigma}$, and let $\Sigma\subset
\bar{\Sigma}$ be such that $T(h)\Sigma=\Sigma$ for all $h\geq0$.
It is easy to check that the subfamily of maps $\{ \Dc_\sigma\}$, $\sigma\in \Sigma$ is also an evolutionary
system, with $\Sigma$ as its symbol space. Then, it is  an evolutionary subsystem of $\Dc_{\bar{\Sigma}}$. Denote it by $\Dc_{\Sigma}$, as in Section \ref{s:ES}.
We suppose in this subsection that $\bar{\Sigma}$ is the sequential closure of $\Sigma$ in some topological space $\Im$.

\begin{theorem}\label{t:A0Aw}Let $\Dc_\Sigma$ be an evolutionary system with uniqueness and  with symbol space $\Sigma$ satisfying A1 and let $\bar{\Dc}_\Sigma$ be the closure of $\Dc_\Sigma$.
Let $\bar{\Sigma}$ be the sequential closure of $\Sigma$ in some topological space
$\Im$ and  $\Dc_{\bar{\Sigma}}\supset\Dc_\Sigma$ be  a closed evolutionary system with uniqueness and with symbol space $\bar{\Sigma}$.
Then, $\Dc_{\bar{\Sigma}}\subset\bar{\Dc}_\Sigma$. Hence,
\begin{itemize}
\item[1.] The three
weak uniform global attractors $\Aw^\Sigma$, $\bar{\mathcal A}^\Sigma_{\mathrm w}$ and $\mathcal A^{\bar{\Sigma}}_{\mathrm{w}}$ for
evolutionary systems $\Dc_\Sigma$, $\bar{\Dc}_\Sigma$ and $\Dc_{\bar{\Sigma}}$,
respectively, exist.
\item[2.]$\Aw^\Sigma$, $\bar{\mathcal A}^\Sigma_{\mathrm w}$ and $\mathcal A^{\bar{\Sigma}}_{\mathrm{w}}$  are  the maximal
invariant and maximal quasi-invariant set w.r.t. $\bar{\Dc}_{\Sigma}$ and satisfy the following:
\begin{equation*}\label{i:A0unistru}
\begin{split}
\Aw^\Sigma&=\bar{\mathcal A}^\Sigma_{\mathrm
w}=\mathcal A^{\bar{\Sigma}}_{\mathrm{w}}=\left\{ u_0: \ u_0=u(0) \mbox{ for some } u \in \bar{\Dc}_\Sigma((-\infty,
\infty))\right\}.
\end{split}
\end{equation*}
\item[3.]The three
weak trajectory attractors $\mathfrak A_{\w}^\Sigma$, $\bar{\mathfrak A}_{\w}^\Sigma$ and $\mathfrak A_{\w}^{\bar{\Sigma}}$ for  $\Dc_\Sigma$, $\bar{\Dc}_\Sigma$ and $\Dc_{\bar{\Sigma}}$,
respectively, exist and satisfy the following:
\begin{equation*}\label{i:A0unistru}
\begin{split}
\mathfrak A_{\w}^\Sigma&=\bar{\mathfrak A}^\Sigma_{\mathrm
w}=\mathfrak A^{\bar{\Sigma}}_{\mathrm{w}}=\Pi_+\bar{\Dc}_{\Sigma}((-\infty, \infty)).\\
\end{split}
\end{equation*}
Hence, the three
weak trajectory attractors satisfy the finite weak uniform tracking property for all the three evolutionary systems and are weakly equicontinuous on $[0,\infty)$.
\item[4.]$\Aw^\Sigma$, $\bar{\mathcal A}^\Sigma_{\mathrm w}$ and $\mathcal A^{\bar{\Sigma}}_{\mathrm{w}}$ are sections of $\mathfrak A_{\w}^\Sigma$, $\bar{\mathfrak A}_{\w}^\Sigma$ and $\mathfrak A_{\w}^{\bar{\Sigma}}$:
\[\Aw^\Sigma=\bar{\mathcal A}^\Sigma_{\mathrm
w}=\mathcal A^{\bar{\Sigma}}_{\mathrm{w}}=\mathfrak A_{\w}^\Sigma(t)=\bar{\mathfrak A}^\Sigma_{\mathrm
w}(t)=\mathfrak A^{\bar{\Sigma}}_{\mathrm{w}}(t),\quad \forall\, t\geq 0.\]
\end{itemize}
Furthermore, assume that $\bar{\Sigma}\subset \Im$ is sequentially compact in itself. Then, $\Dc_{\bar{\Sigma}}=\bar{\Dc}_\Sigma$. Hence,
\begin{itemize}
\item[5.]The following relationships on kernels hold: \begin{align*}\bar{\Dc}_\Sigma((-\infty,\infty))=\mathcal E_{\bar{\Sigma}} ((-\infty,\infty))=\bigcup_{\sigma\in {\bar{\Sigma}}}\mathcal E_\sigma ((-\infty,\infty)),\end{align*}and $\mathcal E_\sigma ((-\infty,\infty))$ is nonempty for any $\sigma\in \bar{\Sigma}$.
\end{itemize}
\end{theorem}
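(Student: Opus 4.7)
The plan centers on establishing the single inclusion $\Dc_{\bar{\Sigma}}\subset\bar{\Dc}_\Sigma$; once this is in hand, conclusions 1--4 reduce to book-keeping with Theorem \ref{t:wA}, and conclusion 5 follows from Lemmas \ref{t:NonempfESsig}--\ref{t:kerofESSig}.

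For the key inclusion I fix $\tau\in\mathbb{R}$ and a trajectory $u\in\Dc_{\bar{\sigma}}([\tau,\infty))$ with $\bar{\sigma}\in\bar{\Sigma}$. Since $\bar{\Sigma}$ is the sequential closure of $\Sigma$ in $\Im$, I pick $\sigma_n\in\Sigma$ with $\sigma_n\to\bar{\sigma}$, and then, invoking uniqueness for $\Dc_\Sigma$, launch trajectories $u_n\in\Dc_{\sigma_n}([\tau,\infty))$ from the common initial datum $u(\tau)$. Property A1 makes $\{u_n\}$ precompact in $C([\tau,\infty);\Xw)$, so after extracting a subsequence $u_n\to v$ in that space; closedness of $\Dc_{\bar{\Sigma}}$ places $v\in\Dc_{\bar{\sigma}}([\tau,\infty))$, and uniqueness of $\Dc_{\bar{\Sigma}}$ together with $v(\tau)=u(\tau)$ forces $v=u$. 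Hence $u\in\bar{\Dc}_\Sigma([\tau,\infty))$. The complete-trajectory case $I=(-\infty,\infty)$ is then immediate from property (4) of Definition \ref{d:NDc}.

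With this inclusion the chain $\Dc_\Sigma\subset\Dc_{\bar{\Sigma}}\subset\bar{\Dc}_\Sigma$ makes all three $C([\tau,\infty);\Xw)$-closures equal to $\bar{\Dc}_\Sigma$ (closure is idempotent), so each of $\Dc_\Sigma$, $\Dc_{\bar{\Sigma}}$, $\bar{\Dc}_\Sigma$ inherits A1 and shares the kernel $\bar{\Dc}_\Sigma((-\infty,\infty))$. Applying Theorem \ref{t:wA} to each then delivers: the existence and coincidence of the three weak global attractors of conclusion 1, identified by the kernel formula of Theorem \ref{t:wA}(3) and the maximal (quasi-)invariance asserted in conclusion 2; the existence and coincidence of the three weak trajectory attractors $\mathfrak A_\w^\Sigma=\bar{\mathfrak A}_\w^\Sigma=\mathfrak A_\w^{\bar{\Sigma}}=\Pi_+\bar{\Dc}_\Sigma((-\infty,\infty))$ of conclusion 3 from Theorem \ref{t:wA}(4), together with the finite weak uniform tracking property and the weak equicontinuity packaged in the same statement; and the section identity of conclusion 4 from Theorem \ref{t:wA}(5).

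For conclusion 5, sequential compactness of $\bar{\Sigma}$ in $\Im$ supplies the reverse inclusion $\bar{\Dc}_\Sigma\subset\Dc_{\bar{\Sigma}}$: given $u\in\bar{\Dc}_\Sigma([\tau,\infty))$, pick $u_n\in\Dc_{\sigma_n}([\tau,\infty))$ with $\sigma_n\in\Sigma$ and $u_n\to u$ in $C([\tau,\infty);\Xw)$, extract $\sigma_n\to\sigma\in\bar{\Sigma}$, and apply closedness of $\Dc_{\bar{\Sigma}}$ to place $u\in\Dc_\sigma([\tau,\infty))$. The resulting equality $\Dc_{\bar{\Sigma}}=\bar{\Dc}_\Sigma$ plugs into Lemma \ref{t:kerofESSig} for the kernel decomposition, and Lemma \ref{t:NonempfESsig} guarantees non-emptiness of each $\mathcal E_\sigma((-\infty,\infty))$. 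The main obstacle in the whole argument is the opening inclusion: only the simultaneous appeal to uniqueness on both sides rigidifies the approximant $u_n$ and then identifies its limit with the given $u$; without uniqueness on either side the identification of $v$ with $u$ collapses, and one obtains at best that $u$ lies in the closure of a much larger family of trajectories.
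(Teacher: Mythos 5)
Your proposal is correct and follows essentially the same route as the paper: the key inclusion $\Dc_{\bar{\Sigma}}\subset\bar{\Dc}_\Sigma$ is obtained exactly as in the paper (approximate the symbol, launch trajectories from the common initial datum via uniqueness of $\Dc_\Sigma$, extract a limit via A1, and identify it with the given trajectory via closedness and uniqueness of $\Dc_{\bar{\Sigma}}$), and conclusions 1--5 are then read off from Theorem \ref{t:wA} and Lemmas \ref{t:NonempfESsig}--\ref{t:ClosureofESSig}. The only cosmetic difference is that you derive the coincidence of the three attractors from the equality of the three closures, while the paper sandwiches the global attractors directly; both are routine book-keeping.
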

\begin{proof}
For any $\tau\in \mathbb R$ and $u_{\sigma}(t)\in\mathcal E_\sigma ([\tau,\infty))$ with $\sigma\in \bar{\Sigma}\backslash \Sigma$, we can take a sequence $\{\sigma_n\}\subset \Sigma$ such that $\sigma_n \rightarrow \sigma$ in $\Im$ as $n\rightarrow \infty$ since $\bar{\Sigma} $ is the sequential closure
of $\Sigma$. Because of the uniqueness of $\mathcal E_\Sigma$, there is a sequence $\{u_{n}(t)\}\subset \mathcal E_{\sigma_n} ([\tau,\infty))$ with $u_{n}(\tau)=u_{\sigma}(\tau)$. Passing to a subsequence and dropping a subindex, $\{u_{n}(t)\}$ converges to a limit $u(t)$ in $C([\tau, \infty);\Xw)$ due to A1. It follows that the  limit $u(t)\in \mathcal E_\sigma ([\tau,\infty))$  for  $\sigma_n \rightarrow \sigma$  in $\Im$ and $\mathcal E_{\bar{\Sigma}}$ is closed.  Note that  $u(\tau)=u_{\sigma}(\tau)$. By the  uniqueness of $\mathcal E_{\bar{\Sigma}}$, we have $u(t)=u_{\sigma}(t)$, $t\ge \tau$. Therefore, $\Dc_{\bar{\Sigma}}\subset\bar{\Dc}_\Sigma$.

The existence of $\Aw^\Sigma$, $\bar{\mathcal A}^\Sigma_{\mathrm
w}$ and  $\mathcal A^{\bar{\Sigma}}_{\mathrm{w}}$ follow from Theorem \ref{t:wA}.

Obviously, we have,   $\Aw^\Sigma\subset\mathcal A^{\bar{\Sigma}}_{\mathrm{w}}\subset\bar{\mathcal A}^\Sigma_{\mathrm
w}$, for  $\Dc_{\Sigma}\subset\Dc_{\bar{\Sigma}}\subset\bar{\Dc}_\Sigma$. On the other hand,  since $\mathcal E_\Sigma$ satisfies A1, $\Aw^\Sigma=\bar{\mathcal A}^\Sigma_{\mathrm
w}$, which deduces that $\mathcal A^{\bar{\Sigma}}_{\mathrm{w}}=\Aw^\Sigma$. The rest results of the conclusion 2 are also obtained by Theorem \ref{t:wA}.

Note that $\mathcal E_\Sigma$ satisfies A1 implies that $\bar{\Dc}_\Sigma$ satisfies \=A1 and then $\Dc_{\bar{\Sigma}}$ satisfies A1.  Thus, the conclusions 3 and 4 follow again from Theorem \ref{t:wA} in a similar way.

Now suppose that $\bar{\Sigma}$ is sequentially compact in $\Im$. Thanks to Lemma \ref{t:ClosureofESSig},  for any $\tau\in \mathbb R$, the set
$\mathcal E_{\bar{\Sigma}} ([\tau,\infty))$ is closed in $C([\tau, \infty);\Xw)$, which implies that $\bar{\Dc}_\Sigma\subset\Dc_{\bar{\Sigma}}$. Hence, $\bar{\Dc}_\Sigma=\Dc_{\bar{\Sigma}}$. The last equality in the conclusion 5 is deduced from Lemma \ref{t:kerofESSig}.  Finally, by Lemma \ref{t:NonempfESsig}, $\mathcal E_\sigma ((-\infty,\infty))$ is nonempty for every $\sigma\in \bar{\Sigma}$.
\end{proof}
\begin{theorem}\label{t:saofESac}
Under the conditions of Theorem \ref{t:A0Aw}, assume that one of the followings is valid:
\begin{itemize}
\item[i).]$\bar{\Dc}_\Sigma$ is asymptotically compact.
\item[ii).] $\Dc_\Sigma$  satisfies A1, A2, and
A3, and $\bar{\Dc}((-\infty,\infty))\subset C((-\infty, \infty);\Xs)$.
\item[iii).]$\bar{\Dc}_\Sigma$ possesses a strongly compact strong global attractor.
\end{itemize}
Then the three
weak uniform global attractors in Theorem \ref{t:A0Aw} are
strongly compact strong uniform global attractors and  the three
weak trajectory attractors are
strongly compact strong trajectory attractors.  Moreover, the three trajectory attractors satisfy the finite
strong uniform tracking property for all the three evolutionary systems and are strongly equicontinuous on $[0,\infty)$.
\end{theorem}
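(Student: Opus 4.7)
My plan is to reduce each of the three hypotheses (i)--(iii) to asymptotic compactness of the closure $\bar{\Dc}_\Sigma$, apply Theorem~\ref{t:sA} to $\bar{\Dc}_\Sigma$ to obtain its strongly compact strong global attractor $\bar{\mathcal A}^\Sigma_\s$ and strongly compact strong trajectory attractor $\bar{\mathfrak A}^\Sigma_\s$ together with the finite strong uniform tracking property and the strong equicontinuity, and then transfer every conclusion to $\Dc_\Sigma$ and $\Dc_{\bar{\Sigma}}$ using the equalities of weak attractors and the chain of inclusions $\Dc_\Sigma\subset \Dc_{\bar{\Sigma}}\subset \bar{\Dc}_\Sigma$ furnished by Theorem~\ref{t:A0Aw}. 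Note that $\bar{\Dc}_\Sigma$ automatically satisfies \={A1} because $\Dc_\Sigma$ satisfies A1, so the A1 hypothesis of Theorem~\ref{t:sA} is free for $\bar{\Dc}_\Sigma$.

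\textbf{Step 1: asymptotic compactness of $\bar{\Dc}_\Sigma$.} Under (i) this is given. Under (iii) it follows immediately from Theorem~\ref{t:suffience and necessity}, since a strongly compact strong global attractor for $\bar{\Dc}_\Sigma$ is equivalent to its asymptotic compactness. Under (ii) I would apply Theorem~\ref{t:A123Comp} to $\bar{\Dc}_\Sigma$ itself (whose closure is $\bar{\Dc}_\Sigma$, so the strong-continuity hypothesis on complete trajectories is exactly the one given); the properties A2 and A3 should be shown to pass from $\Dc_\Sigma$ to $\bar{\Dc}_\Sigma$ by a density argument. Given $u\in\bar{\Dc}_\Sigma([0,\infty))$ and $u_n\in\Dc_\Sigma([0,\infty))$ with $u_n\to u$ in $C([0,\infty);\Xw)$, A3 for $\Dc_\Sigma$ upgrades weak convergence to $\ds(u_n(t_0),u(t_0))\to 0$ for a.e.\ $t_0$; at such $t_0$ the energy inequality $|u_n(t)|\le|u_n(t_0)|+\epsilon$ passes to the limit using weak lower semicontinuity of $|\cdot|$ at $t$, giving A2 for $u$, and a diagonal interleaving of two $\bar{\Dc}_\Sigma$-sequences with their $\Dc_\Sigma$-approximants yields A3 for $\bar{\Dc}_\Sigma$.

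\textbf{Step 2: apply Theorem~\ref{t:sA} to $\bar{\Dc}_\Sigma$.} This yields directly: $\bar{\mathcal A}^\Sigma_\s=\bar{\mathcal A}^\Sigma_\w$ is strongly compact; $\bar{\mathfrak A}^\Sigma_\s=\bar{\mathfrak A}^\Sigma_\w=\Pi_+\bar{\Dc}_\Sigma((-\infty,\infty))$ is strongly compact; $\bar{\mathfrak A}^\Sigma_\s$ satisfies the finite strong uniform tracking property for $\bar{\Dc}_\Sigma$; and $\bar{\mathfrak A}^\Sigma_\s$ is strongly equicontinuous on $[0,\infty)$.

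\textbf{Step 3: transfer to $\Dc_\Sigma$ and $\Dc_{\bar{\Sigma}}$.} By Theorem~\ref{t:A0Aw} the three weak global attractors coincide with $\bar{\mathcal A}^\Sigma_\w=\bar{\mathcal A}^\Sigma_\s$, hence all three are strongly compact, and analogously the three weak trajectory attractors coincide with the strongly compact set $\bar{\mathfrak A}^\Sigma_\s$. Since every trajectory of $\Dc_\Sigma$ or $\Dc_{\bar{\Sigma}}$ is a trajectory of $\bar{\Dc}_\Sigma$, the strong attraction of $\bar{\mathcal A}^\Sigma_\s$ and the finite strong uniform tracking property obtained in Step~2 automatically apply to these subsystems as well. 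For minimality: any strongly closed strongly attracting set for $\Dc_\Sigma$ (respectively $\Dc_{\bar{\Sigma}}$) is in particular weakly closed and weakly attracting, hence contains the weak global attractor, which equals $\bar{\mathcal A}^\Sigma_\s$; therefore $\bar{\mathcal A}^\Sigma_\s$ is the minimal such set and is the strong global attractor $\As^\Sigma=\mathcal A^{\bar{\Sigma}}_\s$. The identical argument based on Lemma~\ref{l:stas-is-wtas} handles the strong trajectory attractors. The strong equicontinuity on $[0,\infty)$ is immediate once the three trajectory attractors are identified with $\bar{\mathfrak A}^\Sigma_\s$.

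\textbf{Main obstacle.} The delicate point is the case (ii): pushing A2 and A3 from $\Dc_\Sigma$ to its closure $\bar{\Dc}_\Sigma$ requires the diagonal interleaving sketched above, because weak convergence in $C([0,\infty);\Xw)$ of $u_n$ to $u$ does not by itself give $\ds(u_n(t),u(t))\to 0$ at any prescribed $t$, and one has to use A3 at the $\Dc_\Sigma$-level to generate a common full-measure set of times on which strong convergence is available.
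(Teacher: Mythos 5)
Your overall strategy --- reduce each of (i)--(iii) to asymptotic compactness of $\bar{\Dc}_\Sigma$ and then invoke Theorem~\ref{t:sA} --- is exactly the paper's. Cases (i) and (iii) are handled identically (the paper also uses Theorem~\ref{t:suffience and necessity} for (iii)), and for (ii) the paper does what you propose, except that instead of proving from scratch that A2 and A3 pass to the closure it cites Lemmas 3.1 and 3.4 of [CL14] to get \={A1}, \={A2}, \={A3} for $\bar{\Dc}_\Sigma$ and then applies Theorem~\ref{t:A123Comp} to $\bar{\Dc}_\Sigma$; your density/interleaving sketch is essentially the content of those cited lemmas, so this is not a gap.

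The one place where your argument genuinely deviates --- and stumbles --- is Step 3. The paper simply observes that once $\bar{\Dc}_\Sigma$ is asymptotically compact, so are the subsystems $\Dc_\Sigma$ and $\Dc_{\bar{\Sigma}}$ (their maps $R(t)$ take values inside those of $\bar{\Dc}_\Sigma$), and each satisfies A1, so Theorem~\ref{t:sA} is applied to each of the three systems directly; no transfer of minimality is needed. Your transfer argument instead asserts that a strongly closed strongly attracting set $P$ is ``in particular weakly closed'', which is backwards: the standing assumption on the two metrics gives that weakly closed sets are strongly closed, not the converse. As written your minimality step only yields $\Aw\subset\overline{P}^{\mathrm{w}}$, not $\Aw\subset P$. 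The repair is either the paper's route, or the standard $\omega$-limit argument: for an asymptotically compact subsystem every point of $\Aw=\ws(X)$ is a strong limit of points $R(t_n)x_n$ with $t_n\to\infty$, and these are strongly attracted to $P$, so the limit lies in $\overline{P}^{\mathrm{s}}=P$. With that correction (or simply by quoting Theorem~\ref{t:sA} for each system, as the paper does), everything else in your proposal is sound.
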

\begin{proof}Assume that i) is valid. Obviously, $\Dc_\Sigma$ and $\Dc_{\bar{\Sigma}}$  are also asymptotically compact.  All the  conclusions follow from Theorem \ref{t:sA}.

Assume that ii) holds. Thanks to Lemmas 3.1 and 3.4 in \cite{CL14}, $\bar{\Dc}_\Sigma$  satisfies \=A1, \=A2, and
\=A3. Therefore, by Theorem \ref{t:A123Comp}, $\bar{\Dc}_\Sigma$ is asymptotically compact. All the conclusions follow from i) just proved.

Now suppose that $\bar{\Dc}_\Sigma$ possesses a strongly compact strong global attractor. Due to Theorem \ref{t:suffience and necessity},  $\bar{\Dc}_\Sigma$ is also asymptotically compact. Then, all the conclusions are obtained.

 We complete the proof.
\end{proof}
\begin{remark}The assumption that  $\bar{\Sigma}\subset\Im$ is the sequential closure of $\Sigma$ was actually used in the proof of Theorem 3.10 in \cite{CL14}. Here is a  remediation for more preciseness. In many applications, the topology of $\Im$ is metrizable on
$\Sigma$. Then, in these cases, the notions of a closure and a sequential closure are equivalent, as well as a compact set and a sequential compact set.
\end{remark}
Theorems \ref{t:A0Aw} and \ref{t:saofESac}  generalize Theorems 3.10-3.13 in \cite{CL14}, and the related results  in \cite{CV02, LWZ05, Lu06, Lu07} where were concerned with the classical cases of a process and a family of processes.

Note that a process and a family of processes always define evolutionary systems with uniqueness, as shown in \cite{CL14}. Let $\{U_\sigma(t,\tau)\}$, $\sigma \in \bar{\Sigma}$ be a family of processes acting on a separable reflexive Banach space $H$: For any $\sigma\in \bar{\Sigma}$, $\tau\in\mathbb{R}$, the following conditions are satisfied:
\begin{equation*}\label{definitionprocesses}\begin{split}
U_{\sigma}(t,\tau)&: H\rightarrow H \text{ is single-valued},\quad \forall\, t\ge \tau,\\
U_{\sigma}(t,s)\circ U_{\sigma}(s,\tau)&=U_{\sigma}(t,\tau),
\quad \forall\, t\ge s\ge\tau,
\\U_{\sigma}(\tau,\tau)&=\text{Identity
operator}.
\end{split}
\end{equation*}
Assume that it satisfies the following translation
identity arising naturally from applications:
\begin{equation*}\label{translation identity}U_{\sigma}(t+h,\tau+h)=U_{T(h)\sigma}(t,\tau),\ \forall\,
\sigma\in\bar{\Sigma},\ t\ge\tau,\ \tau\in\mathbb{R},\ h\ge 0,
\end{equation*}
and $T(s)\bar{\Sigma}=\bar{\Sigma}$, $\forall s\geq 0$. Here $\{T(s)\}_{s\ge 0}$ is the translation semigroup. Assume further that the family of processes is dissipative, i.e., there exists a uniformly (w.r.t. $\sigma\in
\bar{\Sigma}$) absorbing ball $B$:
  For any  $\tau\in\mathbb{R}$ and bounded set $A\subset H$, there exists
$t_0=t_0(A)\geq \tau$, such that,
\begin{equation*}\label{d:absorbingball}\bigcup_{\sigma\in\bar{\Sigma}}\bigcup_{t\geq
t_0}U_{\sigma}(t+\tau,\tau)A\subset B.
\end{equation*}
 Now take $X=B$. Note that since $X$ is a bounded subset of a separable
reflexive Banach space, both the strong and the weak topologies on
$X$ are metrizable. Define a family of maps $\Dc_{\sigma}$, $\sigma\in  \bar{\Sigma}$ in the following way:
$$\Dc_{\sigma}([\tau,\infty)):=\left\{u(\cdot):u(t)=U_{\sigma}(t,\tau)u_{\tau}, u_{\tau}\in X, t\geq\tau\right\}.$$
Conditions 1-4 in Definition \ref{d:NDc} are verified by the definition of the family of processes $\{U_\sigma(t,\tau)\}$, $\sigma\in \bar{\Sigma}$. Hence, the family of processes  $\{U_\sigma(t,\tau)\}$, $\sigma\in \bar{\Sigma}$
 defines an evolutionary system $\Dc_{\bar{\Sigma}}$ with uniqueness and with symbol space $\bar{\Sigma}$.
In applications, $\bar{\Sigma}$ is typically a closure of $\Sigma$
in some appropriate functional space $\Im$, where
$\Sigma:=\{\sigma_0(\cdot+h)|h\in \mathbb R\}$ is the translation
family of a fixed symbol $\sigma_0$.  It can be  checked
\cite{CL14} that the family of processes $\{U_\sigma(t,\tau)\}$,
$\sigma \in \Sigma$, or equivalently, the process $\{U_{\sigma_0}(t,\tau)\}$, also defines  an evolutionary system $\Dc_\Sigma$ with uniqueness and with
symbol space $\Sigma$. It is an evolutionary
subsystem of the former system.

In \cite{LWZ05, Lu06, Lu07}, analogous to the closedness condition, a (weak) continuity condition on  a family of processes was used.  A family of processes
$\{U_\sigma(t,\tau)\}$, $\sigma \in \bar{\Sigma}$  is  $(\Hw\times\bar{\Sigma}, \Hw)$ continuous, if for
any $t\ge\tau$, $\tau\in \mathbb R$, the mapping $(u, \sigma)\rightarrow U_\sigma(t,\tau)u$ is continuous from $ \Hw\times\bar{\Sigma}$ to  $\Hw$. In words of the associated evolutionary system $\Dc_{\bar{\Sigma}}$, we have the following definition.
 \begin{definition}\label{d:XSigmaContinuityofE}An evolutionary system $\mathcal E_{ \bar{\Sigma}}$ with uniqueness and with symbol space $\bar{\Sigma}$ is  $(\Xw\times\bar{\Sigma}, \Xw)$ continuous if for
any $\tau\in \mathbb R$, $\sigma_n, \sigma\in \bar{\Sigma}$, $u_{n}(t)\in \mathcal E_{\sigma_n} ([\tau,\infty))$,  the convergences $u_{n}(\tau)\rightarrow u_\tau$ in $\Xw$ and $\sigma_n \rightarrow\sigma$ in some topological space $\Im$ as $n\rightarrow \infty$ imply $u_{n}(t)\rightarrow u(t)$ in $\Xw$ for all $t\ge\tau$ with a pointwise limit $u(t) \in \mathcal E_{\sigma} ([\tau,\infty))$ and $u(\tau)=u_\tau$.
\end{definition}
Now we have several lemmas about the conditions of  A1, closedness and $(\Xw\times\bar{\Sigma}, \Xw)$ continuity on an evolutionary system with uniqueness.
The first two are obvious.
\begin{lemma}\label{t:closedtocontinuous}
Let $\mathcal E_{ \bar{\Sigma}}$ be an evolutionary system with uniqueness and with symbol space $\bar{\Sigma}$. If $\mathcal E_{ \bar{\Sigma}}$ is $(\Xw\times\bar{\Sigma}, \Xw)$ continuous, then $\mathcal E_{ \bar{\Sigma}}$ is closed.
\end{lemma}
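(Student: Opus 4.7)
The plan is to unwrap the two definitions and show that convergence in $C([\tau,\infty);X_{\mathrm w})$ is strictly stronger at every time point $t\ge\tau$ than what the $(\Xw\times\bar\Sigma,\Xw)$ continuity needs as input, after which uniqueness of limits in $\Xw$ finishes the argument.

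First I would fix $\tau\in\mathbb R$ and a sequence $u_n\in\mathcal E_{\sigma_n}([\tau,\infty))$ with $u_n\to u$ in $C([\tau,\infty);\Xw)$ and $\sigma_n\to\sigma$ in $\Im$, this being the data furnished by the hypothesis of closedness. Since convergence in $C([\tau,\infty);\Xw)$ is uniform convergence on compact subsets of $[\tau,\infty)$, it is in particular pointwise convergence in $\Xw$; evaluating at $t=\tau$ gives $u_n(\tau)\to u(\tau)$ in $\Xw$. Setting $u_\tau:=u(\tau)$, I would then feed $\{u_n\}$, $\{\sigma_n\}$, $u_\tau$, $\sigma$ into Definition \ref{d:XSigmaContinuityofE}: the $(\Xw\times\bar\Sigma,\Xw)$ continuity produces a trajectory $\tilde u(t)\in\mathcal E_\sigma([\tau,\infty))$ with $\tilde u(\tau)=u(\tau)$ and $u_n(t)\to\tilde u(t)$ in $\Xw$ for every $t\ge\tau$.

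Now I have two pointwise $\Xw$-limits of the same sequence $\{u_n(t)\}$: namely $u(t)$ (from the $C([\tau,\infty);\Xw)$ convergence) and $\tilde u(t)$ (from the continuity assumption). Because $\Xw$ is a metric space (so limits are unique), $\tilde u(t)=u(t)$ for every $t\ge\tau$, whence $u=\tilde u\in\mathcal E_\sigma([\tau,\infty))$. This is precisely the conclusion of closedness, so the lemma follows.

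There is no real obstacle here beyond carefully matching the two definitional quantifier patterns; the only point that could trip one up is noticing that uniqueness of the solution starting from $u(\tau)$ under symbol $\sigma$ is not needed at this step (uniqueness of the pointwise limit in the metric space $\Xw$ suffices), although the uniqueness assumption on $\mathcal E_{\bar\Sigma}$ is of course already built into the very statement of Definition \ref{d:XSigmaContinuityofE}. Thus the proof is essentially a one-paragraph verification.
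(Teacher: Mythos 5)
Your proof is correct and is exactly the verification the paper has in mind (the paper simply declares this lemma obvious and gives no written proof): evaluate the $C([\tau,\infty);\Xw)$-convergence at $t=\tau$, feed the data into Definition \ref{d:XSigmaContinuityofE}, and identify the two pointwise limits using that $\Xw$ is a metric space. Your side remark that only uniqueness of limits in $\Xw$, not uniqueness of trajectories, is doing the work at the final step is also accurate.
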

\begin{lemma}\label{t:continuoustoA1}
Let $\Im$ be some topological space and $\bar{\Sigma}\subset\Im$ be sequentially compact in itself. Let $\mathcal E_{ \bar{\Sigma}}$ be an evolutionary system with uniqueness and with symbol space $\bar{\Sigma}$. If $\mathcal E_{ \bar{\Sigma}}$ is $(\Xw\times\bar{\Sigma}, \Xw)$ continuous, then,  for any $\tau\in \mathbb R$, $\sigma_n \in \bar{\Sigma}$, $u_{n}(t)\in \mathcal E_{\sigma_n} ([\tau,\infty))$, there exist  subsequences $\{\sigma_{n_j}\}$, $\{u_{{n_j}}(t)\}$, $\sigma\in  \bar{\Sigma}$ and $ u(t)\in  \mathcal E_{\sigma} ([\tau,\infty))$ such that $u_{{n_j}}(t)\rightarrow u(t)$  in $\Xw$ for all $t\ge\tau$ and $\sigma_{n_j} \rightarrow\sigma$ in $\Im$ as $n_j\rightarrow \infty$.
\end{lemma}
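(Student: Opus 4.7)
The plan is to combine two compactness extractions with the continuity hypothesis in a single diagonal argument. The statement only requires pointwise weak convergence and membership of the limit in $\mathcal{E}_\sigma([\tau,\infty))$, so there is no need to set up uniform convergence on compact time intervals; everything is driven by the one initial datum $u_n(\tau)$ together with the symbols.

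First, I would invoke the sequential compactness of $\bar{\Sigma}$ inside $\Im$: from the given sequence $\{\sigma_n\}\subset\bar{\Sigma}$ extract a subsequence $\{\sigma_{n_j}\}$ with $\sigma_{n_j}\to\sigma$ in $\Im$ for some $\sigma\in\bar{\Sigma}$. Second, I would use the standing hypothesis from Subsection~\ref{ss:PhaseS} that $X$ is $\dw$-compact: since $u_{n_j}(\tau)\in X$, after passing to a further subsequence (still indexed by $n_j$) we have $u_{n_j}(\tau)\to u_\tau$ in $\Xw$ for some $u_\tau\in X$. Third, apply the $(\Xw\times\bar{\Sigma},\Xw)$-continuity hypothesis of Definition~\ref{d:XSigmaContinuityofE} directly to the pair of convergences just produced: there exists $u(t)\in\mathcal{E}_\sigma([\tau,\infty))$ with $u(\tau)=u_\tau$ such that $u_{n_j}(t)\to u(t)$ in $\Xw$ for every $t\ge\tau$, which is exactly the desired conclusion.

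There is essentially no obstacle here; the only thing to be careful about is the order of the two extractions (symbols first, then initial data) because the $(\Xw\times\bar{\Sigma},\Xw)$-continuity requires \emph{both} inputs to converge along the \emph{same} subsequence before one can conclude anything about the evolved states. A second small point is to notice that the uniqueness hypothesis is not needed to produce the subsequential limit itself, although it is what makes the limit well-defined as a function of $(\sigma,u_\tau)$ and is implicit in the formulation of Definition~\ref{d:XSigmaContinuityofE}. The proof is therefore three short lines, paralleling the elementary Lemma~\ref{t:closedtocontinuous} that precedes it, and will be used in the sequel (together with Lemma~\ref{t:closedtocontinuous}) to show that $(\Xw\times\bar{\Sigma},\Xw)$-continuity implies property~A1 whenever $\bar{\Sigma}$ is sequentially compact.
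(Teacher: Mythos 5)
Your proof is correct and is exactly the argument the paper has in mind: the paper states this lemma (together with Lemma \ref{t:closedtocontinuous}) without proof, declaring both ``obvious,'' and your three steps --- sequential compactness of $\bar{\Sigma}$ for the symbols, $\dw$-compactness (hence sequential compactness, since $\dw$ is a metric) of $X$ for the initial data, then Definition \ref{d:XSigmaContinuityofE} applied along the common subsequence --- are precisely the intended fleshing-out. No gaps.
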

\begin{lemma}\label{t:continuoustoclosed}
Let $\mathcal E_{ \bar{\Sigma}}$ be an evolutionary system with uniqueness and with
symbol space $\bar{\Sigma}$. If $\mathcal E_{ \bar{\Sigma}}$ is closed  and satisfies A1,  then, for
any $\tau\in \mathbb R$, $\sigma_n, \sigma\in \bar{\Sigma}$, $u_{n}(t)\in \mathcal E_{\sigma_n} ([\tau,\infty))$, such that,  $u_{n}(\tau)\rightarrow u_\tau$ in $\Xw$ and $\sigma_n \rightarrow\sigma$ in some topological space $\Im$ as $n\rightarrow \infty$, the convergence $u_{n}(t)\rightarrow u(t)$ in $C([\tau, \infty);\Xw)$  holds with a limit $u(t) \in \mathcal E_{\sigma} ([\tau,\infty))$ and $u(\tau)=u_\tau$.
In particular, $\mathcal E_{ \bar{\Sigma}}$ is  $(\Xw\times\bar{\Sigma}, \Xw)$ continuous.
\end{lemma}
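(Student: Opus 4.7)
The plan is to combine the precompactness provided by A1 with the uniqueness of $\mathcal E_{\bar\Sigma}$ and the closedness hypothesis, via a standard subsequence principle. First I observe that A1, applied to the autonomous realization of $\mathcal E_{\bar\Sigma}$, together with property (2) of Definition \ref{Dc} (the time-translation property holds for all $s\in\mathbb R$ in the autonomous formulation), implies that $\mathcal E_{\bar\Sigma}([\tau,\infty))$ is precompact in $C([\tau,\infty);\Xw)$ for every $\tau\in\mathbb R$: the shift map $u(\cdot)\mapsto u(\cdot+\tau)$ is an isometry between $C([\tau,\infty);\Xw)$ and $C([0,\infty);\Xw)$, and it carries $\mathcal E_{\bar\Sigma}([\tau,\infty))$ into $\mathcal E_{\bar\Sigma}([0,\infty))$, which is precompact by A1.

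Next, to prove $u_n\to u$ in $C([\tau,\infty);\Xw)$, I use the fact that it is enough to show that every subsequence of $\{u_n\}$ admits a further subsequence converging (in $C([\tau,\infty);\Xw)$) to the same limit. Pick any subsequence $\{u_{n_k}\}$. By the precompactness established above, a further subsequence $\{u_{n_{k_j}}\}$ converges in $C([\tau,\infty);\Xw)$ to some $v\in C([\tau,\infty);\Xw)$. Since $\mathcal E_{\bar\Sigma}$ is closed and $\sigma_{n_{k_j}}\to\sigma$ in $\Im$, the closedness hypothesis yields $v\in\mathcal E_\sigma([\tau,\infty))$. Convergence in $C([\tau,\infty);\Xw)$ entails pointwise $\dw$-convergence at $t=\tau$, so $v(\tau)=\lim u_{n_{k_j}}(\tau)=u_\tau$.

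Now the uniqueness assumption on $\mathcal E_{\bar\Sigma}$, transported from $0$ to $\tau$ by the time-translation property, guarantees that there is a unique $v\in\mathcal E_\sigma([\tau,\infty))$ with $v(\tau)=u_\tau$. Call this unique element $u$. Thus every subsequential limit of $\{u_n\}$ equals $u$, and since the whole sequence lives in the precompact set $\mathcal E_{\bar\Sigma}([\tau,\infty))$, we conclude $u_n\to u$ in $C([\tau,\infty);\Xw)$ with $u\in\mathcal E_\sigma([\tau,\infty))$ and $u(\tau)=u_\tau$. Finally, the $(\Xw\times\bar\Sigma,\Xw)$ continuity (Definition \ref{d:XSigmaContinuityofE}) is immediate: uniform weak convergence on $[\tau,\infty)$ implies pointwise weak convergence at every $t\ge\tau$.

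The only mildly delicate point is the transfer of A1 from $[0,\infty)$ to an arbitrary $[\tau,\infty)$; this is handled cleanly by the autonomous reformulation in Section \ref{ss:NonautoES}, which permits shifts in both time directions. After that, the architecture of the argument (compactness furnishes limits, closedness places them inside $\mathcal E_\sigma$, uniqueness forces a single limit) is entirely routine.
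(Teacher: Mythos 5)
Your proposal is correct and follows essentially the same route as the paper: A1 furnishes precompactness of the trajectory set in $C([\tau,\infty);\Xw)$, closedness places any subsequential limit in $\mathcal E_{\sigma}([\tau,\infty))$ with initial value $u_\tau$, and uniqueness forces all subsequential limits to coincide, yielding convergence of the full sequence (the paper phrases this last step as a proof by contradiction rather than via the subsequence principle, but the idea is identical). Your explicit remark on transferring A1 from $[0,\infty)$ to $[\tau,\infty)$ via the translation property is a detail the paper leaves implicit but uses in the same way elsewhere.
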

\begin{proof}
 For any $\tau\in \mathbb R$, take $\sigma_n, \sigma\in \bar{\Sigma}$, $u_{n}(t)\in \mathcal E_{\sigma_n} ([\tau,\infty))$, such that, $u_{n}(\tau)\rightarrow u_\tau$ in $\Xw$ and $\sigma_n \rightarrow\sigma$ in $\Im$ as $n\rightarrow \infty$. Thanks to A1, there exists a subsequence $\{u_{{n_j}}(t)\}$ such that
 $$u_{{n_j}}(t)\rightarrow u(t) \mbox{ in } C([\tau, \infty);\Xw), \quad \mbox{as } n_j\rightarrow \infty,$$
 for some $ u(t)\in C([\tau, \infty);\Xw)$ with $u(\tau)=u_\tau$.  Since  $\mathcal E_{ \bar{\Sigma}}$ is closed, we have $u(t)\in \mathcal E_{\sigma} ([\tau,\infty))$. Suppose that
 $$u_{n}(t)\nrightarrow u(t) \mbox{ in } C([\tau, \infty);\Xw), \quad \mbox{as } n\rightarrow \infty.$$
  Thanks to A1 again, there is a subsequence of $\{u_{{n'_j}}(t)\}$,  such that  $$u_{{n'_j}}(t)\rightarrow u^*(t) \mbox{ in } C([\tau, \infty);\Xw), \quad \mbox{as } n'_j\rightarrow \infty,$$ for some $ u^*(t)\in C([\tau, \infty);\Xw)$ with $ u^*(\cdot)\neq u(\cdot)$. Note that  $u^*(t)\in \mathcal E_{\sigma} ([\tau,\infty))$ and $u^*(\tau)=u_{\tau}$. By uniqueness of  $\mathcal E_{ \bar{\Sigma}}$ , $u^*(t)=u(t)$,  for all $t\ge\tau$,  which is a contradiction. Hence,
  $$u_{n}(t)\rightarrow u(t) \mbox{ in } C([\tau, \infty);\Xw), \quad \mbox{as } n\rightarrow \infty.$$
Especially, $u_{n}(t)\rightarrow u(t)$ in $\Xw$ for all $t\ge\tau$, which means that  $\mathcal E_{ \bar{\Sigma}}$ is  $(\Xw\times\bar{\Sigma}, \Xw)$ continuous.
\end{proof}
\begin{remark}The condition A1 generally holds  for dissipative systems  in mathematical physics (cf. e.g. \cite{T88, CV02}). See applications in the next sections.
\end{remark}

\section{Attractors for 2D and 3D Navier-Stokes Equations} \label{s:NSE}

In this section, we apply the new theory established in the previous section to the 2D and the 3D Navier-Stokes equations. Main new results are the existence of strongly compact strong trajectory attractors and sequent properties without further assumptions (cf. \cite{LWZ05, Lu06, CL14}).

More precisely,
we consider the space periodic 2D and 3D incompressible
Navier-Stokes equations (NSE)
\begin{equation} \label{NSE1}
\left\{
\begin{aligned}
&\ddt u - \nu \Delta u + (u \cdot \nabla)u + \nabla p = f(t),\\
&\nabla \cdot u =0,
\end{aligned}
\right.
\end{equation}
where $u$, the velocity, and $p$, the pressure, are unknowns; $f(t)$
is a given driving force, and $\nu>0$ is the kinematic  viscosity
coefficient of the fluid. By a Galilean change of variables, we can
assume that the space average of $u$ is zero, i.e.,
\[
\int_\Omega u(x,t) \, dx =0, \quad \forall t,
\]
where $\Omega=[0,L]^n$, $n=2, 3 $, is a periodic box.\footnote{The no-slip
case can be considered in a similar way, only with some adaption
on the functional  setting.}

First, let us introduce some notations and functional setting.
Denote by $(\cdot,\cdot)$ and $|\cdot|$ the $\left(L^2(\Omega)\right)^n$-inner
product and the corresponding $\left(L^2(\Omega)\right)^n$-norm. Let
$\mathcal{V}$ be the space of all $\mathbb{R}^n$ trigonometric
polynomials of period $L$ in each variable satisfying $\nabla \cdot
u =0$ and $\int_\Omega u(x) \, dx =0$. Let $H$ and $V$ to be the
closures of $\mathcal{V}$ in $\left(L^2(\Omega)\right)^n$ and $\left(H^1(\Omega)\right)^n$,
respectively. Define the strong and weak distances by
\[
\ds(u,v):=|u-v|, \quad
\dw(u,v):= \sum_{\kappa \in \mathbb{Z}^n} \frac{1}{2^{|\kappa|}}
\frac{|u_{\kappa}-v_{\kappa}|}{1 + |u_{\kappa}-v_{\kappa}|},
\quad u,v \in H,
\]
where $u_{\kappa}$ and $v_{\kappa}$ are Fourier coefficients of $u$
and $v$ respectively. Note that the weak metric $\dw$ induces the
weak topology in any ball in $\left(L^2(\Omega)\right)^n$.

Let also  $P_{\sigma} : \left(L^2(\Omega)\right)^n \to H$ be the $L^2$-orthogonal
projection, referred to as the Leray projector. Denote by
$A=-P_{\sigma}\Delta = -\Delta$ the Stokes operator with the domain
$D(A)=(H^2(\Omega))^n \cap V$. The Stokes operator is a self-adjoint
positive operator with a compact inverse.
Let
\[
\|u\| := |A^{1/2} u|,
\]
which is called the enstrophy norm.
Note that $\|u\|$ is equivalent to the $H^1$-norm of $u$ for $u\in D(A^{1/2})$. The corresponding inner product is denoted by
\[
((u,v)):=(A^{1/2} u,A^{1/2} v).
\]

Let $V'$ be the dual of $V$. Now denote $B(u,v):=P_{\sigma}(u \cdot \nabla v)\in V'$ for all
$u, v \in V$. This bilinear form has the following property:
\[
\langle B(u,v),w\rangle=-\langle B(u,w),v\rangle, \qquad u,v,w \in V,
\]
in particular, $\langle B(u,v),v\rangle=0$ for all $u,v \in V$.

Now we can rewrite (\ref{NSE1}) as the following differential equation in $V'$:
\begin{equation*} \label{NSE}
\ddt u + \nu A u +B(u,u) = g,\\
\end{equation*}
where $u$ is a $V$-valued function of time and $g = P_{\sigma} f$.

\begin{definition}
A weak solution  of  \eqref{NSE1} on $[\tau,\infty)$ (or $(-\infty, \infty)$, if
$\tau=-\infty$) is an $H$-valued
function $u(t)$ defined for $t \in [\tau, \infty)$, such that
\[
\ddt u \in L_{\mathrm{loc}}^1([\tau, \infty); V'), \quad
u(t) \in C([\tau, \infty); \Hw) \cap
L_{\mathrm{loc}}^2([\tau, \infty); V),
\]
and
\begin{equation*}\label{IntNSE}
\left(u(t)-u(t_0), v\right) = \int_{t_0}^t \left( -\nu ((u, v)) - \langle B(u,u),v\rangle +\langle g, v\rangle \right) \, ds,
\end{equation*}
for all $v \in V$ and  $\tau\leq t_0 \leq t $.
\end{definition}

\begin{theorem}[Leray, Hopf] \label{t:Leray}
For every $u_0 \in H$ and $g \in L^2_\mathrm{loc}(\mathbb{R};V')$, there exists a
weak solution of (\ref{NSE1}) on $[\tau,\infty)$ with $u(\tau)=u_0$
satisfying the following energy inequality
\begin{equation} \label{EI}
|u(t)|^2 + 2\nu \int_{t_0}^t \|u(s)\|^2 \, ds \leq
|u(t_0)|^2 + 2\int_{t_0}^t \langle g(s), u(s)\rangle \, ds,
\end{equation}
for all $t \geq t_0$, $t_0$ a.e. in $[\tau,\infty)$.
\end{theorem}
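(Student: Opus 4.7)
The plan is to prove Theorem \ref{t:Leray} by the classical Faedo--Galerkin method, which goes back to Leray and Hopf and is now standard in the NSE literature (see e.g.\ \cite{T88, CV02}). Let $\{w_j\}_{j\ge 1}$ be the orthonormal basis of $H$ consisting of eigenfunctions of the Stokes operator $A$, with eigenvalues $\{\lambda_j\}$, and let $P_m$ denote the $L^2$-orthogonal projection onto $H_m := \mathrm{span}\{w_1,\dots,w_m\}$. First I would solve the finite-dimensional ODE system
\[
\ddt u_m + \nu A u_m + P_m B(u_m,u_m) = P_m g,\qquad u_m(\tau)=P_m u_0,
\]
for each $m$; Cauchy--Lipschitz theory gives a local-in-time solution $u_m \in C^1([\tau,\tau+T_m);H_m)$, and testing the equation against $u_m$ produces the finite-dimensional energy identity
\[
|u_m(t)|^2 + 2\nu\int_{t_0}^{t}\|u_m(s)\|^2\,ds = |u_m(t_0)|^2 + 2\int_{t_0}^{t}\langle g(s), u_m(s)\rangle\,ds,
\]
which, after an application of Young's inequality to the right-hand side, yields uniform bounds of $\{u_m\}$ in $L^\infty_\mathrm{loc}([\tau,\infty);H)\cap L^2_\mathrm{loc}([\tau,\infty);V)$ and in particular global existence of $u_m$.

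The second step is to derive a uniform bound on $\ddt u_m$ in $L^{4/3}_\mathrm{loc}([\tau,\infty);V')$ in the 3D case (respectively $L^2_\mathrm{loc}$ in the 2D case) using the standard estimates on the bilinear term $B$ via Ladyzhenskaya/Agmon-type inequalities. Together with the above bounds, the Aubin--Lions compactness lemma then provides a subsequence $u_{m_k}$ converging to some $u$ weakly-$*$ in $L^\infty_\mathrm{loc}(H)$, weakly in $L^2_\mathrm{loc}(V)$, and strongly in $L^2_\mathrm{loc}(H)$. Passing to the limit in the linear terms is immediate; the main obstacle, as always for NSE, is identifying the weak limit of $B(u_{m_k},u_{m_k})$. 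The strong $L^2_\mathrm{loc}(H)$-convergence combined with the weak $L^2_\mathrm{loc}(V)$-convergence allows us to pass to the limit in $\int\langle B(u_{m_k},u_{m_k}),v\rangle$ for each fixed $v\in\mathcal V$, giving that $u$ solves the integrated equation against all test functions. Density of $\mathcal V$ in $V$ then extends this to every $v\in V$.

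Third, I would verify $u\in C([\tau,\infty);H_\w)$: from the integrated equation the map $t\mapsto(u(t),v)$ is continuous for every $v\in V$, hence (by a density/uniform-boundedness argument using $u\in L^\infty_\mathrm{loc}(H)$) for every $v\in H$. This also gives that the initial datum is attained, $u(\tau)=u_0$. Finally, to get the energy inequality \eqref{EI} I pass to the limit in the Galerkin energy identity: the left-hand side is weakly lower semicontinuous ($|u(t)|^2\le \liminf |u_{m_k}(t)|^2$ for a.e.\ $t$ via strong $L^2_\mathrm{loc}(H)$-convergence, and $\int\|u\|^2\le \liminf\int\|u_{m_k}\|^2$ by weak $L^2_\mathrm{loc}(V)$-convergence), while the right-hand side converges at every $t_0$ that is a Lebesgue point of $t\mapsto |u(t)|^2$ and along which $u_{m_k}(t_0)\to u(t_0)$ strongly in $H$; almost every $t_0\in[\tau,\infty)$ has this property. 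This yields \eqref{EI} for all $t\ge t_0$ and a.e.\ $t_0\in[\tau,\infty)$, completing the proof. The delicate point throughout is the nonlinear passage to the limit, which hinges on the Aubin--Lions strong compactness step.
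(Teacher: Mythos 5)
The paper does not prove this theorem: it is stated as the classical Leray--Hopf existence result and deferred to the standard references \cite{T88, CF89, CV02}, and your Faedo--Galerkin argument is exactly the standard proof recorded there; it is essentially correct. The one step worth spelling out is the last: the limit argument as you describe it yields \eqref{EI} only for a.e.\ $t$ (and a.e.\ $t_0$), and to upgrade to \emph{all} $t\ge t_0$, as the statement requires, you should combine the weak continuity $u\in C([\tau,\infty);H_{\mathrm{w}})$ from your third step with the weak lower semicontinuity of the $H$-norm, approximating an arbitrary $t$ by times in the full-measure set where the inequality already holds.
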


\begin{definition} \label{d:ex}
A Leray-Hopf solution of \eqref{NSE1} on the interval $[\tau, \infty)$
is a weak solution on $[\tau,\infty)$ satisfying the
energy inequality (\ref{EI}) for all $\tau \leq t_0 \leq t$,
$t_0$ a.e. in $[\tau,\infty)$. The set $Ex$ of measure $0$ on which the energy
inequality does not hold will be called the exceptional set.
\end{definition}

Now fix an external  force $g_0$  that is   translation bounded in
$L^2_\mathrm{loc}(\mathbb{R};V')$ , i.e.,
\[
\left\| g_0\right\| _{L_{\mathrm{b}}^2\left( \Bbb{R};V'\right) }^2 :=\sup_{t \in \mathbb{R}} \int_t^{t+1}
\|g_0(s)\|_{V'}^2 \, ds < \infty.
\]
Denote by $L^{2,\mathrm{w}}_\mathrm{loc}(\mathbb{R};V')$ the space $L^{2}_\mathrm{loc}(\mathbb{R};V')$ endowed with the local weak convergence topology. Then $g_0$ is translation compact in
$L^{2,\mathrm{w}}_\mathrm{loc}(\mathbb{R};V')$, i.e., the
translation family of $g_0$,
\[
\Sigma:=\{g_0(\cdot+h): h\in \mathbb R\},
\]
is precompact in
$L^{2,\mathrm{w}}_\mathrm{loc}(\mathbb{R};V')$ (see \cite{CV02}).  Note that,
\begin{equation}\label{i:gNSE}
\|g\|^2_{L_{\mathrm{b}}^2\left( \Bbb{R};V'\right) }\leq
\|g_0\|^2_{L_{\mathrm{b}}^2\left( \Bbb{R};V'\right) },\quad \forall\, g\in
\Sigma.
\end{equation}
Due to the energy inequality \eqref{EI}, we have
\begin{equation}\label{i:EIgNSE}
|u(t)|^2 + \nu \int_{t_0}^t \|u(s)\|^2 \, ds \leq |u(t_0)|^2 +
\frac{1}{\nu}\int_{t_0}^t \|g(s)\|^2_{V'} \, ds, \quad \forall g
\in \Sigma,
\end{equation}
for all $t \geq t_0$, $t_0$ a.e. in $[\tau,\infty)$. Here $u(t)$ is a Leray-Hopf solution of  \eqref{NSE1} with the force
$g$ on $[\tau,\infty)$.  By Gr\"{o}nwall's inequality, there
exists a uniformly (w.r.t. $\tau\in \mathbb R$ and $g\in \Sigma$) absorbing ball $B_{\mathrm{s}}(0, R)\subset H$, where the radius
$R$ depends on $L$, $\nu$, and $\|g_0\|^2_{L_{\mathrm{b}}^2\left( \Bbb{R};V'\right) }$.
 Let $X$ be a closed uniformly absorbing ball
\begin{equation}\label{i:absorbongB3}
X= \{u\in H: |u| \leq R\},
\end{equation}
which is also weakly compact in $H$. Then, for any bounded set $A \subset
H$, there exists a time $\bar{t}\ge 0$ independent of the initial time $\tau$, such that
\begin{equation}\label{i:absorbongBtrajectory3}
u(t) \in X, \quad \forall t\geq t_1:=\tau+\bar{t},
\end{equation}
for every Leray-Hopf solution $u(t)$ with the force $g\in \Sigma$
and the initial data $u(\tau) \in A$.  For any sequence of
Leray-Hopf solutions $u_k$, the following result (see e.g. \cite{T88, CF89, Ro01, CL14}) holds.

\begin{lemma} \label{l:precompactofLH23}
Let $u_k(t)$ be a sequence of Leray-Hopf solutions of  \eqref{NSE1} with forces
$g_k \in \Sigma$, such that $u_k(t) \in X$ for all $t\geq t_1$. Then
\begin{equation*}\label{bdofun}
\begin{split}
u_k \ \ &\mbox{is bounded in} \ \ L^2(t_1,t_2;V)\ \ \mbox{and} \ \ L^\infty(t_1,t_2;H),\\
\ddt u_k \ \  &\mbox{is bounded in} \ \ L^{p}(t_1,t_2;V'),
\end{split}
\end{equation*}
for all $t_2>t_1$, with $p=2$ if $n=2$ and $p=4/3$ if $n=3$. Moreover, there exists a subsequence $u_{k_j}$ converges to some solution $u(t)$ in $C([t_1, t_2]; \Hw)$,
i.e.,
\[
(u_{k_j},v) \to (u,v) \quad
\mbox{uniformly on }   [t_1,t_2],
\]
as $k_j\to \infty$, for all $v \in H$.
\end{lemma}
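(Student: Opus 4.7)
The plan is to extract a priori bounds from the energy inequality and the equation itself, then invoke a standard compactness argument together with equicontinuity in the weak topology.

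First I would read off the $L^\infty(t_1,t_2;H)$ bound directly from the hypothesis $u_k(t) \in X$, which gives $|u_k(t)| \le R$ uniformly in $k$ on $[t_1,t_2]$. Next, since each $u_k$ is a Leray-Hopf solution with force $g_k \in \Sigma$, the energy inequality \eqref{i:EIgNSE} applied between $t_1$ (choosing an admissible starting time arbitrarily close to it) and $t_2$ gives
\[
\nu \int_{t_1}^{t_2} \|u_k(s)\|^2\,ds \le |u_k(t_1)|^2 + \frac{1}{\nu}\int_{t_1}^{t_2}\|g_k(s)\|_{V'}^2\,ds \le R^2 + \frac{1+(t_2-t_1)}{\nu}\|g_0\|_{L^2_{\mathrm b}(\mathbb R;V')}^2,
\]
using \eqref{i:gNSE}. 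This yields the $L^2(t_1,t_2;V)$ bound.

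Second, I would estimate $\partial_t u_k = -\nu A u_k - B(u_k,u_k) + g_k$ term by term in $V'$. The linear term $\nu A u_k$ is in $L^2(t_1,t_2;V')$ by the $L^2(V)$ bound, and $g_k \in L^2(t_1,t_2;V')$ uniformly by \eqref{i:gNSE}. For the nonlinear term the dimensional split appears: in $n=2$ one has $\|B(u,u)\|_{V'} \le C|u|\|u\|$, so $B(u_k,u_k)\in L^2(t_1,t_2;V')$, giving $p=2$; in $n=3$ one uses $\|B(u,u)\|_{V'} \le C|u|^{1/2}\|u\|^{3/2}$, and
\[
\int_{t_1}^{t_2}\|B(u_k,u_k)\|_{V'}^{4/3}\,ds \le C\int_{t_1}^{t_2}|u_k|^{2/3}\|u_k\|^2\,ds \le C\,R^{2/3}\int_{t_1}^{t_2}\|u_k\|^2\,ds,
\]
which is bounded, giving $p=4/3$.

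Third, I would obtain the subsequential convergence. The bounds above yield a subsequence (not relabeled) such that $u_k \rightharpoonup u$ weakly in $L^2(t_1,t_2;V)$, weakly-$*$ in $L^\infty(t_1,t_2;H)$, and $\partial_t u_k \rightharpoonup \partial_t u$ in $L^p(t_1,t_2;V')$, with $u_k(t) \in X$ for all $t\in[t_1,t_2]$. The key step is the pointwise-in-$t$ weak convergence uniformly on $[t_1,t_2]$: for each fixed $v \in V$ the family $\bigl\{t\mapsto (u_k(t),v)\bigr\}_k$ is uniformly bounded (since $u_k(t)\in X$) and equicontinuous, because for $t_1\le s<t\le t_2$,
\[
|(u_k(t)-u_k(s),v)| \le \Bigl(\int_s^t \|\partial_t u_k\|_{V'}^p\,d\tau\Bigr)^{1/p}(t-s)^{1-1/p}\|v\|_V,
\]
which is controlled uniformly in $k$. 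By Arzelà-Ascoli plus a diagonal argument over a countable dense subset of $V$ (which is dense in $H$), together with the uniform $H$-bound, we extract a further subsequence with $(u_{k_j}(\cdot),v)\to (u(\cdot),v)$ uniformly on $[t_1,t_2]$ for every $v\in H$, i.e.\ convergence in $C([t_1,t_2];\Hw)$. The main technical obstacle is the $n=3$ nonlinear estimate, which forces $p=4/3$ and hence only Hölder-in-time control of exponent $1/4$; identifying the weak limit as an actual weak solution (needed for the statement that $u$ is a ``solution'') then requires passing to the limit in the bilinear form, which is handled by combining the weak $L^2(V)$ convergence with the strong $L^2(t_1,t_2;H)$ convergence that follows from Aubin-Lions applied to the triple $V\hookrightarrow\hookrightarrow H \hookrightarrow V'$.
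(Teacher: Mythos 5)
Your proof is correct and follows exactly the standard compactness argument that the paper itself does not reproduce but delegates to the cited references (Temam, Constantin--Foias, Robinson, and \cite{CL14}): energy inequality for the $L^2(t_1,t_2;V)$ bound, the dimensional split in the estimate of $B(u_k,u_k)$ in $V'$ for the time-derivative bound, and Arzel\`a--Ascoli with a diagonal argument over a dense subset of $V$ for the convergence in $C([t_1,t_2];\Hw)$. The only point worth adding is that to identify the limit as a solution you must also extract a subsequence of the forces $g_{k_j}$ converging weakly in $L^{2}_{\mathrm{loc}}(\mathbb R;V')$ to some limit $g$ (available since $\Sigma$ is precompact in $L^{2,\mathrm{w}}_{\mathrm{loc}}(\mathbb{R};V')$, as noted just before the lemma), so that the limit equation is well defined; with that, your Aubin--Lions step closes the argument, and consistently with Remark \ref{r:limitLH} you only claim the limit is a weak solution, not a Leray--Hopf one.
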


\begin{remark}\cite{CL14} \label{r:limitLH}
In the nonautonomous case, i.e., $f(t)$ is dependent on $t$, we don't know here whether the limit $u(t)$ is a Leray-Hopf solution yet when $n=3$.
\end{remark}
Consider an evolutionary system for which a family of trajectories
consists of all Leray-Hopf solutions of the 2D or the 3D NSE with a fixed force $g_0$ in $X$. More precisely, define
\[
\begin{split}
\Dc([\tau,\infty)) := \{&u(\cdot): u(\cdot)
\mbox{ is a Leray-Hopf}
\mbox{ solution on } [\tau,\infty)\\
&\mbox{with the force } g\in \Sigma \mbox{ and } u(t) \in X, \
\forall t \in [\tau,\infty)\}, \   \tau \in \mathbb{R},
\end{split}
\]
\[
\begin{split}
\Dc((-\infty,\infty)) := \{&u(\cdot): u(\cdot)
\mbox{ is a Leray-Hopf} \ \mbox{ solution on } (-\infty,\infty)\\
&\mbox{with the force } g\in \Sigma \mbox{ and } u(t) \in X, \
\forall t \in (-\infty,\infty)\}.
\end{split}
\]

Clearly, the properties 1-4 of an  evolutionary system hold for $\Dc$, if we utilize the
translation semigroup $\{T(s)\}_{s\ge 0}$. Therefore, thanks to
Theorem~\ref{t:wA}, the weak uniform global attractor $\Aw$ for this
evolutionary system exists.

 Now  we give the definition of a normal function which was introduced
in \cite{LWZ05, Lu06}.
\begin{definition}\label{d:normal} Let $\mathcal B$ be a Banach space. A function $\varphi (s)\in L^2_{\mathrm{loc}}(\mathbb{R};\mathcal B
)$ is said to be normal in $L^2_{\mathrm{loc}}(\mathbb{R};\mathcal B
)$ if for any $\epsilon>0$, there exists $\delta>0$, such that
\[
\sup_{t\in \mathbb{R}} \int_{t}^{t+\delta}
\|\varphi(s)\|^2_{\mathcal B} \, ds \leq  \epsilon.
\]
\end{definition}Note that the class of normal
functions is a proper closed subspace of
the class of translation bounded functions (see \cite{LWZ05, Lu06} for more details). Then, we have the
following.
\begin{lemma} \label{l:compact0}
The evolutionary system $\Dc$ of the 2D or the 3D NSE with the force $g_0$
satisfies A1 and A3. Moreover, if $g_0$ is normal in
$L^2_{\mathrm{loc}}(\mathbb{R}; V')$ then A2 holds.
\end{lemma}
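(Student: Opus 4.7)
The plan is to verify each of the three properties by standard Navier-Stokes machinery, exploiting the uniform $L^2_\mathrm{loc}(\mathbb{R};V')$-bound \eqref{i:gNSE} on the symbol family $\Sigma$, the a priori estimates from the energy inequality \eqref{i:EIgNSE}, and the compactness already packaged in Lemma~\ref{l:precompactofLH23}. Throughout, every $u\in\Dc([0,\infty))$ is valued in the weakly compact absorbing ball $X$, and the weak metric $\dw$ on $X$ metrizes the weak topology of $\left(L^2(\Omega)\right)^n$ restricted to $X$.

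For A1, I would take an arbitrary sequence $\{u_n\}\subset\Dc([0,\infty))$, associated with forces $g_n\in\Sigma$. By Lemma~\ref{l:precompactofLH23} applied on $[0,N]$, a subsequence converges in $C([0,N];\Hw)$ for every $N\in\mathbb{N}$. A Cantor diagonal extraction then produces a subsequence converging in $C([0,N];\Hw)$ for every $N$ simultaneously, which is exactly convergence in $C([0,\infty);\Xw)$ since the latter metric corresponds to uniform convergence on compacts.

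For A3, from the bounds in Lemma~\ref{l:precompactofLH23} I have $\{u_n\}$ bounded in $L^2(0,T;V)\cap L^\infty(0,T;H)$ and $\{\partial_t u_n\}$ bounded in $L^p(0,T;V')$. The Aubin--Lions lemma (with the compact embedding $V\hookrightarrow H$) gives precompactness of $\{u_n\}$ in $L^2(0,T;H)$. Since $\{u_n\}$ is $\dd_{C([0,T];\Xw)}$-Cauchy it has a pointwise weak limit $u\in C([0,T];\Hw)$, forcing the $L^2(0,T;H)$-limit of any subsequence to coincide with $u$; hence $u_n\to u$ strongly in $L^2(0,T;H)$. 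Passing to a subsequence along which $|u_n(t)-u(t)|\to 0$ for a.e.\ $t\in[0,T]$ yields the $\ds$-Cauchy property a.e., which is A3.

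For A2, I would start from the energy inequality \eqref{i:EIgNSE}: writing $\int_{t_0}^{t}\langle g,u\rangle\,ds\le \frac{\nu}{2}\int_{t_0}^t\|u\|^2\,ds+\frac{1}{2\nu}\int_{t_0}^t\|g\|_{V'}^2\,ds$ and absorbing the dissipation, one gets
\begin{equation*}
|u(t)|^{2}\le |u(t_0)|^{2}+\frac{1}{\nu}\int_{t_0}^{t}\|g(s)\|_{V'}^{2}\,ds,
\end{equation*}
for $t_0$ a.e.\ in $(t-\delta,t)$ and every $g\in\Sigma$. Given $\epsilon>0$, since $g_0$ is normal in $L^2_{\mathrm{loc}}(\mathbb{R};V')$ and every $g\in\Sigma$ is a translate of $g_0$ with identical normality constant, I can choose $\delta>0$ so that $\sup_{s\in\mathbb{R}}\int_{s}^{s+\delta}\|g(\cdot)\|_{V'}^{2}\,ds\le \nu\epsilon^{2}$ uniformly in $g\in\Sigma$. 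Then $|u(t)|^{2}\le |u(t_0)|^{2}+\epsilon^{2}\le(|u(t_0)|+\epsilon)^{2}$ and hence $|u(t)|\le|u(t_0)|+\epsilon$, which is A2. The main technical point is simply the uniform-in-$\Sigma$ character of the normality constant, which follows because $\Sigma$ consists of translates of a single normal $g_0$.
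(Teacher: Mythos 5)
Your proof is correct and follows essentially the same route as the paper: the paper's own proof of Lemma \ref{l:compact0} is just a citation of Lemma 5.7 in [CL14], and the argument there (reproduced almost verbatim in this paper's proof of the parallel RDS statement, Lemma \ref{l:compactRDS}) is exactly your three steps — diagonal extraction from Lemma \ref{l:precompactofLH23} for A1, Aubin--Lions compactness in $L^2(0,T;H)$ with identification of the limit for A3, and the absorbed energy inequality \eqref{i:EIgNSE} plus the translation-invariance of the normality bound for A2. One small caveat, which you share with the paper's own write-up of the RDS case: convergence in $L^2(0,T;H)$ yields $\ds$-convergence a.e.\ only along a subsequence, whereas A3 as stated asks that the \emph{full} sequence $u_n(t)$ be $\ds$-Cauchy a.e.; if you want to be scrupulous you should either note that this suffices for the use made of A3 in Theorem \ref{t:A123Comp} or upgrade the a.e.\ convergence to the whole sequence, but this is a wrinkle in the source argument, not a defect introduced by you.
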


\begin{proof}
For the 3D case, it is just Lemma 5.7 in \cite{CL14}. For the 2D case, it is derived in exactly the same way.
\end{proof}

\subsection{3D Navier-Stokes equations}\label{ss:3NSE}
Now applying the theory in Section \ref{s:AttractES}, we have the followings.
\begin{theorem}\label{t:Aw03DNSE} \cite{CL14} Let $g_0$  be  translation bounded in
$L^2_\mathrm{loc}(\mathbb{R};V')$. Then the weak uniform global attractor $\Aw$  and the weak trajectory attractor $\mathfrak A_{\w}$ for the 3D NSE with the fixed force $g_0$ exist, $\Aw$ is the maximal
invariant and maximal quasi-invariant set w.r.t. the closure $\bar{\Dc}$ of the corresponding evolutionary system $\Dc$ and
\begin{equation*}
\begin{split}
\Aw &= \ww(X)=\ws(X)=\left\{ u(0):  u \in
\bar{\Dc}((-\infty, \infty))\right\}, \\ \mathfrak A_{\w}&=\Pi_+\bar{\Dc}((-\infty, \infty))=\left\{u(\cdot)|_{[0,\infty)}:u\in \bar{\Dc}((-\infty, \infty))\right\}, \\
\Aw&=\mathfrak A_{\w}(t)=\left\{u(t): u\in \mathfrak A_{\w}\right\},\quad \forall\, t\geq 0.
\end{split}
\end{equation*}
Moreover,  $\mathfrak A_{\w}$ satisfies the finite weak uniform tracking property and is weakly equicontinuous on $[0,\infty)$.
\end{theorem}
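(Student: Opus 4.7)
The plan is to reduce the entire statement to a direct application of Theorem~\ref{t:wA} to the evolutionary system $\Dc$ built from Leray--Hopf solutions of the 3D NSE. First I would verify that $\Dc$, as defined just before the statement, actually is an evolutionary system in the sense of Definition~\ref{Dc}. The four axioms are essentially bookkeeping once the force $g_0$ is fixed: nonemptiness of $\Dc([\tau,\infty))$ follows from Theorem~\ref{t:Leray} together with the existence of a uniformly absorbing ball $X$ given by \eqref{i:absorbongB3}--\eqref{i:absorbongBtrajectory3}; translation covariance uses that the symbol family $\Sigma=\{g_0(\cdot+h):h\in\mathbb{R}\}$ is preserved by the translation semigroup $\{T(s)\}_{s\ge 0}$; restriction of a Leray-Hopf solution to a subinterval is again a Leray-Hopf solution (the exceptional set of Definition~\ref{d:ex} is preserved); and the concatenation property needed for the kernel follows by a standard diagonal argument on intervals $[\tau_n,\infty)$ with $\tau_n\to-\infty$.

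Next I would invoke Lemma~\ref{l:compact0} to conclude that $\Dc$ satisfies the assumption A1, namely that $\Dc([0,\infty))$ is precompact in $C([0,\infty);\Xw)$. The content behind Lemma~\ref{l:compact0} is Lemma~\ref{l:precompactofLH23}: on any compact subinterval one has uniform bounds in $L^2(V)\cap L^\infty(H)$ for the solutions together with a uniform $L^{4/3}(V')$ bound for the time derivatives, and an Arzel\`a--Ascoli/Aubin-type argument then yields uniform weak continuity on compacts and hence precompactness in $C([0,\infty);\Xw)$ by a diagonal procedure. This is the only genuinely analytic input; I would treat it as a black box here since it is already established.

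Once A1 is in hand, every clause of the theorem is one of the conclusions of Theorem~\ref{t:wA} applied to $\Dc$ (and its closure $\bar{\Dc}$). Specifically, existence of $\Aw$ and the identity $\Aw=\ww(X)$ come from conclusion 1 of Theorem~\ref{t:wA}; the chain of equalities $\Aw=\ww(X)=\bar\omega_{\mathrm w}(X)=\bar\omega_{\mathrm s}(X)=\bar{\mathcal A}_{\mathrm w}$ (and hence in particular $\Aw=\ws(X)$ once one traces through the notation) is conclusion 2; the maximal invariance, maximal quasi-invariance and the representation $\Aw=\{u(0):u\in\bar{\Dc}((-\infty,\infty))\}$ are conclusion 3; the existence of $\mathfrak{A}_{\w}$ with $\mathfrak{A}_{\w}=\Pi_+\bar{\Dc}((-\infty,\infty))$, its weak compactness, the finite weak uniform tracking property and weak equicontinuity on $[0,\infty)$ are conclusion 4; and the section identity $\Aw=\mathfrak{A}_{\w}(t)$ for all $t\ge 0$ is conclusion 5.

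The only step with any real content beyond citation is the verification of A1, which is the main obstacle and is exactly what Lemma~\ref{l:precompactofLH23} is tailored to handle; beyond that the proof is a bookkeeping assembly of the abstract theorem applied to this concrete evolutionary system. I would therefore write the proof as roughly two short paragraphs: one observing that $\Dc$ is an evolutionary system with A1 (citing Lemmas~\ref{l:precompactofLH23} and~\ref{l:compact0}), and one stating that every assertion of the theorem is then an instance of the corresponding clause of Theorem~\ref{t:wA}.
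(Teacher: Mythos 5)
Your proposal is correct and follows essentially the same route as the paper: the paper's own proof is a one-line citation of Theorems 5.8 and 5.10 of \cite{CL14} reformulated via Theorem \ref{t:wA}, and what you write out (checking that $\Dc$ is an evolutionary system, verifying A1 through Lemmas \ref{l:precompactofLH23} and \ref{l:compact0}, then reading off each clause of Theorem \ref{t:wA}) is exactly the content behind that citation. No substantive difference in approach.
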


 \begin{proof} This theorem is just Theorem 5.8 and the first part of Theorem 5.10 in \cite{CL14}. We reformulate these results according to Theorem \ref{t:wA}.
 \end{proof}

\begin{theorem}\label{t:As03DNSE}
If  $g_0$ is normal in $L^2_{\mathrm{loc}}(\mathbb{R}; V')$ and
every complete trajectory of $\bar{\Dc}$ is strongly
continuous, then the weak uniform global attractor $\Aw$ is a strongly compact strong global attractor  $\As$, and  the weak trajectory attractor $\mathfrak A_{\w}$ is a strongly
compact strong trajectory attractor $\mathfrak A_{\s}$. Moreover,
\begin{itemize}
\item[1.] $\mathfrak A_{\s}=\Pi_+\bar{\Dc}((-\infty, \infty))$ satisfies the finite strong
uniform tracking property, i.e.,  for any $\epsilon >0$ and $T>0$, there exist $t_0$ and a finite subset $P_T^f\subset \mathfrak A_{\s}|_{[0,T]}$, such
that for any $t^*>t_0$, every trajectory $u \in \Dc([0,\infty))$
satisfies
$$|u(t)-v(t-t^*)| < \epsilon, \quad \forall t\in [t^*,t^*+T],$$
for some  $T$-time length piece  $v\in P_T^f$.
\item[2.]
$\mathfrak A_{\s}=\Pi_+\bar{\Dc}((-\infty, \infty))$  is strongly equicontinuous on $[0,\infty)$, i.e.,
\[
\left|v(t_1)-v(t_2)\right|\leq \theta\left(|t_1-t_2|\right), \quad\forall\, t_1,t_2\ge 0, \  \forall v\in \mathfrak A_{\s},
\]
where $\theta(l) $ is a positive function tending
to $0$ as $l\rightarrow 0^+$.
\end{itemize}
\end{theorem}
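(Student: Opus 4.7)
The plan is to verify that under the hypotheses, the evolutionary system $\Dc$ of the 3D NSE with the fixed force $g_0$ falls within the scope of the abstract Theorem \ref{t:sA}, and then read off all four conclusions directly. The main task is therefore to confirm the asymptotical compactness of $\Dc$; everything else will follow from results already in hand.

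First I would invoke Lemma \ref{l:compact0}, which tells us that $\Dc$ automatically satisfies A1 and A3, and that the normality assumption on $g_0$ in $L^2_{\mathrm{loc}}(\mathbb{R};V')$ upgrades this to A1, A2, A3 simultaneously. The hypothesis that every complete trajectory of $\bar{\Dc}$ is strongly continuous is exactly the statement $\bar{\Dc}((-\infty,\infty))\subset C((-\infty,\infty);\Xs)$ required by Theorem \ref{t:A123Comp}. Applying that theorem immediately yields the asymptotical compactness of $\Dc$. This is the only nontrivial step, and its content has already been compressed into Theorem \ref{t:A123Comp}; the point is merely to check that each of the three hypotheses in its statement is in place.

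With asymptotical compactness in hand together with A1, I would then invoke Theorem \ref{t:sA} directly. Its Conclusion 1 gives that the strong global attractor $\As$ exists, is strongly compact, and coincides with $\Aw$ produced by Theorem \ref{t:Aw03DNSE}. Its Conclusion 2 gives that the strong trajectory attractor $\mathfrak A_{\s}$ exists, is strongly compact, and coincides with $\mathfrak A_{\w}=\Pi_+\bar{\Dc}((-\infty,\infty))$. Conclusions 3 and 4 of Theorem \ref{t:sA} are precisely items 1 and 2 of the present theorem, in which the abstract strong metric $\ds$ on the phase space $X$ is realized as the $\left(L^2(\Omega)\right)^3$-norm $|\cdot|$ restricted to the absorbing ball $X$ from \eqref{i:absorbongB3}.

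I do not expect a genuine obstacle here: every ingredient has been prepared in previous sections. The only point that would deserve an explicit sentence in the write-up is the translation between the abstract notation ($\ds$, $X$, $\Xs$) and the concrete functional setting of the 3D NSE, so that the reader sees without ambiguity that ``$\ds(u,v)<\epsilon$'' in Theorem \ref{t:sA} reads as ``$|u-v|<\epsilon$'' in the present statement, and that $\mathfrak A_{\s}|_{[0,T]}$ is the set of restrictions to $[0,T]$ of complete Leray-Hopf-type trajectories lying on $\As$. Once this identification is noted, items 1 and 2 are immediate restatements of Conclusions 3 and 4 of Theorem \ref{t:sA}.
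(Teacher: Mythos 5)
Your proposal is correct and follows exactly the paper's own route: Lemma \ref{l:compact0} supplies A1--A3 under the normality assumption, the strong continuity hypothesis feeds into Theorem \ref{t:A123Comp} to give asymptotical compactness, and Theorem \ref{t:sA} then delivers all the conclusions. The paper's proof is precisely this one-line chain of citations, so there is nothing to add beyond the notational translation you already flag.
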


The strong compactness of  $\As$ and the strong trajectory attracting property of  $\mathfrak A_{\w}$ have been proved in \cite{CL14}. Here  novelties are the strong compactness of $\mathfrak A_{\s}$ and sequent properties.

\begin{proof}
The theorem follows by applying  Lemma \ref{l:compact0}, Theorems \ref{t:A123Comp} and \ref{t:sA}.
\end{proof}

\subsubsection{Open problem} We give some supplementaries to Section 6 in \cite{CL14}. In this subsection, we further assume that  $g_0$ is translation compact in
$L^2_\mathrm{loc}(\mathbb{R};V')$, i.e., the closure of the translation family $\Sigma$ of $g_0$ in $L^{2}_\mathrm{loc}(\mathbb{R};V')$,
\[
\bar{\Sigma}:=\overline{\{g_0(\cdot+h): h\in \mathbb R\}}^{L^2_\mathrm{loc}(\mathbb{R};V')},
\]
is compact in
$L^{2}_\mathrm{loc}(\mathbb{R};V')$.  It is known \cite{CV02} that $L^2_\mathrm{loc}(\mathbb{R};V')$ is metrizable and the corresponding metric space is complete. Note that the class of translation compact functions is also a  closed subspace of
the class of translation bounded functions, but it is a
proper subset of the class of normal functions (for more details, see \cite{LWZ05, Lu06}). Note also that the argument of (\ref{i:gNSE})-(\ref{i:absorbongBtrajectory3}) is valid for $\Sigma$ replaced by $\bar{\Sigma}$ and Lemma \ref{l:precompactofLH23} can be improved as follows (see e.g. \cite{T88, CF89, CV02, CL14}).

\begin{lemma} \label{l:compactofLH3}
Let $n=3$ and let $u_k(t)$ be a sequence of Leray-Hopf solutions of  \eqref{NSE1} with forces
$g_k \in \bar{\Sigma}$, such that $u_k(t) \in X$ for all $t\geq t_1$. Then
\begin{equation*}
\begin{split}
u_k \ \ &\mbox{is bounded in} \ \ L^2(t_1,t_2;V)\ \ \mbox{and} \ \ L^\infty(t_1,t_2;H),\\
\ddt u_k \ \  &\mbox{is bounded in} \ \ L^{4/3}(t_1,t_2;V'),
\end{split}
\end{equation*}
for all $t_2>t_1$. Moreover, there exists a subsequence $k_j$, such
that $g_{k_j}$ converges in
$L^{2}_{\mathrm{loc}}(\mathbb{R};V')$  to some  $g \in
\bar{\Sigma}$ and $u_{k_j}$ converges in $C([t_1, t_2]; \Hw)$ to some
Leray-Hopf solution $u(t)$ of  \eqref{NSE1} with the force $g$,
i.e.,
\[
(u_{k_j},v) \to (u,v) \quad
\mbox{uniformly on }  [t_1,t_2],
\]
as $k_j\to \infty$, for all $v \in H$.
\end{lemma}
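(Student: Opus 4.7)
\medskip

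\noindent\textbf{Proof proposal.} The plan is to follow the same scheme as in Lemma \ref{l:precompactofLH23}, but to additionally exploit the translation compactness of $g_0$ in $L^2_{\mathrm{loc}}(\mathbb{R};V')$ in order to obtain strong convergence of the forces, which is exactly what is needed to promote the limiting weak solution to a Leray-Hopf solution. First I would record the uniform bounds: since $u_k(t)\in X$ for $t\geq t_1$, the $L^\infty(t_1,t_2;H)$ bound is immediate from \eqref{i:absorbongB3}; the $L^2(t_1,t_2;V)$ bound follows by integrating \eqref{i:EIgNSE} on $[t_1,t_2]$ and using \eqref{i:gNSE} together with the fact that each $g_k\in\bar{\Sigma}$ inherits the translation bound of $g_0$; the $L^{4/3}(t_1,t_2;V')$ bound on $\ddt u_k=-\nu Au_k-B(u_k,u_k)+g_k$ follows from the standard 3D estimate $\|B(u,u)\|_{V'}\leq C|u|^{1/2}\|u\|^{3/2}$ together with the $V$-bound and the $V'$-bound on $g_k$.

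Second, I would extract a (not relabelled) subsequence. Since $g_0$ is translation compact in $L^2_{\mathrm{loc}}(\mathbb{R};V')$, the family $\bar{\Sigma}$ is compact in that topology, so there exists $g\in\bar{\Sigma}$ with $g_{k_j}\to g$ strongly in $L^2_{\mathrm{loc}}(\mathbb{R};V')$. By the uniform bounds, a further diagonal subsequence yields $u_{k_j}\rightharpoonup u$ weakly in $L^2(t_1,t_2;V)$, weakly-$\ast$ in $L^\infty(t_1,t_2;H)$ and $\ddt u_{k_j}\rightharpoonup \ddt u$ weakly in $L^{4/3}(t_1,t_2;V')$; the Aubin--Lions lemma then gives $u_{k_j}\to u$ strongly in $L^2(t_1,t_2;H)$, and the argument of Lemma \ref{l:precompactofLH23} produces the uniform convergence $(u_{k_j},v)\to (u,v)$ on $[t_1,t_2]$ for every $v\in H$, i.e.\ convergence in $C([t_1,t_2];\Hw)$.

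Third, I would pass to the limit in the weak formulation on every subinterval $[t_0,t]\subset[t_1,t_2]$. The linear terms are handled by the weak convergence in $L^2(t_1,t_2;V)$. The nonlinear term is handled by writing $\langle B(u_{k_j},u_{k_j}),v\rangle=-\int (u_{k_j}\otimes u_{k_j}):\nabla v$ for $v\in\mathcal{V}$ and using the strong convergence of $u_{k_j}$ in $L^2(t_1,t_2;H)$ to conclude $u_{k_j}\otimes u_{k_j}\to u\otimes u$ in $L^1(t_1,t_2;L^1)$. The forcing term converges because $g_{k_j}\to g$ strongly in $L^2(t_1,t_2;V')$ and $u_{k_j}\rightharpoonup u$ weakly in $L^2(t_1,t_2;V)$. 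This shows $u$ is a weak solution on $[t_1,t_2]$ with force $g$, and by a standard diagonal extension in $t_2\to\infty$ a weak solution on $[t_1,\infty)$.

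The only delicate step, and the main obstacle, is verifying the energy inequality \eqref{EI} for $u$ in order to upgrade it to a Leray-Hopf solution. For almost every $t_0\in(t_1,\infty)$, the inequality \eqref{EI} holds for each $u_{k_j}$; the right-hand side $|u_{k_j}(t_0)|^2+2\int_{t_0}^t\langle g_{k_j},u_{k_j}\rangle\,ds$ passes to the limit precisely because $g_{k_j}\to g$ is strong in $L^2_{\mathrm{loc}}(\mathbb{R};V')$ (otherwise only weak convergence of $\int\langle g_{k_j},u_{k_j}\rangle\,ds$ would be available, which fails to combine with the weak $L^2(V)$-convergence of $u_{k_j}$); the left-hand side $|u(t)|^2+2\nu\int_{t_0}^t\|u\|^2\,ds$ is controlled by the weak lower semicontinuity of the $H$-norm (uniform convergence in $\Hw$ gives $|u(t)|\le\liminf|u_{k_j}(t)|$ for every $t$) and of the $L^2(V)$-norm. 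For the pointwise-in-$t_0$ values, I would invoke the Lebesgue differentiation theorem applied to the (uniformly bounded) sequence of $L^1$ functions $t_0\mapsto|u_{k_j}(t_0)|^2$ and a diagonal subsequence to ensure convergence almost everywhere in $t_0$, so that \eqref{EI} is inherited by $u$ for a.e.\ $t_0\in[t_1,\infty)$ and all $t\geq t_0$. This establishes that $u$ is a Leray-Hopf solution on $[t_1,\infty)$ with force $g\in\bar{\Sigma}$, completing the proof.
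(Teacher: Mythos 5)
Your proposal is correct and is essentially the standard argument that the paper relies on here: the paper gives no proof of this lemma, stating only that the bounds of \eqref{i:gNSE}--\eqref{i:absorbongBtrajectory3} carry over to $\bar{\Sigma}$ and citing \cite{T88, CF89, CV02, CL14}, and those references prove it exactly as you do — uniform energy and $L^{4/3}(V')$ bounds, Aubin--Lions plus equicontinuity for the $C([t_1,t_2];\Hw)$ convergence, strong convergence of $g_{k_j}$ from the compactness of $\bar{\Sigma}$, and passage to the limit in the energy inequality via weak lower semicontinuity on the left and strong-times-weak convergence of $\int\langle g_{k_j},u_{k_j}\rangle$ plus a.e.-in-$t_0$ convergence of $|u_{k_j}(t_0)|^2$ on the right. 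The only cosmetic remark is that the a.e. convergence of $|u_{k_j}(t_0)|^2$ follows more directly from the strong $L^2(t_1,t_2;H)$ convergence (along a further subsequence) than from the Lebesgue differentiation theorem, and one should note that the union of the exceptional sets $Ex_{k_j}$ is still null.
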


Now consider another evolutionary system with $\bar{\Sigma}$ as a symbol space.  The family
of trajectories for this evolutionary system  consists of all Leray-Hopf solutions of the family
of 3D NSE with  forces $g\in \bar{\Sigma}$ in $X$:
\[
\begin{split}
\Dc_{\bar{\Sigma}}([\tau,\infty)) := \{&u(\cdot): u(\cdot) \mbox{ is a
Leray-Hopf}
\mbox{ solution on } [\tau,\infty)\\
&\mbox{with the force } g\in \bar{\Sigma} \mbox{ and } u(t) \in X, \
\forall t \in [\tau,\infty)\}, \quad \tau \in \mathbb{R},
\end{split}
\]
\[
\begin{split}
\Dc_{\bar{\Sigma}}((-\infty,\infty)) := \{&u(\cdot): u(\cdot)
\mbox{ is a Leray-Hopf} \ \mbox{ solution on } (-\infty,\infty)\\
&\mbox{with the force } g\in \bar{\Sigma} \mbox{ and } u(t) \in X, \
\forall t \in (-\infty,\infty)\}.
\end{split}
\]
We have following lemmas.
\begin{lemma}\cite{CL14}\label{l:compactNSE3}
The evolutionary system $\Dc_{\bar{\Sigma}}$ of the family of  3D NSE with
forces in $\bar{\Sigma}$ satisfies \={A1}, \={A2} and \={A3}.
\end{lemma}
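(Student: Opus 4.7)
The plan is to verify \={A1}, \={A2}, and \={A3} for $\Dc_{\bar{\Sigma}}$ by paralleling the proof of Lemma~\ref{l:compact0} (which establishes A1, A3, and A2 under normality for the smaller system $\Dc_\Sigma$), with the crucial substitution that the compactness of $\bar{\Sigma}$ in $L^2_\mathrm{loc}(\mathbb{R};V')$ now lets us invoke the strengthened Lemma~\ref{l:compactofLH3} in place of Lemma~\ref{l:precompactofLH23}. The point of departure from the $\Sigma$ case is exactly what is flagged in Remark~\ref{r:limitLH}: for forces in $\bar{\Sigma}$ we may now identify the limit of a sequence of Leray--Hopf solutions as a Leray--Hopf solution again.

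For \={A1}, I would take any sequence $u_n\in \Dc_{\bar{\Sigma}}([0,\infty))$ with associated forces $g_n\in \bar{\Sigma}$. On each finite window $[0,m]$, Lemma~\ref{l:compactofLH3} together with the compactness of $\bar{\Sigma}$ in $L^2_\mathrm{loc}(\mathbb{R};V')$ yields a subsequence such that $g_{n_k}\to g^{(m)}$ in $L^2_\mathrm{loc}(\mathbb{R};V')$ with $g^{(m)}\in\bar{\Sigma}$ and $u_{n_k}\to u^{(m)}$ in $C([0,m];\Hw)$ to a Leray--Hopf solution with force $g^{(m)}$. A standard diagonal extraction, using uniqueness of limits in the Hausdorff space $L^2_\mathrm{loc}(\mathbb{R};V')$, produces a single $g\in\bar{\Sigma}$ and a single $u\in \Dc_{\bar{\Sigma}}([0,\infty))$ such that the diagonal subsequence converges to $u$ uniformly on compact subsets of $[0,\infty)$, i.e.\ in the metric of $C([0,\infty);\Hw)$. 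Because $u(t)\in X$ is automatic from \eqref{i:absorbongBtrajectory3} (the absorbing ball is uniformly valid for forces in $\bar{\Sigma}$), the limit indeed lies in $\Dc_{\bar{\Sigma}}([0,\infty))$, proving sequential compactness.

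For \={A2}, I would start from the energy inequality \eqref{i:EIgNSE}, which holds uniformly for all $g\in\bar{\Sigma}$ since $\|g\|_{L_{\mathrm{b}}^2(\mathbb{R};V')}\leq\|g_0\|_{L_{\mathrm{b}}^2(\mathbb{R};V')}$. Translation compactness of $g_0$ passes to a uniform normality statement on the closure $\bar{\Sigma}$: for every $\eta>0$ there is $\delta>0$ with $\sup_{g\in\bar{\Sigma}}\sup_{t}\int_t^{t+\delta}\|g(s)\|_{V'}^2\,ds\leq\eta$, because translation compact functions are normal and this property is preserved under $L^2_\mathrm{loc}$-limits of translates. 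Combining this with Cauchy--Schwarz on $\int_{t_0}^t\langle g,u\rangle\,ds$ and the uniform bound $\nu\int_{t_0}^t\|u\|^2\,ds\leq R^2+\nu^{-1}\|g_0\|_{L^2_b}^2\,(t-t_0)$ from \eqref{i:EIgNSE} shows that $|u(t)|^2-|u(t_0)|^2$ can be made arbitrarily small uniformly in $u\in\Dc_{\bar{\Sigma}}([0,\infty))$ when $|t-t_0|<\delta$, which yields \={A2} via the inequality $(|u(t_0)|+\epsilon)^2\geq |u(t_0)|^2+\epsilon^2$.

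For \={A3}, I would use that for any sequence $u_n$ in $\Dc_{\bar{\Sigma}}([0,\infty))$ Cauchy in $C([0,T];\Xw)$, the uniform bounds in Lemma~\ref{l:compactofLH3} put $\{u_n\}$ in a bounded subset of $L^2(0,T;V)\cap L^\infty(0,T;H)$ with $\{\partial_t u_n\}$ bounded in $L^{4/3}(0,T;V')$. Aubin--Lions then gives precompactness in $L^2(0,T;H)$; the identification of limits from weak convergence at each $t$ forces $u_n\to u$ in $L^2(0,T;H)$, and along a subsequence strong convergence in $H$ holds a.e.\ in $[0,T]$, establishing \={A3} in the required sense. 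The main obstacle is in \={A1}: reading the proof of Lemma~\ref{l:compact0}, $\Dc_\Sigma$ is only shown to satisfy the weaker precompactness A1 because one cannot identify the limit force as an element of $\Sigma$ (cf.\ Remark~\ref{r:limitLH}); upgrading to the compactness \={A1} truly requires the availability of Lemma~\ref{l:compactofLH3}, and hence the translation compactness hypothesis on $g_0$ which makes $\bar{\Sigma}$ closed in $L^2_\mathrm{loc}(\mathbb{R};V')$.
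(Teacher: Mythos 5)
Your proposal is correct and follows essentially the same route as the source the paper cites for this lemma (\cite{CL14}) and as the analogous arguments the paper does write out (the proofs of Lemma \ref{l:compact0} and Lemma \ref{l:compactRDS}): Lemma \ref{l:compactofLH3} plus diagonalization gives \={A1}, the energy inequality \eqref{i:EIgNSE} combined with the uniform normality of $\bar{\Sigma}$ (inherited from the translation compactness of $g_0$, since translation compact functions are normal and the normality modulus passes to $L^2_{\mathrm{loc}}$-limits of translates) gives \={A2}, and $L^2(0,T;H)$-precompactness gives \={A3}. One small correction: the reason the limit trajectory satisfies $u(t)\in X$ for all $t$ is that $X$ is weakly compact, hence $\dw$-closed, and $u_{n_k}(t)\rightharpoonup u(t)$ with $u_{n_k}(t)\in X$; the absorbing property \eqref{i:absorbongBtrajectory3} is about solutions eventually entering $X$ and is not what is needed at that step. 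Your closing remark that $L^2(0,T;H)$-convergence yields a.e.\ strong convergence only along a subsequence is a fair observation, but this is exactly the level of precision at which A3 is verified in the paper's own proof of Lemma \ref{l:compactRDS}, so it is not a deviation from the intended argument.
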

\begin{lemma}\label{l:closedNSE3}
The evolutionary system $\Dc_{\bar{\Sigma}}$ of the family of  3D NSE with
forces in $\bar{\Sigma}$ is closed.
\end{lemma}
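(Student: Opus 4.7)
The plan is to read off the closedness directly from Lemma \ref{l:compactofLH3} plus a uniqueness-of-limits argument, together with the weak closedness of $X$. The topological space $\Im$ here is $L^2_{\mathrm{loc}}(\mathbb{R};V')$, in which $\bar{\Sigma}$ is compact (since $g_0$ is translation compact). So assume $\tau\in\mathbb{R}$, $\sigma_n=g_n\in\bar{\Sigma}$, $u_n\in\Dc_{g_n}([\tau,\infty))$, $u_n\to u$ in $C([\tau,\infty);\Xw)$ and $g_n\to g$ in $L^2_{\mathrm{loc}}(\mathbb{R};V')$. We must show $u\in\Dc_g([\tau,\infty))$, that is, $u$ is a Leray--Hopf solution of the 3D NSE on $[\tau,\infty)$ with force $g$ and $u(t)\in X$ for all $t\ge\tau$.

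First I would check the constraint $u(t)\in X$ for every $t\ge\tau$. For each fixed $t$, evaluation is continuous from $C([\tau,\infty);\Xw)$ into $\Xw$, so $u_n(t)\to u(t)$ weakly in $H$; since $X=\{v\in H:|v|\le R\}$ is weakly closed, $u(t)\in X$.

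Next, fix any $T>\tau$. Applying Lemma \ref{l:compactofLH3} with $t_1=\tau$, $t_2=T$ to the sequence $\{u_n\}$, we obtain a subsequence $\{u_{n_j}\}$, some $\tilde g\in\bar{\Sigma}$, and some Leray--Hopf solution $\tilde u$ on $[\tau,T]$ with force $\tilde g$, such that $g_{n_j}\to\tilde g$ in $L^2_{\mathrm{loc}}(\mathbb{R};V')$ and $u_{n_j}\to\tilde u$ in $C([\tau,T];\Hw)$. But the full sequences already converge: $g_n\to g$ in $L^2_{\mathrm{loc}}(\mathbb{R};V')$ and $u_n\to u$ in $C([\tau,\infty);\Xw)$, hence in $C([\tau,T];\Hw)$. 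By uniqueness of limits in these Hausdorff topologies, $\tilde g=g$ and $\tilde u=u|_{[\tau,T]}$. Thus $u|_{[\tau,T]}$ is a Leray--Hopf solution on $[\tau,T]$ with force $g$, and the energy inequality holds for $t_0$ in the complement of some exceptional null set $\mathrm{Ex}_T\subset[\tau,T]$. Letting $T=\tau+k$ and taking $\mathrm{Ex}=\bigcup_{k\in\mathbb N}\mathrm{Ex}_{\tau+k}$, which still has measure zero, we conclude that $u$ is a Leray--Hopf solution on $[\tau,\infty)$ with force $g$. Combined with $u(t)\in X$ for all $t\ge\tau$, this gives $u\in\Dc_g([\tau,\infty))\subset\Dc_{\bar{\Sigma}}([\tau,\infty))$, proving closedness.

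The only nontrivial step is passing to the limit in the nonlinearity $B(u_n,u_n)$ when identifying $\tilde u$ as a weak solution; this is already absorbed into Lemma \ref{l:compactofLH3} via the standard Aubin--Lions compactness using the uniform bounds on $u_n$ in $L^2(\tau,T;V)\cap L^\infty(\tau,T;H)$ and on $\partial_t u_n$ in $L^{4/3}(\tau,T;V')$. So no new analytic work is needed here, and the entire argument reduces to invoking that lemma on each compact time window together with the uniqueness-of-limit observation above.
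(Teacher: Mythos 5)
Your proof is correct and follows essentially the same route as the paper, whose proof is a one-line invocation of Lemma \ref{l:compactofLH3} together with a standard diagonalization over compact time windows. You simply flesh out that argument, replacing the diagonal extraction by the (equivalent) observation that the already-convergent full sequences force the subsequential limits from Lemma \ref{l:compactofLH3} to coincide with $u$ and $g$, and gluing the exceptional null sets over $[\tau,\tau+k]$.
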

\begin{proof}
For any $\tau\in \mathbb R$, take $g_k\in \bar{\Sigma}$, $u_k\in \Dc_{g_k}([\tau,\infty))$, such that,
$u_k\rightarrow u$ in $C([\tau, \infty);\Xw)$ and $g_k\rightarrow
g$ in $L^{2}_{\mathrm{loc}}(\mathbb{R};V')$.  By Lemma \ref{l:compactofLH3} and a standard diagonalization process, we have $u\in \Dc_{g}([\tau,\infty))$. That is, $\Dc_{\bar{\Sigma}}$ is closed.
\end{proof}
\begin{lemma}\label{l:clousurequalsitselfNSE3}Let $\Dc_{\bar{\Sigma}}$ be the evolutionary system of the family of  3D NSE with
forces in $\bar{\Sigma}$. Then $\bar{\Dc}_{\bar{\Sigma}}=\Dc_{\bar{\Sigma}}$.
\end{lemma}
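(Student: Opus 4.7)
The plan is to reduce the claim directly to the abstract result in Lemma \ref{t:ClosureofESSig}, applied to the evolutionary system $\Dc_{\bar{\Sigma}}$ with the symbol space $\bar{\Sigma}\subset L^2_{\mathrm{loc}}(\mathbb{R};V')$. Since by definition $\bar{\Dc}_{\bar{\Sigma}}([\tau,\infty))$ is the closure of $\Dc_{\bar{\Sigma}}([\tau,\infty))$ in $C([\tau,\infty);\Xw)$, it suffices to show that $\Dc_{\bar{\Sigma}}([\tau,\infty))$ is already closed in $C([\tau,\infty);\Xw)$ for every $\tau\in\mathbb{R}$; the equality on $(-\infty,\infty)$ then follows from the restriction characterization in Definition \ref{Dc}(4).

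First I would check the two hypotheses of Lemma \ref{t:ClosureofESSig}. The space $L^2_{\mathrm{loc}}(\mathbb{R};V')$ is metrizable, and $\bar{\Sigma}$ is the closure there of the translation family of the translation-compact force $g_0$, hence $\bar{\Sigma}$ is compact and therefore sequentially compact in itself inside $\Im:=L^2_{\mathrm{loc}}(\mathbb{R};V')$. The closedness of the evolutionary system $\Dc_{\bar{\Sigma}}$ has just been established in Lemma \ref{l:closedNSE3}. With both hypotheses in force, Lemma \ref{t:ClosureofESSig} directly yields that $\Dc_{\bar{\Sigma}}([\tau,\infty))$ is closed in $C([\tau,\infty);\Xw)$, so $\bar{\Dc}_{\bar{\Sigma}}([\tau,\infty))=\Dc_{\bar{\Sigma}}([\tau,\infty))$.

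Alternatively, if one prefers a self-contained argument without invoking Lemma \ref{t:ClosureofESSig}, the same proof can be unwound as follows: take any $u_k\in\Dc_{\bar{\Sigma}}([\tau,\infty))$ with $u_k\to u$ in $C([\tau,\infty);\Xw)$, pick forces $g_k\in\bar{\Sigma}$ associated with $u_k$, extract a subsequence $g_{k_j}\to g\in\bar{\Sigma}$ in $L^2_{\mathrm{loc}}(\mathbb{R};V')$ by the (sequential) compactness of $\bar{\Sigma}$, and invoke the closedness of $\Dc_{\bar{\Sigma}}$ (Lemma \ref{l:closedNSE3}) to conclude $u\in\Dc_g([\tau,\infty))\subset\Dc_{\bar{\Sigma}}([\tau,\infty))$. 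The only place where the specific nature of the 3D NSE enters is in verifying the closedness condition, which in turn relies on Lemma \ref{l:compactofLH3}; that has already been done, so no new PDE-level estimates are needed here.

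The step I expect to require the most care is keeping the two notions of closure straight: the topological closure of $\Dc_{\bar{\Sigma}}([\tau,\infty))$ inside $C([\tau,\infty);\Xw)$, which defines $\bar{\Dc}_{\bar{\Sigma}}$, versus the closedness of $\Dc_{\bar{\Sigma}}$ as an evolutionary system, which involves the joint convergence of trajectories and of symbols. Lemma \ref{t:ClosureofESSig} is precisely the bridge between them, provided $\bar{\Sigma}$ is (sequentially) compact in itself in $\Im$, which is why the translation-compactness hypothesis on $g_0$ is essential and cannot be weakened to mere translation-boundedness.
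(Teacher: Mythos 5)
Your proposal is correct and follows essentially the same route as the paper: verify that $\bar{\Sigma}$ is metrizable and (sequentially) compact in $L^2_{\mathrm{loc}}(\mathbb{R};V')$, invoke the closedness of $\Dc_{\bar{\Sigma}}$ from Lemma \ref{l:closedNSE3}, and apply Lemma \ref{t:ClosureofESSig} to conclude that each $\Dc_{\bar{\Sigma}}([\tau,\infty))$ is already closed in $C([\tau,\infty);\Xw)$, whence $\bar{\Dc}_{\bar{\Sigma}}=\Dc_{\bar{\Sigma}}$. Your added remark that the equality of the kernels follows from the restriction characterization is a harmless (and correct) point of care that the paper leaves implicit.
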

\begin{proof}By the assumption, $\bar{\Sigma}$ is metrizable and compact in
$L^{2}_\mathrm{loc}(\mathbb{R};V')$. Thanks to Lemma \ref{l:closedNSE3}, the corresponding
evolutionary system $\Dc_{\bar{\Sigma}}$ is closed. It follows from Lemma \ref{t:ClosureofESSig} that, for any $\tau \in \mathbb R$, the set $\mathcal E_{\bar{\Sigma}} ([\tau,\infty))$
is closed in $C([\tau, \infty);\Xw)$. Hence, $\bar{\Dc}_{\bar{\Sigma}}=\Dc_{\bar{\Sigma}}$.
\end{proof}
We have the following (cf. \cite{CV02, CL14}) that now knows more on the structure of the kernel.
\begin{theorem} \label{t:AwNSEtrc}Let $g_0$  be  translation compact in
$L^2_\mathrm{loc}(\mathbb{R};V')$. Then the weak uniform global attractor  $\Aw^{\bar{\Sigma}}$  and the weak trajectory attractor $\mathfrak A^{\bar{\Sigma}}_{\w}$  for the family of 3D
NSE with forces $g\in \bar{\Sigma}$ exist, $\Aw^{\bar{\Sigma}}$ is the maximal invariant
and maximal quasi-invariant set w.r.t.  the corresponding
evolutionary system $\Dc_{\bar{\Sigma}}$, and
\begin{equation*}
\begin{split}
\Aw^{\bar{\Sigma}}  &=\left\{u(0): u \in \Dc_{\bar{\Sigma}}((-\infty, \infty))\right\}= \left\{ u(0): u\in \bigcup_{g\in\bar{\Sigma}}\Dc_g ((-\infty, \infty))\right\}, \\ \mathfrak A^{\bar{\Sigma}}_{\w}&=\Pi_+\bigcup_{g\in\bar{\Sigma}}\Dc_g ((-\infty, \infty)), \\
\Aw^{\bar{\Sigma}}&=\mathfrak A^{\bar{\Sigma}}_{\w}(t)=\left\{u(t): u\in \mathfrak A^{\bar{\Sigma}}_{\w}\right\},\quad \forall\, t\geq 0,
\end{split}
\end{equation*}
where $\mathcal E_{g} ((-\infty,\infty))$ is nonempty for any $g\in \bar{\Sigma}$.
Moreover,  $\mathfrak A^{\bar{\Sigma}}_{\w}$ satisfies the finite weak uniform tracking property and  is weakly equicontinuous on $[0,\infty)$.
\end{theorem}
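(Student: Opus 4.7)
The plan is to assemble the abstract machinery of Theorem \ref{t:wA} (and the kernel lemmas of Section \ref{ss:Ker}) on top of the concrete compactness facts that Lemmas \ref{l:compactNSE3}, \ref{l:closedNSE3} and \ref{l:clousurequalsitselfNSE3} have already verified for the evolutionary system $\Dc_{\bar{\Sigma}}$. Since $g_0$ is translation compact in $L^2_{\mathrm{loc}}(\mathbb R;V')$, the symbol space $\bar{\Sigma}$ is a compact metrizable subset of $L^2_{\mathrm{loc}}(\mathbb R;V')$, hence in particular sequentially compact in itself; this is the input that will let us invoke Lemma \ref{t:kerofESSig}.

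First, I would note that Lemma \ref{l:compactNSE3} supplies property \={A1} for $\Dc_{\bar{\Sigma}}$, which a fortiori gives A1. Thus conclusion 1 of Theorem \ref{t:wA} produces the weak uniform global attractor $\Aw^{\bar{\Sigma}}$, and conclusion 4 of Theorem \ref{t:wA} produces the weak trajectory attractor $\mathfrak A^{\bar{\Sigma}}_{\w}$, together with the finite weak uniform tracking property and the weak equicontinuity on $[0,\infty)$ asserted at the end of the statement. Next I would invoke Lemma \ref{l:clousurequalsitselfNSE3}, which tells us $\bar{\Dc}_{\bar{\Sigma}} = \Dc_{\bar{\Sigma}}$, so that the structural formulas of conclusions 3, 4 and 5 of Theorem \ref{t:wA} can be rewritten with $\Dc_{\bar{\Sigma}}$ in place of its closure. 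This immediately yields
\[
\Aw^{\bar{\Sigma}}=\left\{u(0): u \in \Dc_{\bar{\Sigma}}((-\infty,\infty))\right\},\qquad
\mathfrak A^{\bar{\Sigma}}_{\w}=\Pi_+\Dc_{\bar{\Sigma}}((-\infty,\infty)),
\]
together with the maximality (invariance and quasi-invariance) of $\Aw^{\bar{\Sigma}}$ w.r.t.\ $\Dc_{\bar{\Sigma}}$ and the section identity $\Aw^{\bar{\Sigma}}=\mathfrak A^{\bar{\Sigma}}_{\w}(t)$ for all $t\ge 0$.

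The remaining task is to rewrite the kernel as a union over individual symbols. Since $\Dc_{\bar{\Sigma}}$ is closed (Lemma \ref{l:closedNSE3}) and $\bar{\Sigma}\subset L^2_{\mathrm{loc}}(\mathbb R;V')$ is sequentially compact in itself, Lemma \ref{t:kerofESSig} applies and gives
\[
\Dc_{\bar{\Sigma}}((-\infty,\infty))=\bigcup_{g\in\bar{\Sigma}}\Dc_g((-\infty,\infty)),
\]
so the three identities in the display of the theorem line up. Finally, nonemptiness of $\Dc_g((-\infty,\infty))$ for each $g\in\bar{\Sigma}$ is precisely the content of Lemma \ref{t:NonempfESsig}, whose two hypotheses (closedness and A1) have just been verified.

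In this proof there is no single hard step; the only thing to watch is that all the hypotheses line up when we move from the originally fixed force $g_0$ (and its translation family $\Sigma$) to the enlarged hull $\bar{\Sigma}$. The translation compactness of $g_0$ in $L^2_{\mathrm{loc}}(\mathbb R;V')$, as opposed to merely translation boundedness or normality, is exactly what is needed to make $\bar{\Sigma}$ metrizable and sequentially compact, and thereby to feed both Lemma \ref{t:kerofESSig} and the closedness argument of Lemma \ref{l:closedNSE3}; this is the place where one must be careful, but the argument itself is an assembly of results already in hand.
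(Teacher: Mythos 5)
Your proposal is correct and follows essentially the same route as the paper's proof: both assemble Lemma \ref{l:clousurequalsitselfNSE3} (so that $\bar{\Dc}_{\bar{\Sigma}}=\Dc_{\bar{\Sigma}}$), Lemma \ref{t:kerofESSig} for the kernel decomposition over $\bar{\Sigma}$, Lemma \ref{l:compactNSE3} for \={A1}, Lemma \ref{t:NonempfESsig} for nonemptiness of each $\Dc_g((-\infty,\infty))$, and Theorem \ref{t:wA} for the remaining structural conclusions. Your explicit check that translation compactness of $g_0$ is what makes $\bar{\Sigma}$ sequentially compact and metrizable is a welcome clarification of a hypothesis the paper uses implicitly.
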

\begin{proof}
By Lemma \ref{l:clousurequalsitselfNSE3}, $\Dc_{\bar{\Sigma}}$ equals to its closure $\bar{\Dc}_{\bar{\Sigma}}$. Especially, we have $$\Dc_{\bar{\Sigma}}((-\infty, \infty))=\bar{\Dc}_{\bar{\Sigma}}((-\infty, \infty)). $$ It follows from Lemma \ref{t:kerofESSig} that
$$\Dc_{\bar{\Sigma}}((-\infty, \infty))=\bigcup_{g\in\bar{\Sigma}}\Dc_g ((-\infty, \infty)). $$
We know from Lemma \ref{l:compactNSE3} that $\Dc_{\bar{\Sigma}}$ satisfies \=A1. Therefore, $\mathcal E_{g} ((-\infty,\infty))$ is nonempty for any $g\in \bar{\Sigma}$ due to Lemma \ref{t:NonempfESsig}. The rest part of the conclusions follow by applying Theorem \ref{t:wA}, which is in fact
 Theorems 6.3 and 6.6 in \cite{CL14}.
\end{proof}Theorem \ref{t:A123Comp} and Lemma \ref{l:compactNSE3} give a criterion for the strong compactness of the attractors. Thereout,  we would obtain that all the complete Leray-Hopf solutions of the family of 3D NSE with forces $g\in
\bar{\Sigma}$ satisfy the finite strong uniform tracking property and are strongly equicontinuous on $(-\infty,\infty)$.
\begin{theorem}\label{t:AsNSEtrc}
Furthermore, if every complete trajectory of the family of 3D NSE with forces $g\in
\bar{\Sigma}$ is strongly continuous , then the weak uniform global attractor $\Aw^{\bar{\Sigma}}$
is a strongly compact strong global attractor $\As^{\bar{\Sigma}}$ and  the weak trajectory attractor $\mathfrak A^{\bar{\Sigma}}_{\w}$ is a strongly
compact strong trajectory attractor $\mathfrak A^{\bar{\Sigma}}_{\s}$. Moreover, $\mathfrak A^{\bar{\Sigma}}_{\s}$ satisfies the finite
strong uniform tracking property and is strongly equicontinuous on $[0,\infty)$.
\end{theorem}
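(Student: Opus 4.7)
The plan is to deduce this theorem as an essentially immediate corollary of the abstract machinery already set up, in particular Theorem \ref{t:A123Comp} and Theorem \ref{t:sA}, by verifying that the nonautonomous evolutionary system $\Dc_{\bar{\Sigma}}$ fits the hypothesis framework. Since the companion Theorem \ref{t:AwNSEtrc} has already produced $\Aw^{\bar{\Sigma}}$ and $\mathfrak A^{\bar{\Sigma}}_{\w}$ together with their structural identifications, what remains is to upgrade weak compactness/attraction to their strong counterparts.

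First I would check the hypotheses of Theorem \ref{t:A123Comp} for $\Dc_{\bar{\Sigma}}$. Lemma \ref{l:compactNSE3} already establishes \={A1}, \={A2}, \={A3} for this system, which trivially imply A1, A2, A3. Next, Lemma \ref{l:clousurequalsitselfNSE3} gives $\bar{\Dc}_{\bar{\Sigma}}=\Dc_{\bar{\Sigma}}$, so
\[
\bar{\Dc}_{\bar{\Sigma}}((-\infty,\infty))=\Dc_{\bar{\Sigma}}((-\infty,\infty))=\bigcup_{g\in\bar{\Sigma}}\Dc_g((-\infty,\infty)),
\]
where the second equality uses Lemma \ref{t:kerofESSig} as in the proof of Theorem \ref{t:AwNSEtrc}. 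The standing hypothesis of the theorem is precisely that every complete trajectory of the family of 3D NSE with forces $g\in\bar{\Sigma}$ is strongly continuous, which translates directly into $\bar{\Dc}_{\bar{\Sigma}}((-\infty,\infty))\subset C((-\infty,\infty);\Xs)$. Thus all the assumptions of Theorem \ref{t:A123Comp} are met, and I conclude that $\Dc_{\bar{\Sigma}}$ is asymptotically compact.

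With asymptotical compactness in hand, I would invoke Theorem \ref{t:sA}. Its first conclusion yields that the strong global attractor $\As^{\bar{\Sigma}}$ exists, is strongly compact, and coincides with $\Aw^{\bar{\Sigma}}$. Since A1 also holds for $\Dc_{\bar{\Sigma}}$, conclusions 2--4 of Theorem \ref{t:sA} apply: the strong trajectory attractor satisfies
\[
\mathfrak A^{\bar{\Sigma}}_{\s}=\mathfrak A^{\bar{\Sigma}}_{\w}=\Pi_+\bar{\Dc}_{\bar{\Sigma}}((-\infty,\infty))=\Pi_+\bigcup_{g\in\bar{\Sigma}}\Dc_g((-\infty,\infty)),
\]
is strongly compact in $C([0,\infty);\Xs)$, enjoys the finite strong uniform tracking property for $\Dc_{\bar{\Sigma}}$, and is strongly equicontinuous on $[0,\infty)$.

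There is no genuine analytical obstacle here; the entire proof is a clean assembly of already-established lemmas. The only delicate point, more conceptual than technical, is the bookkeeping that identifies $\bar{\Dc}_{\bar{\Sigma}}$ with $\Dc_{\bar{\Sigma}}$ (so that the strong continuity assumption can be read as the inclusion required by Theorem \ref{t:A123Comp}) and that correctly matches conclusions of Theorem \ref{t:sA} with the symbol-space notation $\Aw^{\bar{\Sigma}}$, $\mathfrak A^{\bar{\Sigma}}_{\w}$, $\As^{\bar{\Sigma}}$, $\mathfrak A^{\bar{\Sigma}}_{\s}$ used in this section.
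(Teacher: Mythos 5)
Your proposal is correct and follows exactly the route the paper intends: Lemma \ref{l:compactNSE3} supplies \={A1}--\={A3}, Lemma \ref{l:clousurequalsitselfNSE3} identifies $\bar{\Dc}_{\bar{\Sigma}}$ with $\Dc_{\bar{\Sigma}}$ so the strong-continuity hypothesis becomes the inclusion required by Theorem \ref{t:A123Comp}, and Theorem \ref{t:sA} then delivers all the stated conclusions. The paper leaves this assembly implicit (it only remarks that Theorem \ref{t:A123Comp} and Lemma \ref{l:compactNSE3} give the criterion), so your write-up is simply a more explicit version of the same argument.
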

Let $\bar{\Dc}$ be the closure of the  evolutionary  system $\Dc$.
It follows from Lemma \ref{l:clousurequalsitselfNSE3} that $\Dc\subset\bar{\Dc}\subset\Dc_{\bar{\Sigma}}$. Then, an
interesting problem naturally arises:
\begin{open}\cite{CL14}\label{op:attractor}
Are  the attractors $\Ab$,  $\mathfrak A_{\bullet}$ and
$\Ab^{\bar{\Sigma}}$, $\mathfrak A^{\bar{\Sigma}}_{\bullet}$ in Theorems \ref{t:Aw03DNSE}
and \ref{t:AsNSEtrc} identical?
\end{open}

Due to Theorems \ref{t:A0Aw} and \ref{t:saofESac}, the answer is positive if the solutions of the 3D NSE are unique (cf. footnote \ref{fn:nonuniq}). However, the negative answer would imply that the Leray-Hopf
weak solutions are not unique, and especially that, the attractors $\Ab^{\bar{\Sigma}}$ and $\mathfrak A^{\bar{\Sigma}}_{\bullet}$  for the auxiliary family of 3D NSE with forces in $\bar{\Sigma}$ do not
satisfy the minimality property w.r.t. $\db$-attracting  and $\mathrm d_{ C([0, \infty);X_\bullet)}$-attracting, respectively,  for the evolutionary system $\Dc$ corresponding to the original 3D NSE with the fixed force $g_0$.
For more details, see \cite{CL14}.

\subsection{2D Navier-Stokes equations: Weak solutions}\label{ss:2NSEws} In the 2D case, there are better properties of weak solutions than those in Theorem \ref{t:Leray}. The weak solutions are unique and strongly continuous w.r.t. time $t$,  and the equality in (\ref{EI}) holds for every weak solution  (see e.g. \cite{T88, Ro01, CV02}). We will show in this subsection that these properties provide better results.
\begin{theorem}(Weak solutions)\label{t:Leray2Dws}
Let $n=2$. For every $u_0 \in H$ and $g \in L^2_\mathrm{loc}(\mathbb{R};V')$,  the weak solution of  (\ref{NSE1}) on $[\tau,\infty)$ with $u(\tau)=u_0$ is unique and satisfies
$$u(t)\in C([\tau, \infty);H).  $$
\end{theorem}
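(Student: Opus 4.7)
The plan is to split the statement into the two classical claims and prove each via the standard 2D trilinear estimate. I would first establish uniqueness by a difference argument with Gr\"onwall, then derive the full (equality version of the) energy identity, and finally combine weak continuity with continuity of $|u(t)|^2$ to upgrade to strong continuity in $H$.

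For uniqueness, suppose $u_1$ and $u_2$ are two weak solutions on $[\tau,\infty)$ with the same force $g$ and $u_1(\tau)=u_2(\tau)=u_0$. Set $w:=u_1-u_2$, so that $w\in C([\tau,\infty);\Hw)\cap L^2_{\mathrm{loc}}([\tau,\infty);V)$ and, by subtracting the two equations,
\[
\ddt w + \nu Aw + B(w,u_1)+B(u_2,w)=0 \quad \text{in } V'.
\]
Since $\ddt w\in L^2_{\mathrm{loc}}([\tau,\infty);V')$ in $2$D (see the next paragraph), one may pair this identity with $w\in V$ and use $\langle B(u_2,w),w\rangle=0$ to get
\[
\tfrac12\ddt |w|^2+\nu\|w\|^2=-\langle B(w,u_1),w\rangle.
\]
The key $2$D estimate is Ladyzhenskaya's inequality $\|w\|_{L^4}\le c\,|w|^{1/2}\|w\|^{1/2}$, which yields $|\langle B(w,u_1),w\rangle|\le c|w|\|w\|\|u_1\|$. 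Applying Young's inequality and then the integral Gr\"onwall lemma with the locally integrable function $\|u_1(s)\|^2$ gives $|w(t)|^2\le |w(\tau)|^2\,\exp\!\bigl(\tfrac{c^2}{\nu}\int_\tau^t\|u_1\|^2\,ds\bigr)=0$, so $u_1\equiv u_2$.

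For the strong continuity, I would first show the extra time regularity that is special to $n=2$. The bound $\|B(u,u)\|_{V'}\le c\|u\|_{L^4}^2\le c|u|\|u\|$ combined with $u\in L^\infty_{\mathrm{loc}}(H)\cap L^2_{\mathrm{loc}}(V)$ gives $B(u,u)\in L^2_{\mathrm{loc}}([\tau,\infty);V')$, and since $Au\in L^2_{\mathrm{loc}}([\tau,\infty);V')$ and $g\in L^2_{\mathrm{loc}}(\mathbb R;V')$, we obtain $\ddt u\in L^2_{\mathrm{loc}}([\tau,\infty);V')$. By the classical Lions--Magenes lemma, $u$ has a representative in $C([\tau,\infty);H)$ and $\tfrac12\tfrac{d}{dt}|u|^2=\langle \ddt u,u\rangle$ in the sense of distributions. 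Integrating and using $\langle B(u,u),u\rangle=0$ yields the energy equality for every $t\ge t_0\ge \tau$:
\[
|u(t)|^2+2\nu\int_{t_0}^t\|u\|^2\,ds=|u(t_0)|^2+2\int_{t_0}^t\langle g,u\rangle\,ds.
\]
The right-hand side is a continuous function of $t$, so $t\mapsto |u(t)|^2$ is continuous, which together with $u\in C([\tau,\infty);\Hw)$ and the Radon--Riesz property of $H$ implies $u\in C([\tau,\infty);H)$.

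The main technical point is the $2$D trilinear estimate: it is what simultaneously makes $B(u,u)\in L^2_{\mathrm{loc}}(V')$ (enabling Lions--Magenes and the energy equality) and closes the Gr\"onwall argument for $w$. In dimension $n=3$ neither inequality is strong enough, which is precisely why the same argument fails there and why the uniqueness/strong continuity issue persists. Since everything reduces to these standard estimates, I do not anticipate further obstacles beyond checking the duality pairings and the application of Lions--Magenes in this functional setting.
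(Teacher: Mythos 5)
Your argument is correct and is exactly the classical proof of this statement: the paper itself offers no proof here, simply citing \cite{T88, Ro01, CV02}, and those references establish uniqueness via the Ladyzhenskaya inequality plus Gr\"onwall and strong $H$-continuity via $\ddt u\in L^2_{\mathrm{loc}}(V')$ and the Lions--Magenes lemma, just as you do. The only cosmetic remark is that once Lions--Magenes gives $u\in C([\tau,\infty);H)$ directly, your final step (energy equality plus weak continuity plus Radon--Riesz) is redundant for continuity, though it is the right way to record the energy \emph{equality} that the paper also uses.
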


Denote again by
$$\bar{\Sigma}:=\overline{\{g_0(\cdot+h): h\in \mathbb R\}}^{L^{2,\mathrm{w}}_\mathrm{loc}(\mathbb{R};V') }.$$
It can be known \cite{CV02} that $\bar{\Sigma}$ endowed with the topology of $L^{2,\mathrm{w}}_\mathrm{loc}(\mathbb{R};V')$ is metrizable and the corresponding metric space is compact.
Due to the better properties of the weak solutions, we have the following better version of Lemma \ref{l:precompactofLH23} (see e.g. \cite{T88, Ro01, CV02}).
\begin{lemma} \label{l:compactofLH2w}
Let $n=2$ and let $u_k(t)$ be a sequence of weak solutions of  \eqref{NSE1} with forces
$g_k \in \bar{\Sigma}$, such that $u_k(t) \in X$ for all $t\geq t_1$. Then
\begin{equation*}
\begin{split}
u_k \ \ &\mbox{is bounded in} \ \ L^2(t_1,t_2;V)\ \ \mbox{and} \ \ L^\infty(t_1,t_2;H),\\
\ddt u_k \ \  &\mbox{is bounded in} \ \ L^{2}(t_1,t_2;V'),
\end{split}
\end{equation*}
for all $t_2>t_1$. Moreover, there exists a subsequence $k_j$, such
that $g_{k_j}$ converges in
$L^{2,\mathrm{w}}_\mathrm{loc}(\mathbb{R};V')$  to some  $g \in
\bar{\Sigma}$ and $u_{k_j}$ converges in $C([t_1, t_2]; \Hw)$ to some
weak solution $u(t)$ of  \eqref{NSE1} with the force $g$,
i.e.,
\[
(u_{k_j},v) \to (u,v) \quad
\mbox{uniformly on }  [t_1,t_2],
\]
as $k_j\to \infty$, for all $v \in H$.
\end{lemma}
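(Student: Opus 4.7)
The plan is to follow the standard energy-estimate/Aubin–Lions/pass-to-the-limit scheme, exploiting the two special features of the 2D setting: the sharper bilinear estimate $\|B(u,u)\|_{V'}\le C|u|\,\|u\|$ (Ladyzhenskaya) and the uniqueness/strong continuity guaranteed by Theorem \ref{t:Leray2Dws}, together with the fact (recalled just before the lemma) that $\bar\Sigma$ endowed with the $L^{2,\mathrm w}_{\mathrm{loc}}(\mathbb R;V')$-topology is a compact metric space.

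First I would establish the \emph{a priori} bounds. Because $u_k(t)\in X$, the $L^\infty(t_1,t_2;H)$ bound $|u_k(t)|\le R$ is immediate. Integrating the energy inequality \eqref{i:EIgNSE} from $t_1$ to $t_2$ and using \eqref{i:gNSE} (which is inherited by every $g\in\bar\Sigma$ by lower semicontinuity of the translation-bounded norm) yields the $L^2(t_1,t_2;V)$ bound, with a constant depending only on $R$, $\nu$, $t_2-t_1$ and $\|g_0\|_{L^2_b(\mathbb R;V')}$. For the time derivative, writing $\tfrac{d}{dt}u_k=-\nu A u_k-B(u_k,u_k)+g_k$ in $V'$, we have $\|A u_k\|_{V'}=\|u_k\|\in L^2(t_1,t_2)$, $g_k\in L^2(t_1,t_2;V')$, and in 2D the Ladyzhenskaya estimate gives $\|B(u_k,u_k)\|_{V'}^2\le C|u_k|^2\|u_k\|^2\le CR^2\|u_k\|^2\in L^1(t_1,t_2)$. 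Hence $\tfrac{d}{dt}u_k$ is bounded in $L^2(t_1,t_2;V')$, which is strictly better than the $L^{4/3}$ bound in Lemma~\ref{l:precompactofLH23} and is what the 2D-specific nonlinearity buys us.

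Next I would extract convergent subsequences. Compactness of $\bar\Sigma$ in $L^{2,\mathrm w}_{\mathrm{loc}}(\mathbb R;V')$ produces $g_{k_j}\to g\in\bar\Sigma$. For $u_k$, Aubin–Lions with the compact embedding $V\hookrightarrow\hookrightarrow H\hookrightarrow V'$ together with the $L^2(V)$ and $L^2(V')$ bounds on $u_k$ and $\tfrac{d}{dt}u_k$ gives a further subsequence converging strongly in $L^2(t_1,t_2;H)$. Combined with the uniform $L^\infty(t_1,t_2;H)$ bound and the equicontinuity in $V'$ coming from $\tfrac{d}{dt}u_k\in L^2(V')$, a standard Lions–Magenes/Ascoli argument upgrades this to convergence in $C([t_1,t_2];H_{\mathrm w})$, which is exactly the stated uniform convergence of the inner products $(u_{k_j},v)$.

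Finally I would pass to the limit in the weak formulation on $[t_1,t_2]$. The linear terms $((u_{k_j},v))$ and $\langle g_{k_j},v\rangle$ converge because of weak convergence of $u_{k_j}$ in $L^2(V)$ and of $g_{k_j}$ in $L^{2,\mathrm w}_{\mathrm{loc}}(\mathbb R;V')$ tested against the time-independent $v\in V$. The only non-routine point, and the main obstacle, is the nonlinear term $\int_{t_0}^t\langle B(u_{k_j},u_{k_j}),v\rangle\,ds=-\int_{t_0}^t\langle B(u_{k_j},v),u_{k_j}\rangle\,ds$: here I would combine the weak convergence $u_{k_j}\rightharpoonup u$ in $L^2(t_1,t_2;V)$ with the strong convergence $u_{k_j}\to u$ in $L^2(t_1,t_2;H)$ produced by Aubin–Lions, together with the 2D bound $\|B(\cdot,v)\|_{V\to H}\le C\|v\|$, to identify the limit as $-\int_{t_0}^t\langle B(u,v),u\rangle\,ds=\int_{t_0}^t\langle B(u,u),v\rangle\,ds$. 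The resulting limit $u$ lies in $C([t_1,t_2];H_{\mathrm w})\cap L^2(t_1,t_2;V)$ and satisfies the integrated equation for all $v\in V$, hence is the (unique) weak solution of \eqref{NSE1} on $[t_1,t_2]$ with force $g\in\bar\Sigma$, completing the argument.
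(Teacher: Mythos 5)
Your proposal is correct and is essentially the standard argument: the paper does not write out a proof of Lemma \ref{l:compactofLH2w} at all, but simply cites \cite{T88, Ro01, CV02}, and the scheme you give (a priori bounds from the energy (in)equality, the 2D Ladyzhenskaya estimate upgrading $\ddt u_k$ to $L^2(t_1,t_2;V')$, Aubin--Lions plus equicontinuity of $t\mapsto(u_k(t),v)$ to get $C([t_1,t_2];\Hw)$ convergence, then passage to the limit) is exactly the one in those references and the one the paper itself writes out for the RDS analogue, Lemma \ref{l:convergenceofsolutionRDS}. One small inaccuracy: the operator bound $\|B(\cdot,v)\|_{V\to H}\le C\|v\|$ is false for general $v\in V$ in 2D (for $u\in V\subset L^4$ and $\nabla v\in L^2$ one only gets $u\cdot\nabla v\in L^{4/3}$); the limit in the nonlinear term should instead be taken for smooth divergence-free test functions $v$, where $\int\langle B(u_{k_j},v),u_{k_j}\rangle\,ds\to\int\langle B(u,v),u\rangle\,ds$ follows directly from the strong $L^2(t_1,t_2;H)$ convergence and the uniform $L^\infty(t_1,t_2;H)$ bound, and then extended to $v\in V$ by density. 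This does not affect the validity of the overall argument.
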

Similarly, together with $\Dc$, we can also consider another evolutionary system with $\bar{\Sigma}$ as a symbol space.  The family
of trajectories for this evolutionary system  consists of all weak solutions of the family
of 2D NSE with  forces $g\in \bar{\Sigma}$ in $X$:
\[
\begin{split}
\Dc_{\bar{\Sigma}}([\tau,\infty)) := \{&u(\cdot): u(\cdot) \mbox{ is a
weak }
\mbox{ solution on } [\tau,\infty)\mbox{ with }\\
&\quad\mbox{ the force } g\in \bar{\Sigma} \mbox{ and } u(t) \in X, \
\forall t \in [\tau,\infty)\}, \ \tau \in \mathbb{R},
\end{split}
\]
\[
\begin{split}
\Dc_{\bar{\Sigma}}((-\infty,\infty)) := \{&u(\cdot): u(\cdot)
\mbox{ is a weak} \ \mbox{ solution on } (-\infty,\infty) \mbox{ with }\\
&\ \quad\mbox{ the force } g\in \bar{\Sigma} \mbox{ and } u(t) \in X, \
\forall t \in (-\infty,\infty)\}.
\end{split}
\]
Analogously, we have the following lemma. The proof is exactly the same as that of above Lemma \ref{l:closedNSE3}.
\begin{lemma}\label{l:closedNSE2w}
The evolutionary system $\Dc_{\bar{\Sigma}}$ of the family of  2D NSE with
forces in $\bar{\Sigma}$ is closed.
\end{lemma}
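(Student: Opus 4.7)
The plan is to mimic the proof of Lemma \ref{l:closedNSE3} verbatim, with Lemma \ref{l:compactofLH2w} playing the role Lemma \ref{l:compactofLH3} had in the 3D case, and with the topology on the symbol space adjusted from $L^2_{\mathrm{loc}}(\mathbb{R}; V')$ to $L^{2,\mathrm{w}}_{\mathrm{loc}}(\mathbb{R}; V')$. Concretely, fix $\tau \in \mathbb{R}$ and suppose $g_k \in \bar{\Sigma}$ and $u_k \in \Dc_{g_k}([\tau, \infty))$ satisfy $u_k \to u$ in $C([\tau, \infty); \Xw)$ and $g_k \to g$ in $L^{2,\mathrm{w}}_{\mathrm{loc}}(\mathbb{R}; V')$. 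The goal is to verify that $u$ is a weak solution of the 2D NSE on $[\tau, \infty)$ with the force $g$, with $u(t) \in X$ for all $t \ge \tau$.

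First I would apply Lemma \ref{l:compactofLH2w} on $[\tau, \tau+1]$ to extract a subsequence of $u_k$ converging in $C([\tau, \tau+1]; \Hw)$ to some weak solution $\tilde u^1$ on $[\tau, \tau+1]$ with force $g$ (uniqueness of weak limits in $L^{2,\mathrm{w}}_{\mathrm{loc}}(\mathbb{R}; V')$ pins down the force as $g$, not merely some element of $\bar{\Sigma}$). Passing successively to sub-subsequences on $[\tau, \tau+m]$ for $m \in \mathbb{N}$ and diagonalizing produces a single subsequence $u_{k_j}$ converging, on every compact subinterval of $[\tau, \infty)$, to a weak solution $\tilde u$ of the 2D NSE on $[\tau, \infty)$ with force $g$. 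Because the original sequence $u_k$ already converges to $u$ in $C([\tau, \infty); \Xw)$, and this in particular forces pointwise weak convergence, the diagonal limit $\tilde u$ must coincide with $u$. Finally, weak closedness of $X$ in $H$ and $u_k(t) \in X$ for every $t$ yield $u(t) \in X$ for every $t \in [\tau, \infty)$, so $u \in \Dc_g([\tau, \infty))$.

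There is essentially no obstacle: all the heavy lifting — passing to the limit in the nonlinear term $\langle B(u_k,u_k),v\rangle$, the viscous term, and the forcing term, via the uniform bounds in $L^2(\tau, \tau+m; V) \cap L^\infty(\tau, \tau+m; H)$ together with $\partial_t u_k$ bounded in $L^2(\tau, \tau+m; V')$ and an Aubin–Lions argument — is already packaged into Lemma \ref{l:compactofLH2w}. The only minor point worth flagging is that, thanks to the 2D uniqueness guaranteed by Theorem \ref{t:Leray2Dws}, one does not even need the diagonalization argument to worry about branching into multiple limit solutions: any subsequential limit with initial datum $u(\tau)$ and force $g$ is forced to be $u$ itself, which gives another clean route to the same conclusion.
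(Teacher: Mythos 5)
Your proof is correct and follows essentially the same route as the paper, which simply repeats the argument of Lemma \ref{l:closedNSE3} with Lemma \ref{l:compactofLH2w} in place of Lemma \ref{l:compactofLH3}: extract convergent subsequences on compact intervals, diagonalize, and identify the limit with $u$ and the limit force with $g$. Your additional observation that 2D uniqueness (Theorem \ref{t:Leray2Dws}) makes the identification of the limit immediate is a valid, slightly cleaner shortcut, but not a different method.
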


The following theorems recover and generalize the related results in \cite{CV02, Lu06}.
\begin{theorem}\label{t:Aw02DNSEw} Let $g_0$ be translation bounded in $ L^{2}_\mathrm{loc}(\mathbb{R};V')$. The two weak uniform global attractors $\Aw$, $\Aw^{\bar{\Sigma}}\subset \Hw$  and the two weak trajectory attractors $\mathfrak A_{\w}$,  $\mathfrak A^{\bar{\Sigma}}_{\w}\subset C([0,\infty);\Hw)$ for the 2D NSE with the fixed force $g_0$ and for the family of 2D NSE with forces $g\in \bar{\Sigma}$, respectively, exist, $\Aw$ and $\Aw^{\bar{\Sigma}}$ are the  maximal
invariant and maximal quasi-invariant set w.r.t. the closure $\bar{\Dc}=\Dc_{\bar{\Sigma}}$ of the corresponding evolutionary system $\Dc$ and
\begin{equation*}
\begin{split}
\Aw&=\Aw^{\bar{\Sigma}}=\left\{u(0): u \in \Dc_{\bar{\Sigma}}((-\infty, \infty))\right\}= \left\{ u(0): u\in \bigcup_{g\in\bar{\Sigma}}\Dc_g ((-\infty, \infty))\right\}, \\ \mathfrak A_{\w}&=\mathfrak A^{\bar{\Sigma}}_{\w}=\Pi_+\bigcup_{g\in\bar{\Sigma}}\Dc_g ((-\infty, \infty)), \\
\Aw&=\mathfrak A^{\bar{\Sigma}}_{\w}(t)=\left\{u(t): u\in \mathfrak A^{\bar{\Sigma}}_{\w}\right\},\quad \forall\, t\geq 0,
\end{split}
\end{equation*}
where $\mathcal E_{g} ((-\infty,\infty))$ is nonempty for any $g\in \bar{\Sigma}$.
Moreover,  $\mathfrak A_{\w}=\mathfrak A^{\bar{\Sigma}}_{\w}$ satisfies the finite weak uniform tracking property for $\Dc_{\bar{\Sigma}}$ and  is weakly equicontinuous on $[0,\infty)$.
\end{theorem}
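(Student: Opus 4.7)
The plan is to invoke Theorem \ref{t:A0Aw} directly, reducing the statement to a verification of its hypotheses for the pair of evolutionary systems associated with the 2D NSE. The key enabling fact, not available in 3D, is the uniqueness of weak solutions given by Theorem \ref{t:Leray2Dws}; this is exactly what makes the abstract machinery for evolutionary systems with uniqueness applicable.

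First I would take $\Im := L^{2,\mathrm{w}}_{\mathrm{loc}}(\mathbb{R};V')$ and identify $\bar{\Sigma}$ as the sequential closure of the translation family $\Sigma = \{g_0(\cdot+h): h\in\mathbb{R}\}$ in $\Im$. Because $g_0$ is translation bounded, the inequality \eqref{i:gNSE} confines $\bar{\Sigma}$ to a closed ball of $L^2_b(\mathbb{R};V')$ on which the local weak topology is metrizable and compact; consequently sequential closure coincides with closure, and $\bar{\Sigma}\subset\Im$ is sequentially compact in itself. By Theorem \ref{t:Leray2Dws}, both $\Dc$ (symbol space $\Sigma$) and $\Dc_{\bar{\Sigma}}$ (symbol space $\bar{\Sigma}$) are evolutionary systems with uniqueness, the inclusion $\Dc_{\bar{\Sigma}}\supset\Dc$ is immediate, and Lemma \ref{l:closedNSE2w} provides closedness of $\Dc_{\bar{\Sigma}}$. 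Property A1 for $\Dc$ is Lemma \ref{l:compact0}. Thus all hypotheses of Theorem \ref{t:A0Aw}, including those of its ``furthermore'' clause, are in force.

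Applying Theorem \ref{t:A0Aw} I read off simultaneously: the identification $\bar{\Dc}=\Dc_{\bar{\Sigma}}$; the existence and equality $\Aw=\bar{\mathcal A}^\Sigma_{\mathrm w}=\mathcal A^{\bar{\Sigma}}_{\mathrm w}$ and $\mathfrak A_{\w}=\bar{\mathfrak A}^\Sigma_{\w}=\mathfrak A^{\bar{\Sigma}}_{\w}$; the maximal invariance and maximal quasi-invariance characterization w.r.t.\ $\bar{\Dc}=\Dc_{\bar{\Sigma}}$; the kernel representation
\[
\bar{\Dc}((-\infty,\infty))=\Dc_{\bar{\Sigma}}((-\infty,\infty))=\bigcup_{g\in\bar{\Sigma}}\Dc_g((-\infty,\infty)),
\]
with $\Dc_g((-\infty,\infty))$ nonempty for every $g\in\bar{\Sigma}$ (from Lemma \ref{t:NonempfESsig}); the trajectory representation $\mathfrak A^{\bar{\Sigma}}_{\w}=\Pi_+\bigcup_{g\in\bar{\Sigma}}\Dc_g((-\infty,\infty))$; and the section identity $\Aw=\mathfrak A^{\bar{\Sigma}}_{\w}(t)$ for all $t\ge 0$. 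The finite weak uniform tracking property and weak equicontinuity on $[0,\infty)$ of $\mathfrak A_{\w}=\mathfrak A^{\bar{\Sigma}}_{\w}$ are transmitted from conclusion 4 of Theorem \ref{t:wA} via Theorem \ref{t:A0Aw}.

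There is no genuine obstacle, only a point of care: the topology on $\Im$ must be chosen so that $\bar{\Sigma}$ is at once metrizable, sequentially compact in itself, and compatible with the convergence $g_{k_j}\to g\in\bar{\Sigma}$ produced by Lemma \ref{l:compactofLH2w}. The local weak $L^2$ topology meets all three requirements, which is why $\Im=L^{2,\mathrm{w}}_{\mathrm{loc}}(\mathbb{R};V')$ (rather than the stronger $L^2_{\mathrm{loc}}$ used in the translation-compact 3D case) is the correct ambient space in the general translation-bounded 2D setting. With this choice fixed, the theorem is an immediate consequence of the abstract framework.
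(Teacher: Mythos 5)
Your proposal is correct and follows essentially the same route as the paper: uniqueness from Theorem \ref{t:Leray2Dws}, A1 from Lemma \ref{l:compact0}, closedness of $\Dc_{\bar{\Sigma}}$ from Lemma \ref{l:closedNSE2w}, and then Theorem \ref{t:A0Aw}. Your explicit verification that $\bar{\Sigma}$ is metrizable and sequentially compact in $\Im=L^{2,\mathrm{w}}_{\mathrm{loc}}(\mathbb{R};V')$ (needed for the ``furthermore'' clause of Theorem \ref{t:A0Aw}) is a detail the paper leaves implicit, citing \cite{CV02} earlier in the section.
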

\begin{proof}
Thanks to Theorem \ref{t:Leray2Dws}, the evolutionary systems $\Dc$ and $\Dc_{\bar{\Sigma}}$  are unique.  Lemmas \ref{l:compact0} and \ref{l:closedNSE2w} indicate that $\Dc$  satisfies A1 and  that $\Dc_{\bar{\Sigma}}$ is closed, respectively.
Then, the theorem follows by applying Theorem \ref{t:A0Aw}.
\end{proof}
\begin{theorem}\label{t:As02DNSEw}
If  $g_0$ is normal in $L^2_{\mathrm{loc}}(\mathbb{R}; V')$, then the two weak global attractors $\Aw$ and $\Aw^{\bar{\Sigma}}$  are strongly compact strong global attractors  $\As$ and $\As^{\bar{\Sigma}}$ in $H$, and  the two weak trajectory attractors $\mathfrak A_{\w}$ and $\mathfrak A^{\bar{\Sigma}}_{\w}$ are strongly
compact strong trajectory attractors $\mathfrak A_{\s}$ and $\mathfrak A^{\bar{\Sigma}}_{\s}$ in $ C([0,\infty);H)$, respectively. Moreover,
\begin{itemize}
\item[1.] $\mathfrak A_{\s}=\mathfrak A^{\bar{\Sigma}}_{\s}=\Pi_+\bigcup_{g\in\bar{\Sigma}}\Dc_g ((-\infty, \infty))$ satisfies the finite strong
uniform tracking property, i.e.,  for any $\epsilon >0$ and $T>0$, there exist $t_0$ and a finite subset $P_T^f\subset \mathfrak A_{\s}|_{[0,T]}$, such
that for any $t^*>t_0$, every trajectory $u \in \Dc_{\bar{\Sigma}}([0,\infty))$
satisfies
$$|u(t)-v(t-t^*)|< \epsilon,\quad\forall t\in [t^*,t^*+T],$$
for some $T$-time length piece  $v\in P_T^f$.
\item[2.]
 $\mathfrak A_{\s}=\mathfrak A^{\bar{\Sigma}}_{\s}=\Pi_+\bigcup_{g\in\bar{\Sigma}}\Dc_g ((-\infty, \infty))$   is strongly equicontinuous on $[0,\infty)$, i.e.,
\[
\left|v(t_1)-v(t_2)\right|\leq \theta\left(|t_1-t_2|\right), \quad\forall\, t_1,t_2\ge 0, \  \forall v\in \mathfrak A_{\s},
\]
where $\theta(l) $ is a positive function tending
to $0$ as $l\rightarrow 0^+$.
\end{itemize}
\end{theorem}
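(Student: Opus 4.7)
The plan is to upgrade Theorem \ref{t:Aw02DNSEw} by invoking Theorem \ref{t:saofESac} applied to $\Sigma=\{g_0(\cdot+h):h\in\mathbb R\}$, with $\Dc$ the evolutionary system of the 2D NSE with fixed force $g_0$ and $\Dc_{\bar{\Sigma}}$ the family-of-forces evolutionary system. By Theorem \ref{t:Aw02DNSEw} the closure satisfies $\bar{\Dc}=\Dc_{\bar{\Sigma}}$, so the abstract closure appearing in the hypotheses of Theorem \ref{t:saofESac} coincides with this concrete system, which is precisely what makes the closedness-type hypotheses accessible. Once the hypotheses of Theorems \ref{t:A0Aw} and \ref{t:saofESac} are verified, the full conclusion, namely strong compactness of both attractors, the representation of $\mathfrak A_{\s}$, the finite strong uniform tracking property, and the strong equicontinuity, is delivered in a single application, with $\ds(u,v)=|u-v|$ specializing the strong metric to the $H$-norm.

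First I would verify the hypotheses of Theorem \ref{t:A0Aw}. Take the ambient topological space $\Im=L^{2,\mathrm{w}}_{\mathrm{loc}}(\mathbb R;V')$, in which $\bar{\Sigma}$ is by definition the sequential closure of $\Sigma$ and is moreover metrizable and sequentially compact in itself. By Theorem \ref{t:Leray2Dws}, 2D weak solutions are unique, so both $\Dc$ and $\Dc_{\bar{\Sigma}}$ are evolutionary systems with uniqueness. Lemma \ref{l:compact0} gives A1 for $\Dc$, Lemma \ref{l:closedNSE2w} gives the closedness of $\Dc_{\bar{\Sigma}}$, and $\Dc\subset\Dc_{\bar{\Sigma}}$ is trivial. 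All hypotheses of Theorem \ref{t:A0Aw} are thus met, confirming in particular the identification $\bar{\Dc}=\Dc_{\bar{\Sigma}}$ used below.

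Next I would check condition (ii) of Theorem \ref{t:saofESac}. Lemma \ref{l:compact0} supplies A1 and A3 for $\Dc$ unconditionally, and since $g_0$ is normal in $L^2_{\mathrm{loc}}(\mathbb R;V')$, the same lemma yields A2. Moreover, the continuity clause of Theorem \ref{t:Leray2Dws} places every 2D weak solution in $C([\tau,\infty);H)$, hence $\bar{\Dc}((-\infty,\infty))\subset C((-\infty,\infty);\Xs)$. Condition (ii) being satisfied, Theorem \ref{t:saofESac} immediately produces that $\Aw$ and $\Aw^{\bar{\Sigma}}$ are strongly compact strong global attractors $\As$ and $\As^{\bar{\Sigma}}$ in $H$, and that $\mathfrak A_{\w}$ and $\mathfrak A^{\bar{\Sigma}}_{\w}$ are strongly compact strong trajectory attractors in $C([0,\infty);H)$; the finite strong uniform tracking property and the strong equicontinuity on $[0,\infty)$, i.e.\ items 1 and 2, are exactly the last assertion of that theorem with $\ds(u,v)=|u-v|$. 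The representation $\mathfrak A_{\s}=\mathfrak A^{\bar{\Sigma}}_{\s}=\Pi_+\bigcup_{g\in\bar{\Sigma}}\Dc_g((-\infty,\infty))$ is inherited directly from Theorem \ref{t:Aw02DNSEw}.

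There is no serious obstacle here: the heavy lifting has already been carried out in the abstract framework and in the 2D-specific lemmas. The only subtle point worth emphasising in the write-up is the preliminary identification $\bar{\Dc}=\Dc_{\bar{\Sigma}}$, which is the bridge that converts an abstract closure into a concrete closed evolutionary system to which Lemma \ref{l:closedNSE2w} can be applied; this identification in turn rests on uniqueness of 2D weak solutions and the (sequential) compactness of $\bar{\Sigma}$ in the weak-loc topology, both granted by the 2D theory for translation-bounded forces.
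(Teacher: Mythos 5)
Your proposal is correct and follows essentially the same route as the paper: establish the weak attractors and the identification $\bar{\Dc}=\Dc_{\bar{\Sigma}}$ as in Theorem \ref{t:Aw02DNSEw}, then verify condition (ii) of Theorem \ref{t:saofESac} using Lemma \ref{l:compact0} (A1, A3 always; A2 from normality) and the strong continuity of 2D weak solutions from Theorem \ref{t:Leray2Dws}. The paper's own proof is exactly this, phrased as reducing everything to the asymptotic compactness of $\Dc_{\bar{\Sigma}}$ before invoking Theorem \ref{t:saofESac}.
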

Part of this theorem was obtained in \cite{Lu06}. Here novelties are the existence of  the strongly compact strong trajectory attractor $\mathfrak A_{\s}$ in $ C([0,\infty);H)$ and its corollaries.
\begin{proof}
We follow the proof of Theorem \ref{t:Aw02DNSEw}. The only thing we need to do is to verify the asymptotical compactness of the evolutionary system $\Dc_{\bar{\Sigma}}$. Note that, according to Lemma \ref{l:compact0}, $\Dc$ also satisfies A2 and A3 when $g_0$ is normal in $L^2_{\mathrm{loc}}(\mathbb{R}; V')$. Moreover, Theorem \ref{t:Leray2Dws} ensures that all the weak solutions of the family of  2D NSE with forces $g$ in $\bar{\Sigma}$ are strongly continuous w.r.t. time $t$. Hence, we obtain the conclusions by applying Theorem \ref{t:saofESac}.
\end{proof}
\begin{remark}\label{r:uniq2DNSE} Let $u_1$ and $u_2$ be the solutions of \eqref{NSE1} with forces $g_1(t)$, $g_2(t)\in \bar{\Sigma}$, respectively. By the standard estimates (see e.g. \cite{T88, Ro01, CV02, Lu06}), we have
\begin{equation}\label{i:uniq2DNSEnonauto}
|u_1(t_2)-u_2(t_2)|^2\leq \left(|u_1(t_1)-u_2(t_1)|^2+\frac{1}{\nu}\int^{t_2}_{t_1}\|g_1-g_2\|^2_{V'}\,ds\right)e^{C_1}.
\end{equation}
Here $C_1$ only depends on $L$, $\nu$, and increasingly on  $t_2-t_1$, $|u_1(t_1)|^2$, and $\|g_1\|^2_{L_{\mathrm{b}}^2\left( \Bbb{R};V'\right) }$. Then,  the uniqueness of the solutions in Theorem \ref{t:Leray2Dws} is deduced from \eqref{i:uniq2DNSEnonauto}, so is their continuous dependence on the initial data and the forces, which informs the continuity of the associated family of processes (see \cite{CV02}). In the  autonomous case, i.e., the force $g(t)$ is independent of time $t$, \eqref{i:uniq2DNSEnonauto} yields the following classical estimates
\begin{equation}\label{i:uniq2DNSEautot}
|u_1(t)-u_2(t)|\leq |u_1(0)-u_2(0)| e^{C_2(t)},
\end{equation}where $C_2(t)$ depends increasingly on $t$. Hence,  we obtain, for any $T>0$,
\begin{equation}\label{i:uniq2DNSEautoT}
|u_1(t)-u_2(t)|\leq |u_1(0)-u_2(0)| e^{C_2(T)},\quad \forall\,t\in [0,T].
\end{equation} Now consider a family of solutions $A_{[0,\infty)}:=\{u(\cdot):u(0)\in A\} $ with initial data in a compact subset $A\subset H$. We can see from \eqref{i:uniq2DNSEautoT} that $A_{[0,T]}:=\left\{u(\cdot)|_{[0,T]}:u\in A_{[0,\infty)}\right\} $  is compact in $C([0,T];H) $. Hence, $A_{[0,\infty)}$ is compact in $C([0,\infty);H) $. Especially, the trajectory attractor $\mathfrak A_{\s}$ in the above theorem is strongly compact. Although simple, to our knowledge, both the deducing process and the compactness result were not noticed before. On the other hand, however, for the nonautonomous case, we can not obtain the similar compactness from \eqref{i:uniq2DNSEnonauto}.
\end{remark}

\subsection{2D Navier-Stokes equations: Strong solutions}\label{ss:2NSEss}Concerning the strong solutions of the 2D NSE, there are similar results obtained by the same method as we do in previous subsection. We will present  the main steps and omit details.

Now in this subsection,  fix a more regular force $g_0$  that is   translation bounded in
$L^2_\mathrm{loc}(\mathbb{R};H)$, i.e.,
\[
\left\| g_0\right\| _{L_{\mathrm{b}}^2\left( \Bbb{R};H\right) }^2 := \sup_{t \in \mathbb{R}} \int_t^{t+1}
|g_0(s)|^2 \, ds < \infty.
\]
Then $g_0$ is translation compact in
$L^{2,\mathrm{w}}_\mathrm{loc}(\mathbb{R};H)$, that is, the translation family of $g_0$,
\[
\Sigma:=\{g_0(\cdot+h): h\in \mathbb R\}
\]
is precompact in
$L^{2,\mathrm{w}}_\mathrm{loc}(\mathbb{R};H)$ (see \cite{CV02}).    Denote again by
$$\bar{\Sigma}:=\overline{\{g_0(\cdot+h): h\in \mathbb R\}}^{L^{2,\mathrm{w}}_\mathrm{loc}(\mathbb{R};H) }.$$
It is known \cite{CV02} that $\bar{\Sigma}$ endowed with the topology of $L^{2,\mathrm{w}}_\mathrm{loc}(\mathbb{R};H)$ is metrizable and the corresponding metric space is compact.

 We have more regular solutions (see e.g. \cite{T88, Ro01, CV02}).
\begin{theorem}(Strong solutions)\label{t:Leray2Dss}
Let $n=2$. For every $u_0 \in V$ and $g \in L^2_\mathrm{loc}(\mathbb{R};H)$,  the solution of  (\ref{NSE1}) on $[\tau,\infty)$ with $u(\tau)=u_0$ is unique and satisfies
$$u(t)\in C([\tau, \infty);V)\cap
L_{\mathrm{loc}}^\infty([\tau, \infty); V)\cap
L_{\mathrm{loc}}^2([\tau, \infty); D(A)).  $$
\end{theorem}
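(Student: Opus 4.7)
The plan is to prove existence, uniqueness and the stated regularity by Galerkin approximation combined with the sharper $V$-level enstrophy estimate that is available only in two dimensions. First I would fix the orthonormal eigenbasis $\{w_j\}$ of the Stokes operator $A$, denote by $P_m$ the orthogonal projection onto $\mathrm{span}\{w_1,\dots,w_m\}$, and solve the finite-dimensional ODE $\ddt u_m+\nu Au_m+P_m B(u_m,u_m)=P_m g$ with $u_m(\tau)=P_m u_0$. Testing with $u_m$ and using $\langle B(u_m,u_m),u_m\rangle=0$ yields the usual $L^\infty_t H\cap L^2_t V$ bound, recovering the weak-solution half of the theorem and providing Theorem \ref{t:Leray2Dws} as a backstop.

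Next I would derive the key enstrophy estimate by testing against $Au_m$:
\begin{equation*}
\tfrac{1}{2}\ddt\|u_m\|^2+\nu|Au_m|^2=-\langle B(u_m,u_m),Au_m\rangle+(g,Au_m).
\end{equation*}
In two space dimensions Ladyzhenskaya's inequality gives $|\langle B(u,u),Au\rangle|\le C|u|^{1/2}\|u\|\,|Au|^{3/2}$, and Young's inequality absorbs a fraction of $|Au_m|^2$ into the viscous term, leaving a differential inequality of the form
\begin{equation*}
\ddt\|u_m\|^2+\nu|Au_m|^2\le C\nu^{-3}|u_m|^2\|u_m\|^4+C\nu^{-1}|g|^2.
\end{equation*}
Because $|u_m|$ is uniformly bounded and $\int \|u_m\|^2\,ds$ is already controlled by the preceding step, a uniform Gronwall argument applied to $y_m:=\|u_m\|^2$ yields uniform bounds on $u_m$ in $L^\infty_{\mathrm{loc}}([\tau,\infty);V)\cap L^2_{\mathrm{loc}}([\tau,\infty);D(A))$, and the Galerkin equation then controls $\ddt u_m$ in $L^2_{\mathrm{loc}}([\tau,\infty);H)$.

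With these a priori bounds in hand I would pass to the limit via Banach--Alaoglu together with Aubin--Lions compactness through the chain $D(A)\hookrightarrow V\hookrightarrow H$; the $H^1_t(H)$ control on $u_m$ upgrades weak convergence to strong convergence in $L^2_{\mathrm{loc}}(V)$, which is enough to identify $B(u_m,u_m)\rightharpoonup B(u,u)$ in distribution. Uniqueness follows either from Theorem \ref{t:Leray2Dws} or directly: for two solutions $u_1,u_2$ the difference $w=u_1-u_2$ satisfies $\tfrac{1}{2}\ddt|w|^2+\nu\|w\|^2=-\langle B(w,u_2),w\rangle$, which is closed by Gronwall using $\|u_2\|\in L^2_{\mathrm{loc}}$. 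The strong continuity $u\in C([\tau,\infty);V)$ is the conclusion of the Lions--Magenes embedding applied to $u\in L^2_{\mathrm{loc}}([\tau,\infty);D(A))$ with $\ddt u\in L^2_{\mathrm{loc}}([\tau,\infty);H)$.

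The step I expect to be delicate is the trilinear estimate that closes the enstrophy bound: it is the only place where the two-dimensional hypothesis is decisive, since in $n=3$ the Sobolev exponent forces $|\langle B(u,u),Au\rangle|\lesssim |u|^{1/2}\|u\|\,|Au|^{3/2}$ to be replaced by a quadratic power of $|Au|$ which cannot be absorbed without a smallness assumption. A subsidiary subtle point is that $u(\tau)\in V$ is needed for continuity at the left endpoint $t=\tau$; I would either read the hypothesis as $u_0\in V$, or else use instantaneous smoothing (valid for $t>\tau$ since $u_0\in H$ and $g\in L^2_{\mathrm{loc}}(H)$ give $\int_\tau^{\tau+\delta}\|u\|^2\,ds<\infty$, so $\|u(t_0)\|<\infty$ for some $t_0$ arbitrarily close to $\tau$) and then restart the above argument from $t_0$.
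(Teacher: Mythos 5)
Your proposal is correct and is essentially the classical Galerkin-plus-enstrophy argument of the references the paper cites for this statement (Temam, Robinson, Chepyzhov--Vishik); the paper itself states Theorem \ref{t:Leray2Dss} without proof. Your closing remark is also well taken: for $u_0\in H\setminus V$ one only gets $C((\tau,\infty);V)$ rather than $C([\tau,\infty);V)$, and the instantaneous-smoothing restart you describe is the standard way to reconcile this (and is harmless for the paper's use of the theorem, since trajectories there live in an absorbing ball of $V$).
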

By the classical estimates, there
exists a uniformly (w.r.t. $\tau \in \mathbb R$ and $g\in  \bar{\Sigma}$) absorbing ball $B_{\mathrm{s}}(0, R)\subset V$, where the radius
$R$ depends on $L$, $\nu$, and $\|g_0\|^2_{L_{\mathrm{b}}^2\left( \Bbb{R};H\right) }$.
 Let $X$ be a closed  uniformly  absorbing ball
\begin{equation*}\label{i:absorbongB2s}
X= \{u\in V: \|u\| \leq R\},
\end{equation*}
which is also weakly compact in $V$. Then for any bounded set $A \subset
V$, there exists a time $\bar{t}\ge 0$ independent of $\tau$, such that
\begin{equation*}\label{i:absorbongBtrajectory2s}
u(t) \in X, \quad \forall t\geq t_1:=\tau+\bar{t},
\end{equation*}
for every solution $u(t)$ with the force $g\in \bar{\Sigma}$
and the initial data $u(\tau) \in A$.

Due to the better regularity of the solutions, we have the following better version of Lemma \ref{l:precompactofLH23} (see e.g. \cite{T88, Ro01}).
\begin{lemma} \label{l:compactofLH2s}
Let $n=2$ and let $u_k(t)$ be a sequence of strong solutions of  \eqref{NSE1} with forces
$g_k \in \bar{\Sigma}$, such that $u_k(t) \in X$ for all $t\geq t_1$. Then
\begin{equation*}
\begin{split}
u_k \ \ &\mbox{is bounded in} \ \ L^2(t_1,t_2;D(A))\ \ \mbox{and} \ \ L^\infty(t_1,t_2;V),\\
\ddt u_k \ \  &\mbox{is bounded in} \ \ L^{2}(t_1,t_2;H),
\end{split}
\end{equation*}
for all $t_2>t_1$. Moreover, there exists a subsequence $k_j$, such
that $g_{k_j}$ converges in
$L^{2,\mathrm{w}}_\mathrm{loc}(\mathbb{R};H)$  to some  $g \in
\bar{\Sigma}$ and $u_{k_j}$ converges in $C([t_1, t_2]; V_{\w})$ to some
strong solution $u(t)$ of  \eqref{NSE1} with the force $g$,
i.e.,
\[
((u_{k_j},v)) \to ((u,v)) \quad
\mbox{uniformly on } [t_1,t_2],
\]
as $k_j\to \infty$, for all $v \in D(A)$.
\end{lemma}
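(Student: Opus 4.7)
The plan is the standard 2D Navier--Stokes regularity argument combined with Aubin--Lions compactness, in the spirit of the proof of Lemma \ref{l:compactofLH2w} but lifted one derivative. Since $u_k(t) \in X \subset V$ with $\|u_k(t)\| \leq R$ for all $t \geq t_1$, the $L^\infty(t_1, t_2; V)$ bound is immediate. To derive the $L^2(t_1, t_2; D(A))$ bound I would take the $H$-inner product of the equation with $Au_k$ and integrate on $[t_1, t_2]$, using the 2D estimate $|(B(u,u), Au)| \leq c\,|u|^{1/2}\|u\| |Au|^{3/2}$ to absorb the trilinear term into $\nu|Au_k|^2$ via Young's inequality. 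The uniform translation bound $\|g_k\|^2_{L^2_{\mathrm b}(\mathbb R; H)} \leq \|g_0\|^2_{L^2_{\mathrm b}(\mathbb R; H)}$, which is preserved under weak limits in $L^{2,\w}_{\mathrm{loc}}(\mathbb R; H)$, then closes the estimate and yields a uniform bound on $u_k$ in $L^2(t_1, t_2; D(A))$.

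Next, rewriting the equation as $\partial_t u_k = -\nu A u_k - B(u_k, u_k) + g_k$, each term on the right is uniformly bounded in $L^2(t_1, t_2; H)$: $Au_k$ from the previous step, $B(u_k, u_k)$ via the same 2D interpolation using $\|u_k\|_{L^\infty_t V}$ together with $|Au_k|_{L^2_t H}$, and $g_k$ by translation boundedness. Thus $\partial_t u_k$ is uniformly bounded in $L^2(t_1, t_2; H)$.

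The compactness step applies the Aubin--Lions lemma with the compact embedding $D(A) \hookrightarrow V$ and continuous embedding $V \hookrightarrow H$ to produce a subsequence converging strongly in $L^2(t_1, t_2; V)$. Because $u_k(t) \in X$, which is weakly compact in $V$, and the $L^2(t_1, t_2; H)$ bound on $\partial_t u_k$ gives H\"older continuity, hence equicontinuity with respect to the weak topology of $V$, the Arzel\`a--Ascoli theorem upgrades the convergence to $u_{k_j} \to u$ in $C([t_1, t_2]; V_\w)$ along a further subsequence. Compactness of $\bar{\Sigma}$ in $L^{2,\w}_{\mathrm{loc}}(\mathbb R; H)$ provides a limit force $g \in \bar{\Sigma}$ after another extraction.

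The final step is to pass to the limit in the weak formulation of \eqref{NSE1}. The strong convergence of $u_{k_j}$ in $L^2(t_1, t_2; V)$ handles the nonlinear term $B(u_{k_j}, u_{k_j}) \to B(u, u)$, the weak convergence $u_{k_j}(t) \rightharpoonup u(t)$ in $H$ uniformly in $t$ handles the linear $A$ term, and weak convergence of $g_{k_j}$ suffices for the forcing. The uniform bounds carry over to the limit, so $u$ satisfies the regularity $u \in L^2(t_1, t_2; D(A)) \cap L^\infty(t_1, t_2; V)$, which via Theorem \ref{t:Leray2Dss} identifies $u$ as the unique strong solution with force $g$. The main obstacle is really bookkeeping rather than substance: coupling the weak compactness of $\bar{\Sigma}$ with the strong compactness of $\{u_{k_j}\}$ so that both limits are taken simultaneously; the 2D regularity ensures the nonlinearity poses no essential difficulty (in contrast with the 3D case in Remark \ref{r:limitLH}).
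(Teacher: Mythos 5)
Your proposal is correct and follows exactly the standard argument the paper relies on: the paper gives no proof of this lemma, citing \cite{T88, Ro01}, and your chain of a priori estimates (testing with $Au_k$, the 2D trilinear estimate, translation boundedness of the forces), Aubin--Lions plus Arzel\`a--Ascoli for convergence in $C([t_1,t_2];V_{\w})$, and passage to the limit is precisely that standard route. No gaps of substance; the only small points worth recording are that $u(t)\in X$ in the limit because $X$ is convex and closed (hence weakly closed in $V$), and that the uniform translation bound carries over to every $g\in\bar{\Sigma}$ by weak lower semicontinuity, both of which you implicitly use.
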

Now we consider two evolutionary systems. One for which a family of trajectories
consists of all strong solutions of the 2D  NSE with the fixed force $g_0$ in $X$. More precisely, define
\[
\begin{split}
\Dc([\tau,\infty)) := \{&u(\cdot): u(\cdot)
\mbox{ is a strong}
\mbox{ solution on } [\tau,\infty)\mbox{ with }\\
&\quad\mbox{ the force } g\in \Sigma \mbox{ and } u(t) \in X, \
\forall t \in [\tau,\infty)\}, \   \tau \in \mathbb{R},
\end{split}
\]
\[
\begin{split}
\Dc((-\infty,\infty)) := \{&u(\cdot): u(\cdot)
\mbox{ is a strong} \ \mbox{ solution on } (-\infty,\infty)\mbox{ with }\\
&\quad\mbox{ the force } g\in \Sigma \mbox{ and } u(t) \in X, \
\forall t \in (-\infty,\infty)\}.
\end{split}
\]
Another one we consider is with $\bar{\Sigma}$ as a symbol space.  The family
of trajectories for this evolutionary system  consists of all strong solutions of  the family of 2D NSE with  forces $g\in \bar{\Sigma}$ in $X$:
\[
\begin{split}
\Dc_{\bar{\Sigma}}([\tau,\infty)) := \{&u(\cdot): u(\cdot) \mbox{ is a
strong }
\mbox{ solution on } [\tau,\infty)\mbox{ with }\\
&\quad\mbox{ the force } g\in \bar{\Sigma} \mbox{ and } u(t) \in X, \
\forall t \in [\tau,\infty)\}, \   \tau\in \mathbb{R},
\end{split}
\]
\[
\begin{split}
\Dc_{\bar{\Sigma}}((-\infty,\infty)) := \{&u(\cdot): u(\cdot)
\mbox{ is a strong} \ \mbox{ solution on } (-\infty,\infty)\mbox{ with }\\
&\ \quad\mbox{ the force } g\in \bar{\Sigma} \mbox{ and } u(t) \in X, \
\forall t \in (-\infty,\infty)\}.
\end{split}
\]
By analogous arguments to those of Lemmas \ref{l:compactNSE3} and  \ref{l:closedNSE3}, we have following lemmas.
\begin{lemma}\label{l:A1NSE2s}
The evolutionary system $\Dc$ of the  2D NSE with the fixed
force $g_0$ satisfies A1.
\end{lemma}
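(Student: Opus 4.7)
The plan is to verify A1 directly by showing that every sequence in $\Dc([0,\infty))$ admits a subsequence that converges in $C([0,\infty); \Xw)$. Take any $\{u_n\} \subset \Dc([0,\infty))$. By definition of $\Dc$ in the 2D strong solution setting, each $u_n$ is a strong solution of \eqref{NSE1} on $[0,\infty)$ with some force $g_n \in \Sigma = \{g_0(\cdot + h): h \in \mathbb{R}\}$ and $u_n(t) \in X$ for all $t \geq 0$. In particular, $g_n \in \Sigma \subset \bar{\Sigma}$, which makes the compactness machinery of Lemma \ref{l:compactofLH2s} applicable.

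First, I would apply Lemma \ref{l:compactofLH2s} on the interval $[0,1]$: this yields a subsequence $\{u_n^{(1)}\}$ of $\{u_n\}$ that converges in $C([0,1]; V_\w)$ to some strong solution $u^{(1)}$. Applied again to $\{u_n^{(1)}\}$ on $[0,2]$, we extract a further subsequence $\{u_n^{(2)}\}$ converging in $C([0,2]; V_\w)$, and so on inductively for each interval $[0,l]$, $l\in \mathbb{N}$. A standard Cantor diagonal argument then produces a single subsequence, still denoted $\{u_{n_k}\}$, such that $u_{n_k}\to u$ in $C([0,l]; V_\w)$ for every $l\in\mathbb{N}$, with limits that match on overlaps and together define $u(t)$ on $[0,\infty)$. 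Since convergence in $C([0,\infty); V_\w)$ in the weighted metric is equivalent to uniform convergence on every compact subinterval, this gives $u_{n_k} \to u$ in $C([0,\infty); V_\w)$.

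Next I would verify that this limit actually lives in $C([0,\infty); \Xw)$. Because $X=\{u\in V : \|u\|\le R\}$ is closed and convex in $V$, it is weakly closed; and $u_{n_k}(t)\in X$ for each $t\ge 0$ implies, by the weak convergence at each point, that $u(t)\in X$ for all $t\ge 0$. Finally, since $X$ is a bounded set in the separable reflexive Banach space $V$, the weak topology restricted to $X$ is metrizable, and the metric $\dw$ generates exactly this topology; hence convergence in $C([0,l]; V_\w)$ on sequences staying in $X$ coincides with convergence in $C([0,l]; \Xw)$, and thus $u_{n_k}\to u$ in $C([0,\infty); \Xw)$.

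The only delicate point is the diagonal extraction, which is routine. Note that for precompactness we do \emph{not} need the limit $u$ to lie in $\Dc([0,\infty))$ itself---the force in Lemma \ref{l:compactofLH2s} is only guaranteed to lie in $\bar{\Sigma}$, which is typically strictly larger than $\Sigma$, so the limit solution may correspond to a force outside $\Sigma$. This is exactly why A1 (and not the stronger \={A1}) is the natural property here, and why the notion of the closure $\bar{\Dc}$ introduced earlier is essential: the missing limits are precisely what $\bar{\Dc}$ captures.
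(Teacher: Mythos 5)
Your proof is correct and follows essentially the same route the paper intends: the paper proves this lemma ``by analogous arguments'' to the 3D case, namely by applying the compactness lemma (here Lemma \ref{l:compactofLH2s}) on the intervals $[0,l]$, $l\in\mathbb{N}$, and running a diagonal extraction, exactly as you do (compare the explicit version of this argument in the proof of Lemma \ref{l:compactRDS}). Your closing remark that only precompactness is needed because the limiting force may lie in $\bar{\Sigma}\setminus\Sigma$ is also the correct reason the paper asserts A1 rather than \={A1} for $\Dc$.
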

\begin{lemma}\label{l:closedNSE2s}
The evolutionary system $\Dc_{\bar{\Sigma}}$ of the family of  2D NSE with
forces in $\bar{\Sigma}$ is closed.
\end{lemma}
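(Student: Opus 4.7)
The plan is to mirror the argument of Lemma \ref{l:closedNSE3}, replacing the 3D compactness lemma with its 2D strong-solution analogue, namely Lemma \ref{l:compactofLH2s}. Concretely, I would fix $\tau \in \mathbb R$ and take sequences $g_n \in \bar{\Sigma}$ and $u_n \in \Dc_{g_n}([\tau, \infty))$ satisfying $u_n \to u$ in $C([\tau, \infty); \Xw)$ and $g_n \to g$ in $\Im := L^{2,\mathrm{w}}_\mathrm{loc}(\mathbb{R}; H)$, with $\Im$ playing the role of the ambient topological space in the definition of closedness. The goal is to verify that $u$ belongs to $\Dc_g([\tau, \infty))$, i.e., that $u$ is a strong solution of \eqref{NSE1} on $[\tau, \infty)$ driven by $g$ and that $u(t) \in X$ for every $t \ge \tau$.

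On each finite subinterval $[\tau, \tau + m]$, I would invoke Lemma \ref{l:compactofLH2s}: since $u_n(t) \in X$ for all $t \ge \tau$, a subsequence $u_{n_k}$ converges in $C([\tau, \tau+m]; V_{\mathrm w})$ to a strong solution $\tilde u^{(m)}$ of \eqref{NSE1} with force $\tilde g^{(m)} \in \bar{\Sigma}$, where $\tilde g^{(m)}$ is a corresponding subsequential limit of $g_{n_k}$ in $L^{2,\mathrm{w}}_\mathrm{loc}(\mathbb{R}; H)$. By uniqueness of weak limits, $\tilde g^{(m)} = g$ and $\tilde u^{(m)} = u|_{[\tau, \tau+m]}$, so $u$ is a strong solution with force $g$ on every $[\tau, \tau+m]$. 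A standard diagonalization along $m \to \infty$ then upgrades this to a strong solution on the whole half-line $[\tau, \infty)$. Finally, since $X$ is weakly closed in $V$ and $u_n(t) \rightharpoonup u(t)$ in $V$ for each $t$, we obtain $u(t) \in X$ for all $t \ge \tau$, so $u \in \Dc_g([\tau, \infty))$.

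The proof is thus essentially identical in structure to that of Lemma \ref{l:closedNSE3}; the only substantive change is to use the 2D strong-solution compactness Lemma \ref{l:compactofLH2s} in place of the 3D Leray--Hopf one. No new obstacle appears: if anything, the uniqueness of 2D strong solutions (Theorem \ref{t:Leray2Dss}) and the better regularity estimates make the identification of the limit and the diagonalization step even cleaner than in the 3D case already handled.
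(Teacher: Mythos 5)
Your proposal is correct and follows essentially the same route as the paper, which simply asserts that the lemma is proved ``by analogous arguments'' to Lemma \ref{l:closedNSE3}, i.e., by applying the compactness Lemma \ref{l:compactofLH2s} on finite intervals and diagonalizing; you correctly identify the ambient space $\Im$ as $L^{2,\mathrm{w}}_{\mathrm{loc}}(\mathbb{R};H)$ and the identification of the subsequential limits with $u$ and $g$ by uniqueness of weak limits. The only (harmless) redundancy is the final step verifying $u(t)\in X$, which already follows from $u$ being an element of $C([\tau,\infty);\Xw)$ by the definition of closedness.
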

Note that the strongly compact strong global attractor $\As^{\bar{\Sigma}}$ for the family of  2D NSE with forces in $\bar{\Sigma}$  had been obtained in \cite{LWZ05}. Now, we have more: The following theorems recover and generalize the related results in \cite{CV02, LWZ05}.

\begin{theorem}\label{t:Aw02DNSEs} Let $g_0$ be translation bounded in $ L^{2}_\mathrm{loc}(\mathbb{R};H)$. The two weak uniform global attractors $\Aw$, $\Aw^{\bar{\Sigma}}\subset V_{\w}$  and the two weak trajectory attractors $\mathfrak A_{\w}$,  $\mathfrak A^{\bar{\Sigma}}_{\w}\subset C([0,\infty);V_{\mathrm{w}})$ for the 2D NSE with the fixed force $g_0$ and  for the family of 2D NSE with forces $g\in \bar{\Sigma}$, respectively, exist, $\Aw$ and $\Aw^{\bar{\Sigma}}$ are  the  maximal
invariant and maximal quasi-invariant set w.r.t. the closure $\bar{\Dc}=\Dc_{\bar{\Sigma}}$ of the corresponding evolutionary system $\Dc$ and
\begin{equation*}
\begin{split}
\Aw&=\Aw^{\bar{\Sigma}}=\left\{u(0): u \in \Dc_{\bar{\Sigma}}((-\infty, \infty))\right\}= \left\{ u(0): u\in \bigcup_{g\in\bar{\Sigma}}\Dc_g ((-\infty, \infty))\right\}, \\ \mathfrak A_{\w}&=\mathfrak A^{\bar{\Sigma}}_{\w}=\Pi_+\bigcup_{g\in\bar{\Sigma}}\Dc_g ((-\infty, \infty)), \\
\Aw&=\mathfrak A^{\bar{\Sigma}}_{\w}(t)=\left\{u(t): u\in \mathfrak A^{\bar{\Sigma}}_{\w}\right\},\quad \forall\, t\geq 0,
\end{split}
\end{equation*}
where $\mathcal E_{g} ((-\infty,\infty))$ is nonempty for any $g\in \bar{\Sigma}$.
Moreover,  $\mathfrak A_{\w}=\mathfrak A^{\bar{\Sigma}}_{\w}$ satisfies the finite weak uniform tracking property for $\Dc_{\bar{\Sigma}}$ and  is weakly equicontinuous on $[0,\infty)$.
\end{theorem}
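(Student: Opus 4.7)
The plan is to mirror the proof of Theorem \ref{t:Aw02DNSEw} step by step, replacing the weak-solution setting with the strong-solution setting and using the corresponding lemmas established just above the statement. The structural idea is to verify the hypotheses of the abstract Theorem \ref{t:A0Aw} for the two evolutionary systems $\Dc\subset \Dc_{\bar{\Sigma}}$, with symbol spaces $\Sigma$ and $\bar{\Sigma}$ respectively, and then read off the conclusions.

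First I would observe that Theorem \ref{t:Leray2Dss} furnishes exactly the uniqueness needed: for every initial datum $u_0\in X\subset V$ and every symbol $g\in\bar{\Sigma}$, the strong solution is unique on $[\tau,\infty)$ and takes values in $V$; this simultaneously shows that $\Dc$ and $\Dc_{\bar{\Sigma}}$ are evolutionary systems with uniqueness in the sense of Subsection \ref{ss:NonautoES}. Next, Lemma \ref{l:A1NSE2s} supplies the precompactness of $\Dc([0,\infty))$ in $C([0,\infty);\Xw)$, i.e., property A1 for $\Dc$, where here $\Xw$ denotes $X$ equipped with the metric inducing the weak topology of $V$ on the bounded absorbing ball $X=\{u\in V:\|u\|\le R\}$. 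Lemma \ref{l:closedNSE2s} then gives the closedness of the larger system $\Dc_{\bar{\Sigma}}$ with respect to convergence in $C([\tau,\infty);\Xw)$ and convergence of the symbols in the ambient topological space $\Im=L^{2,\mathrm{w}}_\mathrm{loc}(\mathbb{R};H)$.

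To apply Theorem \ref{t:A0Aw} we additionally need that $\bar{\Sigma}$ be the sequential closure of $\Sigma$ in $\Im$ and be sequentially compact in itself; both are recorded in the paragraph preceding Theorem \ref{t:Leray2Dss}, where $\bar{\Sigma}$ is the closure of $\Sigma$ in the metrizable compact space $L^{2,\mathrm{w}}_\mathrm{loc}(\mathbb{R};H)$. With all hypotheses verified, Theorem \ref{t:A0Aw} (conclusions 1--5) yields directly: the existence of $\Aw$, $\bar{\mathcal A}^\Sigma_\w$, and $\Aw^{\bar{\Sigma}}$ with the identification $\Aw=\bar{\mathcal A}^\Sigma_\w=\Aw^{\bar{\Sigma}}$; the identity $\bar{\Dc}=\Dc_{\bar{\Sigma}}$; the kernel identity
\[
\bar{\Dc}((-\infty,\infty))=\Dc_{\bar{\Sigma}}((-\infty,\infty))=\bigcup_{g\in\bar{\Sigma}}\Dc_g((-\infty,\infty)),
\]
with each $\Dc_g((-\infty,\infty))$ nonempty; the existence of the weak trajectory attractors with $\mathfrak A_\w=\mathfrak A^{\bar\Sigma}_\w=\Pi_+\bigcup_{g\in\bar{\Sigma}}\Dc_g((-\infty,\infty))$; the section property $\Aw=\mathfrak A^{\bar\Sigma}_\w(t)$ for all $t\ge 0$; and finally, the finite weak uniform tracking property together with weak equicontinuity on $[0,\infty)$, which come from the weakly compact (hence finitely coverable) structure of $\mathfrak A_\w$ granted by A1 and the Arzel\`a--Ascoli criterion in the weak topology.

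The main obstacle, if any, is conceptual rather than technical: one must be careful that the phase space $X$ here is a ball in $V$ rather than in $H$, so the weak metric $\dw$ on $X$ induces the weak topology of $V$ on $X$, and the appropriate compactness lemma is Lemma \ref{l:compactofLH2s} (yielding $C([t_1,t_2];V_\w)$-convergence of subsequences of strong solutions with converging symbols). Once this identification of $\Xw$ with $V_\w\cap X$ is in place, Lemmas \ref{l:A1NSE2s} and \ref{l:closedNSE2s} encode exactly the A1 and closedness hypotheses of Theorem \ref{t:A0Aw}, so no further estimates are required and the proof reduces to invoking that abstract theorem.
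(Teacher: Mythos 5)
Your proposal matches the paper's proof: the paper likewise invokes Theorem \ref{t:Leray2Dss} for uniqueness, Lemma \ref{l:A1NSE2s} for A1, Lemma \ref{l:closedNSE2s} for closedness of $\Dc_{\bar{\Sigma}}$, and then applies Theorem \ref{t:A0Aw}, exactly as in the proof of Theorem \ref{t:Aw02DNSEw}. Your additional remarks about the phase space being a ball in $V$ with $\dw$ inducing the weak topology of $V$, and the sequential compactness of $\bar{\Sigma}$ in $L^{2,\mathrm{w}}_\mathrm{loc}(\mathbb{R};H)$, are correct and consistent with the setup preceding the theorem.
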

\begin{proof}The proof is similar to that of Theorem \ref{t:Aw02DNSEw}. Due to Theorem \ref{t:Leray2Dss}, evolutionary systems $\Dc$ and $\Dc_{\bar{\Sigma}}$  are unique.  The facts that $\Dc$  satisfies A1 and  $\Dc_{\bar{\Sigma}}$ is closed are obtained by Lemmas \ref{l:A1NSE2s} and \ref{l:closedNSE2s}, respectively.
Then, the theorem follows by applying Theorem \ref{t:A0Aw} again.
\end{proof}
\begin{theorem}\label{t:As02DNSEs}
If  $g_0$ is normal in $L^2_{\mathrm{loc}}(\mathbb{R};H)$, then the two weak global attractors $\Aw$ and $\Aw^{\bar{\Sigma}}$  are strongly compact strong global attractors  $\As$ and $\As^{\bar{\Sigma}}$ in $V$, and  the two weak trajectory attractors $\mathfrak A_{\w}$ and $\mathfrak A^{\bar{\Sigma}}_{\w}$ are strongly
compact strong trajectory attractors $\mathfrak A_{\s}$ and $\mathfrak A^{\bar{\Sigma}}_{\s}$ in $ C([0,\infty);V)$, respectively. Moreover,
\begin{itemize}
\item[1.] $\mathfrak A_{\s}=\mathfrak A^{\bar{\Sigma}}_{\s}=\Pi_+\bigcup_{g\in\bar{\Sigma}}\Dc_g ((-\infty, \infty))$ satisfies the finite strong
uniform tracking property, i.e.,  for any $\epsilon >0$ and $T>0$, there exist $t_0$ and a finite subset $P_T^f\subset \mathfrak A_{\s}|_{[0,T]}$, such
that for any $t^*>t_0$, every trajectory $u \in \Dc_{\bar{\Sigma}}([0,\infty))$
satisfies
$$\|u(t)-v(t-t^*)\|< \epsilon,\quad\forall t\in [t^*,t^*+T],$$
for some $T$-time length piece  $v\in P^f_T$.
\item[2.]
$\mathfrak A_{\s}=\mathfrak A^{\bar{\Sigma}}_{\s}=\Pi_+\bigcup_{g\in\bar{\Sigma}}\Dc_g ((-\infty, \infty))$  is strongly equicontinuous on $[0,\infty)$, i.e.,
\[
\left\|v(t_1)-v(t_2)\right\|\leq \theta\left(|t_1-t_2|\right), \quad\forall\, t_1,t_2\ge 0, \  \forall v\in \mathfrak A_{\s},
\]
where $\theta(l) $ is a positive function tending
to $0$ as $l\rightarrow 0^+$.
\end{itemize}
\end{theorem}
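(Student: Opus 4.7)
The strategy mirrors the argument for Theorem~\ref{t:As02DNSEw}: I would invoke Theorem~\ref{t:saofESac} applied to the evolutionary system $\Dc$ of strong solutions of the 2D NSE with fixed force $g_0$, together with the closed evolutionary system $\Dc_{\bar\Sigma}$ on the symbol space $\bar\Sigma=\overline{\{g_0(\cdot+h):h\in\mathbb R\}}^{L^{2,\mathrm w}_{\mathrm{loc}}(\mathbb R;H)}$. The hypotheses of Theorem~\ref{t:A0Aw} are already assembled in Theorem~\ref{t:Aw02DNSEs}: uniqueness of strong solutions (Theorem~\ref{t:Leray2Dss}), property A1 for $\Dc$ (Lemma~\ref{l:A1NSE2s}), closedness of $\Dc_{\bar\Sigma}$ (Lemma~\ref{l:closedNSE2s}), and the sequential-closure structure of $\bar\Sigma$ in the metrizable space $L^{2,\mathrm w}_{\mathrm{loc}}(\mathbb R;H)$. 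Thus the residual task is to verify one of the three alternatives i)--iii) in Theorem~\ref{t:saofESac}; I would go through condition ii), namely that $\Dc$ satisfies A1, A2, A3 and that every complete trajectory in $\bar{\Dc}((-\infty,\infty))$ is strongly continuous into $V$.

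Strong continuity of each $u\in\bar{\Dc}((-\infty,\infty))$ into $V$ is immediate from Theorem~\ref{t:Leray2Dss}, since the limit in $C([t_1,t_2];V_{\mathrm w})$ provided by Lemma~\ref{l:compactofLH2s} is a strong solution and therefore lies in $C([\tau,\infty);V)$. For A3 I would take a sequence $u_n\in\Dc([0,\infty))$ that is $\mathrm d_{C([0,T];X_{\mathrm w})}$-Cauchy, extract the uniform bounds of Lemma~\ref{l:compactofLH2s} in $L^2(0,T;D(A))$ with $\tfrac{d}{dt}u_n$ bounded in $L^2(0,T;H)$, and conclude strong convergence in $V$ almost everywhere by the Aubin--Lions compact embedding $L^2(0,T;D(A))\cap H^1(0,T;H)\hookrightarrow L^2(0,T;V)$.

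The key analytic step is A2 in the enstrophy metric $\ds(u,v)=\|u-v\|$. Here I would test the equation against $Au$ to obtain the standard 2D enstrophy identity
\[
\tfrac{1}{2}\tfrac{d}{dt}\|u\|^2+\nu|Au|^2=(g,Au)-\langle B(u,u),Au\rangle,
\]
and then, using the 2D bound $|\langle B(u,u),Au\rangle|\le C\|u\|\,|u|^{1/2}|Au|^{3/2}$, absorb the nonlinear term and the forcing by Young's inequality to reach a differential inequality of the form
\[
\tfrac{d}{dt}\|u\|^2+\nu|Au|^2\le C|g|^2+C\|u\|^6,
\]
valid on $X\subset V$. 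Since $X$ is bounded in $V$, integrating over a short interval $(t-\delta,t)$ and exploiting the normality of $g_0$ in $L^2_{\mathrm{loc}}(\mathbb R;H)$ yields $\|u(t)\|^2\le\|u(t_0)\|^2+\epsilon$ for $t_0$ a.e.\ in $(t-\delta,t)$, provided $\delta$ is chosen small depending only on $\epsilon$, $R$, $\nu$, and the normality modulus of $g_0$. This is precisely A2 with $H$ replaced by $V$, and the Radon--Riesz property is automatic in the Hilbert space $V$.

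With A1, A2, A3 and the strong continuity of complete trajectories in place, Theorem~\ref{t:saofESac}\,ii) immediately delivers every assertion of the theorem: both $\Aw$, $\Aw^{\bar\Sigma}$ upgrade to strongly compact strong global attractors $\As$, $\As^{\bar\Sigma}$ in $V$; both weak trajectory attractors become strongly compact strong trajectory attractors in $C([0,\infty);V)$; and the finite strong uniform tracking property together with strong equicontinuity follow from Theorem~\ref{t:sA} via the same Arzel\`a--Ascoli argument used there. I expect the enstrophy-level verification of A2 to be the main obstacle, because unlike the $L^2$-energy inequality of Theorem~\ref{t:Leray}, the $V$-level inequality is only conditional and must be controlled uniformly on $X$; the normality assumption on $g_0$ in $L^2_{\mathrm{loc}}(\mathbb R;H)$ (strictly stronger than translation boundedness) is exactly what makes the forcing contribution uniformly small on short time intervals.
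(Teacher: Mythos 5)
Your proposal is correct, but it takes a different route from the paper. The paper's proof of Theorem \ref{t:As02DNSEs} is a one-paragraph appeal to alternative iii) of Theorem \ref{t:saofESac}: it cites Theorem 3.2 of \cite{LWZ05}, which already establishes the strongly compact strong global attractor $\As^{\bar{\Sigma}}$ in $V$ for the family of 2D NSE with forces in $\bar{\Sigma}$ when $g_0$ is normal in $L^2_{\mathrm{loc}}(\mathbb{R};H)$, identifies $\bar{\Dc}_\Sigma=\Dc_{\bar{\Sigma}}$ via Theorem \ref{t:A0Aw}, and then lets Theorem \ref{t:saofESac} do the rest. You instead verify alternative ii): A1 (Lemma \ref{l:A1NSE2s}), strong $V$-continuity of complete trajectories (Theorem \ref{t:Leray2Dss} plus $\bar{\Dc}_\Sigma=\Dc_{\bar{\Sigma}}$), A3 via Aubin--Lions with $L^2(0,T;D(A))\cap H^1(0,T;H)\hookrightarrow\hookrightarrow L^2(0,T;V)$, and A2 at the enstrophy level via the $Au$-tested energy inequality and the normality of $g_0$ in $L^2_{\mathrm{loc}}(\mathbb{R};H)$. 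Your route is self-contained: it effectively re-derives the asymptotic compactness that the paper imports from \cite{LWZ05}, at the cost of the extra enstrophy estimate; the paper's route is shorter but leans on the earlier literature as a black box. Two small remarks on your version: in the space-periodic setting the paper actually works in, $\langle B(u,u),Au\rangle=0$ in 2D, so your nonlinear term vanishes and A2 follows directly from $\frac{d}{dt}\|u\|^2+\nu|Au|^2\le \nu^{-1}|g|^2$; and in the Dirichlet case your cubic remainder is indeed uniformly bounded on $X$ because $X$ is bounded in $V$, so the argument still closes. Both approaches land on Theorem \ref{t:saofESac} (hence Theorems \ref{t:sA} and \ref{t:A0Aw}) for the finite strong uniform tracking property and strong equicontinuity, so the conclusions are identical.
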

 Part of this theorem recovers the corresponding results in \cite{LWZ05}. Novelties here are the existence of the strongly compact strong trajectory attractor $\mathfrak A_{\s}$ in $ C([0,\infty);V)$ and sequent properties.
\begin{proof}Analogous to the proof of Theorem \ref{t:As02DNSEw}, we follow the proof of Theorem \ref{t:Aw02DNSEs}. Note that, by Theorem \ref{t:A0Aw},  $\bar{\Dc}_\Sigma=\Dc_{\bar{\Sigma}}$. Theorem 3.2 in \cite{LWZ05} provides the existence of the strongly compact strong global attractor $\As^{\bar{\Sigma}}$ in $V$ for  the family of 2D NSE with forces $g\in \bar{\Sigma}$  when $g_0$ is normal in $L^2_{\mathrm{loc}}(\mathbb{R}; H)$. In other words,  the  evolutionary system $\bar{\Dc}_\Sigma=\Dc_{\bar{\Sigma}}$ possesses a strongly compact strong global attractor $\As^{\bar{\Sigma}}$. Hence, we obtain the conclusions by applying Theorem \ref{t:saofESac} again.
\end{proof}

\section{Attractors for Reaction-Diffusion System} \label{s:RDS}In this section, we study the long-time behavior of solutions of  the following nonautonomous reaction-diffusion system (RDS):
\begin{equation}\label{RDS}
\begin{split}
\partial_t u- a\Delta u+f(u,
t)&=g(x,t),\quad\text{in }\Omega,
 \\
 u&=0,\qquad\quad\ \text{on }\partial\Omega,\\
u|_{t=\tau}&=u_{\tau},\qquad\quad \tau\in{\mathbb{R}}.
\end{split}
\end{equation}
\noindent Here $\Omega$ is a bounded domain in $\mathbb{R}^n$ with
a boundary $\partial\Omega$ of sufficient smoothness;
$a=\left\{a_{ij}\right\}^{j=1,\cdots, N}_{i=1,\cdots, N}$ is an
$N\times N$ real matrix with positive symmetric part
$\frac12(a+a^*)\geq\beta I$, $\beta>0$; $u=u(x,t)=(u^1,\cdots,u^N)$ is the unknown function;
$g=(g^1,\cdots,g^N)$ is the driving force and $f=(f^1,\cdots,f^N)$ is the  interaction function.

Denote the spaces by
$H:=\left(L^2(\Omega)\right)^N$ and $V:=\left(H_0^1(\Omega)\right)^N$, and denote by $(\cdot,\cdot)$ and $|\cdot|$ the $H$-inner product and the corresponding $H$-norm.
 Let  $V'$ be the
dual of $V$. Assume that  $g(s)= g( \cdot ,s)$ is
translation bounded in $L_{\mathrm{loc}}^2\left(
\Bbb{R};V'\right)$, i.e.,
\begin{equation*}\label{c:gRDS}
\left\| g\right\| _{L_{\mathrm{b}}^2\left( \Bbb{R};V'\right) }^2:=
\sup_{t\in \Bbb{R}}\int_t^{t+1}\left\| g(s)\right\|_{V'}
^2\,ds<\infty,
\end{equation*}
and $f(v,s)$ satisfies the following conditions of continuity, dissipativeness and growth:
\begin{align}
\label{c:continuityoffRDS} f(v,s)&\in  C(\mathbb R^N\times\mathbb R;
\mathbb R^N),\\
\label{c:dissipationoffRDS}\sum_{i=1}^{N}\gamma_i|v^i|^{p_i}-C&\leq\sum_{i=1}^{N}f^i(v,s)v^i=f(v,s)\cdot
v,\ \forall v\in \mathbb
R^N,\\
\notag &\quad\qquad\qquad\qquad\gamma_k>0, \ k=1, \cdots, N, \\
\label{c:boundednessoffRDS}\sum_{i=1}^{N}|f^i(v,s)|^\frac{p_i}{p_i-1}&\leq
C\left(\sum_{i=1}^{N}|v^i|^{p_i}+1\right),\ \forall v\in \mathbb
R^N,\\
\notag &\quad\qquad p_1\geq p_2\geq\cdots\geq p_n\geq 2,
\end{align}
where  the letter $C$ denotes a constant which may be different in
each occasion throughout this section.\footnote{RDS (\ref{RDS}) with other
boundary conditions such as Neumann or periodic boundary conditions
can be handled in the same way, and all results hold for these
boundary conditions. For the Dirichlet boundary conditions, instead
of considering $p_k \geq 2$, $k=1,\cdots, N$, for simplicity, we may
assume that $p_k>1$. See Remarks II.4.1 and II.4.2 in \cite{CV02} for
more details.}

Let $q_k:=p_k/(p_k-1)$, $r_k:=\max\{1, n(1/2-1/p_k)\}$, $k=1,
\cdots, N$, and denote by ${\bf p}:=(p_1,\cdots, p_N)$,
${\bf q}:=(q_1,\cdots, q_N)$, ${\bf r}:=(r_1,\cdots, r_N)$ and
\begin{align*}
\begin{split}
&L^{\bf p}(\Omega):=L^{p_1}(\Omega)\times L^{ p_2}(\Omega)\times\cdots\times L^{
p_N}(\Omega),\\
&H^{-{\bf r}}(\Omega):=H^{-r_1}(\Omega)\times H^{-r_2}(\Omega)\times\cdots\times
H^{-r_N}(\Omega),\\
&L^{\bf p}(\tau, t;L^{\bf p}(\Omega)):=L^{p_1}(\tau, t;L^{p_1}(\Omega))\times\cdots\times L^{
p_N}(\tau, t;L^{p_N}(\Omega)),\\
&L^{\bf q}(\tau,t;H^{-{\bf r}}(\Omega)):=L^{q_1}(\tau, t;H^{-r_1}(\Omega))\times\cdots\times L^{
q_N}(\tau, t;H^{-r_N}(\Omega)).
\end{split}
\end{align*}
\begin{definition}
A weak solution  of  \eqref{RDS} on $[\tau,\infty)$ (or $(-\infty, \infty)$, if
$\tau=-\infty$) is a
function $u(x,t)\in L_{\mathrm{loc}}^{\bf p}(\tau, \infty;L^{\bf
p}(\Omega))\cap L_{\mathrm{loc}}^2(\tau, \infty;V)$ that satisfies \eqref{RDS} in the distribution sense of the space $\mathcal D'(\tau,\infty;H^{-{\bf r}}(\Omega))$.
\end{definition}
We recall the results on the existence of weak solutions of (\ref{RDS}) (see e.g.
\cite{CV02}). Note that conditions (\ref{c:dissipationoffRDS})-(\ref{c:boundednessoffRDS}) do not
ensure the uniqueness of the solutions.

\begin{theorem} \label{t:existenceRDS} Let $ f$ satisfy (\ref{c:continuityoffRDS})-(\ref{c:boundednessoffRDS}) and $g \in L^2_{\mathrm {loc}}(\mathbb
R;V')$. For every $u_\tau \in H$, there exists a weak solution $u(t)$ of
(\ref{RDS})  satisfying
\begin{equation*}\label{existence}
 u\in C([\tau,
\infty);H)\cap L_{\mathrm{loc}}^2(\tau, \infty;V)\cap L_{\mathrm{loc}}^{\bf p}(\tau, \infty;L^{\bf
p}(\Omega)).
\end{equation*}Moreover, the function $|u(t)|^2$ is absolutely
continuous on $[\tau,\infty)$ and
\begin{equation}\label{energyequality}
\frac{1}{2}\ddt |u(t)|^2+(a\nabla u(t),\nabla u(t))+(f(u(t),t), u(t))=\langle
g(t),u(t)\rangle,
\end{equation} for a.e. $t\in [\tau,\infty)$.
\end{theorem}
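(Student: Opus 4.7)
The plan is to construct a weak solution by the Faedo--Galerkin method, derive uniform a priori estimates from the dissipation and growth conditions on $f$, pass to the limit via an Aubin--Lions compactness argument, and finally recover the absolute continuity of $|u(t)|^2$ and the pointwise energy equality by a duality/chain-rule argument.

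First, I would fix $T>\tau$ and choose a Galerkin basis $\{w_j\}\subset V\cap L^{\bf p}(\Omega)$ (e.g.\ eigenfunctions of $-\Delta$ with Dirichlet data, which belong to $L^\infty(\Omega)^N$), let $P_m$ be the orthogonal projection onto $V_m=\mathrm{span}\{w_1,\dots,w_m\}$, and solve the finite-dimensional ODE system
\[
(\partial_t u_m,w_j)+(a\nabla u_m,\nabla w_j)+(f(u_m,t),w_j)=\langle g(t),w_j\rangle,\quad u_m(\tau)=P_m u_\tau,
\]
for $j=1,\dots,m$. Local solvability follows from Carath\'eodory theory (using \eqref{c:continuityoffRDS}), and the a priori bounds below extend $u_m$ to $[\tau,\infty)$. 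Testing with $u_m$ and using $(a\nabla u,\nabla u)\ge\beta\|u\|_V^2$, the dissipation \eqref{c:dissipationoffRDS}, and Young's inequality on $\langle g,u_m\rangle$, I get
\[
\tfrac12\ddt|u_m|^2+\tfrac{\beta}{2}\|u_m\|_V^2+\sum_{i=1}^N\gamma_i\|u_m^i\|_{L^{p_i}}^{p_i}\le C+C\|g(t)\|_{V'}^2.
\]
Gr\"onwall's inequality then yields uniform bounds of $u_m$ in $L^\infty(\tau,T;H)\cap L^2(\tau,T;V)\cap L^{\bf p}(\tau,T;L^{\bf p}(\Omega))$. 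The growth condition \eqref{c:boundednessoffRDS} makes $f(u_m,\cdot)$ uniformly bounded in $L^{\bf q}(\tau,T;L^{\bf q}(\Omega))$, and a comparison in the equation gives $\partial_t u_m$ uniformly bounded in $L^2(\tau,T;V')+L^{\bf q}(\tau,T;L^{\bf q}(\Omega))\hookrightarrow L^{\min(2,q_N)}(\tau,T;H^{-\bf r}(\Omega))$.

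Next, with the embedding $V\hookrightarrow\hookrightarrow H\hookrightarrow H^{-\bf r}(\Omega)$, the Aubin--Lions lemma yields a subsequence (still $u_m$) with $u_m\to u$ strongly in $L^2(\tau,T;H)$ and a.e.\ in $\Omega\times(\tau,T)$, while $u_m\rightharpoonup u$ weakly in $L^2(\tau,T;V)\cap L^{\bf p}(\tau,T;L^{\bf p}(\Omega))$ and weakly-$*$ in $L^\infty(\tau,T;H)$. By \eqref{c:continuityoffRDS}, $f(u_m,t)\to f(u,t)$ a.e., and combined with the $L^{\bf q}$-bound, a standard lemma (Lions, Lemma 1.3 of Ch.\ 1 in his 1969 book) gives $f(u_m,\cdot)\rightharpoonup f(u,\cdot)$ in $L^{\bf q}(\tau,T;L^{\bf q}(\Omega))$. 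Passing to the limit in the Galerkin identity against test functions in $\bigcup_m V_m$ and then by density, $u$ solves \eqref{RDS} in $\mathcal{D}'(\tau,\infty;H^{-\bf r}(\Omega))$; a diagonal procedure on $T\to\infty$ extends $u$ to $[\tau,\infty)$.

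Finally, since $u\in L^2(\tau,T;V)\cap L^{\bf p}(\tau,T;L^{\bf p}(\Omega))$ with $\partial_t u\in L^2(\tau,T;V')+L^{\bf q}(\tau,T;L^{\bf q}(\Omega))$ (the dual of the intersection space), the chain rule of Lions--Magenes / Strauss applies to the pairing $\langle\partial_t u,u\rangle$, yielding $u\in C([\tau,T];H)$, the absolute continuity of $|u(t)|^2$, and
\[
\tfrac12\ddt|u(t)|^2=\langle\partial_t u(t),u(t)\rangle\quad\text{a.e.}
\]
Substituting $\partial_t u=a\Delta u-f(u,t)+g(t)$ in this pairing gives \eqref{energyequality}. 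The main obstacle is the passage to the limit in the nonlinear term $f(u_m,t)$: one needs both the a.e.\ convergence supplied by Aubin--Lions and the reflexive-space bound from \eqref{c:boundednessoffRDS} to identify the weak limit with $f(u,t)$, and this is where the balance between the growth exponents $p_i$ and the regularity $L^{\bf p}(L^{\bf p})$ of $u$ is essential.
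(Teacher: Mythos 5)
Your proposal is correct and is essentially the canonical proof: the paper does not prove Theorem \ref{t:existenceRDS} at all but recalls it from \cite{CV02}, where the argument is exactly your Galerkin approximation, the a priori bounds from \eqref{c:dissipationoffRDS}--\eqref{c:boundednessoffRDS}, Aubin--Lions compactness plus Lions' ``a.e.\ convergence and bounded in $L^{\bf q}$ implies weak convergence'' lemma to identify $f(u,t)$, and the Lions--Magenes/Strauss chain rule (Theorem II.1.8 in \cite{CV02}) for $u\in C([\tau,\infty);H)$ and the energy equality --- the same ingredients the paper itself reuses in Lemma \ref{l:convergenceofsolutionRDS}. The only slip is cosmetic: since $p_1\ge\cdots\ge p_N$ gives $q_1\le\cdots\le q_N\le 2$, the single time-integrability exponent for $\partial_t u_m$ should be $q_1=\min_i q_i$ rather than $q_N$, which is immaterial because any exponent $\ge 1$ suffices for the compactness step.
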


Now, we consider a fixed pair of  an interaction function $f_0$ and a driving
force $g_0$, such that,  $f_0(v,t)$ satisfies
(\ref{c:continuityoffRDS})-(\ref{c:boundednessoffRDS}) and $g_0(t)\in
L^{2}_{\mathrm{b}}(\mathbb{R};V')$.  Let $$\sigma_0:=(f_0, g_0),$$ and $$\Sigma:=\{\sigma_0(\cdot+h):h\in
\mathbb R)\}.$$ Obviously, for every
$\sigma=(f,g) \in \Sigma$, $f$  satisfies
(\ref{c:continuityoffRDS})-(\ref{c:boundednessoffRDS}) with the same
constants, and
\begin{equation}\label{i:gRDS}
\|g\|^2_{L_{\mathrm{b}}^2\left( \Bbb{R};V'\right) }\leq \|g_0\|^2_{L_{\mathrm{b}}^2\left( \Bbb{R};V'\right) }.
\end{equation}
Let $u(t)$, $t\in [\tau,\infty)$, be a weak solution of \eqref{RDS} with $\sigma=(f,g) \in
\Sigma$ guaranteed by
Theorem~\ref{t:existenceRDS}. Thanks to  \eqref{c:dissipationoffRDS} and \eqref{i:gRDS},
we obtain from (\ref{energyequality}) that
\begin{equation}\label{energyinequality}
\ddt |u(t)|^2 + \lambda_1 \beta |u(t)|^2 \leq C + \beta^{-1}
\|g_0\|^2_{V'},
\end{equation}
for a.e. $t\in[\tau,\infty)$. Here $\lambda_1$ is the first
eigenvalue of the Laplacian with Dirichlet boundary conditions.
Due to the absolute continuity of $|u(t)|$ and Gr\"{o}nwall's inequality,  (\ref{energyinequality}) implies that
\begin{equation}\label{i:absorbingBRDS}
|u(t)|^2  \leq |u(\tau)|^2 e^{-\lambda_1 \beta(t-\tau)}+ C, \quad
\forall\, t \in [\tau,\infty).
\end{equation}
Therefore, there exists a  uniformly (w.r.t. $\tau\in \mathbb R$ and $\sigma\in \Sigma$) absorbing ball $B_{\mathrm{s}}(0, R)\subset H$, where the radius $R$ depends on $\lambda_1$, $ \beta$, the constant
in (\ref{c:dissipationoffRDS}) and $\|g_0\|^2_{L_{\mathrm{b}}^2\left( \Bbb{R};V'\right) }$. We denote by
$X$  a closed   absorbing ball
\begin{equation}\label{absorbingballRDS}
X= \{u\in H: |u| \leq R\}.
\end{equation}That is, for any bounded set $A
\subset H$, there exists a time $\bar{t} \geq 0$ independent of the initial time $\tau$, such that
\begin{equation}\label{absorbingtrajectoryRDS}
u(t) \in X, \quad \forall\, t\geq t_1:=\tau+\bar{t},
\end{equation}
for every weak solution $u(t)$ with  $\sigma\in \Sigma$ and
the initial data $u (\tau) \in A$.
It is known that $X$ is  weakly compact in $H$ and  metrizable with a metric $\dw$
deducing the weak topology on $X$.

Consider an evolutionary system for which a family of trajectories consists of all weak solutions of
(\ref{RDS}) with the fixed $\sigma_0$ in $X$. More precisely, define
\[
\begin{split}
\Dc([\tau,\infty)) := \{&u(\cdot): u(\cdot) \mbox{ is a weak solution on } [\tau,\infty)\\
&\ \mbox{ with } \sigma\in \Sigma \mbox{ and } u(t) \in
X, \forall\, t \in [\tau,\infty)\}, \  \tau \in \mathbb{R},
\end{split}
\]
\[
\begin{split}
\Dc((-\infty,\infty)) := \{&u(\cdot): u(\cdot)
\mbox{ is a weak solution on } (-\infty,\infty)\\
&\ \mbox{ with } \sigma\in \Sigma \mbox{ and } u(t) \in X, \forall\, t \in (-\infty,\infty)\}.
\end{split}
\]

Clearly, the properties 1-4 in Definition \ref{d:NDc} hold for $\Dc$ if we utilize the fact that is formulated as the translation identity: A weak solution of  (\ref{RDS}) with  $\sigma\in \Sigma$ initiating at time $\tau+h$ is also a weak solution of  (\ref{RDS}) with  $\sigma(\cdot+h)\in \Sigma$ initiating at time $\tau$.
Thanks to Theorem \ref{t:wA}, the weak global
attractor $\Aw$ for this evolutionary system exists.

\begin{lemma}\label{l:convergenceofsolutionRDS}
Let $u_k(t)$ be a sequence of weak solutions of (\ref{RDS}) with
 $\sigma_k \in \Sigma$, such that $u_k(t) \in X$ for all
$t\geq t_1$. Then
\[
\begin{aligned}
u_k \ \ &\mbox{is bounded in} \ \ L^2(t_1,t_2;V),\\
\partial_t u_k \ \  &\mbox{is bounded in} \ \
L^{{\bf q}}(t_1,t_2;H^{-{\bf r}}(\Omega)),
\end{aligned}
\]
for all $t_2>t_1$. Moreover, there exists a subsequence  $u_{k_j}$ converges in $ C([t_1,t_2]; \Hw)$ to some
 $\phi(t)\in  C([t_1,t_2]; H)$, i.e.,
\[
(u_{k_j},v) \to (\phi,v) \  \ \mbox{uniformly on } [t_1,t_2],
\]
as $k_j\to \infty$, for all $v \in H$.
\end{lemma}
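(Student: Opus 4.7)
The plan is a standard compactness argument adapted to the nonlinear growth structure of the RDS: use the energy equality \eqref{energyequality} to extract uniform bounds in $L^2(t_1,t_2;V) \cap L^{\bf p}(t_1,t_2;L^{\bf p}(\Omega))$, use these bounds and the equation itself to bound $\partial_t u_k$ in $L^{\bf q}(t_1,t_2;H^{-{\bf r}}(\Omega))$, and finally apply an Arzel\`a--Ascoli argument in the weak topology to pull out a $C([t_1,t_2];\Hw)$-convergent subsequence, upgrading to strong continuity of the limit via an Aubin--Lions-type identification.

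First I would obtain the energy bounds. Since $u_k(t_1)\in X$ one has $|u_k(t_1)|\le R$, and integrating \eqref{energyequality} over $[t_1,t_2]$, using the coercivity $(a\nabla u,\nabla u)\ge \beta\|\nabla u\|^2$, the dissipativity \eqref{c:dissipationoffRDS}, \eqref{i:gRDS}, and Young's inequality $\langle g,u\rangle \le \frac{\beta}{2}\|u\|_V^2 + C\|g\|_{V'}^2$, I would obtain uniform bounds
\[
\int_{t_1}^{t_2} \|u_k(s)\|_V^2\,ds \;+\; \sum_{i=1}^N \int_{t_1}^{t_2}\!\!\int_\Omega |u_k^i(x,s)|^{p_i}\,dx\,ds \;\le\; C(R,t_2-t_1,\|g_0\|_{L^2_{\mathrm b}(V')}),
\]
so $u_k$ is bounded in $L^2(t_1,t_2;V)\cap L^{\bf p}(t_1,t_2;L^{\bf p}(\Omega))\cap L^\infty(t_1,t_2;H)$.

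Next I would bound $\partial_t u_k$ using the equation $\partial_t u_k = a\Delta u_k - f(u_k,\cdot) + g_k$. The term $a\Delta u_k$ is bounded in $L^2(t_1,t_2;V')\subset L^{q_i}(t_1,t_2;H^{-r_i}(\Omega))$; the growth hypothesis \eqref{c:boundednessoffRDS} combined with the $L^{\bf p}$-bound gives $f^i(u_k,\cdot)$ bounded in $L^{q_i}(t_1,t_2;L^{q_i}(\Omega))$, and the choice $r_i=\max\{1,n(1/2-1/p_i)\}$ is exactly the Sobolev exponent making $L^{q_i}(\Omega)\hookrightarrow H^{-r_i}(\Omega)$ continuous; and $g_k$ is bounded in $L^2(t_1,t_2;V')$ by \eqref{i:gRDS}. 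Summing, $\partial_t u_k$ is bounded in $L^{\bf q}(t_1,t_2;H^{-{\bf r}}(\Omega))$.

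For the subsequential convergence, for each fixed $t$ the set $\{u_k(t)\}\subset X$ is $\dw$-precompact in $\Hw$, and the $\partial_t u_k$-bound gives weak equicontinuity of $t\mapsto (u_k(t),v)$ uniformly for $v$ in the unit ball of any finite-dimensional piece of $V\cap L^{\bf p}(\Omega)$. Testing against a countable dense family in $H$ and diagonalizing yields a subsequence $u_{k_j}$ such that $(u_{k_j}(\cdot),v)$ converges uniformly on $[t_1,t_2]$ for every $v\in H$, i.e. $u_{k_j}\to \phi$ in $C([t_1,t_2];\Hw)$ with $\phi\in C([t_1,t_2];\Hw)$. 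The main obstacle is the promotion to $\phi\in C([t_1,t_2];H)$: for this I would combine the $L^2(V)$ and $\partial_t$ bounds via an Aubin--Lions lemma to extract, along a further subsequence, strong convergence in $L^2(t_1,t_2;H)$ and almost-everywhere convergence pointwise in $(x,t)$; the continuity \eqref{c:continuityoffRDS} and growth \eqref{c:boundednessoffRDS} then identify the weak limit of $f(u_{k_j},\cdot)$ as $f(\phi,\cdot)$ via a Vitali/De la Vall\'ee Poussin argument, showing that $\phi$ is itself a weak solution of \eqref{RDS} for a limiting symbol in $\overline{\Sigma}$, so that Theorem \ref{t:existenceRDS} (or the absolute continuity of $|\phi(t)|^2$ derived from the resulting energy equality) delivers $\phi\in C([t_1,t_2];H)$.
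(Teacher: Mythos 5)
Your energy estimates, the bound on $\partial_t u_k$, and the Arzel\`a--Ascoli/diagonalization extraction of a $C([t_1,t_2];\Hw)$-convergent subsequence all match the paper's argument. The genuine gap is in your final step, where you promote $\phi$ to $C([t_1,t_2];H)$ by identifying the weak limit of $f_{k_j}(u_{k_j},\cdot)$ with $f(\phi,\cdot)$ for a limiting symbol and concluding that $\phi$ is itself a weak solution. Under the hypotheses of this lemma $f_0$ is only assumed to satisfy \eqref{c:continuityoffRDS}--\eqref{c:boundednessoffRDS}; it is \emph{not} assumed translation compact in $C^{\mathrm{p.u.}}(\mathbb R;\mathcal M)$, so the translates $f_0(\cdot,\cdot+h_{k_j})$ need not converge to any limiting nonlinearity at all (the paper's Examples I--III in Subsection \ref{ss:nonln} are built precisely so that no pointwise limit exists), and the a.e.\ convergence $u_{k_j}(x,t)\to\phi(x,t)$ does not let you pass to the limit in $f_0(u_{k_j}(x,t),t+h_{k_j})$. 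That identification --- which in the paper also uses the uniform equicontinuity \eqref{ucofphiv} of the family of symbols --- is exactly what is deferred to Lemma \ref{l:convergenceofsolutionRDSclosed}, where translation compactness of $f_0$ is an additional hypothesis. Hence you cannot conclude here that $\phi$ is a weak solution, nor invoke Theorem \ref{t:existenceRDS} or an energy equality for $\phi$; note the lemma deliberately claims only $\phi\in C([t_1,t_2];H)$, not that $\phi$ solves \eqref{RDS}.

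The conclusion can still be reached, and this is what the paper does: keep the weak limit $\psi$ of $f_{k_j}(u_{k_j},\cdot)$ in $L^{\bf q}(t_1,t_2;L^{\bf q}(\Omega))$ \emph{unidentified} (it exists along a subsequence by the growth bound \eqref{c:boundednessoffRDS} together with the $L^{\bf p}$ estimate), pass to the limit in the equation to obtain the distributional identity $\partial_t\phi - a\Delta\phi + \psi = g$, and then apply the abstract regularity result (a vector version of Theorem II.1.8 in \cite{CV02}): any $\phi\in L^2(t_1,t_2;V)\cap L^{\bf p}(t_1,t_2;L^{\bf p}(\Omega))$ whose time derivative has this structure lies in $C([t_1,t_2];H)$. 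This step needs no information about what $\psi$ actually is. With that substitution for your last step, the rest of your proof is sound and coincides with the paper's.
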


\begin{proof}The proof is analogous with that of Lemma 3.2 in \cite{Lu07} and  that of Lemma 2.1 in \cite{R98}.
Standard estimates (see e.g. \cite{CV02}) show that, for all $t_2>t_1$,
\begin{equation}\label{boundedness_E} \{u_k\} \ \ \mbox{is bounded in} \ \ L^2(t_1,t_2;V)\cap
L^{\infty}(t_1,t_2;H)\cap L^{{\bf p}}\left(t_1,t_2;L^{{\bf p}}(\Omega)\right),
\end{equation} and
\begin{align}\label{boundedness_dE}
\{\partial_t u_k\} \ \  &\mbox{ is bounded in} \ \ L^{{\bf q}}(t_1,t_2;H^{-{\bf r}}(\Omega)),\\
\label{bounednessofu_nlp}\{f_k(u_k(x,t),t)\} \ &\mbox{  is
bounded in  }
L^{\bf q}\left(t_1,t_2;L^{\bf q}(\Omega)\right).
\end{align} By the embedding theorem (cf. Theorem II.1.4 in \cite{CV02}, Theorem 8.1 in
\cite{Ro01}), we obtain that
\begin{equation}\label{precompactness_E}
\{u_k\} \ \ \mbox{is precompact in} \ \ L^2(t_1,t_2;H).
\end{equation}
Passing to  a subsequence and dropping a subindex, we know
from (\ref{boundedness_E})-(\ref{precompactness_E})
that,
\begin{align}\label{convergenceofu_n}
u_{k}(t)\rightarrow\phi(t) \quad &\text{ weak-star in
}L^\infty(t_1,t_2;H),\notag\\
&\text{ weakly in }L^2(t_1,t_2;V)\cap L^{{\bf p}}\left(t_1,t_2;L^{{\bf p}}(\Omega)\right),\\
&\text{ strongly in }L^2(t_1,t_2;H),\notag
\end{align}
 and
\begin{align}\label{convergenceofu_nd} \Delta u_{k}(t)\rightarrow \Delta\phi(t)\quad &\text{ weakly in }L^2(t_1,t_2;V'),\notag\\
\partial_t u_{k}(t)\rightarrow \partial_t\phi(t)\quad &\text{ weakly in
}L^{{\bf q}}(t_1,t_2;H^{-{\bf r}}(\Omega)),\\
f_{k}(u_{k}(x,t),t)\rightarrow \psi(t)\quad &\text{ weakly in
}L^{{\bf q}}\left(t_1,t_2;L^{{\bf q}}(\Omega)\right),\notag
\end{align} for some
\begin{align}\label{phi}
\phi(t)\in L^\infty(t_1,t_2;H)\cap L^2(t_1,t_2;V)\cap
L^{{\bf p}}\left(t_1,t_2;L^{{\bf p}}(\Omega)\right),
\end{align}
and some
\begin{align}\label{phi} \psi(t)\in L^{{\bf q}}\left(t_1,t_2;L^{{\bf q}}(\Omega)\right).
\end{align}
Note that $g_0$ is translation
compact in $ L^{2,\mathrm{w}}_\mathrm{loc}(\mathbb{R};V')$ (see \cite{CV02}). Thus, passing to  a subsequence and dropping a subindex again,  we also have,
\begin{equation}\label{convergenceofg_n}
g_k(t) \rightarrow g(t)\quad \text{ weakly in }L^{2}(t_1,t_2;V'),
\end{equation}
with some
$g(t)\in L^{2}(t_1,t_2;V').$
Passing
the limits  yields the following equality
\begin{align}\label{quasiequ} \partial_t \phi-a\Delta \phi+ \psi=g
\end{align}
in the distribution sense of the space $\mathcal D'(t_1,t_2;H^{-\bf{r}}(\Omega))$. Thanks to a vector version of Theorem
II.1.8 in \cite{CV02}, (\ref{convergenceofu_nd})-(\ref{quasiequ})
indicate that $\phi(t)\in  C([t_1,t_2];H)$.

Now we prove that
$u_{k}(t)\rightarrow \phi(t)$ in $C([t_1,t_2]; H_{\mathrm w})$.

Thanks  to  the strong convergence in (\ref{convergenceofu_n}), we know that (passing to  a subsequence and dropping a subindex in above procedures if it is necessary)
\[
u_k(t)\rightarrow \phi(t) \quad \text{ strongly in } H, \quad a.e. \ t\in [t_1,t_2].
\]
Thus, for any test function $v\in (C_0^\infty(\Omega))^N$,
\[
(u_k(t),v)\rightarrow(\phi(t),v),\quad a.e. \ t\geq t_1.
\]
It follows from (\ref{boundedness_E}) that
$\{(u_k(t),v)\}$ is uniformly bounded on $[t_1,t_2]$.
On the other hand, by (\ref{boundedness_dE}), for every $v\in (C_0^\infty(\Omega))^N$, $0<\delta <1$  and
$t_1\leq t \leq t+\delta \leq t_2$,
\begin{align*}
|(u_k(t+\delta)-u_k(t),v)|&=\left|\int^{t+\delta}_t\langle \partial_t u_k(s),v\rangle\,ds\right|\\
&\leq C\delta^{\frac{1}{p_1}}\|v\|_{H^{{\bf r}}}\|\partial_t u_k\|_{L^{{\bf q}}(t,t+\delta;H^{-\bf{r}})}\\
&\leq C\delta^{\frac{1}{p_1}}\|v\|_{H^{{\bf r}}}.
\end{align*}
That is,  the sequence $\{(u_k(t),v)\}$ is equicontinuous on $[t_1,t_2]$. Hence,
\begin{align*}
(u_k(t),v)\rightarrow(\phi(t),v) \quad \mbox{uniformly on } [t_1,t_2],\quad \forall v\in
(C_0^\infty(\Omega))^N. \end{align*}
Note that  $(C_0^\infty(\Omega))^N$ is dense in $H$. We have
\begin{align*}
(u_k(t),v)\rightarrow(\phi(t),v) \quad \mbox{uniformly on }  [t_1,t_2],\quad \forall v\in H.
\end{align*}

We complete the proof.
\end{proof}

Then, we have the
following.
\begin{lemma} \label{l:compactRDS}
The evolutionary system $\Dc$ of (\ref{RDS}) with the fixed $\sigma_0$ satisfies A1 and
A3. Moreover, if $g_0$ is normal in
$L^2_{\mathrm{loc}}(\mathbb{R};V')$ then A2 holds.
\end{lemma}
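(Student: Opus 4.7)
My plan is to verify A1 and A3 as simultaneous corollaries of Lemma \ref{l:convergenceofsolutionRDS}, and to establish A2 separately from the energy equality \eqref{energyequality} combined with the normality of $g_0$. For A1, I take any sequence $\{u_n\}\subset \Dc([0,\infty))$ (its trajectories all live in $X$ by \eqref{absorbingtrajectoryRDS}) and apply Lemma \ref{l:convergenceofsolutionRDS} successively on $[0,k]$, $k\in\mathbb N$. Each step extracts a subsequence convergent in $C([0,k];\Hw)$, and the usual diagonal argument yields a single subsequence converging in $C([0,k];\Hw)$ for every $k$, equivalently in $C([0,\infty);\Xw)$; hence $\Dc([0,\infty))$ is precompact there.

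For A3, I take a $\dd_{C([0,T];\Xw)}$-Cauchy sequence $\{u_n\}\subset \Dc([0,\infty))$, with limit $\phi\in C([0,T];\Xw)$ by completeness. Re-tracing the proof of Lemma \ref{l:convergenceofsolutionRDS}, and in particular the strong $L^2$-convergence in \eqref{convergenceofu_n}, every subsequence of $\{u_n\}$ has a sub-subsequence converging strongly in $L^2(0,T;H)$; uniqueness of the weak pointwise limit forces this strong limit to be $\phi$, so the full sequence converges strongly in $L^2(0,T;H)$ to $\phi$. One further subsequence extraction then produces a.e.~strong pointwise convergence in $H$, yielding the $\ds$-Cauchy conclusion on a full-measure subset of $[0,T]$.

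For A2, starting from \eqref{energyequality} I use $(f(u,t),u)\ge -C$ from \eqref{c:dissipationoffRDS}, the coercivity $(a\nabla u,\nabla u)\ge \beta\|u\|_V^2$, and Young's inequality applied to $\langle g,u\rangle$ to obtain
\[
\ddt |u(t)|^2 \le 2C + \frac{1}{\beta}\|g(t)\|_{V'}^2.
\]
Integrating on $[t_0,t]$ gives
\[
|u(t)|^2 - |u(t_0)|^2 \le 2C(t-t_0) + \frac{1}{\beta}\int_{t_0}^t \|g(s)\|_{V'}^2\,ds.
\]
When $g_0$ is normal in $L^2_{\mathrm{loc}}(\mathbb R;V')$, every $g\in\Sigma$ shares the same modulus of normality, so for any given $\epsilon>0$ there is $\delta>0$, independent of $g\in\Sigma$ and $t_0$, making the right-hand side at most $\epsilon^2$ whenever $|t-t_0|\le\delta$. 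The subadditivity $\sqrt{a+b}\le \sqrt a+\sqrt b$ then yields $|u(t)|\le |u(t_0)|+\epsilon$, which is A2.

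The most delicate point will be the ``Cauchy a.e.''\ step in A3: strong $L^2(0,T;H)$ convergence of the full sequence does not literally imply a.e.~strong pointwise convergence of the full sequence, and this gap must be bridged by the last subsequence extraction, paralleling the NSE treatment in Lemma 5.7 of \cite{CL14}. Everything else is a direct assembly of the ingredients above.
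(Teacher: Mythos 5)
Your proposal follows essentially the same route as the paper's proof: A1 by diagonalizing the compactness lemma over the intervals $[0,k]$, A3 by upgrading the weak Cauchy property to strong $L^2(0,T;H)$ convergence (boundedness in $L^2(0,T;V)$ plus the time-derivative bound, with uniqueness of the limit forcing full-sequence convergence) and then passing to an a.e.\ convergent subsequence, and A2 by integrating the energy (in)equality and using the uniform normality of the translates of $g_0$. The one caveat you flag yourself --- that the last a.e.\ step literally yields the $\ds$-Cauchy property only along a subsequence rather than for the full sequence --- is present in exactly the same form in the paper's own proof, which asserts $|u_k(t)|\to|\phi(t)|$ a.e.\ directly from the $L^2(0,T;H)$ convergence, so your treatment is, if anything, slightly more careful on this point.
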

\begin{proof}The proof is analogous to that of Lemma 3.4 in \cite{CL09}.
First,  by Theorem \ref{t:existenceRDS}, $\Dc([0,\infty)) \subset C([0,\infty);\Xs)$. Now take a sequence $\{u_k \}\subset \Dc([0,\infty))$. Owing to
Lemma~\ref{l:convergenceofsolutionRDS}, there exists a subsequence,
still denoted by  $\{u_k\}$, which converges in $C([0, 1];\Xw)$ to some
$\phi^{1} \in C([0, 1];\Xs)$  as $k \to \infty$. Passing to a
subsequence and dropping a subindex once more, we have that $u_k
\to \phi^2$ in $C([0, 2];\Xw)$ as $k \to \infty$ for some $\phi^{2}
\in C([0, 2];\Xs)$. Note that $\phi^1(t)=\phi^2(t)$ on $[0, 1]$.
Continuing this diagonalization process, we obtain a subsequence
$\{u_{k_j}\}$ of $\{u_k\}$ that converges in $ C([0, \infty);\Xw)$ to some
$\phi \in C([0, \infty);\Xs)$  as $k_j \to \infty$. Therefore, A1
holds.

 Take a sequence $\{u_k\} \subset \Dc([0,\infty))$ be such
that it is a $\mathrm d_{ C([0, T];\Xw)}$-Cauchy sequence  in $ C([0, T];\Xw)$ for
some $T>0$.   Thanks to Lemma~\ref{l:convergenceofsolutionRDS} again, the
sequence $\{u_k\}$ is bounded in $L^2(0,T;V)$. Hence, there
exists some $\phi(t)\in  C([0, T]; \Xw)$, such that
\[
\int_{0}^{T} |u_k(s)-\phi(s)|^2 \, ds \to 0, \quad \mbox{ as }
 k \to \infty.
\]
In particular, $|u_k(t)| \to |\phi(t)|$ as $k \to \infty$ a.e.
on $[0,T]$, which means that  $u_k(t)$ is a $\ds$-Cauchy
sequence a.e. on $[0,T]$. Thus, A3 is valid.

For  any $u \in \Dc([0,\infty))$ and $t>0$, it follows from
(\ref{energyinequality}) and the absolute continuity of $|u(\cdot)|^2$ that
\begin{equation}\label{ie:abvestimate}
|u(t)|^2\leq   |u(t_0)|^2+C(t-t_0)+ \frac{1}{\beta}\int_{t_0}^t
\|g_0\|^2_{V'}\,ds,
\end{equation}
for all $0\leq t_0<t$. Here $C$ is independent of $u$.
Suppose now that $g_0$ is normal in $L^2_{\mathrm{loc}}(\mathbb{R};V')$. Then given $\epsilon>0$, there exists $ 0<\delta<\epsilon/2C$, such that
\begin{equation*} \label{normality}\sup_{t\in \mathbb{R}} \int_{t-\delta}^t
\|g_0(s)\|^2_{V'} \, ds \leq \frac{\beta\epsilon}{2}.
\end{equation*}
It follows from (\ref{ie:abvestimate}) that
\begin{equation*}\label{A2}
|u(t)|^2 \leq |u(t_0)|^2  + \epsilon, \quad\,\forall t_0\in (t-\delta,t),
\end{equation*}which concludes that A2 holds.
\end{proof}

We have the followings.
\begin{theorem} \label{t:structureofattractorwRDS} Let $ f_0$ satisfy (\ref{c:continuityoffRDS})-(\ref{c:boundednessoffRDS}) and $g_0$ be translation bounded in $ L^{2}_\mathrm{loc}(\mathbb{R};V')$.  Then the weak uniform global attractor $\Aw$  and the weak trajectory attractor $\mathfrak A_{\w}$ for (\ref{RDS}) with the fixed $\sigma_0=(f_0, g_0)$ exist, $\Aw$ is the maximal invariant and maximal quasi-invariant set w.r.t. the closure $\bar{\Dc}$ of the
corresponding evolutionary system $\Dc$, and
\begin{equation*}
\begin{split}
\Aw &= \ww(X)=\ws(X)=\left\{ u(0):  u \in
\bar{\Dc}((-\infty, \infty))\right\}, \\ \mathfrak A_{\w}&=\Pi_+\bar{\Dc}((-\infty, \infty))=\left\{u(\cdot)|_{[0,\infty)}:u\in \bar{\Dc}((-\infty, \infty))\right\}, \\
\Aw&=\mathfrak A_{\w}(t)=\left\{u(t): u\in \mathfrak A_{\w}\right\},\quad \forall\, t\geq 0.
\end{split}
\end{equation*}
Moreover,  $\mathfrak A_{\w}$ satisfies the finite weak uniform tracking property and is weakly equicontinuous on $[0,\infty)$.
\end{theorem}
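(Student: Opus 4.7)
The plan is to simply apply the general machinery developed in Section \ref{ss:WAttractES}, specifically Theorem \ref{t:wA}, once we have verified that $\Dc$ is an evolutionary system satisfying assumption A1. Most of the work has already been done in the preliminary discussion preceding the theorem statement.

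First, I would observe that $\Dc$ has already been checked to satisfy properties 1--4 of Definition \ref{d:NDc}: nonemptiness of $\Dc([\tau,\infty))$ follows from Theorem \ref{t:existenceRDS} together with the uniform absorbing estimate \eqref{i:absorbingBRDS}--\eqref{absorbingtrajectoryRDS} (by translating backward in time if necessary), and the translation, restriction, and concatenation properties (2)--(4) follow from the translation identity $\sigma \mapsto \sigma(\cdot + h)$ applied to the pair $(f,g)$, since a weak solution of \eqref{RDS} with symbol $\sigma \in \Sigma$ remains a weak solution under time translation with the translated symbol, and all such translates still lie in $\Sigma$. In particular $\Dc([0,\infty)) \subset C([0,\infty);X_{\s}) \subset C([0,\infty);\Xw)$ by Theorem \ref{t:existenceRDS}, so the hypothesis preceding Definition \ref{d:wtas} applies.

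Next, I would invoke Lemma \ref{l:compactRDS} to conclude that $\Dc$ satisfies A1. With A1 in hand, Theorem \ref{t:wA} applies directly to $\Dc$ and yields all of the stated conclusions at once:
\begin{itemize}
\item conclusions 1--3 of Theorem \ref{t:wA} give existence of the weak global attractor $\Aw$, the identities $\Aw = \ww(X) = \ws(X) = \bar{\omega}_{\w}(X) = \bar{\A}_{\w}$, and the characterization of $\Aw$ as the maximal invariant and maximal quasi-invariant set w.r.t.\ $\bar{\Dc}$ via $\Aw=\{u(0):u\in\bar{\Dc}((-\infty,\infty))\}$;
\item conclusion 4 gives existence of the weak trajectory attractor $\mathfrak A_{\w}=\Pi_+\bar{\Dc}((-\infty,\infty))$, the finite weak uniform tracking property, and the weak equicontinuity on $[0,\infty)$;
\item conclusion 5 gives $\Aw=\mathfrak A_{\w}(t)$ for all $t\geq 0$.
\end{itemize}
This is exactly the list of assertions in the theorem, so nothing further is needed once A1 is confirmed.

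There is essentially no new obstacle at this stage, since the two heavy technical inputs---the compactness/convergence lemma for sequences of weak solutions (Lemma \ref{l:convergenceofsolutionRDS}) and the verification of A1 (Lemma \ref{l:compactRDS})---have already been proved. The only mild check I would carry out explicitly is that the evolutionary system structure genuinely uses the translation identity together with the closedness of $\Sigma$ under $\{T(h)\}_{h\ge 0}$; in particular the concatenation property (4) of Definition \ref{d:NDc} requires only measurability and the local-in-time nature of being a weak solution, both of which are immediate from the distributional formulation in $\mathcal D'(\tau,\infty;H^{-{\bf r}}(\Omega))$. Once these routine bookkeeping items are recorded, the theorem follows as a corollary of Theorem \ref{t:wA}.
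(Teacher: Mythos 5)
Your proposal is correct and matches the paper's own proof, which likewise just cites Lemma \ref{l:compactRDS} to verify A1 and then invokes Theorem \ref{t:wA}; the evolutionary-system properties 1--4 are checked in the discussion preceding the theorem exactly as you describe. No gaps.
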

\begin{proof}It is known from Lemma \ref{l:compactRDS} that the associated evolutionary system $\Dc$ satisfies A1. Then the conclusions follow from Theorems \ref{t:wA}.
\end{proof}
\begin{theorem}\label{t:sturctureofattractorsRDS}
Furthermore, if  $g_0$ is  normal in $L^2_{\mathrm{loc}}(\mathbb{R}; V')$, then
the weak global attractor $\Aw$ is a strongly compact
strong global attractor $\As$, and the weak trajectory attractor $\mathfrak A_{\w}$ is a strongly compact strong trajectory attractor $\mathfrak A_{\s}$. Moreover,
\begin{itemize}
\item[1.] $\mathfrak A_{\s}=\Pi_+\bar{\Dc}((-\infty, \infty))$ satisfies the finite strong
uniform tracking property, i.e.,  for any $\epsilon >0$ and $T>0$, there exist $t_0$ and a finite subset $P_T^f\subset \mathfrak A_{\s}|_{[0,T]}$, such
that for any $t^*>t_0$, every trajectory $u \in \Dc([0,\infty))$
satisfies
$$|u(t)-v(t-t^*)| < \epsilon, \quad \forall t\in [t^*,t^*+T],$$
for some $T$-time length piece  $v\in P_T^f$.
\item[2.]
$\mathfrak A_{\s}=\Pi_+\bar{\Dc}((-\infty, \infty))$  is strongly equicontinuous on $[0,\infty)$, i.e.,
\[
\left|v(t_1)-v(t_2)\right|\leq \theta\left(|t_1-t_2|\right), \quad\forall\, t_1,t_2\ge 0, \  \forall v\in \mathfrak A_{\s},
\]
where $\theta(l) $ is a positive function tending
to $0$ as $l\rightarrow 0^+$.
\end{itemize}
\end{theorem}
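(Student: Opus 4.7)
The plan is to reduce this theorem to an application of the abstract Theorems \ref{t:A123Comp} and \ref{t:sA}, using the properties of the RDS already established in Lemmas \ref{l:convergenceofsolutionRDS} and \ref{l:compactRDS}. The structure of the argument mirrors that used for the 3D NSE in Theorem \ref{t:As03DNSE} and for the 2D NSE in Theorems \ref{t:As02DNSEw} and \ref{t:As02DNSEs}.

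First, I would invoke Lemma \ref{l:compactRDS}: under the assumption that $g_0$ is normal in $L^2_{\mathrm{loc}}(\mathbb{R};V')$, the evolutionary system $\Dc$ associated to the RDS \eqref{RDS} with the fixed symbol $\sigma_0=(f_0,g_0)$ satisfies A1, A2, and A3. This is the key ingredient. Second, I would verify the remaining hypothesis of Theorem \ref{t:A123Comp}, namely that every complete trajectory of the closure $\bar{\Dc}$ is strongly continuous, i.e.\ $\bar{\Dc}((-\infty,\infty))\subset C((-\infty,\infty);\Xs)$. This follows from Lemma \ref{l:convergenceofsolutionRDS}: any $u\in\bar{\Dc}((-\infty,\infty))$ is obtained as a limit in $C([\tau,\infty);\Xw)$ of trajectories in $\Dc([\tau,\infty))$, and the limit $\phi$ produced in that lemma lies in $C([t_1,t_2];H)$, so $u\in C([\tau,\infty);H)$ for each $\tau$, hence $u\in C((-\infty,\infty);\Xs)$.

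With these two points in hand, Theorem \ref{t:A123Comp} yields that $\Dc$ is asymptotically compact. Now all the hypotheses of Theorem \ref{t:sA} are satisfied: $\Dc$ is asymptotically compact and satisfies A1. Conclusion 1 of Theorem \ref{t:sA} gives that the weak global attractor $\Aw$ obtained in Theorem \ref{t:structureofattractorwRDS} coincides with a strongly compact strong global attractor $\As$. Conclusion 2 of Theorem \ref{t:sA} gives that the weak trajectory attractor $\mathfrak A_{\w}=\Pi_+\bar{\Dc}((-\infty,\infty))$ is a strongly compact strong trajectory attractor $\mathfrak A_{\s}$. Conclusions 3 and 4 of Theorem \ref{t:sA} then deliver exactly items 1 and 2 of the statement: the finite strong uniform tracking property, and the strong equicontinuity of $\mathfrak A_{\s}$ on $[0,\infty)$ with modulus $\theta(l)\to 0^+$ as $l\to 0^+$.

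The proof is therefore a direct chain of applications of the abstract theory developed in Section \ref{s:AttractES}, and there is no serious obstacle beyond checking that the two prerequisites of Theorem \ref{t:A123Comp} (the assumptions A1--A3 on $\Dc$ and the strong continuity of complete trajectories of $\bar{\Dc}$) hold. Both of these have essentially already been done in Lemmas \ref{l:convergenceofsolutionRDS} and \ref{l:compactRDS}, and the mild additional observation that the limit in Lemma \ref{l:convergenceofsolutionRDS} is strongly continuous on every compact time interval suffices to propagate strong continuity from $\Dc$ to $\bar{\Dc}$.
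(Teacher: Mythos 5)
Your proof is correct and follows essentially the same route as the paper's: Lemma \ref{l:compactRDS} supplies A1--A3 when $g_0$ is normal, Theorem \ref{t:A123Comp} then gives asymptotic compactness, and Theorem \ref{t:sA} delivers all the stated conclusions. Your explicit verification that strong continuity passes to the complete trajectories of the closure $\bar{\Dc}$ via the limit $\phi\in C([t_1,t_2];H)$ of Lemma \ref{l:convergenceofsolutionRDS} is in fact slightly more careful than the paper's one-line citation of Theorem \ref{t:existenceRDS}, which only directly covers trajectories of $\Dc$ itself.
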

\begin{proof}According to Theorem \ref{t:A123Comp}, Theorem \ref{t:existenceRDS} and  Lemma \ref{l:compactRDS} mean that the associated evolutionary system $\Dc$ is asymptotically compact.  Hence, we obtain the theorem by applying Theorem \ref{t:sA}.
\end{proof}
\begin{remark}
The existence of $\As$ is obtained in \cite{CL09}. Thus, instead of Theorem \ref{t:A123Comp}, we can also utilize Theorem \ref{t:suffience and necessity} to ensure the asymptotical compactness of the corresponding evolutionary system $\Dc$.
\end{remark}
\begin{remark}\label{r:ansOP}
The equality in the conclusion 1 of  Theorem \ref{t:sturctureofattractorsRDS} answers an open problem in \cite{Lu07, CL09}, which concerns how to describe the structure of $\As$ for \eqref{RDS} with general interaction terms, satisfying no additional  assumption other than conditions of  (\ref{c:continuityoffRDS})-(\ref{c:boundednessoffRDS}). Note that, only with these three conditions on nonlinearities, it is not known how to construct a suitable symbol space that is necessary for applying previous framework \cite{CV02}. See Subsetion \ref{ss:nonln} for more.
\end{remark}
\subsection{RDS with more regular interaction terms} \label{ss:MoreRegN}In this subsection, we study \eqref{RDS} with more regular interaction functions.

Denote by $\mathcal M$ the space $C(\mathbb R^N; \mathbb R^N)$  endowed
with the local uniform convergence topology.
Denote by  $ C^{\mathrm{p.p.}}(\mathbb R;\mathcal
M)$  the space $C(\mathbb R;\mathcal M)$ endowed with the
topology of the following convergence: $\varphi_k(s)\rightarrow
\varphi(s)$ in $ C^{\mathrm{p.p.}}(\mathbb
R;\mathcal M)$ as $k\rightarrow \infty$, if $\varphi_k(v,s)$ is uniformly bounded on any
ball in $\mathbb R^N\times \mathbb R$ and for every $(v,s)\in \mathbb
R^N\times \mathbb R$,
\[\|\varphi_k(v,s)-\varphi(v,s)\|_{\mathbb R^N} \rightarrow 0,\quad
\text{as }k\rightarrow \infty.\]
Note that $ C^{\mathrm{p.p.}}(\mathbb R;\mathcal
M)$ is in fact  the space $C (\mathbb R^N\times \mathbb R; \mathbb R^N)$ endowed with the usual weak topology.
Let  $C^{\mathrm{p.u.}}(\mathbb R;\mathcal M)$ denote  the space $ C(\mathbb R;\mathcal M)$ endowed with another topology of the following convergence:  $\varphi_k(s)\rightarrow \varphi(s)$ in $C^{\mathrm{p.u.}}(\mathbb R;\mathcal
M)$ as
$k\rightarrow \infty$, if $\varphi_k(v,s)$ is uniformly bounded on any ball in $\mathbb R^N\times \mathbb R$ and for every $s\in \mathbb
R$, $R>0$,
\[\max_{\|v\|_{\mathbb R^N}\leq
R}\|\varphi_k(v,s)-\varphi(v,s)\|_{\mathbb R^N} \rightarrow
0,\quad \text{as }k\rightarrow \infty.\]

Now we  assume in addition  that $f_0(v,t)$ is translation
compact in $C^\mathrm{p.u.}(\mathbb R;\mathcal M)$, i.e., the following closure
$$\bar{\Sigma_1}:=\overline{\{f_0(\cdot+h):h\in
\mathbb R)\}}^{C^\mathrm{p.u.}(\mathbb R;\mathcal M)}$$
is compact in $ C^\mathrm{p.u.}(\mathbb R;\mathcal M)$. Note that $\bar{\Sigma_1}$ is metrizable  in the space\footnote{\label{fn:metrizable}If
$K$ is a (relatively) weakly compact set in a Banach space $\mathcal B$ and the dual $\mathcal B'$ of $\mathcal B$
contains a countable total set, then the $
\overline{K}^{\text{weak}}$ is metrizable. Recall that a set
$A\subset \mathcal B'$ is called total if $a(x)=0$ for every $a\in A$
implies $x=0$ (see \cite{Di84}, p18). Note that $\bar{\Sigma_1}$
is also a compact set in $C^{\mathrm{p.p.}}(\mathbb R;\mathcal M)$,
i.e., a weakly compact set in $C(\mathbb R^N\times\mathbb
R;\mathbb R^N)$ (cf. Theorem VII.1 in \cite{Di84}). For any  closed bounded ball $B\subset \mathbb
R^N\times\mathbb R$, $\bar{\Sigma_1}|_{B}:=\{f|_{B}=\text{restriction of } f \text{ on } B:f\in \bar{\Sigma_1}\}$ is weakly compact in $C(B;\mathbb R^N)$.
$C(B;\mathbb R^N)'$ contains a total set of Dirac
$\delta$-measures on rational points of $B$. Hence $\bar{\Sigma_1}|_{B}$ endowed with the weak topology of $C(B;\mathbb R^N)$ is metrizable. It follows that $\bar{\Sigma_1}$ is metrizable in $C^{\mathrm{p.p.}}(\mathbb R;\mathcal M)$ by means of the so-called Fr\'{e}chet metric.} $ C^\mathrm{p.p.}(\mathbb R;\mathcal M)$ and is compact w.r.t. such a  metric. For convenience, we gather the properties of this kind of translation compact functions in Subsection \ref{ss:nonln} below. Denote by
$$\bar{\Sigma_2}:=\overline{\{g_0(\cdot+h): h\in \mathbb R\}}^{L^{2,\mathrm{w}}_\mathrm{loc}(\mathbb{R};V') }.$$
Then, $\bar{\Sigma_2}$ endowed with the topology of $L^{2,\mathrm{w}}_\mathrm{loc}(\mathbb{R};V')$ is metrizable and the corresponding metric space is compact (see \cite{CV02}).

Let $$\bar{\Sigma}:=\overline{\{\sigma_0(\cdot+h):h\in
\mathbb R)\}}^{C^\mathrm{p.u.}(\mathbb R;\mathcal M)\times
L^{2,\mathrm{w}}_{\mathrm{loc}}(\mathbb{R};V')}=\bar{\Sigma_1}\times\bar{\Sigma_2}.$$
Then $\bar{\Sigma}$ is compact in the product space $C^\mathrm{p.u.}(\mathbb R;\mathcal M)\times
L^{2,\mathrm{w}}_{\mathrm{loc}}(\mathbb{R};V')$ and is metrizable in the weaker space $C^\mathrm{p.p.}(\mathbb R;\mathcal M)\times
L^{2,\mathrm{w}}_{\mathrm{loc}}(\mathbb{R};V')$.  For every $\sigma=(f,g)  \in \bar{\Sigma}$, $g$
satisfies (\ref{i:gRDS}) by Proposition V.4.2 in \cite{CV02} and there exists  a sequence $\{f_0(\cdot, \cdot+h_n)\}$ that converges to $f$ in $C^\mathrm{p.u.}(\mathbb R;\mathcal M)$ as $n\rightarrow \infty$. Then, $\{f_0(\cdot, \cdot+h_n)\}$  also converges to $f$ in the weak topology of  $C (\mathbb R^N\times \mathbb R; \mathbb R^N)$.
Note that the weak convergence of $C (\mathbb R^N\times \mathbb R; \mathbb R^N)$ is equivalent  to the local uniform boundedness and the pointwise
convergence of a sequence of functions of $C (\mathbb R^N\times \mathbb R; \mathbb R^N)$ (cf. \cite{Di84, Me98}). Thus, $f$ satisfies
(\ref{c:continuityoffRDS})-(\ref{c:boundednessoffRDS}) with the same
constants.

It can be seen that the argument of (\ref{energyinequality})-(\ref{absorbingtrajectoryRDS}) is  still valid for  $\Sigma$ replaced by $\bar{\Sigma}$ and Lemma
\ref{l:convergenceofsolutionRDS} can be  improved as follows.

\begin{lemma}\cite{CL09}\label{l:convergenceofsolutionRDSclosed}
Let $u_k(t)$ be a sequence of weak solutions of (\ref{RDS}) with
 $\sigma_k \in \bar{\Sigma}$, such that $u_k(t) \in X$ for all
$t\geq t_1$. Then
\[
\begin{aligned}
u_k \ \ &\mbox{is bounded in} \ \ L^2(t_1,t_2;V),\\
\partial_t u_k \ \  &\mbox{is bounded in} \ \
L^{{\bf q}}(t_1,t_2;H^{-{\bf r}}(\Omega)),
\end{aligned}
\]
for all $t_2>t_1$. Moreover, there exists a subsequence  $k_j$, such that $\sigma_{k_j}$ converges in $C^\mathrm{p.u.}(\mathbb R;\mathcal M)\times
L^{2,\mathrm{w}}_{\mathrm{loc}}(\mathbb{R};V')$ to some $\sigma\in \bar{\Sigma}$, and  $u_{k_j}$ converges in $ C([t_1,t_2]; \Hw)$ to some
weak solution $u(t)$ of (\ref{RDS}) with $\sigma$, i.e.,
\[
(u_{k_j},v) \to (u,v) \  \ \mbox{uniformly on }  [t_1,t_2],
\]
as $k_j\to \infty$, for all $v \in H$.
\end{lemma}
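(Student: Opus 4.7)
The plan is to follow closely the strategy of Lemma \ref{l:convergenceofsolutionRDS}, with two additional ingredients to handle the variable symbols $\sigma_k \in \bar{\Sigma}$. First, since every $\sigma_k=(f_k,g_k)\in\bar{\Sigma}$ inherits the growth and dissipativity constants of $\sigma_0$ together with the bound $\|g_k\|_{L^2_{\mathrm b}(\mathbb R;V')}\le\|g_0\|_{L^2_{\mathrm b}(\mathbb R;V')}$, the standard energy estimates used in Lemma \ref{l:convergenceofsolutionRDS} apply verbatim: $\{u_k\}$ is bounded in $L^2(t_1,t_2;V)\cap L^\infty(t_1,t_2;H)\cap L^{\bf p}(t_1,t_2;L^{\bf p}(\Omega))$, the family $\{f_k(u_k(\cdot,\cdot),\cdot)\}$ is bounded in $L^{\bf q}(t_1,t_2;L^{\bf q}(\Omega))$, and therefore $\{\partial_t u_k\}$ is bounded in $L^{\bf q}(t_1,t_2;H^{-{\bf r}}(\Omega))$ via the equation itself. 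An Aubin--Lions argument, as in (\ref{convergenceofu_n}), then yields a subsequence converging strongly in $L^2(t_1,t_2;H)$, hence, after a further diagonal extraction, almost everywhere to some limit $u$.

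Second, the compactness of $\bar{\Sigma}$ in $C^{\mathrm p.u.}(\mathbb R;\mathcal M)\times L^{2,\mathrm w}_{\mathrm{loc}}(\mathbb R;V')$ lets us pass to a further subsequence $\{k_j\}$ with $\sigma_{k_j}=(f_{k_j},g_{k_j})\to\sigma=(f,g)\in\bar{\Sigma}$ in this product topology; in particular $g_{k_j}\to g$ weakly in $L^2(t_1,t_2;V')$. By boundedness, we may also assume $f_{k_j}(u_{k_j}(x,t),t)\to\psi(x,t)$ weakly in $L^{\bf q}(t_1,t_2;L^{\bf q}(\Omega))$ for some $\psi$. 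The crux of the proof is then to identify $\psi=f(u,\cdot)$. At any point $(x,t)$ where $u_{k_j}(x,t)\to u(x,t)$, the estimate
\[
|f_{k_j}(u_{k_j}(x,t),t)-f(u(x,t),t)|\le|f_{k_j}(u_{k_j}(x,t),t)-f(u_{k_j}(x,t),t)|+|f(u_{k_j}(x,t),t)-f(u(x,t),t)|
\]
gives a.e. convergence: the first term tends to zero because $f_{k_j}(\cdot,t)\to f(\cdot,t)$ uniformly on bounded subsets of $\mathbb R^N$ by the very definition of convergence in $C^{\mathrm p.u.}(\mathbb R;\mathcal M)$, and the second by continuity of $f(\cdot,t)$. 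Since weak and a.e. limits of a uniformly integrable sequence coincide, $\psi=f(u,\cdot)$.

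Having identified $\psi$, passing to the limit in $\partial_t u_{k_j}-a\Delta u_{k_j}+f_{k_j}(u_{k_j},\cdot)=g_{k_j}$ within $\mathcal D'(t_1,t_2;H^{-{\bf r}}(\Omega))$ shows that $u$ is a weak solution of (\ref{RDS}) with the limiting symbol $\sigma$. The $C([t_1,t_2];\Hw)$ convergence $(u_{k_j},v)\to(u,v)$ uniform on $[t_1,t_2]$ for every $v\in H$ is then obtained exactly as in the last part of the proof of Lemma \ref{l:convergenceofsolutionRDS}: the bound on $\partial_t u_{k_j}$ in $L^{\bf q}(t_1,t_2;H^{-{\bf r}}(\Omega))$ gives equicontinuity of the scalar functions $t\mapsto(u_{k_j}(t),v)$ for $v\in(C_0^\infty(\Omega))^N$, together with pointwise convergence a.e. one upgrades to uniform convergence on $[t_1,t_2]$ by Arzel\`a--Ascoli, and a density argument extends this to all $v\in H$. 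The main obstacle throughout is the nonlinear identification $\psi=f(u,\cdot)$; this step genuinely requires the strength of $C^{\mathrm p.u.}$ convergence (uniform convergence on $v$-balls) rather than merely the weaker pointwise topology $C^{\mathrm p.p.}$, which motivates the hypothesis imposed on $f_0$ in this subsection.
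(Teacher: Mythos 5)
Your proposal is correct and follows the same overall skeleton as the paper's proof (energy estimates inherited from the uniform constants on $\bar{\Sigma}$, Aubin--Lions compactness, a.e.\ convergence, identification of the weak $L^{\bf q}$-limit of the nonlinearity, passage to the limit in the equation, and the Arzel\`a--Ascoli argument for $C([t_1,t_2];\Hw)$). The one genuine difference is the triangle-inequality splitting used to identify $\psi=f(u,\cdot)$: you write $f_{k_j}(u_{k_j})-f(u)=\bigl(f_{k_j}(u_{k_j})-f(u_{k_j})\bigr)+\bigl(f(u_{k_j})-f(u)\bigr)$ and use the $C^{\mathrm{p.u.}}$-convergence $f_{k_j}\to f$ at the moving point plus continuity of the limit $f$, whereas the paper writes $\bigl(f_{k_j}(u_{k_j})-f_{k_j}(\phi)\bigr)+\bigl(f_{k_j}(\phi)-f(\phi)\bigr)$ and uses the uniform equicontinuity of the whole family $\{f_k\}$ (Theorem \ref{proofinteractionfunction}, same modulus $\theta$ in \eqref{ucofphiv}) plus only pointwise ($C^{\mathrm{p.p.}}$) convergence at the fixed limit point. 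Both splittings are valid; the paper's has the advantage of needing only $C^{\mathrm{p.p.}}$ convergence of the symbols during the identification, upgrading to $C^{\mathrm{p.u.}}$ convergence only at the very end via Theorem \ref{t:squenconvergence} --- so your closing remark that the identification ``genuinely requires'' $C^{\mathrm{p.u.}}$ convergence is slightly too strong: what is really needed is the equicontinuity of the family, which the $C^{\mathrm{p.u.}}$-translation-compactness hypothesis supplies. One point to tighten: you extract a subsequence with $\sigma_{k_j}\to\sigma$ in $C^{\mathrm{p.u.}}(\mathbb R;\mathcal M)\times L^{2,\mathrm w}_{\mathrm{loc}}(\mathbb R;V')$ ``by compactness of $\bar{\Sigma}$,'' but $C^{\mathrm{p.u.}}(\mathbb R;\mathcal M)$ is not metrizable, so compactness does not by itself yield sequential compactness; you should either extract in the metrizable $C^{\mathrm{p.p.}}$ topology first and then upgrade (the paper's route), or invoke Corollary \ref{t:sequencomp} directly.
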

\begin{proof}Note that Lemma \ref{l:convergenceofsolutionRDS} still holds for  $\Sigma$ replaced by $\bar{\Sigma}$ and the proof needs no modification at all. Now we continue the proof.
Since $\bar{\Sigma_1}$
is also a compact set in $C^{\mathrm{p.p.}}(\mathbb R;\mathcal M)$, $\{f_k\}$ in (\ref{convergenceofu_nd}) can be taken as a convergent sequence in $C^{\mathrm{p.p.}}(\mathbb R;\mathcal M)$ with a limit $f\in \bar{\Sigma_1}$.

We claim that $f(\phi(x,t),t)=\psi(t)$, which will imply
that $\phi (t)$ is a weak solution of (\ref{RDS}) with the interaction function $f$ and the driving force $g$ in (\ref{convergenceofg_n}). Obviously, $g\in \bar{\Sigma_2}$.  Thanks to  the strong convergence in
(\ref{convergenceofu_n}), we know that, passing to a subsequence
if necessary,
\[u_k(x,t)\rightarrow \phi(x,t), \quad a.e. \ (x,t)\in \Omega\times [t_1,t_2].
\]
Note that
\begin{align*} \|f_k(u_k(x,t),t)-f(\phi(x,t),t)\|_{\mathbb
R^N}\leq & \|f_k(u_k(x,t),t)-f_k(\phi(x,t),t)\|_{\mathbb
R^N}\\&+\|f_k(\phi(x,t),t)-f(\phi(x,t),t)\|_{\mathbb
R^N}.
\end{align*}
Due to  Theorem \ref{proofinteractionfunction} in Subsection \ref{ss:nonln} below,
all $f_k$ satisfy (\ref{ucofphiv}) with the same function
$\theta$. Hence we obtain that
 \begin{align}\label{pointwiseoffk}
\|f_k(u_k(x,t),t)-f(\phi(x,t),t)\|_{\mathbb R^N}\rightarrow 0,
 \quad a.e. \ (x,t)\in \Omega\times [t_1,t_2].
 \end{align} On the other hand, according to Lemma II.1.2 in \cite{CV02}, the uniform boundedness of
(\ref{bounednessofu_nlp}) and the pointwise convergence of \eqref{pointwiseoffk} yield that
\begin{align*}
f_k(u_k(x,t),t)\rightarrow f(\phi(x,t),t) \quad\mbox{ weakly in }
L^{\bf q}\left(t_1,t_2;L^{\bf q}(\Omega)\right).
\end{align*}
Therefore, $f(\phi(x,t),t)=\psi(t)$ in $L^{\bf q}\left(t_1,t_2;L^{\bf q}(\Omega)\right)$ for all $t_2>t_1$.

Finally, thanks to Theorem \ref{t:squenconvergence} below, indeed, $f_k\rightarrow f$ in $ C^{\mathrm{p.u.}}(\mathbb R;\mathcal M)$. Together with (\ref{convergenceofg_n}), it deduces that
\begin{align}\label{convergenceofsigma_n}\sigma_{k}=(f_k, g_k)\rightarrow (f,g)\quad\mbox{ in } C^\mathrm{p.u.}(\mathbb R;\mathcal M)\times
L^{2,\mathrm{w}}_{\mathrm{loc}}(\mathbb{R};V'),
\end{align} with $\sigma=(f,g)\in \bar{\Sigma}$.
\end{proof}

Similarly, we can now also consider another evolutionary system with $\bar{\Sigma}$ as a symbol space.  The family
of trajectories for this evolutionary system  consists of all weak solutions of the family
of (\ref{RDS}) with  $\sigma\in \bar{\Sigma}$ in $X$:
\[
\begin{split}
\Dc_{\bar{\Sigma}}([\tau,\infty)) := \{&u(\cdot): u(\cdot) \mbox{ is a
weak solution on } [\tau,\infty)\\
&\ \mbox{ with  } \sigma\in \bar{\Sigma} \mbox{ and } u(t) \in X, \
\forall t \in [\tau,\infty)\}, \  \tau \in \mathbb{R},
\end{split}
\]
\[
\begin{split}
\Dc_{\bar{\Sigma}}((-\infty,\infty)) := \{&u(\cdot): u(\cdot)
\mbox{ is a weak solution on } (-\infty,\infty)\\
&\ \mbox{with } \sigma\in \bar{\Sigma} \mbox{ and } u(t) \in X, \
\forall t \in (-\infty,\infty)\}.
\end{split}
\]
We have the following lemmas.
\begin{lemma}\cite{CL09}\label{l:compactRDSclosed}
The evolutionary system $\Dc_{\bar{\Sigma}}$ of the family of  (\ref{RDS}) with
$\sigma\in\bar{\Sigma}$ satisfies \={A1}.
\end{lemma}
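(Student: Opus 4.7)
The goal is to strengthen the argument used for Lemma~\ref{l:compactRDS} (which only gave precompactness in $C([0,\infty);\Xw)$) into genuine compactness. Concretely, given any sequence $\{u_k\}\subset \Dc_{\bar{\Sigma}}([0,\infty))$, I must extract a subsequence converging in $C([0,\infty);\Xw)$ to a limit that still lies in $\Dc_{\bar{\Sigma}}([0,\infty))$. Each $u_k$ is a weak solution with some symbol $\sigma_k=(f_k,g_k)\in \bar{\Sigma}$, and $u_k(t)\in X$ for all $t\geq 0$.

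First I exploit the compactness of the symbol space: since $\bar{\Sigma}$ is compact in $C^{\mathrm{p.u.}}(\mathbb{R};\mathcal{M})\times L^{2,\mathrm{w}}_{\mathrm{loc}}(\mathbb{R};V')$, I pass to a subsequence (which I still label $\{\sigma_k\}$) such that $\sigma_k\to\sigma$ in this space for some $\sigma\in\bar{\Sigma}$. Next, for the trajectory side, I apply Lemma~\ref{l:convergenceofsolutionRDSclosed} successively on $[0,1],[0,2],\dots$ combined with a standard diagonal extraction: on $[0,1]$ a sub-subsequence of $\{u_k\}$ converges in $C([0,1];\Xw)$ to a weak solution $u^{(1)}$ of \eqref{RDS} with a symbol in $\bar{\Sigma}$; but by the uniqueness of limits in the symbol space that symbol must coincide with (the restriction of) $\sigma$. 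Iterating on $[0,n]$ and diagonalizing produces a subsequence $\{u_{k_j}\}$ and a function $u\in C([0,\infty);\Xw)$ such that $u_{k_j}\to u$ in $C([0,n];\Xw)$ for every $n$, with $u|_{[0,n]}$ a weak solution of \eqref{RDS} carrying the symbol $\sigma$. Convergence in $C([0,n];\Xw)$ for every $n$ is equivalent to convergence in $C([0,\infty);\Xw)$ by the definition of the Fr\'echet metric.

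It remains to check that $u\in \Dc_{\bar{\Sigma}}([0,\infty))$. Piecing together the solutions on $[0,n]$, $u$ is a weak solution of \eqref{RDS} on $[0,\infty)$ with the fixed symbol $\sigma\in\bar{\Sigma}$. Since $X$ is weakly closed in $H$ and $u_{k_j}(t)\in X$ for every $t\ge 0$, the pointwise weak limit satisfies $u(t)\in X$ for every $t\ge 0$. Therefore $u$ belongs to $\Dc_\sigma([0,\infty))\subset \Dc_{\bar{\Sigma}}([0,\infty))$, proving that $\Dc_{\bar{\Sigma}}([0,\infty))$ is sequentially compact, hence compact (since $C([0,\infty);\Xw)$ is metrizable), which is exactly property \={A1}.

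The only subtle point — and the main obstacle — is guaranteeing that the various limit symbols produced by iterated application of Lemma~\ref{l:convergenceofsolutionRDSclosed} are mutually consistent and together glue to the preselected $\sigma\in\bar{\Sigma}$; this is handled by first fixing $\sigma_k\to\sigma$ before the diagonal extraction, so that every further subsequence inherits the same limit symbol in $C^{\mathrm{p.u.}}(\mathbb{R};\mathcal{M})\times L^{2,\mathrm{w}}_{\mathrm{loc}}(\mathbb{R};V')$. Everything else is a direct transcription of the proof of Lemma~\ref{l:compactRDS} with Lemma~\ref{l:convergenceofsolutionRDSclosed} replacing Lemma~\ref{l:convergenceofsolutionRDS}.
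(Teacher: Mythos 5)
Your proof is correct and follows the intended route: the paper cites \cite{CL09} for this lemma rather than reproducing a proof, but the argument it has in mind is exactly yours --- the diagonalization used for Lemma~\ref{l:compactRDS}, with Lemma~\ref{l:convergenceofsolutionRDSclosed} replacing Lemma~\ref{l:convergenceofsolutionRDS} so that the limit of the extracted subsequence is again a weak solution with a symbol in $\bar{\Sigma}$, hence a trajectory of $\Dc_{\bar{\Sigma}}$. Your handling of the symbol (fixing $\sigma_k\to\sigma$ before the diagonal extraction, which is justified by Corollary~\ref{t:sequencomp} or by metrizability in the weaker $C^{\mathrm{p.p.}}(\mathbb{R};\mathcal{M})\times L^{2,\mathrm{w}}_{\mathrm{loc}}(\mathbb{R};V')$ topology) is consistent with the paper's remark that convergence of the symbols in that weaker space already suffices.
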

\begin{lemma}\label{l:closedRDSclosed}
The evolutionary system $\Dc_{\bar{\Sigma}}$ of the family of  (\ref{RDS}) with
$\sigma\in\bar{\Sigma}$  is closed.
\end{lemma}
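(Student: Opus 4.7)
The plan is to imitate the proof of Lemma \ref{l:closedNSE3}, using Lemma \ref{l:convergenceofsolutionRDSclosed} in place of Lemma \ref{l:compactofLH3}. Fix $\tau \in \mathbb{R}$ and suppose we are given $\sigma_n \in \bar{\Sigma}$ and $u_n \in \Dc_{\sigma_n}([\tau, \infty))$ satisfying $u_n \to u$ in $C([\tau, \infty); \Xw)$ and $\sigma_n \to \sigma$ in the product space $C^{\mathrm{p.u.}}(\mathbb{R};\mathcal{M}) \times L^{2,\mathrm{w}}_{\mathrm{loc}}(\mathbb{R};V')$ as $n\to\infty$. The goal is to exhibit $u$ as a weak solution of \eqref{RDS} driven by \emph{exactly} the symbol $\sigma$, with $u(t) \in X$ for every $t \geq \tau$, which would yield $u \in \Dc_\sigma([\tau, \infty))$.

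Next, for each $m \in \mathbb{N}$, I would apply Lemma \ref{l:convergenceofsolutionRDSclosed} on the finite window $[\tau, \tau+m]$: since $u_n(t) \in X$ for $t \geq \tau$, the hypotheses are met (with $t_1 = \tau$, $t_2 = \tau+m$), and one extracts a subsequence $\{u_{n_j^m}\}$ converging in $C([\tau,\tau+m];\Hw)$ to a weak solution of \eqref{RDS} driven by some symbol in $\bar{\Sigma}$. A standard diagonal extraction then produces a single subsequence $\{u_{n_j}\}$ and a function $\tilde u \in C([\tau,\infty);\Xw)$ such that $u_{n_j} \to \tilde u$ in $C([\tau,\tau+m];\Hw)$ and $\sigma_{n_j} \to \tilde\sigma$ in $C^{\mathrm{p.u.}}(\mathbb{R};\mathcal{M}) \times L^{2,\mathrm{w}}_{\mathrm{loc}}(\mathbb{R};V')$ for every $m$, with $\tilde u$ a weak solution of \eqref{RDS} driven by $\tilde\sigma$ on every $[\tau,\tau+m]$, and hence on the entire half-line $[\tau,\infty)$.

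The conclusion would then follow by uniqueness of limits. The full sequence $u_n$ already converges to $u$ in $C([\tau,\infty);\Xw)$ and the full sequence $\sigma_n$ already converges to $\sigma$ in $C^{\mathrm{p.u.}}(\mathbb{R};\mathcal{M}) \times L^{2,\mathrm{w}}_{\mathrm{loc}}(\mathbb{R};V')$; since both of these limit topologies are Hausdorff, we are forced to have $\tilde u = u$ on $[\tau,\infty)$ and $\tilde\sigma = \sigma$. Because $X$ is weakly closed, $u(t) \in X$ for every $t \geq \tau$, and therefore $u \in \Dc_\sigma([\tau,\infty))$, which is exactly closedness of $\Dc_{\bar{\Sigma}}$.

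The only point requiring any care is the identification $\tilde\sigma = \sigma$: Lemma \ref{l:convergenceofsolutionRDSclosed} produces some a priori unknown limit symbol in $\bar{\Sigma}$, and one must observe that it must coincide with the prescribed $\sigma$ precisely because the prescribed convergence $\sigma_n \to \sigma$ already takes place in the same Hausdorff topology. Beyond this observation, the argument is a direct application of Lemma \ref{l:convergenceofsolutionRDSclosed} combined with a diagonal extraction, with no further analytic work needed.
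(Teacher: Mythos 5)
Your proposal is correct and follows essentially the same route as the paper: the paper's proof simply invokes Lemma \ref{l:convergenceofsolutionRDSclosed} together with a standard diagonalization over expanding time windows, exactly as you do, the identification of the subsequential limits with the prescribed $u$ and $\sigma$ via uniqueness of limits being left implicit there. Your explicit remark that $\tilde\sigma=\sigma$ because the convergence $\sigma_n\to\sigma$ already takes place in the same Hausdorff topology is a correct filling-in of that implicit step, not a deviation.
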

\begin{proof}
With Lemma \ref{l:convergenceofsolutionRDSclosed} in hand, the proof is just the same as that of Lemma \ref{l:closedNSE3}.
\end{proof}
\begin{lemma}\label{l:clousurequalsitselfRDS}Let $\Dc_{\bar{\Sigma}}$ be the evolutionary system of the family of  (\ref{RDS}) with
$\sigma\in\bar{\Sigma}$. Then $\bar{\Dc}_{\bar{\Sigma}}=\Dc_{\bar{\Sigma}}$.
\end{lemma}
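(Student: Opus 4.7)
The plan is to mirror the argument used for Lemma \ref{l:clousurequalsitselfNSE3} in the NSE setting, since the present lemma has identical structure with only the underlying functional setting changed. In broad terms, the goal is to verify the hypotheses of Lemma \ref{t:ClosureofESSig} for the evolutionary system $\Dc_{\bar{\Sigma}}$ of the RDS; once those hypotheses hold, that lemma immediately yields the closedness of $\mathcal E_{\bar{\Sigma}}([\tau,\infty))$ in $C([\tau, \infty);\Xw)$ for every $\tau\in\mathbb R$, and the equality $\bar{\Dc}_{\bar{\Sigma}}=\Dc_{\bar{\Sigma}}$ follows from the definition of the closure of an evolutionary system.

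More concretely, the first step is to identify a topological space $\Im$ in which $\bar{\Sigma}$ is sequentially compact. Recall from the setup preceding Lemma \ref{l:convergenceofsolutionRDSclosed} that $\bar{\Sigma}=\bar{\Sigma_1}\times\bar{\Sigma_2}$ is compact in the product space $C^{\mathrm{p.u.}}(\mathbb R;\mathcal M)\times L^{2,\mathrm{w}}_{\mathrm{loc}}(\mathbb{R};V')$ and is metrizable in the weaker space $C^{\mathrm{p.p.}}(\mathbb R;\mathcal M)\times L^{2,\mathrm{w}}_{\mathrm{loc}}(\mathbb{R};V')$. Taking $\Im$ to be this weaker product space, one concludes that $\bar{\Sigma}\subset \Im$ is (sequentially) compact in itself, which supplies the topological hypothesis of Lemma \ref{t:ClosureofESSig}.

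Next, the closedness hypothesis of Lemma \ref{t:ClosureofESSig} is exactly the content of the already established Lemma \ref{l:closedRDSclosed}, which asserts that $\Dc_{\bar{\Sigma}}$ is a closed evolutionary system. Therefore Lemma \ref{t:ClosureofESSig} applies and yields that for each $\tau\in\mathbb R$,
\[
\mathcal E_{\bar{\Sigma}}([\tau,\infty))=\bigcup_{\sigma\in\bar{\Sigma}}\mathcal E_{\sigma}([\tau,\infty))
\]
is closed in $C([\tau, \infty);\Xw)$. Consequently, $\overline{\Dc_{\bar{\Sigma}}([\tau,\infty))}^{C([\tau, \infty);\Xw)}=\Dc_{\bar{\Sigma}}([\tau,\infty))$ for every $\tau$, and passing to the kernel via condition 4 of Definition \ref{d:NDc} gives the desired identification $\bar{\Dc}_{\bar{\Sigma}}=\Dc_{\bar{\Sigma}}$.

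I do not anticipate any real obstacle: the only non-trivial verification, namely that the mixed topology on $\bar{\Sigma}$ is strong enough to preserve weak solutions under limits while still making $\bar{\Sigma}$ sequentially compact, has already been handled in Lemma \ref{l:convergenceofsolutionRDSclosed} and the subsequent Lemma \ref{l:closedRDSclosed}. The main conceptual point worth flagging is simply that one must work in the \emph{weaker} metrizable topology $C^{\mathrm{p.p.}}(\mathbb R;\mathcal M)\times L^{2,\mathrm{w}}_{\mathrm{loc}}(\mathbb{R};V')$ when invoking Lemma \ref{t:ClosureofESSig}, so that sequential compactness of $\bar{\Sigma}$ is genuinely available; the stronger $C^{\mathrm{p.u.}}$-convergence of symbols obtained inside the proof of Lemma \ref{l:convergenceofsolutionRDSclosed} is a bonus, not something needed here.
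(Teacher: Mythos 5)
Your proposal is correct and follows essentially the same route as the paper: the paper's proof likewise reduces to Lemma \ref{l:closedRDSclosed} plus Lemma \ref{t:ClosureofESSig}, verbatim mirroring the NSE case of Lemma \ref{l:clousurequalsitselfNSE3}. Your observation that sequential compactness of $\bar{\Sigma}$ must be taken in the weaker metrizable topology $C^{\mathrm{p.p.}}(\mathbb R;\mathcal M)\times L^{2,\mathrm{w}}_{\mathrm{loc}}(\mathbb{R};V')$ is exactly the point the paper itself records in the remark following this lemma.
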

\begin{proof}Using Lemma \ref{l:closedRDSclosed}, the argument is the same as that of Lemma \ref{l:clousurequalsitselfNSE3}.
\end{proof}
\begin{remark} In the  proofs of Lemmas \ref{l:compactRDSclosed}, \ref{l:closedRDSclosed} and \ref{l:clousurequalsitselfRDS},  whenever utilizing Lemma \ref{l:convergenceofsolutionRDSclosed}, instead of (\ref{convergenceofsigma_n}), the convergence of $\sigma_k$ in $C^\mathrm{p.p.}(\mathbb R;\mathcal M) \times
L^{2,\mathrm{w}}_{\mathrm{loc}}(\mathbb{R};V')$ is enough. Note again that $\bar{\Sigma}$ is compact in $C^\mathrm{p.u.}(\mathbb R;\mathcal M)\times
L^{2,\mathrm{w}}_{\mathrm{loc}}(\mathbb{R};V')$ and is metrizable in $C^\mathrm{p.p.}(\mathbb R;\mathcal M)\times
L^{2,\mathrm{w}}_{\mathrm{loc}}(\mathbb{R};V')$. Hence, $\bar{\Sigma}$ is sequentially  compact in $C^\mathrm{p.p.}(\mathbb R;\mathcal M)\times
L^{2,\mathrm{w}}_{\mathrm{loc}}(\mathbb{R};V')$.  Though it is also sequentially  compact in $C^\mathrm{p.u.}(\mathbb R;\mathcal M)\times
L^{2,\mathrm{w}}_{\mathrm{loc}}(\mathbb{R};V')$ due to Corollary \ref{t:sequencomp} below, it is not necessary for our current procedures.
\end{remark}
\begin{theorem} \label{t:AwRDStrcclosed} Let $ f_0$ satisfy (\ref{c:continuityoffRDS})-(\ref{c:boundednessoffRDS}) and be translation
compact in $C^\mathrm{p.u.}(\mathbb R;\mathcal M)$,  and $g_0$ be translation bounded in $ L^{2}_\mathrm{loc}(\mathbb{R};V')$.  Then  the weak uniform global attractor  $\Aw^{\bar{\Sigma}}$  and the weak trajectory attractor $\mathfrak A^{\bar{\Sigma}}_{\w}$  for the family of (\ref{RDS})
 with $\sigma=(f, g)\in \bar{\Sigma}$ exist, $\Aw^{\bar{\Sigma}}$ is the maximal invariant
and maximal quasi-invariant set w.r.t.  the corresponding
evolutionary system $\Dc_{\bar{\Sigma}}$, and
\begin{equation*}
\begin{split}
\Aw^{\bar{\Sigma}}  &=\left\{u(0): u \in \Dc_{\bar{\Sigma}}((-\infty, \infty))\right\}= \left\{ u(0): u\in \bigcup_{\sigma\in\bar{\Sigma}}\Dc_\sigma ((-\infty, \infty))\right\}, \\ \mathfrak A^{\bar{\Sigma}}_{\w}&=\Pi_+\bigcup_{\sigma\in\bar{\Sigma}}\Dc_\sigma ((-\infty, \infty)), \\
\Aw^{\bar{\Sigma}}&=\mathfrak A^{\bar{\Sigma}}_{\w}(t)=\left\{u(t): u\in \mathfrak A^{\bar{\Sigma}}_{\w}\right\},\quad \forall\, t\geq 0,
\end{split}
\end{equation*}
where $\mathcal E_{\sigma} ((-\infty,\infty))$ is nonempty for any $\sigma\in \bar{\Sigma}$.
Moreover,  $\mathfrak A^{\bar{\Sigma}}_{\w}$ satisfies the finite weak uniform tracking property and  is weakly equicontinuous on $[0,\infty)$.
\end{theorem}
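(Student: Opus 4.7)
The plan is to follow exactly the template of the analogous Theorem \ref{t:AwNSEtrc} for the 3D NSE, assembling the already-established lemmas for the RDS setting. The key ingredients I would invoke are Lemma \ref{l:compactRDSclosed} (the evolutionary system $\Dc_{\bar\Sigma}$ satisfies \=A1, hence A1 for its subsystems), Lemma \ref{l:closedRDSclosed} (it is closed), and Lemma \ref{l:clousurequalsitselfRDS} (its closure coincides with itself, i.e.\ $\bar\Dc_{\bar\Sigma}=\Dc_{\bar\Sigma}$). Before I start, I would also record the fact that $\bar\Sigma\subset C^{\mathrm{p.p.}}(\mathbb R;\mathcal M)\times L^{2,\mathrm{w}}_{\mathrm{loc}}(\mathbb R;V')$ is sequentially compact in itself, which is what is needed to apply the kernel lemmas: since $\bar\Sigma$ is compact in the finer space $C^{\mathrm{p.u.}}(\mathbb R;\mathcal M)\times L^{2,\mathrm{w}}_{\mathrm{loc}}(\mathbb R;V')$ and metrizable in the coarser space, sequential compactness follows.

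First I would handle the kernel. From Lemma \ref{l:clousurequalsitselfRDS},
\[
\Dc_{\bar\Sigma}((-\infty,\infty))=\bar\Dc_{\bar\Sigma}((-\infty,\infty)).
\]
Because $\Dc_{\bar\Sigma}$ is closed and $\bar\Sigma$ is sequentially compact, Lemma \ref{t:kerofESSig} yields
\[
\Dc_{\bar\Sigma}((-\infty,\infty))=\bigcup_{\sigma\in\bar\Sigma}\Dc_\sigma((-\infty,\infty)),
\]
and, since $\Dc_{\bar\Sigma}$ satisfies \=A1 and is closed, Lemma \ref{t:NonempfESsig} gives the nonemptiness of $\mathcal E_\sigma((-\infty,\infty))$ for every $\sigma\in\bar\Sigma$.

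Next I would extract the attractor statements from the abstract Theorem \ref{t:wA} applied to $\Dc_{\bar\Sigma}$, which is legitimate precisely because \=A1 (and therefore A1) is in force. This gives the existence of the weak uniform global attractor $\Aw^{\bar\Sigma}$ and the weak trajectory attractor $\mathfrak A^{\bar\Sigma}_{\w}$, the identification
\[
\mathfrak A^{\bar\Sigma}_{\w}=\Pi_+\bar\Dc_{\bar\Sigma}((-\infty,\infty))=\Pi_+\bigcup_{\sigma\in\bar\Sigma}\Dc_\sigma((-\infty,\infty)),
\]
the section formula $\Aw^{\bar\Sigma}=\mathfrak A^{\bar\Sigma}_{\w}(t)$ for all $t\ge 0$, the maximal invariance and quasi-invariance of $\Aw^{\bar\Sigma}$ with respect to $\bar\Dc_{\bar\Sigma}=\Dc_{\bar\Sigma}$, the finite weak uniform tracking property, and the weak equicontinuity of $\mathfrak A^{\bar\Sigma}_{\w}$ on $[0,\infty)$. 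Substituting the previously derived kernel formula into these expressions produces exactly the conclusions claimed in the theorem.

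Since every step is a direct citation of the abstract machinery and the three verification lemmas specific to the RDS, there is no substantive obstacle; the only place requiring a moment of care is ensuring that the sequential compactness hypothesis needed by Lemmas \ref{t:kerofESSig} and \ref{t:ClosureofESSig} is really available, for which it suffices, as noted above, to work with the metrizable topology of $C^{\mathrm{p.p.}}(\mathbb R;\mathcal M)\times L^{2,\mathrm{w}}_{\mathrm{loc}}(\mathbb R;V')$ in which $\bar\Sigma$ is compact and hence sequentially compact. With that observation in place the proof reduces to assembling the pieces in the same order as in Theorem \ref{t:AwNSEtrc}.
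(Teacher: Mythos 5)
Your proposal is correct and follows essentially the same route as the paper: the paper's proof of this theorem simply cites Lemmas \ref{l:compactRDSclosed}, \ref{l:closedRDSclosed} and \ref{l:clousurequalsitselfRDS} and declares the argument analogous to that of Theorem \ref{t:AwNSEtrc}, which is precisely the template you assemble (kernel identity via Lemma \ref{t:kerofESSig}, nonemptiness via Lemma \ref{t:NonempfESsig}, and the remaining conclusions from Theorem \ref{t:wA}). Your explicit check that $\bar\Sigma$ is sequentially compact in the metrizable topology of $C^{\mathrm{p.p.}}(\mathbb R;\mathcal M)\times L^{2,\mathrm{w}}_{\mathrm{loc}}(\mathbb R;V')$ matches the remark the paper makes for exactly this purpose.
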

\begin{proof}
Utilizing  Lemmas \ref{l:closedRDSclosed}, \ref{l:clousurequalsitselfRDS} and \ref{l:compactRDSclosed}, the proof is analogous to that of Theorem \ref{t:AwNSEtrc}.
\end{proof}

The existence of  $\As^{\bar{\Sigma}}$  as well as $\Aw^{\bar{\Sigma}}$ was proved in \cite{CL09}. Hence, we are able to obtain strong compactness of  $\mathfrak A^{\bar{\Sigma}}_{\s}$  from that of $\As^{\bar{\Sigma}}$ by applying Corollary \ref{t:SAtoSTA}.
\begin{theorem}\label{t:AsRDSclosed}
Furthermore, if  $g_0$ is  normal in $L^2_{\mathrm{loc}}(\mathbb{R}; V')$, then the weak uniform global attractor $\Aw^{\bar{\Sigma}}$
is a strongly compact strong uniform global attractor $\As^{\bar{\Sigma}}$ and  the weak trajectory attractor $\mathfrak A^{\bar{\Sigma}}_{\w}$ is a strongly
compact strong trajectory attractor $\mathfrak A^{\bar{\Sigma}}_{\s}$. Moreover, $\mathfrak A^{\bar{\Sigma}}_{\s}$ satisfies the finite
strong uniform tracking property and is strongly equicontinuous on $[0,\infty)$.
\end{theorem}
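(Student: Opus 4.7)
My plan is to follow the hint given in the paragraph preceding the theorem: reduce everything to Corollary \ref{t:SAtoSTA} applied to the evolutionary system $\Dc_{\bar{\Sigma}}$. To invoke that corollary I need two ingredients: first, that $\Dc_{\bar{\Sigma}}$ satisfies assumption A1; second, that the strongly compact strong global attractor $\As^{\bar{\Sigma}}$ for $\Dc_{\bar{\Sigma}}$ already exists. The first ingredient is immediate from Lemma \ref{l:compactRDSclosed}, which in fact gives the stronger property \={A1}, and A1 follows trivially. The second ingredient is precisely what was established in \cite{CL09} under the normality assumption on $g_0$.

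With both inputs secured, I would apply Corollary \ref{t:SAtoSTA} directly to $\Dc_{\bar{\Sigma}}$. Since $\bar{\Dc}_{\bar{\Sigma}} = \Dc_{\bar{\Sigma}}$ by Lemma \ref{l:clousurequalsitselfRDS}, the corollary yields the existence of the strongly compact strong trajectory attractor $\mathfrak A^{\bar{\Sigma}}_{\s}$ and in particular the equality $\mathfrak A^{\bar{\Sigma}}_{\s} = \Pi_+ \Dc_{\bar{\Sigma}}((-\infty,\infty))$. Together with the already-proved identity $\mathfrak A^{\bar{\Sigma}}_{\w} = \Pi_+ \bigcup_{\sigma \in \bar{\Sigma}} \Dc_\sigma((-\infty,\infty))$ from Theorem \ref{t:AwRDStrcclosed} and the decomposition $\Dc_{\bar{\Sigma}}((-\infty,\infty)) = \bigcup_{\sigma \in \bar{\Sigma}} \Dc_\sigma((-\infty,\infty))$, this shows $\mathfrak A^{\bar{\Sigma}}_{\w} = \mathfrak A^{\bar{\Sigma}}_{\s}$, so the weak trajectory attractor upgrades to the strong one. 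Similarly, the weak global attractor coincides with $\As^{\bar{\Sigma}}$ via the section formula $\Aw^{\bar{\Sigma}} = \mathfrak A^{\bar{\Sigma}}_{\w}(t)$, completing the coincidence of weak and strong objects.

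The remaining two assertions—the finite strong uniform tracking property and the strong equicontinuity on $[0,\infty)$—come for free as items 1 and 2 of Corollary \ref{t:SAtoSTA}, so no further work is required for them once the corollary has been invoked.

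I do not anticipate any real obstacle in this argument, since all the heavy lifting has been done in earlier sections: the closedness and A1-property of $\Dc_{\bar{\Sigma}}$ were established via Lemmas \ref{l:convergenceofsolutionRDSclosed}--\ref{l:closedRDSclosed}, the coincidence $\bar{\Dc}_{\bar{\Sigma}} = \Dc_{\bar{\Sigma}}$ via Lemma \ref{l:clousurequalsitselfRDS}, and the strongly compact strong global attractor via \cite{CL09}. The only mild point to be careful about is ensuring that the hypotheses of Corollary \ref{t:SAtoSTA} are applied to the correct evolutionary system (namely $\Dc_{\bar{\Sigma}}$, with symbol space $\bar{\Sigma}$, rather than to the original $\Dc$ with symbol family $\Sigma$), and then transferring the conclusions back through the identifications provided by Theorem \ref{t:AwRDStrcclosed}.
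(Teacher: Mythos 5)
Your proposal is correct and follows essentially the same route the paper takes: the paper's own justification is precisely the remark preceding the theorem, namely that the strongly compact strong global attractor $\As^{\bar{\Sigma}}$ exists by \cite{CL09} under the normality assumption, that $\Dc_{\bar{\Sigma}}$ satisfies A1 (indeed \={A1}) by Lemma \ref{l:compactRDSclosed} and equals its own closure by Lemma \ref{l:clousurequalsitselfRDS}, and that Corollary \ref{t:SAtoSTA} then delivers the strongly compact strong trajectory attractor together with the finite strong uniform tracking property and the strong equicontinuity. Your extra care in identifying $\mathfrak A^{\bar{\Sigma}}_{\s}$ with $\Pi_+\bigcup_{\sigma\in\bar{\Sigma}}\Dc_\sigma((-\infty,\infty))$ via Theorem \ref{t:AwRDStrcclosed} and Lemma \ref{t:kerofESSig} is consistent with the paper and requires no change.
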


Let $\bar{\Dc}$ be the closure of the  evolutionary  system $\Dc$.
It follows from Lemma \ref{l:clousurequalsitselfRDS} that $\Dc\subset\bar{\Dc}\subset\Dc_{\bar{\Sigma}}$. Then, an
interesting problem arises:
\begin{open}\cite{CL09}\label{op:attractorRDS}
Are the attractors $\Ab$,  $\mathfrak A_{\bullet}$ and
$\Ab^{\bar{\Sigma}}$, $\mathfrak A^{\bar{\Sigma}}_{\bullet}$ in Theorems \ref{t:structureofattractorwRDS}
and \ref{t:AsRDSclosed} identical?
\end{open}

As indicated in Theorems \ref{t:A0Aw} and \ref{t:saofESac}, when the solutions of \eqref{RDS} with $\sigma\in \bar{\Sigma}$ are unique, the answer is positive. Otherwise, the negative answer would imply that the solutions are not unique and, moreover, that the attractors $\Ab^{\bar{\Sigma}}$ and $\mathfrak A^{\bar{\Sigma}}_{\bullet}$  for the auxiliary family of \eqref{RDS} with $\sigma\in \bar{\Sigma}$ do not
satisfy the minimality property w.r.t. $\db$-attracting and $\mathrm d_{ C([0, \infty);X_\bullet)}$-attracting, respectively,  for the evolutionary system $\Dc$ corresponding to the original \eqref{RDS} with the fixed $\sigma_0$.
For more details, see \cite{CL09, CL14}.

\subsubsection{RDS with unqiueness}

Now we suppose further the following condition on the nonlinearity $f_0(v,s)$,
\begin{equation}
\label{c:uniquenessoffRDS}\left(f_0(v_1,s)-f_0(v_2,s),v_1-v_2\right)\geq
-C\|v_1-v_2\|_{\mathbb R^N}^2, \forall v_1,v_2\in \mathbb R^N,\forall s\in \mathbb
R.
\end{equation}
It is known  that the weak solutions provided by Theorem \ref{t:existenceRDS} are now unique (see e.g. \cite{CV02}) .
\begin{theorem} \label{t:AwRDSStrcuniq}Let $ f_0$ satisfy (\ref{c:continuityoffRDS})-(\ref{c:boundednessoffRDS}), (\ref{c:uniquenessoffRDS}) and be translation
compact in $C^\mathrm{p.u.}(\mathbb R;\mathcal M)$,  and $g_0$ be translation bounded in $ L^{2}_\mathrm{loc}(\mathbb{R};V')$. Then  the two weak uniform global attractors $\Aw$,  $\Aw^{\bar{\Sigma}}$  and the two weak trajectory attractors $\mathfrak A_{\w}$, $\mathfrak A^{\bar{\Sigma}}_{\w}$  for (\ref{RDS}) with the fixed $\sigma_0=(f_0,g_0)$ and for the family of (\ref{RDS})
 with $\sigma=(f, g)\in \bar{\Sigma}$, respectively,  exist, $\Aw$ and $\Aw^{\bar{\Sigma}}$ are the maximal
invariant and maximal quasi-invariant set w.r.t. the closure $\bar{\Dc}=\Dc_{\bar{\Sigma}}$ of the corresponding evolutionary system $\Dc$ and
\begin{equation*}
\begin{split}
\Aw&=\Aw^{\bar{\Sigma}}  =\left\{u(0): u \in \Dc_{\bar{\Sigma}}((-\infty, \infty))\right\}= \left\{ u(0): u\in \bigcup_{\sigma\in\bar{\Sigma}}\Dc_\sigma ((-\infty, \infty))\right\}, \\ \mathfrak A_{\w}&=\mathfrak A^{\bar{\Sigma}}_{\w}=\Pi_+\bigcup_{\sigma\in\bar{\Sigma}}\Dc_\sigma ((-\infty, \infty)), \\
\Aw&=\Aw^{\bar{\Sigma}}=\mathfrak A^{\bar{\Sigma}}_{\w}(t)=\left\{u(t): u\in \mathfrak A^{\bar{\Sigma}}_{\w}\right\},\quad \forall\, t\geq 0,
\end{split}
\end{equation*}
where $\mathcal E_{\sigma} ((-\infty,\infty))$ is nonempty for any $\sigma\in \bar{\Sigma}$.
Moreover,  $\mathfrak A_{\w}=\mathfrak A^{\bar{\Sigma}}_{\w}$ satisfies the finite weak uniform tracking property  for $\Dc_{\bar{\Sigma}}$ and  is weakly equicontinuous on $[0,\infty)$.
\end{theorem}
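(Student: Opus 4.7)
The plan is to deduce this theorem as an immediate corollary of Theorem \ref{t:A0Aw}, applied with symbol space $\Sigma = \{\sigma_0(\cdot+h) : h \in \mathbb{R}\}$, ambient topological space $\Im = C^{\mathrm{p.u.}}(\mathbb{R}; \mathcal{M}) \times L^{2,\mathrm{w}}_{\mathrm{loc}}(\mathbb{R}; V')$, and $\bar{\Sigma} = \bar{\Sigma}_1 \times \bar{\Sigma}_2$ being the sequential closure of $\Sigma$ in $\Im$. Essentially all of the required machinery has already been assembled in the preceding subsection; the task reduces to checking each hypothesis of Theorem \ref{t:A0Aw} one by one and then reading off its conclusions.

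First, I would verify that the monotonicity condition \eqref{c:uniquenessoffRDS} propagates through $\bar{\Sigma}_1$. Given any $f \in \bar{\Sigma}_1$, there exists a sequence $\{f_0(\cdot, \cdot + h_n)\}$ that converges to $f$ in $C^{\mathrm{p.u.}}(\mathbb{R}; \mathcal{M})$, hence in particular pointwise on $\mathbb{R}^N \times \mathbb{R}$. Since \eqref{c:uniquenessoffRDS} is preserved under translation in $s$ and under pointwise limits, the same inequality holds for $f$ with the same constant $C$. Combined with the absolute continuity of $|u(t)|^2$ from Theorem \ref{t:existenceRDS} and a routine Gronwall estimate on $|u_1(t) - u_2(t)|^2$, this gives uniqueness of weak solutions of \eqref{RDS} for every $\sigma \in \bar{\Sigma}$. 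Consequently both $\Dc$ (with symbol space $\Sigma$) and $\Dc_{\bar{\Sigma}}$ become evolutionary systems with uniqueness in the sense of Section \ref{s:AttractES}.

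Second, the remaining hypotheses of Theorem \ref{t:A0Aw} have all been checked already: Lemma \ref{l:compactRDS} gives property A1 for $\Dc$; Lemma \ref{l:closedRDSclosed} shows $\Dc_{\bar{\Sigma}}$ is a closed evolutionary system with symbol space $\bar{\Sigma}$; and as noted in the discussion preceding Lemma \ref{l:convergenceofsolutionRDSclosed}, the set $\bar{\Sigma}$ is compact in $\Im$ and metrizable (in the weaker space $C^{\mathrm{p.p.}}(\mathbb{R}; \mathcal{M}) \times L^{2,\mathrm{w}}_{\mathrm{loc}}(\mathbb{R}; V')$), hence sequentially compact, and is by construction the sequential closure of $\Sigma$ in $\Im$. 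Invoking Theorem \ref{t:A0Aw} then delivers all of the asserted conclusions at once: existence of $\Aw$, $\Aw^{\bar{\Sigma}}$, $\mathfrak A_{\w}$, $\mathfrak A^{\bar{\Sigma}}_{\w}$; the coincidences $\Aw = \Aw^{\bar{\Sigma}}$ and $\mathfrak A_{\w} = \mathfrak A^{\bar{\Sigma}}_{\w}$ via $\bar{\Dc}_\Sigma = \Dc_{\bar{\Sigma}}$; the structural identity $\mathfrak A_{\w}^{\bar{\Sigma}} = \Pi_+\bigcup_{\sigma \in \bar{\Sigma}} \Dc_\sigma((-\infty,\infty))$ with each $\Dc_\sigma((-\infty,\infty))$ nonempty; the section property $\Aw = \mathfrak A_{\w}^{\bar{\Sigma}}(t)$ for every $t \geq 0$; and the finite weak uniform tracking property together with weak equicontinuity on $[0,\infty)$.

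The main obstacle is largely bookkeeping: it lies in the verification that \eqref{c:uniquenessoffRDS} is inherited by every $f$ in the $C^{\mathrm{p.u.}}$-closure of the translation family of $f_0$. This is the only point at which the newly added hypothesis must genuinely be used, and it is a short argument once one unwinds the definition of $C^{\mathrm{p.u.}}$-convergence (locally uniform in $v$, pointwise in $s$); after that, everything is extracted directly from Theorem \ref{t:A0Aw} with no further computation. No asymptotic compactness or normality of $g_0$ is invoked here, consistent with the fact that the theorem asserts only weak-topology statements.
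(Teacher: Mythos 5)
Your proposal is correct and follows essentially the same route as the paper: check that $\Dc$ and $\Dc_{\bar{\Sigma}}$ are evolutionary systems with uniqueness, cite Lemma \ref{l:compactRDS} for A1 and Lemma \ref{l:closedRDSclosed} for closedness, note the sequential compactness of $\bar{\Sigma}$, and invoke Theorem \ref{t:A0Aw}. Your explicit verification that \eqref{c:uniquenessoffRDS} passes to every $f\in\bar{\Sigma}_1$ via pointwise limits is a detail the paper leaves implicit, but it is the right (and short) argument.
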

\begin{proof}
The proof is analogous to that of Theorem \ref{t:Aw02DNSEw}. By the assumptions, the associated evolutionary systems $\Dc$ and $\Dc_{\bar{\Sigma}}$ are unique. It follows from Lemmas  \ref{l:compactRDS} and \ref{l:closedRDSclosed} that $\Dc$ satisfies A1 and that  $\Dc_{\bar{\Sigma}}$  is closed, respectively. Then, we obtain the theorem by Theorem \ref{t:A0Aw}.
\end{proof}

\begin{theorem}\label{t:AsRDSStrcuniq}
Furthermore, if  $g_0$ is  normal in $L^2_{\mathrm{loc}}(\mathbb{R}; V')$, then the two weak uniform global attractors $\Aw$ and $\Aw^{\bar{\Sigma}}$  are strongly compact strong uniform global attractors  $\As$ and $\As^{\bar{\Sigma}}$, and  the two weak trajectory attractors $\mathfrak A_{\w}$ and $\mathfrak A^{\bar{\Sigma}}_{\w}$ are strongly
compact strong trajectory attractors $\mathfrak A_{\s}$ and $\mathfrak A^{\bar{\Sigma}}_{\s}$, respectively. Moreover,
\begin{itemize}
\item[1.] $\mathfrak A_{\s}=\mathfrak A^{\bar{\Sigma}}_{\s}=\Pi_+\bigcup_{\sigma\in\bar{\Sigma}}\Dc_\sigma  ((-\infty, \infty))$ satisfies the finite strong
uniform tracking property, i.e.,  for any $\epsilon >0$ and $T>0$, there exist $t_0$ and a finite subset $P_T^f\subset \mathfrak A_{\s}|_{[0,T]}$, such
that for any $t^*>t_0$, every trajectory $u \in \Dc_{\bar{\Sigma}}([0,\infty))$
satisfies
$$|u(t)-v(t-t^*)|< \epsilon,\quad\forall t\in [t^*,t^*+T],$$
for some  $T$-time length piece  $v\in P_T^f$.
\item[2.]
 $\mathfrak A_{\s}=\mathfrak A^{\bar{\Sigma}}_{\s}=\Pi_+\bigcup_{\sigma\in\bar{\Sigma}}\Dc_\sigma ((-\infty, \infty))$   is strongly equicontinuous on $[0,\infty)$, i.e.,
\[
\left|v(t_1)-v(t_2)\right|\leq \theta\left(|t_1-t_2|\right), \quad\forall\, t_1,t_2\ge 0, \  \forall v\in \mathfrak A_{\s},
\]
where $\theta(l) $ is a positive function tending
to $0$ as $l\rightarrow 0^+$.
\end{itemize}
\end{theorem}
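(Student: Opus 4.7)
The plan is to deduce the conclusions by invoking the abstract machinery already set up in Section \ref{s:AttractES}, essentially along the same route used for the 2D Navier--Stokes case in Theorem \ref{t:As02DNSEw}. All the structural information about kernels, invariance and equality of the $\Sigma$-indexed and $\bar{\Sigma}$-indexed attractors has already been supplied by Theorem \ref{t:AwRDSStrcuniq}; the only new ingredient needed is asymptotical compactness of the underlying evolutionary systems, which will follow once $g_0$ is normal.

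First I would verify the hypotheses of Theorem \ref{t:saofESac}(ii) applied to $\Dc_\Sigma=\Dc$. Lemma \ref{l:compactRDS} already gives that $\Dc$ satisfies A1 and A3 under the standing hypotheses, and that the normality of $g_0$ in $L^2_{\mathrm{loc}}(\mathbb{R};V')$ additionally yields A2. Moreover, Theorem \ref{t:existenceRDS} guarantees that every weak solution of \eqref{RDS} lies in $C([\tau,\infty);H)$, so in particular every trajectory in $\bar{\Dc}((-\infty,\infty))$ is strongly continuous, i.e.\ $\bar{\Dc}((-\infty,\infty))\subset C((-\infty,\infty);\Xs)$. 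Thus Theorem \ref{t:saofESac}(ii) applies and produces strongly compact strong uniform global attractors $\As$, $\As^{\bar{\Sigma}}$ and strongly compact strong trajectory attractors $\mathfrak A_\s$, $\mathfrak A^{\bar{\Sigma}}_\s$; by that theorem they also satisfy the finite strong uniform tracking property and the strong equicontinuity on $[0,\infty)$.

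Next I would identify these strong attractors with the weak ones already described. From Theorem \ref{t:AwRDSStrcuniq}, uniqueness of solutions and the closedness of $\Dc_{\bar{\Sigma}}$ (Lemma \ref{l:closedRDSclosed}) give $\bar{\Dc}_\Sigma=\Dc_{\bar{\Sigma}}$ and the kernel identifications
\[
\Aw=\Aw^{\bar{\Sigma}}=\{u(0):u\in\Dc_{\bar{\Sigma}}((-\infty,\infty))\}=\Big\{u(0):u\in\!\!\!\bigcup_{\sigma\in\bar{\Sigma}}\!\!\Dc_\sigma((-\infty,\infty))\Big\},
\]
\[
\mathfrak A_{\w}=\mathfrak A^{\bar{\Sigma}}_{\w}=\Pi_+\!\!\bigcup_{\sigma\in\bar{\Sigma}}\Dc_\sigma((-\infty,\infty)).
\]
Since a strongly compact strong global attractor coincides with the weak global attractor (conclusion 1 of Theorem \ref{t:sA}) and a strong trajectory attractor coincides with the weak one (Theorem \ref{t:sA}, conclusion 2), we conclude
$\As=\As^{\bar{\Sigma}}=\Aw=\Aw^{\bar{\Sigma}}$ and $\mathfrak A_\s=\mathfrak A^{\bar{\Sigma}}_\s=\mathfrak A_\w=\mathfrak A^{\bar{\Sigma}}_\w=\Pi_+\bigcup_{\sigma\in\bar{\Sigma}}\Dc_\sigma((-\infty,\infty))$, which yields the structural formula stated in the theorem and the two enumerated properties (the metric here being $\ds(u,v)=|u-v|$).

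The most delicate step is the appeal to Theorem \ref{t:saofESac}(ii); its proof relies on Theorem \ref{t:A123Comp}, so one must confirm that after passing to the closure $\bar{\Dc}$ the properties A1, A2, A3 upgrade to $\bar{\mathrm A}1$, $\bar{\mathrm A}2$, $\bar{\mathrm A}3$, and that every element of $\bar{\Dc}((-\infty,\infty))$ is indeed a weak solution of \eqref{RDS} with some $\sigma\in\bar{\Sigma}$ (so that Theorem \ref{t:existenceRDS} applies to give strong continuity). The latter is exactly what Lemma \ref{l:convergenceofsolutionRDSclosed} and Lemma \ref{l:clousurequalsitselfRDS} secure, since they imply $\bar{\Dc}=\Dc_{\bar{\Sigma}}$ and every trajectory in the closure is a bona fide weak solution with some limiting symbol in $\bar{\Sigma}$. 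Once that identification is in place, the rest of the argument is essentially bookkeeping: all the abstract theorems apply and the stated conclusions follow.
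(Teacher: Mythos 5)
Your proposal is correct, and at the top level it follows the paper's own strategy: continue from Theorem \ref{t:AwRDSStrcuniq} and reduce everything to Theorem \ref{t:saofESac}. The one place you diverge is in \emph{which} of the three sufficient conditions of Theorem \ref{t:saofESac} you verify. The paper checks condition (iii): it invokes Theorem \ref{t:AsRDSclosed} (ultimately resting on the existence of the strongly compact strong global attractor $\As^{\bar{\Sigma}}$ established in \cite{CL09} together with Corollary \ref{t:SAtoSTA} and Theorem \ref{t:suffience and necessity}) to conclude that $\bar{\Dc}_\Sigma=\Dc_{\bar{\Sigma}}$ already possesses a strongly compact strong global attractor, hence is asymptotically compact. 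You instead check condition (ii) directly: A1, A3 and (using normality of $g_0$) A2 from Lemma \ref{l:compactRDS}, plus $\bar{\Dc}((-\infty,\infty))\subset C((-\infty,\infty);\Xs)$ via the identification $\bar{\Dc}=\Dc_{\bar{\Sigma}}$ (Lemmas \ref{l:convergenceofsolutionRDSclosed} and \ref{l:clousurequalsitselfRDS}) and the strong continuity of weak solutions from Theorem \ref{t:existenceRDS}. Your route is the one the paper itself uses for Theorem \ref{t:sturctureofattractorsRDS}, transplanted to the uniqueness setting; it is more self-contained (it does not lean on the external attractor-existence result from \cite{CL09}), at the cost of re-verifying the energy-inequality machinery, while the paper's version is shorter because that work was already packaged into Theorem \ref{t:AsRDSclosed}. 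The remaining identifications ($\As=\Aw$, $\mathfrak A_\s=\mathfrak A_\w=\Pi_+\bigcup_{\sigma\in\bar{\Sigma}}\Dc_\sigma((-\infty,\infty))$, and the two enumerated properties) are handled exactly as in the paper, via Theorems \ref{t:sA} and \ref{t:AwRDSStrcuniq}.
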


Part of this theorem recovers the corresponding results in \cite{Lu07}. Here we obtain in addition the existence of  the strongly compact strong trajectory attractor and its corollaries.

\begin{proof}We continue the proof of Theorem \ref{t:AwRDSStrcuniq}. We only need to obtain the asymptotical compactness of the evolutionary system $\Dc_{\bar{\Sigma}}$. By Theorem \ref{t:AsRDSclosed}, the strongly compact strong uniform global attractor $\As^{\bar{\Sigma}}$ for $\Dc_{\bar{\Sigma}}$ exists when  $g_0$ is  normal in $L^2_{\mathrm{loc}}(\mathbb{R}; V')$. Hence, all conclusions follow from Theorem \ref{t:saofESac}.
\end{proof}

\subsection{On nonlinearity}
\label{ss:nonln}
In this subsection, we first collect  properties of some kinds of interaction functions, with which \eqref{RDS} are studied in some previous literature (see \cite{CV02, Lu07, CL09}). Then, we construct several counter examples that do not satisfy part of restrictions on the nonlinearity in these literature, especially do not belong to these classes of interaction functions. However, our Theorems \ref{t:structureofattractorwRDS} and \ref{t:sturctureofattractorsRDS} are still applicable for  \eqref{RDS} with interaction terms being such examples. As indicated in Open Problem \ref{op:attractorRDS}, it is not yet known how to obtain the same results by previous frameworks.

\begin{definition}\label{d:trc}\cite{CV95, CV02} Let $\Xi$ be a topological space of functions defined on $\mathbb R$. A function $\phi(s)\in \Xi$  is said to be  translation compact in $\Xi$ if the closure $$\overline{\{\phi(\cdot+h):h\in \mathbb R\}}^\Xi,$$ is compact  in $\Xi$.
\end{definition}
Denote by $ C^{\mathrm{p.u.}}_{\mathrm{tr.c.}}(\mathbb R;\mathcal M)$,
$C^{\mathrm{p.p.}}_{\mathrm{tr.c.}}(\mathbb R;\mathcal M)$ and $
C_{\mathrm{tr. c.}} (\mathbb R;\mathcal M)$ the classes of
translation  compact functions in $ C^{\mathrm{p.u.}}(\mathbb
R;\mathcal M)$, $ C^{\mathrm{p.p.}}(\mathbb R;\mathcal M)$ and $
 C (\mathbb R;\mathcal M)$, respectively.  We have the following characterizations and relationships of these spaces.
 \begin{theorem} \cite{CV02}\label{t:Charaoftrc} $\varphi(s)\in  C_{\mathrm{tr.c.}}(\mathbb R;\mathcal
M)$ if and only if for any $R>0$, $\varphi(v,s)$ is bounded in
$Q(R)=\{(v,s):\|v\|_{\mathbb R^N}\leq R, s\in \mathbb R\}$, and
\begin{equation*}\label{ucofphisv}
\begin{split}
\|\varphi(v_1,s_1)-\varphi(v_2,s_2)\|_{\mathbb R^N}\leq
\theta\left(\|v_1-v_2\|_{\mathbb R^N}+|s_1-s_2|,R\right), \\  \forall\,
(v_1,s_1), (v_2, s_2) \in Q(R),
\end{split}
\end{equation*} where $\theta(l,R) $ is a positive function tending
to $0$ as $l\rightarrow 0^+$.
\end{theorem}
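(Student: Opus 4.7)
The plan is to treat this as a standard Arzelà--Ascoli result once we unravel the topology. The space $C(\mathbb R;\mathcal M)$ with $\mathcal M=C(\mathbb R^N;\mathbb R^N)$, both endowed with local uniform convergence (the compact--open topology), is naturally identified with $C(\mathbb R^N\times\mathbb R;\mathbb R^N)$ under the compact--open topology via the exponential law for the locally compact spaces $\mathbb R$ and $\mathbb R^N$. Under this identification, translation compactness of $\varphi$ means precisely that the family $F:=\{\varphi(\cdot+h):h\in\mathbb R\}$ has compact closure in $C(\mathbb R^N\times\mathbb R;\mathbb R^N)$ with respect to uniform convergence on compact subsets of $\mathbb R^N\times\mathbb R$. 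The classical Arzelà--Ascoli theorem then says $F$ is relatively compact if and only if, for each compact set $K\subset\mathbb R^N\times\mathbb R$, the restrictions $F|_K$ are uniformly bounded and equicontinuous on $K$.

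For the forward direction, I would fix $R,T>0$ and set $K_{R,T}=\{v:\|v\|_{\mathbb R^N}\le R\}\times[-T,T]$. Compactness of the closure of $F$ yields a uniform bound $M=M(R,T)$ and a modulus of continuity $\eta=\eta_{R,T}$ on $K_{R,T}$, uniform in $h\in\mathbb R$. The key point is translation invariance in the $s$-variable: as $h$ ranges over $\mathbb R$, the cylinder $\{v:\|v\|\le R\}\times[h-T,h+T]$ sweeps out all of $Q(R)$, so the uniform bound on $F|_{K_{R,T}}$ in $h$ transfers to a bound on $\varphi$ over $Q(R)$. Similarly, the equicontinuity modulus $\eta_{R,T}$ on $K_{R,T}$, valid uniformly in $h$, translates into a modulus of uniform continuity for $\varphi$ on $Q(R)$; taking $T\to\infty$ keeps the modulus, producing the required function $\theta(l,R)$ by setting $\theta(l,R):=\sup_{T}\eta_{R,T}(l)$ (or more simply $\theta(l,R):=\eta_{R,1}(l)$ after noting the $s$-invariance removes the $T$-dependence entirely).

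For the backward direction, assume the uniform boundedness and uniform continuity of $\varphi$ on each $Q(R)$. Then for any compact $K_{R,T}$ the family $F|_{K_{R,T}}$ is uniformly bounded (by the bound of $\varphi$ on $Q(R)$) and equicontinuous (the modulus $\theta(\cdot,R)$ on $Q(R)$ restricts to give a common modulus on each translate $\{v:\|v\|\le R\}\times[h-T,h+T]\subset Q(R)$). Arzelà--Ascoli gives relative compactness of $F|_{K_{R,T}}$ in $C(K_{R,T};\mathbb R^N)$. A standard diagonal extraction across exhausting sequences $R_n,T_n\to\infty$ produces a subsequence of $\{\varphi(\cdot+h_k)\}$ converging uniformly on every compact subset of $\mathbb R^N\times\mathbb R$, which is precisely convergence in $C(\mathbb R;\mathcal M)$. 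Hence $F$ is relatively compact there, i.e., $\varphi$ is translation compact in $C_{\mathrm{tr.c.}}(\mathbb R;\mathcal M)$.

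The main obstacle is the bookkeeping between the two equivalent topological formulations: verifying that convergence in $C(\mathbb R;\mathcal M)$ genuinely means uniform convergence on every product $\{\|v\|\le R\}\times[-T,T]$, so that Arzelà--Ascoli is legitimately applicable. Everything else is routine manipulation of moduli of continuity and diagonal extraction; the translation invariance in the $s$-variable does all the real work, converting ``local'' equicontinuity uniform in $h$ into ``global'' (in $s$) uniform continuity of $\varphi$.
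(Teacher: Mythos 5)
This theorem is quoted from \cite{CV02} without proof in the present paper, and your Arzel\`a--Ascoli argument --- identifying $C(\mathbb R;\mathcal M)$ with $C(\mathbb R^N\times\mathbb R;\mathbb R^N)$ under the compact--open topology via the exponential law and using translation invariance in $s$ to pass between equicontinuity on the compact cylinders $K_{R,T}$ and uniform continuity on all of $Q(R)$ --- is correct and is essentially the standard proof of this characterization. The only small caution is that $\theta(l,R):=\sup_{T}\eta_{R,T}(l)$ need not tend to $0$ as $l\to 0^+$ (the moduli may degrade as the compact set grows); your alternative choice $\theta(l,R):=\eta_{R,1}(l)$ for $l\le 1$, extended by twice the uniform bound on $Q(R)$ for larger $l$, is the right one.
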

\begin{theorem} \cite{Lu07}\label{t:Charaoftrwc} $\varphi(s)\in  C^{\mathrm{p.u.}}_{\mathrm{tr.c.}}(\mathbb R;\mathcal
M)$ if and only if $\varphi(s)\in C^{\mathrm{p.p.}}_{\mathrm{tr.c.}}(\mathbb R;\mathcal M)$ and one of the following
holds.
\begin{itemize}
\item[1.] $\{\varphi(s): s\in \mathbb R\}$ is precompact in
$\mathcal M$.

\item[2.] For any $R>0$, $\varphi(v,s)$ is bounded in
$Q(R)=\{(v,s):\|v\|_{\mathbb R^N}\leq R, s\in \mathbb R\}$, and
\begin{equation}\label{ucofphiv}
\|\varphi(v_1,s)-\varphi(v_2,s)\|_{\mathbb R^N}\leq
\theta\left(\|v_1-v_2\|_{\mathbb R^N},R\right), \ \forall\,
(v_1,s), (v_2, s) \in Q(R),
\end{equation} where $\theta(l,R) $ is a positive function tending
to $0$ as $l\rightarrow 0^+$.
\end{itemize}
\end{theorem}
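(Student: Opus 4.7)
The plan is to recognize that conditions (1) and (2) are in fact equivalent characterizations of the same property, then prove each direction by carefully relating the finer topology $C^{\mathrm{p.u.}}$ to the coarser topology $C^{\mathrm{p.p.}}$. First I would prove the equivalence $(1)\Leftrightarrow(2)$: a subset $K\subset\mathcal{M}=C(\mathbb{R}^N;\mathbb{R}^N)$ with the local uniform convergence topology is precompact if and only if, for every compact $B\subset\mathbb{R}^N$, the restriction $\{f|_B:f\in K\}$ is precompact in $C(B;\mathbb{R}^N)$, which by the classical Arzel\`{a}--Ascoli theorem is equivalent to uniform boundedness on $B$ together with equicontinuity on $B$. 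Applied to $K=\{\varphi(\cdot,s):s\in\mathbb{R}\}$, uniform boundedness becomes the boundedness of $\varphi$ on every $Q(R)$, and equicontinuity is precisely the modulus-of-continuity estimate \eqref{ucofphiv} uniform in $s$. Hence it suffices to establish the theorem with condition (1).

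For the forward direction, assume $\varphi$ is translation compact in $C^{\mathrm{p.u.}}$. By the very definitions, $C^{\mathrm{p.u.}}$-convergence implies $C^{\mathrm{p.p.}}$-convergence, so the identity map between the two spaces is continuous; continuous images of compact sets are compact, hence the $C^{\mathrm{p.p.}}$-closure of the translation family is compact, giving $\varphi\in C^{\mathrm{p.p.}}_{\mathrm{tr.c.}}$. To obtain (1), I would use the evaluation map $\mathrm{ev}_0:\psi\mapsto\psi(\cdot,0)$ from $C^{\mathrm{p.u.}}(\mathbb{R};\mathcal{M})$ to $\mathcal{M}$, whose continuity is immediate from the $C^{\mathrm{p.u.}}$ definition by taking $s=0$. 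Applying $\mathrm{ev}_0$ to the compact closure of the translation family yields a compact subset of $\mathcal{M}$ that contains $\{\varphi(\cdot,h):h\in\mathbb{R}\}$, which is therefore precompact in $\mathcal{M}$.

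For the converse, suppose $\varphi\in C^{\mathrm{p.p.}}_{\mathrm{tr.c.}}$ and (1) holds. Given any sequence of shifts $\{h_k\}\subset\mathbb{R}$, translation compactness in $C^{\mathrm{p.p.}}$ yields a subsequence (not relabeled) with $\varphi(v,s+h_k)\to\psi(v,s)$ pointwise in $(v,s)$ for some $\psi$. Fix $s$. The sequence $\varphi(\cdot,s+h_k)$ lies in the $\mathcal{M}$-precompact set $\{\varphi(\cdot,s'):s'\in\mathbb{R}\}$; any $\mathcal{M}$-convergent sub-subsequence must have limit $\psi(\cdot,s)$ because $\mathcal{M}$-convergence implies pointwise convergence. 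The standard Urysohn subsequence principle then forces the whole sequence $\varphi(\cdot,s+h_k)$ to converge to $\psi(\cdot,s)$ in $\mathcal{M}$, i.e., uniformly on every compact $v$-ball. Together with the uniform boundedness inherent to (2), this is precisely $C^{\mathrm{p.u.}}$-convergence, so $\varphi\in C^{\mathrm{p.u.}}_{\mathrm{tr.c.}}$.

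The chief obstacle is a topological subtlety: $C^{\mathrm{p.u.}}(\mathbb{R};\mathcal{M})$ is not manifestly metrizable, so ``compactness'' of the translation closure must be reconciled with the sequential arguments above. I would handle this by first restricting attention to the translation family itself, which, being parameterized by $\mathbb{R}$, admits a natural Fr\'{e}chet-type metrization analogous to the one invoked in footnote \ref{fn:metrizable} for $C^{\mathrm{p.p.}}$; on this metrizable subspace compactness and sequential compactness agree and every subsequence extraction used above is legitimate. An auxiliary benefit of this metrization is that it makes the continuity of $\mathrm{ev}_0$ and of the inclusion $C^{\mathrm{p.u.}}\hookrightarrow C^{\mathrm{p.p.}}$ trivially sequential, which is all that is actually used in the proof.
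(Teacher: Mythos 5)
The paper offers no proof of this result---it is quoted from \cite{Lu07}---so there is nothing internal to compare against; judged on its own, your argument is essentially correct and follows the standard route. The reduction $(1)\Leftrightarrow(2)$ via Arzel\`a--Ascoli in $\mathcal M$ is right (precompactness in the local uniform topology on each ball is exactly uniform boundedness on $Q(R)$ plus a modulus of continuity uniform in $s$); the forward direction via continuity of the inclusion $C^{\mathrm{p.u.}}\hookrightarrow C^{\mathrm{p.p.}}$ and of the evaluation $\psi\mapsto\psi(\cdot,0)$ is sound; and the converse correctly upgrades pointwise convergence to convergence in $\mathcal M$ for each fixed $s$ by combining the precompactness of $\{\varphi(\cdot,s'):s'\in\mathbb R\}$ with the subsequence principle in the metrizable space $\mathcal M$.

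The one step you should tighten is the final topological one. Your converse establishes that every sequence of translates has a $C^{\mathrm{p.u.}}$-convergent subsequence, i.e.\ sequential precompactness, whereas Definition \ref{d:trc} asks for compactness of the closure; and your proposed remedy---metrizing ``the translation family itself, being parameterized by $\mathbb R$''---does not quite do the job, since the relevant object is the closure (not parameterized by $\mathbb R$), and metrizability of a subset does not pass to its closure in a non-metrizable ambient space. The clean repair, and the one consistent with the paper's own machinery (Theorem \ref{t:squenconvergence}, Corollary \ref{t:sequencomp}, footnote \ref{fn:metrizable}), is to note that once \eqref{ucofphiv} holds, every element of the $C^{\mathrm{p.p.}}$-closure of the translation family satisfies \eqref{ucofphiv} with the same $\theta$, and for such an equicontinuous family pointwise convergence on a compact $v$-ball is automatically uniform; hence the $C^{\mathrm{p.p.}}$ and $C^{\mathrm{p.u.}}$ topologies coincide on that closure, which is therefore compact and metrizable in $C^{\mathrm{p.u.}}$ because it is so in $C^{\mathrm{p.p.}}$. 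This same observation also legitimizes the sequential extractions used in your forward direction.
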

By  Arzel\`{a}-Ascoli
compactness criterion, the property 1 or 2 implies that the family $\{\varphi(\cdot,s):s\in \mathbb R\}$ is equicontinuous  on any ball $\left\{v\in \mathbb R^N: \|v\|_{\mathbb R^N}\leq R\right\}$ with radius $R>0$.

\begin{theorem}\cite{Lu07}\label{proofinteractionfunction} Let  $\varphi\in C^{\mathrm{p.u.}}_{\mathrm{tr.c.}}(\mathbb R;\mathcal M)$ and  denote by $\mathcal
H^{\mathrm{p.u.}}(\varphi)$ the closure of its translation family $\{\varphi(\cdot+h): h\in \mathbb
R\}$ in  $ C^{\mathrm{p.u.}}(\mathbb R;\mathcal M)$.
Then
\begin{itemize}
\item[1.] $\mathcal H^{\mathrm{p.u.}}(\varphi)\subset C^{\mathrm{p.u.}}_{\mathrm{tr.c.}}
(\mathbb R;\mathcal M)$, moreover, $\mathcal H^{\mathrm{p.u.}}(\varphi_1)\subset
\mathcal H^{\mathrm{p.u.}}(\varphi)$,  $\forall \varphi_1\in \mathcal
H^{\mathrm{p.u.}}(\varphi)$.

\item[2.]   Any $\varphi_1\in \mathcal H^{\mathrm{p.u.}}(\varphi)$ satisfies
(\ref{ucofphiv}) for the same $\theta(l,R)$.

\item[3.]  The translation group $\{T(t)\}$ is invariant and
continuous on $\mathcal H^{\mathrm{p.u.}}(\varphi)$ in the topology of $
C^{\mathrm{p.u.}} (\mathbb R;\mathcal M)$.
\end{itemize}
\end{theorem}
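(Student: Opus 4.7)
\textbf{Proof proposal for Theorem \ref{proofinteractionfunction}.} The plan is to handle the three conclusions in a single chain: first verify translation continuity on $C^{\mathrm{p.u.}}(\mathbb R;\mathcal M)$, then leverage the characterization in Theorem \ref{t:Charaoftrwc} to show that the equicontinuity estimate \eqref{ucofphiv} is inherited by all limit points, and finally deduce translation invariance of $\mathcal H^{\mathrm{p.u.}}(\varphi)$ together with the nesting property.

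\emph{Step 1 (translation acts continuously).} Fix $t\in\mathbb R$ and a sequence $\psi_n\to\psi$ in $C^{\mathrm{p.u.}}(\mathbb R;\mathcal M)$. By definition this means uniform boundedness on every ball $Q(R)$ and, for each fixed $s$, uniform convergence $\max_{\|v\|\le R}\|\psi_n(v,s)-\psi(v,s)\|_{\mathbb R^N}\to 0$. Writing $(T(t)\psi_n)(v,s)=\psi_n(v,s+t)$ and applying the same condition at the shifted time $s+t$ shows $T(t)\psi_n\to T(t)\psi$ in $C^{\mathrm{p.u.}}$; uniform boundedness persists trivially. Thus each $T(t):C^{\mathrm{p.u.}}\to C^{\mathrm{p.u.}}$ is continuous.

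\emph{Step 2 (invariance of $\mathcal H^{\mathrm{p.u.}}(\varphi)$ and inheritance of $\theta$).} Since $T(t)$ permutes the translation family $\{\varphi(\cdot+h):h\in\mathbb R\}$ and $\mathcal H^{\mathrm{p.u.}}(\varphi)$ is its closure, the continuity established in Step 1 gives $T(t)\mathcal H^{\mathrm{p.u.}}(\varphi)\subset\mathcal H^{\mathrm{p.u.}}(\varphi)$ for all $t\in\mathbb R$; invariance is in fact equality because $T(t)$ and $T(-t)$ both map into $\mathcal H^{\mathrm{p.u.}}(\varphi)$. For conclusion~2, fix $\varphi_1\in\mathcal H^{\mathrm{p.u.}}(\varphi)$ and choose $h_n$ with $\varphi(\cdot+h_n)\to\varphi_1$ in $C^{\mathrm{p.u.}}$. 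By Theorem \ref{t:Charaoftrwc} each translate satisfies
\[
\|\varphi(v_1,s+h_n)-\varphi(v_2,s+h_n)\|_{\mathbb R^N}\le\theta(\|v_1-v_2\|_{\mathbb R^N},R),
\quad(v_1,s),(v_2,s)\in Q(R),
\]
with the same $\theta$ as for $\varphi$. Since $C^{\mathrm{p.u.}}$-convergence is uniform in $v\in B_R$ for each fixed $s$, we may pass to the limit on the left-hand side and obtain \eqref{ucofphiv} for $\varphi_1$ with the same $\theta(l,R)$.

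\emph{Step 3 (assembling the three conclusions).} Conclusion~2 is Step 2. For conclusion~1, the estimate just obtained shows, via Theorem \ref{t:Charaoftrwc}(2), that $\varphi_1\in C^{\mathrm{p.u.}}_{\mathrm{tr.c.}}(\mathbb R;\mathcal M)$; moreover since $\mathcal H^{\mathrm{p.u.}}(\varphi)$ is closed in $C^{\mathrm{p.u.}}$ and, by Step 2, contains every $T(t)\varphi_1$, it contains the closure of $\{T(t)\varphi_1:t\in\mathbb R\}$, i.e.\ $\mathcal H^{\mathrm{p.u.}}(\varphi_1)\subset\mathcal H^{\mathrm{p.u.}}(\varphi)$. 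Conclusion~3 is the combination of Step~1 (continuity of each $T(t)$) and Step~2 (invariance).

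\emph{Expected main obstacle.} The one point that requires care is the passage to the limit inside the modulus estimate in Step~2, because $C^{\mathrm{p.u.}}$-convergence is only pointwise in $s$ (though uniform in $v$ on balls). Fortunately, the inequality in \eqref{ucofphiv} is stated at a single time $s$, so uniform convergence in $v\in B_R$ for that fixed time $s+h_n\to\ast$ is precisely what is needed, and no uniformity in the time variable is required. The rest is bookkeeping with the Arzel\`a--Ascoli-type characterization in Theorem \ref{t:Charaoftrwc}.
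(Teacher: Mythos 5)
The paper itself gives no proof of this statement (it is quoted from \cite{Lu07}), so your argument can only be judged on its own terms; on those terms it is essentially sound. Step 1 (sequential continuity of each $T(t)$ on $C^{\mathrm{p.u.}}(\mathbb R;\mathcal M)$) is immediate from the definition of the convergence, Step 2 correctly deduces invariance of $\mathcal H^{\mathrm{p.u.}}(\varphi)$, and the passage to the limit in \eqref{ucofphiv} is legitimate: for each fixed $s$ the $C^{\mathrm{p.u.}}$-convergence $\varphi(\cdot+h_n)\to\varphi_1$ gives convergence at the two points $(v_1,s)$ and $(v_2,s)$, so the inequality survives; the boundedness of $\varphi_1$ on $Q(R)$ is inherited the same way. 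Your ``expected obstacle'' is, as you say, not an obstacle.

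The one genuine slip is in Step 3, where you conclude $\varphi_1\in C^{\mathrm{p.u.}}_{\mathrm{tr.c.}}(\mathbb R;\mathcal M)$ ``via Theorem \ref{t:Charaoftrwc}(2)''. That theorem is an equivalence whose right-hand side is the \emph{conjunction} of $\varphi_1\in C^{\mathrm{p.p.}}_{\mathrm{tr.c.}}(\mathbb R;\mathcal M)$ with condition 1 or 2; you verify only condition 2 and never check translation compactness of $\varphi_1$ in $C^{\mathrm{p.p.}}$, so the characterization cannot be invoked as stated. The repair is already contained in your own text: you prove $\mathcal H^{\mathrm{p.u.}}(\varphi_1)\subset\mathcal H^{\mathrm{p.u.}}(\varphi)$, and $\mathcal H^{\mathrm{p.u.}}(\varphi_1)$ is by definition closed in $C^{\mathrm{p.u.}}(\mathbb R;\mathcal M)$, hence a closed subset of the compact set $\mathcal H^{\mathrm{p.u.}}(\varphi)$ and therefore compact --- which is precisely Definition \ref{d:trc} for $\varphi_1$. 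So the first half of conclusion 1 follows directly from the second half, with no appeal to Theorem \ref{t:Charaoftrwc} at all; you should reorder the argument accordingly. A smaller point of the same flavour: representing an arbitrary $\varphi_1\in\mathcal H^{\mathrm{p.u.}}(\varphi)$ as a sequential limit $\varphi(\cdot+h_n)\to\varphi_1$ tacitly identifies the closure with the sequential closure; this is consistent with the paper's conventions (cf. the metrizability discussion in footnote \ref{fn:metrizable}), but deserves a word, since the topology of $C^{\mathrm{p.u.}}(\mathbb R;\mathcal M)$ is only specified through a notion of convergence.
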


\begin{theorem}\label{t:squenconvergence}
Let  $\varphi\in C^{\mathrm{p.u.}}_{\mathrm{tr.c.}}(\mathbb R;\mathcal M)$ and  denote by $\mathcal H^{\mathrm{p.u.}}(\varphi)$  the closure of its translation family $\{\varphi(\cdot+h): h\in \mathbb R\}$ in  $ C^{\mathrm{p.u.}}(\mathbb R;\mathcal M)$. Let $\{\varphi_n\}\subset \mathcal
H^{\mathrm{p.u.}}(\varphi)$ be such that $\varphi_n\rightarrow \varphi_0$ in $ C^{\mathrm{p.p.}}(\mathbb R;\mathcal M)$. Then
\begin{itemize}
\item[1.] $\varphi_0\in \mathcal H^{\mathrm{p.u.}}(\varphi)$.
\item[2.] $\varphi_n\rightarrow \varphi_0$ in $ C^{\mathrm{p.u.}}(\mathbb R;\mathcal M)$.
\end{itemize}
\end{theorem}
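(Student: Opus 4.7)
The plan is to establish part 2 first, using only the fact that every element of $\mathcal H^{\mathrm{p.u.}}(\varphi)$ carries one and the same modulus of continuity in the $v$-variable, and then to deduce part 1 as an immediate corollary of closedness. That is, once the assumed $C^{\mathrm{p.p.}}$-convergence is upgraded to $C^{\mathrm{p.u.}}$-convergence, part 1 is automatic, since $\mathcal H^{\mathrm{p.u.}}(\varphi)$ is by definition closed in $C^{\mathrm{p.u.}}(\mathbb R;\mathcal M)$.

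First, I would invoke Theorem \ref{proofinteractionfunction}(2) to assert that every $\varphi_n$ satisfies the estimate (\ref{ucofphiv}) with one and the same function $\theta(l,R)$ and that the whole family $\{\varphi_n\}$ is uniformly bounded on each set $Q(R)$. Since the $C^{\mathrm{p.p.}}$-convergence $\varphi_n \to \varphi_0$ entails pointwise convergence on $\mathbb R^N \times \mathbb R$, passing to the limit in (\ref{ucofphiv}) shows that $\varphi_0$ is bounded on each $Q(R)$ and obeys the same modulus $\theta(\cdot,R)$.

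Second, fix $s \in \mathbb R$ and $R>0$. On the compact ball $B_R := \{v \in \mathbb R^N : \|v\|_{\mathbb R^N} \le R\}$, the sequence $\{\varphi_n(\cdot,s)\}$ is equicontinuous (with modulus $\theta(\cdot,R)$) and uniformly bounded. By Arzel\`a--Ascoli, every subsequence has a further subsequence converging uniformly on $B_R$ to some continuous limit; pointwise convergence forces that limit to coincide with $\varphi_0(\cdot,s)|_{B_R}$. The standard subsequence principle then yields
\[
\max_{v \in B_R} \|\varphi_n(v,s) - \varphi_0(v,s)\|_{\mathbb R^N} \longrightarrow 0 \quad \text{as } n \to \infty.
\]
Since $s \in \mathbb R$ and $R>0$ are arbitrary, and the sequence is uniformly bounded on every $Q(R)$, this is exactly the definition of $\varphi_n \to \varphi_0$ in $C^{\mathrm{p.u.}}(\mathbb R;\mathcal M)$, giving conclusion 2. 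Conclusion 1 then follows at once from the closedness of $\mathcal H^{\mathrm{p.u.}}(\varphi)$ in $C^{\mathrm{p.u.}}(\mathbb R;\mathcal M)$.

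The only point requiring a small amount of care---and the main potential obstacle---is verifying that the limit $\varphi_0$ inherits both the uniform bound and the common modulus $\theta(\cdot,R)$ from the sequence; this is what legitimizes applying Arzel\`a--Ascoli on each slice and identifying the uniform limit with $\varphi_0(\cdot,s)$. Once Theorem \ref{proofinteractionfunction} is in hand, however, this is a routine passage to the limit in a pointwise inequality.
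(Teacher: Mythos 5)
Your proof is correct, and it rests on the same key ingredient as the paper's: conclusion 2 of Theorem \ref{proofinteractionfunction}, i.e.\ that all elements of $\mathcal H^{\mathrm{p.u.}}(\varphi)$ share one modulus $\theta(\cdot,R)$ in the $v$-variable, so that for each fixed $s$ the family $\{\varphi_n(\cdot,s)\}$ is equicontinuous and uniformly bounded on $B_R$. Where you differ is in packaging and in logical order. For conclusion 2 the paper runs an explicit $\delta$-net computation (three-term triangle inequality through a finite $\delta$-net of $B_R$), whereas you invoke Arzel\`a--Ascoli together with the subsequence principle, identifying every uniform sub-limit with $\varphi_0(\cdot,s)$ via the assumed pointwise convergence; these are the same argument in concrete versus abstract form, and both correctly reduce the passage from $C^{\mathrm{p.p.}}$ to $C^{\mathrm{p.u.}}$ convergence to equicontinuity on each slice. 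For conclusion 1 the routes genuinely diverge: the paper proves it \emph{first}, using that $\mathcal H^{\mathrm{p.u.}}(\varphi)$ is compact, hence compact and metrizable in the weaker space $C^{\mathrm{p.p.}}(\mathbb R;\mathcal M)$ (the footnote on Fr\'echet metrics), so a $C^{\mathrm{p.p.}}$-convergent sequence keeps its limit in the set; you instead derive conclusion 1 \emph{after} conclusion 2, from the fact that $\mathcal H^{\mathrm{p.u.}}(\varphi)$ is by construction closed under the $C^{\mathrm{p.u.}}$ convergence. Your ordering is slightly more economical in that it bypasses the metrizability discussion, at the cost of making conclusion 1 depend on conclusion 2 rather than standing on its own; the paper's version has the minor advantage that the uniform continuity of $\varphi_0$ on $B_R$ (its inequality (\ref{ucofphi0})) is then available with the common modulus from the outset. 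Your remark that $\varphi_0$ inherits the bound and the modulus $\theta(\cdot,R)$ by passing to the limit in (\ref{ucofphiv}) is correct but not actually needed for your Arzel\`a--Ascoli step, since equicontinuity of the $\varphi_n$ alone suffices.
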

\begin{proof}Note that, the compactness of $\mathcal H^{\mathrm{p.u.}}(\varphi)$  in $ C^{\mathrm{p.u.}}(\mathbb R;\mathcal M)$ implies its compactness in $ C^{\mathrm{p.p.}}(\mathbb R;\mathcal M)$. As indicated in the footnote \ref{fn:metrizable}, $\mathcal H^{\mathrm{p.u.}}(\varphi)$ endowed with the topology of $C^{\mathrm{p.p.}}(\mathbb R;\mathcal M)$ is metrizable, and then the
corresponding metric space is complete. Hence, $\varphi_0\in \mathcal H^{\mathrm{p.u.}}(\varphi)$.

 Due to Theorem \ref{t:Charaoftrwc}, $\varphi$ satisfies (\ref{ucofphiv}). Now fix $s\in \mathbb R$, $R>0$, and denote by
$B_R:=\{v\in \mathbb R^N: \| v\|_{\mathbb R^N}\leq R\}$. For any
$\epsilon
>0$, there exists $\delta>0$ such that
\begin{align}\label{ucofphi0}\| \varphi_0 (v_1,s)-
\varphi_0 (v_2,s)\|_{\mathbb R^N}\leq \epsilon/3 ,\quad
\forall\,v_1,v_2\in B_R, \|v_1-v_2\|_{\mathbb R^N}\leq \delta,
\end{align} and  the function $\theta(l,R) $ in (\ref{ucofphiv})
is smaller or equal to  $\varepsilon/3 $ for $l\leq\delta$. For such
$\delta $, there exists a finite number of points $\{v_1,\ldots,v_m\}\subset B_R$ being a $\delta$-net of $B_R$, that is, for any $v\in
B_R$, there exists some $v_j$ satisfying $\| v-v_j\|_{\mathbb
R^N}\leq \delta$. We have that, for any $v\in B_R$,
\begin{align*} \|\varphi_n(v,s)- \varphi_0
(v,s)\|_{\mathbb R^N} \leq& \|\varphi_n(v,s)-
\varphi_n(v_j,s)\|_{\mathbb R^N}\\&+\|\varphi_n(v_j,s)-
\varphi_0 (v_j,s)\|_{\mathbb R^N}\\&+\| \varphi_0 (v_j,s)-
\varphi_0 (v,s)\|_{\mathbb R^N}
\end{align*}
Thanks to the conclusion 2 of Theorem \ref{proofinteractionfunction}, it follows that
\begin{align}\label{ucofphin}
\|\varphi_n(v,s)-
\varphi_n(v_j,s)\|_{\mathbb R^N}
\leq \epsilon/3,\quad \forall n\in \mathbb N.
\end{align}
Combining (\ref{ucofphi0}),  (\ref{ucofphin}) and the fact that $\varphi_n\rightarrow \varphi_0$ in $ C^{\mathrm{p.p.}}(\mathbb R;\mathcal M)$, we obtain
\begin{align*} \|\varphi_n(v,s)- \varphi_0
(v,s)\|_{\mathbb R^N} \leq
\epsilon/3+\epsilon/3+\epsilon/3=\epsilon, \quad \forall v\in B_R,
\end{align*}
for sufficiently large $n$. This means that, for every $s\in \mathbb R$, $R>0$,
\[\max_{\|v\|_{\mathbb R^N}\leq
R}\|\varphi_n(v,s)-\varphi_0(v,s)\|_{\mathbb R^N} \rightarrow
0, \quad \mbox { as } n\rightarrow \infty,\] which is
$\varphi_n(v,s)\rightarrow \varphi_0(v,s)$  in $C^{\mathrm{p.u.}}(\mathbb
R;\mathcal M)$.

We complete the proof.
\end{proof}
\begin{corollary}\label{t:sequencomp}
Let  $\varphi\in C^{\mathrm{p.u.}}_{\mathrm{tr.c.}}(\mathbb R;\mathcal M)$ and  denote by  $\mathcal H^{\mathrm{p.u.}}(\varphi)$  the closure of  its translation family  $\{\varphi(\cdot+h): h\in \mathbb R\}$ in  $ C^{\mathrm{p.u.}}(\mathbb R;\mathcal M)$. Then, $\mathcal H^{\mathrm{p.u.}}(\varphi)$ is sequentially compact in  $ C^{\mathrm{p.u.}}(\mathbb R;\mathcal M)$.
\end{corollary}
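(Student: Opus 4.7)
The plan is to reduce sequential compactness in the finer $C^{\mathrm{p.u.}}$ topology to sequential compactness in the weaker $C^{\mathrm{p.p.}}$ topology, and then upgrade the convergence back via Theorem \ref{t:squenconvergence}. The key observation is that $\mathcal H^{\mathrm{p.u.}}(\varphi)$, being compact in $C^{\mathrm{p.u.}}(\mathbb R;\mathcal M)$, is automatically compact (hence closed) in the coarser space $C^{\mathrm{p.p.}}(\mathbb R;\mathcal M)$, and in this coarser space the set is metrizable (by the Fr\'{e}chet-type construction indicated in footnote \ref{fn:metrizable}). Metrizability plus compactness immediately yields sequential compactness in $C^{\mathrm{p.p.}}(\mathbb R;\mathcal M)$.

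Given this, I would take an arbitrary sequence $\{\varphi_n\}\subset \mathcal H^{\mathrm{p.u.}}(\varphi)$, extract by the above step a subsequence $\{\varphi_{n_k}\}$ and some $\varphi_0\in \mathcal H^{\mathrm{p.u.}}(\varphi)$ (the limit lies in the set because $\mathcal H^{\mathrm{p.u.}}(\varphi)$ is $C^{\mathrm{p.p.}}$-closed) with $\varphi_{n_k}\to \varphi_0$ in $C^{\mathrm{p.p.}}(\mathbb R;\mathcal M)$. Then I would apply conclusion 2 of Theorem \ref{t:squenconvergence} directly to the subsequence to conclude that $\varphi_{n_k}\to \varphi_0$ in $C^{\mathrm{p.u.}}(\mathbb R;\mathcal M)$. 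Since every sequence admits a convergent subsequence with limit in $\mathcal H^{\mathrm{p.u.}}(\varphi)$, the set is sequentially compact in $C^{\mathrm{p.u.}}(\mathbb R;\mathcal M)$.

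The main (and in fact only non-trivial) obstacle is justifying the metrizability of $\mathcal H^{\mathrm{p.u.}}(\varphi)$ in the $C^{\mathrm{p.p.}}$ topology, since this is exactly what allows passing from topological compactness to sequential compactness. This is handled by the argument already sketched in footnote \ref{fn:metrizable}: for any closed bounded ball $B\subset \mathbb R^N\times \mathbb R$, the restrictions $\mathcal H^{\mathrm{p.u.}}(\varphi)|_B$ form a weakly compact subset of $C(B;\mathbb R^N)$, and the dual $C(B;\mathbb R^N)'$ contains the countable total set of Dirac masses at rational points of $B$, so the weak topology is metrizable on this restriction; assembling these metrics on an exhausting sequence of balls by the standard Fr\'{e}chet construction produces a metric for the $C^{\mathrm{p.p.}}$ topology on $\mathcal H^{\mathrm{p.u.}}(\varphi)$. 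Once this is in place, the proof is essentially a one-line consequence of Theorem \ref{t:squenconvergence}.
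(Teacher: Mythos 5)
Your proposal is correct and follows essentially the same route as the paper: pass to the metrizable $C^{\mathrm{p.p.}}$ topology on the compact set $\mathcal H^{\mathrm{p.u.}}(\varphi)$ to extract a $C^{\mathrm{p.p.}}$-convergent subsequence, then invoke Theorem \ref{t:squenconvergence} to place the limit in $\mathcal H^{\mathrm{p.u.}}(\varphi)$ and upgrade the convergence to $C^{\mathrm{p.u.}}$. Your added detail on the metrizability via the Fr\'{e}chet construction from the footnote is exactly the justification the paper relies on implicitly.
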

\begin{proof}
Take a sequence $\{\varphi_n\}\subset \mathcal
H^{\mathrm{p.u.}}(\varphi)$.  $\{\varphi_n\}$ is compact in the metrizable topological  space $(\mathcal H^{\mathrm{p.u.}}(\varphi),  C^{\mathrm{p.p.}}(\mathbb R;\mathcal M))$. Hence, there is a subsequence $\{\varphi_{n_j}\}$,  such that $\varphi_{n_j}\rightarrow \varphi_0$ in $ C^{\mathrm{p.p.}}(\mathbb R;\mathcal M)$ with $\varphi_0\in C^{\mathrm{p.p.}}(\mathbb R;\mathcal M)$ as $n_j\rightarrow \infty$. By Theorem \ref{t:squenconvergence}, $\varphi_0\in \mathcal
H^{\mathrm{p.u.}}(\varphi)$ and the convergence  is actually valid in $ C^{\mathrm{p.u.}}(\mathbb R;\mathcal M)$.
\end{proof}
Let  $C_{\mathrm{b}} (\mathbb R ;\mathcal M)$ be the space of
bounded continuous functions with values in $\mathcal M $ and endowed with
the uniform convergence topology on $\mathbb R$. We have the
following relationships.
\begin{theorem}\cite{Lu07}\label{t:raltionofspaces} $C_{\mathrm{tr.c.}} (\mathbb
R;\mathcal M)\subset  C^{\mathrm{p.u.}}_{\mathrm{tr.c.}}(\mathbb
R;\mathcal M)\subset  C^{\mathrm{p.p.}}_{\mathrm{tr. c.}} (\mathbb
R;\mathcal M)\subset C_{\mathrm{b}} (\mathbb R;\mathcal M)$ with all
inclusions being proper and the former three sets being closed in
$ C_{\mathrm{b}} (\mathbb R;\mathcal M)$.
\end{theorem}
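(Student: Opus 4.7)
The result has three parts: (i) the chain of inclusions, (ii) their properness, and (iii) closedness of the first three classes in $C_{\mathrm b}(\mathbb R;\mathcal M)$. I would organize the argument in that order.

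For (i), the strategy is to observe that the topologies on $C(\mathbb R;\mathcal M)$ are nested: uniform convergence on $\mathbb R$ (the $C_{\mathrm b}$-topology) refines local uniform convergence on compact intervals (the topology used in $C_{\mathrm{tr.c.}}$), which refines the ``pointwise in $s$, uniform in $v$ on compacts'' topology (that of $C^{\mathrm{p.u.}}$), which finally refines the fully pointwise topology (that of $C^{\mathrm{p.p.}}$). Since a set compact in a finer Hausdorff topology remains compact in any coarser topology, the closure of a translation family that is relatively compact in a finer topology is still compact in the coarser ones, yielding the three inclusions among translation-compactness classes. For $C^{\mathrm{p.p.}}_{\mathrm{tr.c.}}\subset C_{\mathrm b}$, given $\varphi\in C^{\mathrm{p.p.}}_{\mathrm{tr.c.}}$ I would argue that $\varphi$ is bounded in the Fr\'echet sense: fix $R>0$ and suppose for contradiction there exist $\|v_n\|_{\mathbb R^N}\le R$ and $t_n\in\mathbb R$ with $\|\varphi(v_n,t_n)\|_{\mathbb R^N}\to\infty$. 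Extracting a subsequence of translates $\varphi(\cdot+t_n)$ convergent in $C^{\mathrm{p.p.}}$, the defining condition of convergence in that topology forces uniform boundedness on the ball $\{\|v\|\le R\}\times\{|s|\le 1\}$; evaluating at $s=0$ contradicts the assumption.

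For (ii), I would exhibit three explicit counterexamples depending on the distinction between uniform-in-$s$ versus pointwise-in-$s$ and uniform-in-$v$ versus pointwise-in-$v$. For the gap $C_{\mathrm{tr.c.}}\subsetneq C^{\mathrm{p.u.}}_{\mathrm{tr.c.}}$ a function of the form $\varphi(v,s)=f(v)\sin(s^2)$ with a bounded equicontinuous $f$ works: Theorem~\ref{t:Charaoftrc} is violated because the family $\{\sin((s+h)^2)\}$ is not equicontinuous in $s$, whereas one can check the translation family remains relatively compact in $C^{\mathrm{p.u.}}$. For $C^{\mathrm{p.u.}}_{\mathrm{tr.c.}}\subsetneq C^{\mathrm{p.p.}}_{\mathrm{tr.c.}}$ I would pick a function whose translates converge pointwise in $v$ but whose convergence is not locally uniform in $v$, for instance one involving $\sin(s\,|v|^2)/(1+|v|^2)$. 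Finally $C^{\mathrm{p.p.}}_{\mathrm{tr.c.}}\subsetneq C_{\mathrm b}$ is witnessed by a bounded continuous function whose translates are not even pointwise precompact, such as $\sin(\log(1+|s|))$ type constructions where the translation family has no pointwise convergent subsequence.

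For (iii), closedness follows by a standard diagonal extraction. Suppose $\varphi_n\to\varphi$ in $C_{\mathrm b}(\mathbb R;\mathcal M)$ with each $\varphi_n$ in one of the three classes, and let $\{h_k\}\subset\mathbb R$. Using the translation compactness of $\varphi_1$, extract a subsequence of $\{h_k\}$ making $\varphi_1(\cdot+h_k)$ converge in the appropriate topology; then refine to handle $\varphi_2$, and so on, and take the diagonal subsequence $\{h_{k_j}\}$. Because the $C_{\mathrm b}$-topology is finer than all three target topologies, the estimate
\[
\rho\bigl(\varphi(\cdot+h_{k_j}),\varphi(\cdot+h_{k_l})\bigr)\le \rho\bigl(\varphi(\cdot+h_{k_j}),\varphi_n(\cdot+h_{k_j})\bigr)+\rho\bigl(\varphi_n(\cdot+h_{k_j}),\varphi_n(\cdot+h_{k_l})\bigr)+\rho\bigl(\varphi_n(\cdot+h_{k_l}),\varphi(\cdot+h_{k_l})\bigr)
\]
in the appropriate (metrizable) translation-invariant metric $\rho$ lets me first choose $n$ large using uniform convergence, then $j,l$ large using the Cauchy property for $\varphi_n$. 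The main obstacle is purely bookkeeping: one must verify that each of the three target topologies is translation invariant and admits a translation-invariant (semi)metric controlling the relevant compactness so the three-term estimate above closes cleanly; once that setup is done the argument is uniform across the three classes.
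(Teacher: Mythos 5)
The paper gives no proof of this theorem (it is quoted from \cite{Lu07}), so I can only judge your plan on its merits. Parts (i) and (iii) are sound in outline: the nesting of the four topologies plus the ``compact in a finer Hausdorff topology $\Rightarrow$ compact in a coarser one'' argument gives the inclusions, the uniform-boundedness-by-contradiction argument gives $C^{\mathrm{p.p.}}_{\mathrm{tr.c.}}\subset C_{\mathrm b}$, and the diagonal extraction with the three-term estimate does close up for closedness, precisely because the $C_{\mathrm b}$-metric is translation invariant and dominates all the local seminorms (for the $C^{\mathrm{p.u.}}$ and $C^{\mathrm{p.p.}}$ classes you should replace the global metric $\rho$, which does not exist, by pointwise Cauchy estimates at each $(v,s)$ resp.\ $\sup_{\|v\|\le R}$ at each fixed $s$, and invoke Eberlein--\v{S}mulian or the metrizability discussed in the paper's footnote to pass from sequential to genuine compactness).

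The genuine gap is part (ii): all three of your separating examples fail, essentially because you have attached each oscillation mechanism to the wrong inclusion. For the first gap, $f(v)\sin(s^2)$ is not in $C^{\mathrm{p.u.}}_{\mathrm{tr.c.}}$ --- it is not even in $C^{\mathrm{p.p.}}_{\mathrm{tr.c.}}$, since for $h_n\to\infty$ no subsequence of $\sin((s+h_n)^2)$ converges pointwise in $s$ (this is exactly the fact the paper uses in its Example III to place $\sin(T^2)$ \emph{outside} $C^{\mathrm{p.p.}}_{\mathrm{tr.c.}}$; one can see it from the impossibility of $e^{i\alpha_n s}\to\ell(s)$ pointwise on a set of positive measure with $\alpha_n\to\infty$). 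For the second gap, $\sin(s|v|^2)/(1+|v|^2)$ fails for the same reason: pointwise convergence of translates would force $e^{ih_n r}$ to converge for every $r=|v|^2\ge 0$, which is impossible. For the third gap, $\sin(\log(1+|s|))$ is bounded and globally Lipschitz, so by Arzel\`a--Ascoli its translation family is precompact even in $C(\mathbb R;\mathcal M)$, i.e.\ it lies in the \emph{smallest} class $C_{\mathrm{tr.c.}}$; moreover its translates do have pointwise convergent subsequences (to constants), contrary to your claim. Ironically, $\sin(s^2)$ is a correct witness for the \emph{third} properness. The first two gaps are the genuinely delicate part of the theorem: by Theorems \ref{t:Charaoftrc} and \ref{t:Charaoftrwc} you need functions whose translate orbits are relatively compact in the weak (pointwise) topology of $C(\mathbb R^N\times\mathbb R;\mathbb R^N)$ yet fail equicontinuity in $s$ (first gap) or in $v$ (second gap); naive fast-oscillation formulas destroy the pointwise compactness at the same time, and one needs Levitan-almost-periodic-type constructions as in \cite{Lu07} and its erratum.
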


Now, we construct several examples in $ C(\mathbb R\times\mathbb R;
\mathbb R)$  that satisfy conditions (\ref{c:dissipationoffRDS})-(\ref{c:boundednessoffRDS}), but are not translation compact in $C^{\mathrm{p.u.}}(\mathbb R; C(\mathbb R;
\mathbb R))$ and do not satisfy (\ref{c:uniquenessoffRDS}) as well. Note again that, Theorems \ref{t:structureofattractorwRDS} and \ref{t:sturctureofattractorsRDS} are applicable for  (\ref{RDS}) with such interaction functions.

In the following examples, let $$T:=\max\{0,t\}, \quad t\in \mathbb R,$$
and let $p\ge 2$ and  $\mathcal M =C(\mathbb R, \mathbb R)$.

\smallskip\smallskip
 \noindent {\bf Example I.}
 \[f\left(v,t\right)=
 \begin{cases}
 \vspace{1.2\jot}|v|^{p-2}v, &\text{if } v\leq 0, \\
 \vspace{1.2\jot}- (1+T)v, &\text{if } 0\leq v\leq\frac{1}{1+T},\\
 \left|v-\frac{1}{1+T}\right|^{p-1}-1, & \text{if } v>\frac{1}{1+T}.
 \end{cases}
\]
Note that, the family $\{f(\cdot,t):t\in \mathbb R\}$  is not equicontinuous on $[0,1]$, which means that $f(v,t)$ does not satisfy (\ref{ucofphiv}).  The fact that $$\partial_v f\left(\frac{1}{2(1+n)}, n\right)=-(1+n)\rightarrow -\infty, \quad \mbox{ as } n\rightarrow \infty,$$implies that condition  (\ref{c:uniquenessoffRDS}) does not hold for $f(v,t)$. Moreover,  the pointwise limit function of $f(\cdot,t) $,  as $t\rightarrow +\infty$, is a discontinuous function,
\[f_{\infty}\left(v\right)=
 \begin{cases}
 \vspace{1.2\jot}|v|^{p-2}v, &\text{if } v\leq 0, \\
 \left|v\right|^{p-1}-1, & \text{if } v>0.
 \end{cases}
\]
Hence, $f(v,t)$ does not even belong to $C^{\mathrm{p.p.}}_{\mathrm{tr.c.}}(\mathbb R;\mathcal M)$. In fact, for any sequence $\{ f(\cdot,\cdot+t_n)\}$ with $t_n\rightarrow +\infty$, the pointwise  limit is $f_\infty$.

\smallskip\smallskip\smallskip
 \noindent {\bf Example II.}
 \[f\left(v,t\right)=
 \begin{cases}
 \vspace{1.2\jot}|v+2\pi|^{p-2}(v+2\pi), &\text{if } v\leq -2\pi, \\
 \vspace{1.2\jot} \rho(v)\sin(1+T)v, &\text{if } -2\pi<v<2\pi,\\
 \left|v-2\pi\right|^{p-1}, & \text{if } v\geq 2\pi,
 \end{cases}
\]
where $\rho(\cdot)$ is a continuous function supported on $[-2\pi,2\pi] $. For instance, $\rho(\cdot)$ is an infinitely differentiable function supported on $(-2\pi,2\pi) $ and equals to 1 on $[-\pi,\pi] $.  Note again that, the family $\{f(\cdot,t): t\in \mathbb R\}$  is not equicontinuous in $[-2\pi,2\pi]$. Hence, $f(v,t)$ does not satisfy (\ref{ucofphiv}).  Moreover,   there is no a pointwise limit function of any sequence $\{f(\cdot,\cdot+t_n)\} $,  as $t_n\rightarrow +\infty$. Therefore, $f(v,t)\notin  C^{\mathrm{p.p.}}_{\mathrm{tr.c.}}(\mathbb R;\mathcal M)$. The fact that $\partial_v f\left(\cdot, \cdot\right)$ has no a uniform lower bound in $[-\pi,\pi] \times \mathbb R$  implies that $f(v,t)$ does not satisfy (\ref{c:uniquenessoffRDS}) either.

\smallskip\smallskip\smallskip
 \noindent {\bf Example III.}
 \[f\left(v,t\right)=
 \begin{cases}
 \vspace{1.2\jot}|v+2|^{p-2}(v+2), &\text{if } v\leq -2, \\
 \vspace{1.2\jot} \rho(v, T)\sin T^2, &\text{if } -2<v<2,\\
 \left|v-2\right|^{p-1}, & \text{if } v\geq 2,
 \end{cases}
\]
where $\rho(\cdot,\cdot)$ is an infinitely differentiable function that $\rho(\cdot,t)$  is supported on
$$\left(-2+\frac{1}{2(1+T)},2-\frac{1}{2(1+T)}\right) $$ for any $t\in \mathbb R$, and equals to 1 on $$\left(-2+\frac{1}{1+T},2-\frac{1}{1+T}\right) .$$ Similarly, there exists no a uniform lower bound to $\partial_v f\left(\cdot, \cdot\right)$ in $[-2,2] \times \mathbb R$ and  the family $\{f(\cdot,t): t\in \mathbb R\}$  is not equicontinuous on $[-2,2]$. Hence, $f(v,t)$ does not satisfy (\ref{c:uniquenessoffRDS}) and (\ref{ucofphiv}).
Moveover,  there is also no a pointwise limit function of any sequence $\{f(\cdot,\cdot+t_n)\} $,  as $t_n\rightarrow +\infty$. Therefore, $f(v,t)\notin  C^{\mathrm{p.p.}}_{\mathrm{tr.c.}}(\mathbb R;\mathcal M)$.

\end{document}